\long\def\metanote#1#2{{\color{#1}\
		\ifmmode\hbox\fi{\sffamily\mdseries\upshape [#2]}\ }}
\newcommand\xleftrightarrow[2][]{%
	\ext@arrow 9999{\longleftrightarrowfill@}{#1}{#2}}
\newcommand\longleftrightarrowfill@{%
	\arrowfill@\leftarrow\relbar\rightarrow}
\newcommand{\xRightarrow}[2][]{\ext@arrow 0359\Rightarrowfill@{#1}{#2}}
\newcounter{smalllist}
\numberwithin{equation}{section}
\newcommand{\al}{\alpha}
\newcommand{\be}{\beta}
\newcommand{\ga}{\gamma}
\newcommand{\Ga}{\Gamma}
\newcommand{\de}{\delta}
\newcommand{\De}{\Delta}
\newcommand{\ep}{\varepsilon}
\newcommand{\ta}{\theta}
\newcommand{\Ta}{\Theta}
\newcommand{\ka}{\kappa}
\newcommand{\la}{\lambda}
\newcommand{\La}{\Lambda}
\newcommand{\si}{\sigma}
\newcommand{\om}{\omega}
\newcommand{\vph}{\varphi}
\newcommand{\vphi}{\varphi}
\newcommand{\sqb}[1]{\left[#1\right]}
\newcommand{\cb}[1]{\left\{#1\right\}}
\newcommand{\abs}[1]{\left|#1\right|}
\newcommand{\inn}[1]{\left\langle #1 \right\rangle}
\newcommand{\supp}{\rm{supp}}
\newcommand{\tr}{\mathrm{tr}}
\newcommand{\mcl}{\mathcal}
\newcommand{\mbe}{\mathbb{E}}
\newcommand{\mcA}{\mathcal{A}}
\newcommand{\mcB}{\mathcal{B}}
\newcommand{\mcD}{\mathcal{D}}
\newcommand{\mcG}{\mathcal{G}}
\newcommand{\mcM}{\mathcal{M}}
\newcommand{\mcP}{\mathcal{P}}
\newcommand{\mcS}{\mathcal{S}}
\newcommand{\mcW}{\mathcal{W}}
\newcommand{\mcc}{\mathcal{C}}
\newcommand{\mcg}{\mathcal{G}}
\newcommand{\mcp}{\mathcal{P}}
\newcommand{\mA}{\mathcal{A}}
\newcommand{\mB}{\mathcal{B}}
\newcommand{\mC}{\mathcal{C}}
\newcommand{\mJ}{\mathcal{J}}
\newcommand{\mbr}{\mathbb R}
\newcommand{\f}{\frac}
\newcommand{\drv}[2]{\frac{d #1}{d #2}}
\newcommand{\Lag}{\Lambda_{\text{glob}}}
\newtheorem{theorem}{Theorem}[section]
\newtheorem{proposition}[theorem]{Proposition}
\newtheorem{lemma}[theorem]{Lemma}
\newtheorem{corollary}[theorem]{Corollary}
\theoremstyle{definition}
\newtheorem{definition}[theorem]{Definition}
\newtheorem{problem}[theorem]{Problem}
\newtheorem{example}[theorem]{Example}
\newtheorem{convention}[theorem]{Convention}
\newtheorem{remark}[theorem]{Remark}
\theoremstyle{definition}
\newcommand{\wg}{\mcl{W}_\mcl{G}}
\begin{document}
\title[]{On Optimal Markovian Couplings of L\'evy Processes}

\author{Wei Yang Kang, Tau Shean Lim}

\address{\noindent Department of Mathematics \\ Xiamen University Malaysia}
\email{taushean.lim@xmu.edu.my \\ wyang1217@gmail.com }

\begin{abstract}
	We study the optimal Markovian coupling problem for two \( \Pi \)-valued Feller processes \( \{X_t\} \) and \( \{Y_t\} \), which seeks a coupling process \( \{(X_t, Y_t)\} \) that minimizes the right derivative at \( t=0 \) of the expected cost \( \mathbb{E}^{(x,y)}[c(X_t, Y_t)] \), for all initial states \((x,y) \in \Pi^2\) and a given cost function \( c \) on \( \Pi \). This problem was first formulated and solved by Chen~\cite{Chen1994} for drift–diffusion processes and later extended by Zhang~\cite{Zhang2000} to Markov processes with bounded jumps. In this work, we resolve the case of L\'evy processes under the quadratic cost \( c(x,y) = \tfrac{1}{2}|x - y|^2 \) by introducing a new formulation of the \emph{L\'evy optimal transport problem} between L\'evy measures. 
	We show that the resulting optimal coupling process \( \{(X_t^*,Y_t^*)\}_{t \ge 0} \) satisfies a minimal growth property: for each $t\ge 0,x,y\in\mbr^d$, the expectation \( \mathbb{E}^{(x,y)}|X_t^* - Y_t^*|^2 \) is minimized among all Feller couplings. A key feature of our approach is the development of a dual problem, expressed as a variational principle over test functions of the generators. We prove \emph{strong duality} for this formulation, thereby closing the optimality gap. As a byproduct, we obtain a Wasserstein-type metric on the space of L\'evy generators and L\'evy measures with finite second moment, and establish several of its fundamental properties.
%
\vspace{6ex}

\noindent {\bf Keywords}: Optimal Markovian couplings, L\'evy optimal transport, L\'evy processes, strong duality, Wasserstein-type metrics on L\'evy generators
\end{abstract}

\maketitle
\tableofcontents  

\section{Introduction}

\emph{Coupling} is a fundamental probabilistic technique with applications across a wide range of stochastic models. The core idea is as follows: given two probability measures $(\Omega_1, \mathbb{P}_1)$ and $(\Omega_2, \mathbb{P}_2)$, one seeks a joint probability space $(\boldsymbol{\Omega}, \boldsymbol{P})$, with $\boldsymbol{\Omega} = \Omega_1 \times \Omega_2$, and a probability measure $\boldsymbol{P}$ on $\boldsymbol{\Omega}$ such that the marginals agree with the original measures. That is, for all measurable sets $E_1 \subset \Omega_1$ and $E_2 \subset \Omega_2$,
\[
\boldsymbol{P}(E_1 \times \Omega_2) = \mathbb{P}_1(E_1), \qquad 
\boldsymbol{P}(\Omega_1 \times E_2) = \mathbb{P}_2(E_2).
\]
Such a construction allows one to study the relationship between the two distributions, random variables, or processes by analyzing them on the same probability space. 
The technique is widely used in areas such as 
the analysis of ergodicity, convergence rates, and spectral gaps for Markov chains and MCMC \cite{doeblin1937theorie,Griffeath1975,levin2017markov,propp1996exact}; 
optimal transport theory \cite{MR3050280,MR2459454}; 
Poisson approximation via the Stein–Chen method \cite{barbour1992poisson, chen1975poisson}; 
interacting particle systems \cite{McKean1969,sznitman1991topics,Chaintron_2022_1,Chaintron_2022_2}; 
probabilistic analysis of PDEs \cite{kuwada2010duality,stroock2006multidimensional,wang2013harnack}; 
and geometric analysis through couplings of diffusions \cite{kendall1986nonnegative,vonrenesse2005transport}.

In many applications, a central objective is to construct a coupling that satisfies a specific \emph{optimality} criterion.  
One prominent example is the \emph{maximal coupling} of two probability measures on the sample space, which maximizes the probability that the two coupled random variables are equal.  
More precisely, under the setting above, $\boldsymbol{P}$ is a maximal coupling of $\mathbb{P}_1$ and $\mathbb{P}_2$ if  
\begin{align*}
	\boldsymbol{P}(X = Y) &= \sup_{\boldsymbol{Q}} \, \boldsymbol{Q}(X = Y),
\end{align*}
where the supremum is taken over all couplings $\boldsymbol{Q}$ of $\mathbb{P}_1$ and $\mathbb{P}_2$.  
Equivalently, $\boldsymbol{P}(X \neq Y) = d_{\mathrm{TV}}(\mathbb{P}_1,\mathbb{P}_2)$, where $d_{\mathrm{TV}}$ denotes the total variation distance.  
Such couplings are widely used in the study of mixing times and convergence rates of Markov chains/MCMC, in the construction of perfect sampling algorithms, and in Poisson approximation via the Stein--Chen method.  
It is a standard fact that maximal couplings always exist.

A broader class of optimal couplings is given by the viewpoint of \emph{optimal transport theory}.  
Let $\mu$ and $\nu$ be probability measures defined on Polish spaces $\Pi$ and $\Gamma$, respectively, equipped with a \emph{cost function} $c : \Pi \times \Gamma \to \mathbb{R}$.  
A coupling $\gamma_*$, which is a probability measure on $\Pi \times \Gamma$ with marginals $\mu$ and $\nu$, is called \emph{$c$-optimal} if  
\begin{align*}
	\int_{\Pi \times \Gamma} c(x,y) \, d\gamma_*(x,y)
	= \inf_{\gamma} \int_{\Pi \times \Gamma} c(x,y) \, d\gamma(x,y),
\end{align*}
where the infimum is taken over all couplings $\gamma$ of $\mu$ and $\nu$.  
Such $c$-optimal couplings naturally arise when one seeks to match two distributions while minimizing an underlying transportation cost, and they play a central role in probability theory, statistics, and PDEs.  
Existence of a $c$-optimal coupling is guaranteed under mild conditions, for example when $c$ is lower semicontinuous and bounded from below on $\Pi \times \Gamma$ (see, e.g., \cite{MR2459454}).  
The maximal coupling described above is in fact a special case of this framework, obtained by choosing the cost function $c(x,y) = \mathbf{1}_{\{x \neq y\}}$.

In the context of stochastic processes, one can view a $\Pi$-valued process as a probability measure on the corresponding path space, and a coupling can then be defined analogously as a probability measure on the product of two such path spaces with given marginals. For instance, given two processes $\{X_t\}_{t\ge 0}$ and $\{Y_t\}_{t\ge 0}$, one can define their \emph{maximal coupling}---which always exists in this abstract sense. However, such a coupling is typically of limited practical value, as it does not in general preserve the temporal or causal structure inherent to the stochastic evolution of the individual processes.
For this reason, when working \emph{at the process level}, couplings are often required to satisfy additional structural constraints that reflect the dynamic nature of the problem. Common examples include \emph{faithfulness} \cite{rosenthal1997faithful}, \emph{co-adaptedness} \cite{connor2009optimal,burdzy2000efficient}, and \emph{immersion} \cite{kendall2015coupling}, each imposing progressively stronger compatibility between the joint dynamics and the original processes. 

Let us now turn to the notion of \emph{Markovian coupling}, which is the regime of interest in the present work. Broadly speaking, a coupling $\{Z_t=(X'_t,Y'_t)\}_{t\ge 0}$ of two Markov processes $\{X_t\}_{t\ge 0},\{Y_t\}_{t\ge 0}$ is said to be Markovian if the joint process $\{Z_t\}_{t}$ itself forms a Markov process on the product state space, with respect to its natural filtration. In this setting, the coupling is determined by a transition kernel or generator on the product space whose marginals coincide with those of the individual processes. This ensures that the joint evolution preserves the memoryless property, which is especially useful in the study of interacting particle systems, where couplings can then be analyzed within the framework of Markov semigroups and generators.

An interesting and widely studied example of coupling processes is the \emph{coalescing coupling}, in which two processes evolve jointly until they first meet, after which they move together identically. In the setting of Markov chains on discrete state spaces, the meeting time often serves as a natural stopping time whose distribution directly informs quantitative mixing properties. This naturally motivates the search for \emph{optimal coalescing couplings}—those that bring the processes together as quickly as possible, for example by minimising the distribution or a moment of the coalescence time. In many discrete settings, such optimal constructions coincide with \emph{maximal couplings} in the sense of total variation distance, yielding the sharpest coupling-based bounds on convergence to equilibrium. 

Optimal coalescing couplings have also been studied in the context of \emph{Markovian} couplings on continuous state spaces, often referred to as \emph{Markovian maximal couplings} \cite{bottcher2020markovian}. A canonical example is the \emph{reflection coupling} of Brownian motions \cite{LindvallRogers1986}, where the processes evolve symmetrically with respect to the hyperplane bisecting their positions until they meet, after which they move together. This construction is both Markovian and optimal, achieving the minimal possible coalescence time among all admissible couplings. We remark also coalescing couplings have also been investigated in the L\'evy setting \cite{BoettcherSchillingWang2011,SchillingWang2011,LiangSchillingWang2020}.

Another important class of Markovian couplings is the \emph{synchronous coupling}, in which two Markov processes on $\mathbb{R}^d$—typically described by stochastic differential equations driven by a common source of noise (Brownian or L\'evy)—evolve under exactly the same noise realisation. For example, if the two processes $\{X_t\}_t$ and $\{Y_t\}_t$ on $\mathbb{R}^d$ admit the SDE representations
\[
dX_t = b(X_t)\,dt + \sigma(X_t)\,dW_t, \quad 
dY_t = b(Y_t)\,dt + \sigma(Y_t)\,d\tilde W_t,
\]
where $\{W_t\}$ and $\{\tilde W_t\}$ are $d$-dimensional L\'evy processes with the same law, their synchronous coupling is obtained by setting $W=\tilde W$. We note here that reflection couplings of Brownian motions is a special csae of synchronous couplings. 

In the analysis of interacting particle systems, one is often interested in comparing two such systems, described by Markov processes $\{X_t\}$ and $\{\bar X_t\}$ on some state space $\Pi$, in terms of their error measured by a cost function—for instance, the expected transport cost 
\begin{align}\label{def:coupcost}
	\mathbb{E}[c(X_t,\bar X_t)]	,\qquad t\ge 0
\end{align}
under an appropriate coupling. In the case where the systems are described via SDEs (e.g., diffusion processes), synchronous couplings are particularly useful. By aligning the driving noise across the two systems, synchronous coupling links probabilistic arguments with tools from stochastic calculus and martingale theory, such as It\^o's formula and maximal inequalities. This makes it a natural bridge between coupling methods and analytic techniques, particularly in the study of stability, moment bounds, and propagation of chaos in mean-field models.

For general interacting particle systems where an SDE representation is not available, synchronous couplings are no longer directly applicable. Nevertheless, the underlying idea--constructing a coupling process $\{(X_t, \bar X_t)\}_{t \ge 0}$ to bound the expected transport cost \eqref{def:coupcost}--remains central to the analysis. Within the Markovian coupling framework, as Chen and Li pointed out in \cite{ChenLi1989}, one can exploit the semigroup structure together with integral inequalities (for instance, Gr\"onwall's lemma) to obtain global-in-time estimates from bounds on the derivative of the expected cost at time $t=0$, that is, by analyzing the action of the infinitesimal generator of the coupling on the cost function $c$. 

This motivates the following optimality criterion, introduced by Chen \cite{Chen1994}: given two Markov generators $\mathcal{A}$ and $\mathcal{B}$ of two Markov processes on some state space $\Pi$, and a cost function $c:\Pi^2\to[0,\infty)$, a coupling generator $\mathcal{J}_*$ of $\mathcal{A}$ and $\mathcal{B}$ is said to be \emph{$c$-optimal} if 
\begin{align*}
	(\mathcal{J}_* c)(x,y) &= \inf_{\mathcal{J}} (\mathcal{J} c)(x,y), \qquad \text{for all } (x,y)\in \Pi^2,
\end{align*}
where the infimum is taken overall all coupling generators of $\mA,\mB$. 
A precise formulation of this optimality condition will be presented in the coming section. 

The central question is the following: given two Markov generators $\mathcal{A}$ and $\mathcal{B}$ and a cost function $c$, 
does there exist a $c$-optimal coupling generator $\mathcal{J}_*$ of $\mathcal{A}$ and $\mathcal{B}$? 
We refer to this as the \emph{$c$-optimal Markovian coupling problem}. 
This problem was first introduced by Chen and Li \cite{ChenLi1989}, and was subsequently given a precise formulation by Chen \cite{Chen1994, Chen2020OptimalCouplings} for \emph{bounded Markov jump operators}, 
who also established the existence of minimizers in the case of drift–diffusion generators under suitable assumptions on the cost function $c$.
Later, Zhang \cite{Zhang2000} solved the problem for bounded Markov jump operators. 
For further background, see Section \ref{subsec-history-ocp}. 
The problem has also been studied in the L\'evy setting for specific cost functions 
(e.g., \cite{LiangSchillingWang2020, KendallMajkaMijatovic2024}). 
To the best of our knowledge, however, the $c$-optimal Markovian coupling problem remains open for general unbounded Markov generators.

In the present work, our goal is twofold. First, we rigorously formulate the optimal Markovian coupling problem for general Feller generators. Second, we provide a complete solution to this problem in the case of two L\'evy generators on $\mathbb{R}^d$, with respect to the squared cost $c(x,y) = \tfrac{1}{2}|x-y|^2$. It is well known that L\'evy generators can be decomposed into drift--diffusion and jump components, the latter of which may be unbounded. As mentioned earlier, the general drift--diffusion case was addressed in \cite{ChenLi1989}, while the bounded jump case was solved in \cite{Zhang2000}. The main contribution of the present work is therefore the treatment of the unbounded jump component.

One interesting by-product of this research is the formulation and analysis of the \emph{L\'evy optimal transport problem}. This is a variation of the classical optimal transport problem, which seeks an optimal coupling, but now between two \emph{L\'evy measures} rather than probability measures. This is not the first time such a problem has appeared; it was already considered in the work of Kolokoltsov \cite{Kolokoltsov2010LevyKhintchine} (see the paragraph after Corollary \ref{cor:main4}). Nonetheless, the present work provides the first systematic study of this problem. In particular, the link between the L\'evy optimal transport problem and the optimal Markovian coupling problem for L\'evy jump processes is established here for the first time. This problem will be studied in detail in Section~\ref{sec-lotp}, where we shall see its close connection with the optimal Markovian coupling problem for jump generators.

Another theme of this work is the study of the \emph{dual formulation} associated with the optimal Markovian coupling problem and L\'evy optimal transport problem. We establish that strong duality holds, and this plays a crucial role in the analysis. In particular, the dual formulation provides a verification principle: once a candidate coupling is constructed, its optimality can be confirmed by showing that its cost coincides with the dual value. The question of whether the dual value equals the primal value is referred to as the \emph{optimality gap problem}, and strong duality ensures that this gap vanishes. This follows the standard approach in optimization theory, where strong duality guarantees that equality of primal and dual values certifies optimality.

Another by-product of this research is the introduction of a \emph{Wasserstein-type} metric on the space of L\'evy generators and L\'evy measures. This metric arises naturally from the optimal Markovian problem between L\'evy generators and L\'evy optimal transport problem between L\'evy measures, and provides a quantitative way to compare different L\'evy-type dynamics. This metric was first introduced in \cite{LimTeoh2025}, and to our knowledge, this is the first time such a metric has been systematically studied, and it has several potential applications. We present a detailed construction and analysis of this metric in Section \ref{sec-wass-metric}.

In the coming section, we formulate the two main research problems of this work: the \emph{optimal Markovian coupling problem} and the \emph{optimality gap problem}, both defined with respect to a cost function $c$ in a general abstract framework. Our study focuses on the special case of L\'evy generators on $\mathbb{R}^d$ with the squared cost $c(x,y) = \frac{1}{2}|x-y|^2$. The main results, stated as Theorems \ref{main1}, \ref{main2}, \ref{main-LK}, \ref{main3}, \ref{main5} and Corollary \ref{cor:main4}, will be presented subsequently. The detailed plan and organization of the remainder of the paper will be outlined at the end of Section \ref{sec-setting}.

\subsection*{List of symbols and notations}\,
\begingroup
\footnotesize

\newcommand{\sym}[2]{#1 &:& #2 \\[1ex]} 

\begin{longtable}{lll}
	\sym{$\oplus$}{Direct sum of functions, e.g., $f\oplus g= f(x)+g(y)$, or spaces, e.g., $L^1(\mu)\oplus L^1(\nu)$}
	\sym{$\otimes$}{Tensor product (of functions, measures, operators, semigroups)}
	\sym{$\inn{\cdot,\cdot}$}{Natural pairing over $\mcM(\Pi)\times C_b(\Pi)$}
    \sym{$\xrightarrow{\La}$}{$\La$-weak convergence of L\'evy measures}
        \sym{$(\kappa,\al,\mu),(\zeta,\be,\nu)$}{L\'{e}vy triplet}
        \sym{$(\eta,\si,\ga)$}{Coupled L\'evy triplet}
        \sym{$(\eta_*,\si_*,\ga_*)$}{2-optimal coupled L\'evy triplet}
        \sym{$\al,\be$}{Diffusion matrices}
        \sym{$\mcA,\mcB$}{Feller generators on $\Pi$, or L\'evy generators on $\mbr^d$}
        \sym{$\mcA^\nabla,\mB^\nabla$}{Drift parts of $\mA,\mB$}
	\sym{$\mcA^\De,\mB^\De$}{Diffusion parts of $\mA,\mB$}
	\sym{$\mcA^J,\mB^J$}{Jump parts of $\mA,\mB$}
    \sym{$\mcl{B}(\Pi)$}{Space of Borel measurable functions on $\Pi$}
    \sym{$\mcl{B}_b(\Pi)$}{Space of bounded Borel measurable functions on $\Pi$}
        \sym{$c$}{Cost function over some state space $\Pi$}
        \sym{$c_2$}{Squared cost on $\mbr^{d}$, $c_2(x,y)=\f 12 |x-y|^2$}
        \sym{$\mathcal{C}_c$}{$c$-optimal transport cost}
        \sym{$\mathcal{C}_2$}{$2$-optimal transport cost}
        \sym{$\mathcal{C}_2^\Lambda$}{2-optimal L\'evy transport cost (between L\'evy measures)}
        \sym{$C(\mA),C(\mB)$}{Continuity domains of generators $\mA,\mB$}
        \sym{$C(\Pi)$}{Space of continuous functions on $\Pi$}
	\sym{$C_0(\Pi)$}{Space of continuous functions on $\Pi$ vanishing at infinity }
        \sym{$C_b(\Pi)$}{Space of bounded continuous functions}
        \sym{$C_c(\Pi)$}{Space of continuous functions $f:\Pi\to\mbr$ with compact support}
        \sym{$C_2(\mbr^d)$}{Space of continuous functions $\vphi:\mbr^d\to\mbr$ such that $|\vphi(x)|\le C(1+|x|^2)$}
        \sym{$C_0^2(\mbr^d)$}{Space of bounded twice differentiable functions on $\mbr^d$ vanishing at infinity} 
        \sym{$C_b^2(\mbr^d)$}{Space of bounded twice differentiable functions on $\mbr^d$}
        \sym{$C_2^2(\mbr^d)$}{Space of all $\vphi\in C_2(\mbr^d)$ that is continuously twice differentiable}
        \sym{$\de_x,\de_y$}{Dirac delta measures at $x,y\in\Pi$}
        \sym{$D^-$}{Right hand lower Dini derivative operator}
        \sym{$D(\mcA)$}{Domain of the generator $\mcA$}
        \sym{$\bar{D}(\mcA)$}{Extended domain of the generator $\mcA$}
        \sym{$\mcl{D}(x_0,y_0)$}{Set of admissible pairs for the definition of dual transport derivative }
        \sym{$\{e^{t\mcA}\}_{t\ge 0},\{e^{t\mB}\}_{t\ge 0}$}{Feller semigroups generated by $\mA,\mB$}
        \sym{$\eta$}{$\ka\oplus\zeta$, coupled drift vector}
        \sym{$\ga$}{Coupling of probability/L\'evy measures $\mu,\nu$}
        \sym{$\ga_*$}{L\'evy 2-optimal coupling of L\'evy measures $\mu,\nu$}
        \sym{$\Gamma(\mathcal{A},\mathcal{B})$}{Set of all Feller coupling generators of $\mathcal{A}$ and $\mathcal{B}$}
        \sym{$\Gamma^\La(\mathcal{A},\mathcal{B})$}{Set of all L\'evy coupling generators of $\mathcal{A}$ and $\mathcal{B}$}
        \sym{$\Gamma(\mu,\nu)$}{Set of all (classical) couplings of bounded measures $\mu$ and $\nu$ with equal mass}
        \sym{$\Gamma^\La(\mu,\nu)$}{Set of all (L\'evy) couplings of L\'evy measures $\mu$ and $\nu$}
        \sym{$\mcG(\Pi)$}{Space of probability generators on $\Pi$}
        \sym{$\mcG^\Lambda(\mathbb{R}^d)$}{Space of L\'evy generators on $\mathbb{R}^d$}
        \sym{$\mcG_2^\Lambda(\mathbb{R}^d)$}{Space of L\'evy generators with finite second moments}
        \sym{$\mathcal{J}$}{Coupling generator of $\mathcal{A,B}$}
        \sym{$\mJ^-$}{Lower generator of the generator $\mJ$}
        \sym{$\mathcal{J}_*$}{Optimal coupling generator of $\mathcal{A,B}$}
        \sym{$\kappa,\zeta$}{Drift vectors}
        \sym{$\La$}{L\'evy, particularly any notions related to L\'evy settings}
        \sym{$\Lambda(\mathbb{R}^d)$}{Class of L\'evy measures on $\mathbb{R}^d$}
        \sym{$\Lambda_2(\mathbb{R}^d)$}{Class of L\'evy measures on $\mathbb{R}^d$ with finite second moment}
        \sym{$\Lambda_{\text{glob}}(\kappa,\alpha,\mu)$}{L\'evy generator associated with the triplet $(\kappa,\alpha,\mu)$ expressed in global form}
        \sym{$\mu,\nu$}{Probability measures on $\Pi$, or L\'evy measures on $\mbr^d$}
        \sym{$m_\mathcal{A},m_\mB$}{Mean vectors of L\'evy generator $\mathcal{A},\mB$}
        \sym{$M(\mbr^d)$}{Space of squared $d\times d$ real matrices}
        \sym{$\mcM(\Pi)$}{Space of bounded (Borel) measures on $\Pi$}
        \sym{$\omega_c^\pm$}{Pointwise upper/lower transport derivative}
        \sym{$\omega_c^*$}{Dual transport derivative}
        \sym{$\Pi$}{State (Polish) space}
        \sym{$\mcP(\Pi)$}{Space of probability (Borel) measures on $\Pi$}
        \sym{$\mcp_2(\mbr^d)$}{Space of probability measures $\mu$ with finite second moments}
        \sym{$Q_\mA,Q_\mB$}{covariance matrices of L\'evy generators $\mA,\mB$}
        \sym{$\mathcal{S}_{\geq0}(\mathbb{R}^d)$}{Space of nonnegative definite $(d\times d)$-matrices with real entries}
        \sym{$\si$}{Coupled diffusion matrix}
        \sym{$\{T_t\}_{t\ge 0}$}{Markov or Feller semigroup}
        \sym{$\theta_c$}{Markovian transport derivative}
        \sym{$\ta_2,\om_2^\pm,\om_2^*$}{Transport derivatives w.r.t. the squared cost $c_2$}
        \sym{$\omega_2'$}{Restricted dual transport derivative}
        \sym{$\mcW_2$}{Wasserstein 2-metric between probability measures}
        \sym{$\mathcal{W}_\mathcal{G}$}{Wasserstein generator metric}
        \sym{$\mathcal{W}_\Lambda$}{$(\mcc_2^\La)^{1/2}$, L\'evy-Wasserstein metric}
        \sym{$\mathcal{W}_\mathcal{S}$}{Bures-Wasserstein distance}
        \sym{$\{X_t\}_{t\ge 0}, \{Y_t\}_{t\ge 0}$}{Markov, Feller or L\'evy processes}
\end{longtable}
\endgroup

\section{Framework, Research Problems, and Main Results}\label{sec-setting}

\newcommand{\WG}{\mcW_{\mcg}^\La}

In this section, we formulate the optimal Markovian coupling problem and the associated optimality gap problem in the abstract setting of Feller generators, and present the background required for their analysis. We begin with notations and the general framework of the study, followed by a brief review of Feller and L\'evy processes and their generators. Key notions, particularly transport derivatives, are introduced in Section~\ref{sec2.3}. The two research problems are then stated in Sections~\ref{sec2.4} and~\ref{sec2.5}, and related results from the literature are discussed in Section~\ref{sec2.6}. Our main contributions, concerning the case of L\'evy processes, are presented in Sections~\ref{sec2.7} and~\ref{sec2.8}. We conclude with a discussion of the results, proof strategies, and the organization of the remainder of the article.

\subsection{Notations and Feller semigroups/generators}\label{sec2.1}

Throughout this paper, we denote $\Pi$ a state space, which is a \emph{Polish} (completely metrizable separable) space. 
Let us introduce also the following notations for spaces of functions, measures, generators, pairings, and actions:
\begin{itemize}
	\item $\mcl{B}(\Pi)$: the space of Borel functions $f:\Pi\to\mbr$.
	
	\item $\mcl{B}_b(\Pi)$: the space of bound Borel functions $f:\Pi\to\mbr$.
	
	\item $C(\Pi)$: the space of continuous functions $f:\Pi\to\mbr$.

    \item $C_b(\Pi)$: the space of bounded continuous functions $f:\Pi\to\mbr$, with the supremum norm $\|f\|=\sup_{x\in \Pi} |f(x)|.$
    
	\item $C_0(\Pi)$: the space of continuous functions $f:\Pi\to\mbr$ vanishing at infinity.
	
	\item $C_c(\Pi)$: the space of continuous functions $f:\Pi\to\mbr$ with compact support. 
	
	\item $\mcl{M}(\Pi)$: the space of bounded (Borel) measures on $\Pi$.

	\item $\mcp(\Pi)$: the space of probabilty (Borel) measures on $\Pi$.
	
	\item $\mcg(\Pi)$: the space of all Feller generators on $\Pi$, see the following subsubsection.
	
	\item $\inn{\cdot,\cdot}:\mcl{M}(\Pi)\times C_b(\Pi)\to\mbr$: the natural pairing between bounded measures and bounded continuous functions
	\begin{align*}
		\inn{\mu,f}&= \int_{\Pi} fd\mu. 
	\end{align*}	
	This notion of pairing may potentially extend to unbounded functions $f$ or unbounded measures $\mu$ (e.g., L\'evy measures), so long as the integral makes sense. 
	\item \emph{Left action:} Given an operator $T:C_b(\Pi)\to C_b(\Pi)$ (for example, a Markov kernel, a Feller semigroup, or a generator) and a measure $\mu \in \mathcal{M}(\Pi)$, we define the \emph{left action} of $T$ on $\mu$, denoted $\mu T$, by
	\begin{align*}
		\langle \mu T, f \rangle := \langle \mu, T f \rangle, \qquad \text{for all } f \in C_b(\Pi).
	\end{align*}
	In particular, if $T$ is a Markov kernel or a Feller semigroup, then $\mu T$ is again a measure.
\end{itemize}

In the present work, although the research problems are formulated in the abstract framework introduced above, we will mainly concentrate on the case $\Pi=\mbr^d$. 
Accordingly, the spaces of continuous functions, measures, and generators will be understood with $\Pi=\mbr^d$. 
For later use, we also introduce the following spaces:
\begin{itemize}
	\item $C_2(\mbr^d)$: the space of continuous functions satisfying the quadratic growth bound for some constant $C>0$:
	\begin{align}\label{eq:quad-bound}
		|\vphi(x)| \le C(1+|x|^2), \quad x\in\mbr^d,
	\end{align}
	\item $C^2(\mbr^d)$: the space of twice continuously differentiable functions;
	\item $C_b^2(\mbr^d)$: the space of bounded twice continuously differentiable functions;
	\item $C_2^2(\mbr^d)=C_2(\mbr^d)\cap C^2(\mbr^d)$: the space of twice continuously differentiable functions $\vphi$ satisfying the quadratic growth bound \eqref{eq:quad-bound}.
	
	\item $\mcp_2(\mbr^d)$: the space of probability measures on $\mbr^d$ with finite second moment, i.e.
	\[
	\int_{\mbr^d} |x|^2\, d\mu(x)<\infty.
	\]
\end{itemize}

\subsubsection{Markov/Feller semigroups, generators, and processes}
We begin with a brief review of Markov and Feller semigroups, their generators, and the associated processes, as these form the central framework of the present work. For a comprehensive treatment, we refer the reader to \cite{ethier2009markov,kuhn2017,liggett2010continuous}.

A \emph{Markov semigroup} is a family of bounded linear operators $\{T_t\}_{t\ge 0}$ on the space $\mcl{B}_b(\Pi)$ of bounded Borel measurable functions, satisfying: 
\begin{itemize}
	\item Identity: $T_0 = I$.
	\item Semigroup property: for all $\vphi \in \mcl{B}_b(\Pi)$ and $s,t \ge 0$,  
	\[
	T_{t+s}\vphi = T_t(T_s\vphi).
	\]
	\item Positivity: if $\vphi \ge 0$, then $T_t\vphi \ge 0$ for all $t \ge 0$.
	\item Conservativeness: $T_t 1 = 1$ for all $t \ge 0$. 
\end{itemize}
A canonical example is the semigroup induced by a \emph{Markov process} $\{X_t\}_{t \ge 0}$, given by
\begin{align}\label{rep-mark}
	T_t\vphi(x) = \mbe^x[\vphi(X_t)], \quad \vphi \in \mcl{B}_b(\Pi), \; x \in \Pi.	
\end{align}
In this case, $\{T_t\}_{t\ge 0}$ admits a \emph{transition kernel} $\{\ka_t(x,dy)\}_{t\ge 0,\,x\in\Pi}$ such that
\begin{align*}
	T_t\vphi(x) &= \int_{\Pi} \vphi(y)\,\ka_t(x,dy) \;=\; \inn{\ka_t(x),\vphi}. 
\end{align*}
This kernel representation also permits extending $T_t\vphi$ to certain unbounded functions whenever the integral is well defined. In terms of the left action notation introduced earlier, we may write
\begin{align*}
	\ka_t(x) = \de_x T_t. 
\end{align*}

A \emph{Feller semigroup}\footnote{In \cite{liggett2010continuous}, it is referred to as a \emph{probability semigroup}.} is a Markov semigroup with the additional \emph{Feller property}:
\begin{itemize}
	\item $T_t(C_0(\Pi)) \subset C_0(\Pi)$ for all $t \ge 0$;
	\item $\lim_{t\searrow 0} T_t\vphi = \vphi$ in $C_0(\Pi)$ for all $\vphi \in C_0(\Pi)$. 
\end{itemize}
The \emph{infinitesimal generator} $\mA$ of a Feller semigroup and its domain $D(\mA)$ is defined by 
\begin{align}\label{def:domain}
	D(\mA) &:= \Big\{ \vphi \in C_0(\Pi): 
	\lim_{t\searrow 0}\frac{T_t\vphi - \vphi}{t} 
	\ \text{exists in } C_0(\Pi) \Big\},\qquad 
	\mA \vphi := \lim_{t\searrow 0}\frac{T_t\vphi-\vphi}{t}.
\end{align}
The domain $D(\mA)$ is a dense subspace of $C_0(\Pi)$, and $(\mA,D(\mA))$ is a (typically unbounded) closed operator on $C_0(\Pi)$.

To highlight the connection between the generator $\mA$ and its semigroup $\{T_t\}_{t\ge 0}$, it is customary to use the formal notation
\begin{align*}
	T_t = e^{t\mA}, \qquad t \ge 0,
\end{align*}
which expresses the fact that the semigroup is generated by $\mA$ in the sense of \eqref{def:domain}. 
As introduced earlier, the collection of all such generators on $\Pi$ is denoted by $\mcg(\Pi)$ and referred to as the set of \emph{Feller generators}. 
Every Feller semigroup arises from a Markov process $\{X_t\}_{t\ge 0}$ via \eqref{rep-mark}, called a \emph{Feller process}. 
Classically, there is a one-to-one correspondence between Feller processes, their semigroups, and their generators, providing a powerful analytic framework for their study; see, for example, \cite{ethier2009markov,liggett2010continuous}.

\subsubsection{Extended domains of generators} 

For a Feller generator $\mathcal{A} \in \mathcal{G}(\Pi)$, the domain $D(\mathcal{A})$ consists of all $\vphi \in C_0(\Pi)$ for which the limit in \eqref{def:domain} exists in the uniform topology.  
This definition, though natural in semigroup theory, is often too restrictive: for example, the quadratic cost $c(x,y)=\tfrac{1}{2}|x-y|^2$ does not lie in $C_0(\mbr^{2d})$.  
Yet $e^{t\mathcal{A}}\vphi$ can still be defined for such unbounded $\vphi$ whenever the integral against the transition kernel is well-defined.  
To handle these cases, one introduces the \emph{extended domain} of the generator.

\begin{definition}[Continuity domain and extended domain of a generator]\label{def:extdom}
	Let $\mA \in \mcg(\Pi)$.  
	The \emph{continuity domain} $C(\mA)$ of $\mA$ is the vector space of all $\vphi \in C(\Pi)$ such that 
	\[
	e^{t\mA}\vphi \in C(\Pi) \quad \forall\, t \ge 0, 
	\qquad \text{and} \qquad 
	e^{t\mA}\vphi \to e^{s\mA}\vphi \ \text{locally uniformly on $\Pi$ as } t \to s.
	\]
	The \emph{extended domain} $\bar D(\mA)$ of $\mA$ consists of all $\vphi \in C(\mA)$ such that the following holds: for each $t\ge 0$, there exists $g_t \in C(\mA)$ such that 
	\[
	\frac{e^{(t+h)\mA}\vphi - e^{t\mA}\vphi}{h} \;\to\; g_t 
	\qquad \text{locally uniformly on $\Pi$ as } h \searrow 0.
	\]
	In this case, we continue to write $\mA\vphi := g_0$ for the limit in \eqref{def:domain}, now understood in the local uniform sense.
\end{definition}

Clearly, we have $C_b(\Pi)\subset C(\mA)$ and $D(\mA)\subset \bar D(\mA)$. 
Moreover, for any $\vphi \in \bar D(\mA)$ and $t \ge 0$, one has $e^{t\mA}\vphi \in \bar D(\mA)$, and it holds
\begin{align*}
	\mA e^{t\mA}\vphi 
	= \lim_{h \searrow 0} \frac{e^{(t+h)\mA}\vphi - e^{t\mA}\vphi}{h} 
	= e^{t\mA}\mA \vphi.
\end{align*}
By the fundamental theorem of calculus, for any $\vphi \in \bar D(\mA)$ it holds
\begin{align}\label{eq:int-eq}
	e^{t\mA}\vphi 
	= \vphi + \int_0^t e^{s\mA}\mA\vphi \, ds. 
\end{align}

In the present work, our focus lies on the \emph{lower right-hand Dini derivative} of trajectories of the form $t \mapsto e^{t\mA}\vphi,$
where $\vphi \in \mcl{B}(\Pi)$ is a measurable test function without any regularity assumptions.  
This leads naturally to the following notion.  

\begin{definition}[Lower generator]\label{def:lower-gen}
	Let $\mA \in \mcg(\Pi)$. 
	The \emph{lower generator} $\mA^-$ is the operator acting on $\mcl{B}(\Pi)$ by
	\begin{align*}
		\mA^- \vphi(x) 
		:= \liminf_{h \searrow 0} \frac{e^{h\mA}\vphi(x) - \vphi(x)}{h},
		\qquad \vphi \in \mcl{B}(\Pi).
	\end{align*}
\end{definition}

Clearly, if $\vphi \in \bar D(\mA)$, then $\mA^- \vphi = \mA \vphi$.  
One may also define the \emph{upper generator} $\mA^+$ analogously, by replacing $\liminf$ with $\limsup$.

%

\subsection{L\'evy processes, generators, triplets}\label{sec2.2}
Our primary interest lies in L\'evy processes. We now provide a brief overview of their definition and generators; for a comprehensive treatment, see \cite{bottcher2013,Sato1999}

\subsubsection{Basics of L\'evy processes} 
A \emph{L\'evy process} \(\{X_t\}_{t \ge 0}\) on \(\mbr^d\) is a Feller process with \emph{stationary} and \emph{independent increments}. By the \emph{L\'evy--Khintchine formula}, the associated Feller generator \(\mA\), defined for test functions \(\varphi \in C_c^2(\mbr^d)\), is given by
\begin{align}\label{LK-decomp}
	\mA \varphi = \kappa \cdot \nabla \varphi + \f 12\tr(\alpha D^2 \varphi) + \int_{\mbr^d \setminus \{0\}} [\varphi(x + x') - \varphi(x) - \chi_{B_1(0)}(x')\, x' \cdot \nabla \varphi]\, d\mu(x').
\end{align}
where the triple \((\ka,\alpha, \mu)\) is called the \emph{L\'evy triplet} and consists of:
\begin{itemize}
	\item \(\ka \in \mbr^d\), a vector representing the \emph{drift} component;
	\item \(\al \in \mcl{S}_{\ge 0}(\mbr^d)\), a nonnegative definite matrix representing the \emph{diffusion} component;
	\item \(\mu\), a \emph{L\'evy measure}, i.e., a Borel measure on \(\mbr^d\) satisfying
	\begin{align}\label{Levy-m-def}
		\mu(\{0\})=0,\qquad \int_{\mbr^d \setminus \{0\}} \min\{1, |x|^2\}\, d\mu(x) < \infty.
	\end{align}
\end{itemize}
The three terms in \eqref{LK-decomp} are referred to as the \emph{drift}, \emph{diffusion}, and \emph{jump} parts of the generator \(\mA\), respectively.

\subsubsection{L\'evy generators with finite second moments}
Let \(\Lambda(\mathbb{R}^d)\) denote the family of all L\'evy measures on \(\mathbb{R}^d\), that is, measures \(\mu\) satisfying Condition \eqref{Levy-m-def}. As before, \(\mathcal{G}(\mathbb{R}^d)\) denotes the space of all Feller generators on \(\mathbb{R}^d\). We write \(\mathcal{G}^\Lambda(\mathbb{R}^d) \subset \mathcal{G}(\mathbb{R}^d)\) for the subspace of all \emph{L\'evy generators} on \(\mathbb{R}^d\), i.e., Feller generators of the form \eqref{LK-decomp}. 
Specifically, \emph{the Greek letter $\La$ is reserved for any notions that is related to ``L\'evy}."

For \(p \in [1, \infty)\), denote by \(\Lambda_p(\mbr^d) \subset \Lambda(\mbr^d)\) the class of L\'evy measures with finite \(p\)-th moment outside the unit ball:
\begin{align}\label{pmom-levy}
	\int_{B_1(0)^c} |x|^p\, d\mu(x) < \infty.
\end{align}
A L\'evy process \(\{X_t\}_{t \ge 0}\) has finite \(p\)-th moments for $p\ge 1$, i.e., \(\mathbb{E}[|X_t|^p] < \infty\) for all \(t \ge 0\), if and only if its generator (given by \eqref{LK-decomp}) is associated with a L\'evy measure \(\mu \in \Lambda_p(\mbr^d)\), see \cite{Sato1999}. We note also the inclusion holds: if $p\le q$, then 
\begin{align*}
	\La_q(\mbr^d)\subset \La_p(\mbr^d). 
\end{align*}

In this work, we focus on generators whose associated L\'evy processes that admit finite second moments, which corresponds to L\'evy measures \(\mu\) with finite second moment. Combining the conditions \eqref{Levy-m-def} and \eqref{pmom-levy}, $\mu\in \La_2(\mbr^d)$ if and only if $\mu$ admits (globally) finite second moment:
\begin{align}\label{levy-2moment}
	\int_{\mathbb{R}^d } |x|^2 \, d\mu(x) < \infty.
\end{align}
Similarly, let \(\mathcal{G}^\Lambda_2(\mathbb{R}^d) \subset \mathcal{G}^\Lambda(\mathbb{R}^d)\) denote the class of all L\'evy generators \(\mathcal{A}\) taking the form \eqref{LK-decomp} with \(\mu \in \Lambda_2(\mathbb{R}^d)\). As discussed in the preceding paragraph, $\mA\in\mcg_2^\La(\mbr^d)$ if and only if the semigroup admits a finite second moment, that is, $e^{t\mA}[|x|^2](0)<\infty$  for all $t\ge 0$. 

Recall the spaces \(C_2(\mathbb{R}^d)\), \(C_2^2(\mathbb{R}^d)\) introduced in Section~\ref{sec2.1}, together with the continuity domain \(C(\mathcal{A})\) and the extended domain \(\bar D(\mathcal{A})\) of a generator \(\mathcal{A}\) from Definition \ref{def:extdom}.  
If \(\mathcal{A} \in \mathcal{G}_2^\Lambda(\mathbb{R}^d)\), then  
\[
C_2(\mathbb{R}^d) \subset C(\mathcal{A}), 
\qquad 
C_2^2(\mathbb{R}^d) \subset \bar D(\mathcal{A}).
\]
In particular, the extended domain \(\bar D(\mathcal{A})\) of a L\'evy generator with finite second moments contains all quadratic functions.  
This fact is crucial, since the quadratic cost function used in our framework must belong to the domain of the generator.

\subsubsection{Global cutoff functions}

We briefly remark on the role of \emph{cutoff functions} in the definition of the jump operator in \eqref{LK-decomp}. The standard choice is the characteristic function \(\chi_{B_1(0)}\) of the unit ball, which yields the compensated expression
\[
\varphi(x + x') - \varphi(x) - \chi_{B_1(0)}(x')\, x' \cdot \nabla \varphi(x),
\]
designed to regularize the potential singularity near the origin when the L\'evy measure \(\mu\) is infinite there.
More generally, as noted in Sato \cite{Sato1999}, any compactly supported measurable function \(\chi(x')\) satisfying \(\chi(x') = 1 + o(1)\) as \(x' \to 0\) may be used in place of \(\chi_{B_1(0)}\). Different choices of cutoff result in equivalent generators on \(C_c^2(\mbr^d)\), with corresponding adjustments to the drift vector \(\kappa\). Thus, the cutoff is a matter of representation rather than intrinsic structure.

When the L\'evy measure \(\mu\) admits a finite \(p\)-th moment \eqref{pmom-levy} for some \(p \ge 1\),
the first moment outside the unit ball (as a vector in $\mbr^d$) is also finite (componentwise):
\[
\kappa' := \int_{B_1(0)^c} x'\, d\mu(x') < \infty.
\]
In this case, it is natural to adopt the \emph{global form} of the jump operator by taking \(\chi(x') \equiv 1\), which leads to
\[
\int_{\mbr^d} \left[ \varphi(x + x') - \varphi(x) - x' \cdot \nabla \varphi(x) \right] d\mu(x').
\]
This integral remains well-defined for \(\varphi \in C_c^2(\mbr^d)\) due to the cancellation near zero and the integrability of \(x'\) at infinity. The standard cutoff-based form \eqref{LK-decomp} and the global form are related by
\[
\mA\varphi(x)= -\kappa' \cdot \nabla \varphi(x) + \int_{\mbr^d} \left[ \varphi(x + x') - \varphi(x) - \chi_{B_1(0)}(x')x' \cdot \nabla \varphi(x) \right] d\mu(x'),
\]
so switching to the global form simply absorbs the tail drift \(\kappa'\) into the total drift.

While the global form is less common in the literature, it proves useful when studying couplings of processes or generators (see, e.g., \cite{Kolokoltsov2010LevyKhintchine}), 
as it simplifies both computations and comparisons. 
For notational consistency—especially in our later treatment of coupling generators—we adopt this form throughout the paper.

\begin{convention}
	Given a L\'evy triplet \((\ka,\alpha, \mu)\) with \(\mu \in \Lambda_p(\mbr^d),p\ge 1\), we denote the associated generator by
	\begin{align}\label{La-gen-def}
		\mA = \Lag(\ka,\alpha,\mu),
	\end{align}
	defined for \(\varphi \in C_0^2(\mbr^d)\) by the \emph{global (untruncated) form} (in contrast to the \emph{local (truncated) form} in \eqref{LK-decomp}):
	\begin{align}
		(\mA \varphi)(x)
		&:= \mA^\nabla \varphi +\mA^\De \varphi +  \mA^J \varphi \nonumber\\
		&:= \kappa \cdot \nabla \varphi + \f 12\tr[\alpha D^2 \varphi] + \int_{\mbr^d \setminus \{0\}} \left[ \varphi(x + x') - \varphi(x) - x' \cdot \nabla \varphi(x) \right] d\mu(x'). \label{LK-rep2}
	\end{align}
	This form is well-defined under \eqref{pmom-levy} and will be used as the default representation throughout the paper. Unless otherwise stated, all L\'evy measures \(\mu\) considered in this paper are assumed to have finite second moment \eqref{levy-2moment},
	i.e., \(\mu \in \Lambda_2(\mbr^d)\).
\end{convention}

\subsubsection{Mean vectors and covariance matrices of L\'evy processes}

Given a L\'evy process \( \{X_t\}_{t \geq 0} \) on \(\mathbb{R}^d\) with generator \( \mathcal{A} \), suppose that the process---equivalently, the L\'evy measure \( \mu \) restricted on $B_1(0)^c$---admits a finite first moment. Then there exists a vector \( m_{\mathcal{A}} \in \mathbb{R}^d \), called the \emph{mean vector} of the process (or of the generator \( \mathcal{A} \)), such that the expectation of the process starting from the origin satisfies
\[
e^{t\mathcal{A}}[x](0) = \mathbb{E}^0[X_t] = t m_{\mathcal{A}}.
\]
Here, the expression \( e^{t\mathcal{A}}[x] \) denotes the action of the semigroup \( e^{t\mathcal{A}} \) on the coordinate function \( x = (x_1, x_2, \ldots, x_d) \), applied componentwise.
Taking the time derivative at \( t = 0 \), we obtain the identity
\begin{align}\label{mean-vec}
	m_{\mathcal{A}} = \mathcal{A}[x](0)=\lim_{t\searrow 0} \frac{e^{t\mA}[x](0)}{t} ,
\end{align}
where \( \mathcal{A}[x] \) is interpreted componentwise.
Alternatively, for any vector \( \theta \in \mathbb{R}^d \), define the linear functional \(\vphi:\mbr^d\to\mbr, \vphi(x) := x^\top \ta\). Then $\vphi\in \bar D(\mA)$, and the generator acts on \( \vphi \) as
\[
(\mathcal{A}\vphi)(x) = \ta^\top(x+tm_\mA). 
\]

The mean vector \( m_{\mathcal{A}} \) can be explicitly expressed in terms of the L\'evy triplet \( (\ka, \alpha, \mu) \), but its precise form depends on the choice of cutoff function used in the definition of the jump operator. For example, under the standard (localized) representation \eqref{LK-decomp}, which uses the cutoff function \( \chi_{B_1(0)} \), the mean vector is given by
\[
m_{\mathcal{A}} = \kappa + \int_{B_1(0)^c} x'\, d\mu(x').
\]
In contrast, for the global (untruncated) form \eqref{LK-rep2}, where the compensation term is applied over the entire domain \( \mathbb{R}^d \setminus \{0\} \), the mean vector reduces to
\[
m_{\mathcal{A}} = \kappa.
\]
Both expressions follow directly from the general identity \eqref{mean-vec}.

The second-order structure of a L\'evy process can be described in terms of its \emph{covariance matrix}.  
Assume that the process admits a finite second moment, equivalently, the L\'evy measure \( \mu\in \La_2(\mbr^d) \) or the L\'evy generator $\mA\in\mcg_2^\La(\mbr^d)$.  
Then there exists a symmetric nonnegative definite matrix \( Q_{\mathcal{A}}  \), called the \emph{covariance matrix} of the process (or of the generator \( \mathcal{A} \)), such that the centered second moment satisfies
\[
\mathrm{Cov}^0(X_t) = \mathbb{E}^0\!\left[(X_t - t m_{\mathcal{A}})(X_t - t m_{\mathcal{A}})^\top\right] = t\, Q_{\mathcal{A}}.
\]
Equivalently, for any quadratic function $\vphi(x)=x^\top Ux$, where $U\in M(\mbr^d)$ is a $(d\times d)$-matrix, we have $\vphi\in \bar D(\mA)$, and
\begin{align*}
	(\mA\vphi)(x) = \tr(UQ_\mA)+ 2x^\top U m_\mA. 
\end{align*}
Particularly the following identity holds:
\begin{align*}
	(e^{t\mA}\vphi)(x)&= (x+tm_\mA)^\top U (x+tm_\mA)+t\,\tr(UQ_\mA). 
\end{align*}

In terms of the L\'evy triplet \( (\kappa, \alpha, \mu) \), the covariance matrix takes the explicit form
\[
Q_{\mathcal{A}} = \alpha \;+\; \int_{\mathbb{R}^d} x x^\top \, d\mu(x),
\]
when expressed in the global representation \eqref{LK-rep2}.  
Under the localized representation \eqref{LK-decomp}, the integral is restricted to \(B_1(0)\), so that
\[
Q_{\mathcal{A}} = \alpha \;+\; \int_{B_1(0)} x x^\top \, d\mu(x).
\]
As in the case of the mean vector, the precise decomposition depends on the chosen cutoff function, but the covariance matrix itself is intrinsic to the process.

\subsection{Optimal transport between Markov flows}\label{sec2.3}
Let us return to the abstract general setting. 
Let $\Pi$ be a state space equipped with a cost function \( c: \Pi^2 \to [0, \infty) \). 
Given two probability measures \( \mu, \nu \in \mathcal{P}(\Pi) \), we denote by \( \Gamma(\mu, \nu) \) the set of all \emph{couplings} of \( \mu \) and \( \nu \), that is, the set of probability measures \( \gamma \in \mathcal{P}(\Pi^2) \) such that
\begin{align*}
	\gamma(E \times \Pi) = \mu(E), \qquad \gamma(\Pi \times E) = \nu(E), \qquad \text{for all measurable } E \subset \Pi.
\end{align*}
The \emph{\( c \)-optimal transport cost} between \( \mu \) and \( \nu \) is defined as
\begin{align}\label{c-cost-def}
	\mathcal{C}_c(\mu, \nu) := \inf_{\gamma \in \Gamma(\mu, \nu)} \int_{\Pi^2} c(x, y) \, d\gamma(x, y).
\end{align}
Any coupling \( \gamma \in \Gamma(\mu, \nu) \) that attains this infimum is called a \emph{\( c \)-optimal coupling}.  
The \emph{\( c \)-optimal transport problem} asks whether such an optimal coupling exists. Existence is guaranteed under mild assumptions—for example, if \( c \) is lower semicontinuous.%
\footnote{A finite \( c \)-moment condition on \( \mu \) and \( \nu \) is often imposed to ensure that \( \mathcal{C}_c(\mu, \nu) < \infty \).}

The \emph{Kantorovich dual formulation} of \eqref{c-cost-def} is given by
\begin{align}\label{K-dual1}
	\sup \left[ \int_\Pi \varphi(x)\,d\mu(x) + \int_\Pi \psi(y)\,d\nu(y) \right],
\end{align}
where the supremum is taken over all continuous functions \( \varphi, \psi : \Pi \to \mathbb{R} \) satisfying the pointwise inequality
\[
	\varphi(x) + \psi(y) \le c(x, y) \quad \text{for all } x, y \in \Pi.
\]
The Kantorovich duality theorem asserts that, under appropriate conditions on \( c \), \( \mu \), and \( \nu \), the infimum in \eqref{c-cost-def} coincides with the supremum in \eqref{K-dual}.

In many applications, a central question is how the \( c \)-optimal transport cost between two probability measures evolves under the action of Markov flows. Let \( \mathcal{A}, \mathcal{B} \in \mathcal{G}(\Pi) \) be Feller generators on \( \Pi \), and $\mu,\nu$ be two probability measures. One then studies the evolution of the function
\[
	t \mapsto \rho(t)=\rho(t;\mu,\nu) := \mathcal{C}_c(\mu e^{t\mathcal{A}}, \nu e^{t\mathcal{B}}),
\]
which tracks the \( c \)-optimal transport cost between the time-evolved measures \( \mu e^{t\mathcal{A}} \) and \( \nu e^{t\mathcal{B}} \). A key goal is to bound the growth of \( \rho(t) \) in terms of the generators \( \mathcal{A}, \mathcal{B} \) and the initial measures \( \mu, \nu \). Leveraging the semigroup property of the flows, such bounds can often be derived by analyzing the derivative of \( \rho(t;\mu,\nu) \) at time \( t = 0 \) for $\mu=\de_x,\nu=\de_y,x,y\in\Pi$. This motivates the following definition. 

\begin{definition}[Pointwise transport derivatives between Markov flows]\label{diff-opt-def}
	Let $\mA,\mB \in \mcg(\Pi)$.  
	The \emph{pointwise upper transport derivative} is the map 
	\begin{align}\label{om-def}
		\omega_c^+(x,y;\mA,\mB) := 
		\limsup_{t \searrow 0} \frac{\mcc_c(\delta_x e^{t\mA},\, \delta_y e^{t\mB}) - c(x,y)}{t},
		\qquad (x,y)\in\Pi^2.	
	\end{align}
	Similarly, the \emph{pointwise lower transport derivative} is given by 
	\[
	\omega_c^-(x,y;\mA,\mB) := 
	\liminf_{t \searrow 0} \frac{\mcc_c(\delta_x e^{t\mA},\, \delta_y e^{t\mB}) - c(x,y)}{t},\qquad (x,y)\in \Pi^2.
	\]
\end{definition}

\begin{remark}\label{rem:om-bdd}
	By definition, one always has $\om_c^-(x,y;\mA,\mB) \le \om_c^+(x,y;\mA,\mB)$.  
	A natural question is whether these two bounds coincide, that is, whether the map 
	\(
	t \mapsto \mcc_c(\de_x e^{t\mA}, \de_y e^{t\mB})
	\)
	is differentiable at $t=0$. This is related to the \emph{optimality gap problem}. See Problem \ref{prob:ogp}. 
\end{remark}

The notion of the \emph{pointwise transport derivative} provides a local-in-time measure of how the optimal transport cost evolves under the action of two Markov semigroups. It was first appeared in \cite{alfonsi2018} (in the case $\Pi=\mbr^d$ and $c(x,y)=|x-y|^p$) and further developed in \cite{LimTeoh2025}, where it is used to study the stability of optimal transport costs along Markov flows.
In particular, it is shown that if there exist constants \( \alpha, \beta > 0 \) such that
\[
    \omega^+_c(x, y; \mathcal{A}, \mathcal{B}) \le \alpha + \beta c(x, y) \quad \text{for all } x, y \in \Pi,
\]
then the following exponential stability estimate holds: for any pair of probability measures \( \mu, \nu \in \mathcal{P}(\Pi) \) with finite \( c \)-moment,
\[
    \mathcal{C}_c(\mu e^{t\mathcal{A}}, \nu e^{t\mathcal{B}}) \le \mathcal{C}_c(\mu, \nu) e^{\beta t} + \frac{\alpha}{\beta}(e^{\beta t} - 1).
\]

To estimate or control \( \omega_c^\pm \) in concrete applications, it is often convenient to consider its dual formulation. This leads to the definition of the \emph{dual transport derivative}, defined as follows:

\begin{definition}[Dual transport derivative]\label{diff-dual-def}
	Given \( \mathcal{A}, \mathcal{B} \in \mathcal{G}(\Pi) \) and points \( x_0, y_0 \in \Pi \), define the \emph{dual value} of \eqref{om-def} by
	\begin{align}\label{om-dual-def}
		\omega_c^*(x_0, y_0; \mathcal{A}, \mathcal{B}) := \sup_{(\varphi, \psi)\in \mcl{D}(x_0,y_0)} \left[ \mathcal{A} \varphi(x_0) + \mathcal{B} \psi(y_0) \right],
	\end{align}
	where $\mcl{D}(x_0,y_0)$ is the set of all pairs of functions $\varphi,\psi:\Pi\to\mbr$ such that:
	\begin{itemize}
		\item \( \varphi \in \bar D(\mathcal{A}) \), \( \psi \in \bar D(\mathcal{B}) \);
		\item $(x,y)\mapsto c(x,y)-\varphi(x)-\psi(y)$ achieves a global minimum at $(x_0,y_0)$. 
	\end{itemize}
\end{definition}

\begin{remark}\label{rem:supdom}
	Since both $\mA$ and $\mB$ annihilate constants, the functional
	\[
	(\vphi,\psi)\mapsto (\mA\vphi)(x_0) + (\mB\psi)(y_0)
	\] 
	is invariant under constant shifts of the form $\vphi \mapsto \vphi + b$ and $\psi \mapsto \psi + b'$ with $b,b'\in\mbr$. 
	Consequently, by adding suitable constants, we may assume without loss of generality that $(\vphi,\psi) \in \mcl{D}(x_0,y_0)$ in \eqref{om-dual-def} satisfies the \emph{touching condition}
	\begin{align*}
		\varphi(x) + \psi(y) \,\le\, c(x,y),
		\quad \text{with equality at } (x_0,y_0).
	\end{align*}
\end{remark}

\begin{remark}
	In Definition~\ref{diff-dual-def}, the choice of domains $\bar D(\mA)$ and $\bar D(\mB)$ is not essential. 
	One may instead work with smaller domains such as $D(\mA),D(\mB)$ (as in \cite{LimTeoh2025}), or with other cores or dense subspaces. 
	For example, when $\Pi=\mbr^d$, a natural choice is the space $C^2_2(\mbr^d)$, which is the main domain adopted in the present work. 
	In many situations these domains are dense in one another, so the value of the supremum in \eqref{om-dual-def} remains unchanged.
\end{remark}

The notion of dual transport derivative was also first explored in \cite{alfonsi2018}, and further investigated in \cite{LimTeoh2025}. 
As hinted in the definition, the notion is inspired by the theory of \emph{viscosity solutions}. In particular, the relationship between \eqref{om-dual-def} and \eqref{om-def} closely parallels the classical duality between the Kantorovich formulation \eqref{K-dual1} and the primal optimal transport problem \eqref{c-cost-def}. Just as Kantorovich duality equates the infimum over couplings with a supremum over admissible potentials, the dual formulation \eqref{om-dual-def} offers a variational perspective on the infinitesimal growth of the transport cost in \eqref{om-def}. This analogy suggests that, under suitable conditions, the primal and dual transport derivatives coincide: \( \omega_c^* = \omega_c^\pm \). Determining whether this equality holds is precisely the content of the optimality gap problem, Problem~\ref{prob:ogp}, which will also be addressed in the present work.


\subsection{Optimal Markovian couplings}\label{sec2.4}

In this subsection, we formulate our first main problem—the optimal coupling problem between two Feller processes, and equivalently, between their generators. To set the stage, we first introduce the notion of \emph{couplings between processes and generators}.

\subsubsection{Feller couplings of generators}
Let \( \{X_t\}_{t \ge 0} \) and \( \{Y_t\}_{t \ge 0} \) be two \( \Pi \)-valued stochastic processes defined on a common state space \( \Pi \), and let \( \{\mu_t\}_{t \ge 0} \) and \( \{\nu_t\}_{t \ge 0} \) denote their respective laws:
\[
    \mu_t = \mathrm{law}(X_t), \qquad \nu_t = \mathrm{law}(Y_t).
\]
A \emph{pointwise (in time) coupling} of the two processes is a family \( \{\gamma_t\}_{t \ge 0} \) of probability measures on \( \Pi^2 \) such that
\[
    \gamma_t \in \Gamma(\mu_t, \nu_t) \qquad \text{for all } t \ge 0.
\]
This notion provides a coupling between the marginal distributions at each fixed time, but it does not encode any joint temporal behavior of the processes. In particular, the family \( \{\gamma_t\}_{t \ge 0} \) is not required to arise from the law of a stochastic process on \( \Pi^2 \), and thus lacks the dynamical structure needed to study time-dependent interactions between \( X_t \) and \( Y_t \).

To overcome this limitation, one often considers a \emph{(general) coupling of stochastic processes}, defined as a \( \Pi^2 \)-valued stochastic process \( \{Z_t = (X_t', Y_t')\}_{t \ge 0} \) such that for all \( t \ge 0 \), the marginals of \( Z_t \) agree with those of the original processes:
\begin{align}\label{stoc-marginal}
    \mathrm{law}(X_t') = \mathrm{law}(X_t), \qquad \mathrm{law}(Y_t') = \mathrm{law}(Y_t).
\end{align}
If both \( \{X_t\} \) and \( \{Y_t\} \) are Feller (or more generally, Markov) processes, it is natural to restrict attention to couplings \( \{Z_t\}_{t \ge 0} \) that also preserve the Markovian structure. That is, we seek couplings that themselves form a Markov process on \( \Pi^2 \). In the Feller setting, such processes are fully characterized by their generators, which leads to the following definition of \emph{Feller couplings}. Recall the definition of continuity domain $C(\mA)$ and extended domain $\bar D(\mA)$ of a generator $\mA$ from Definition \ref{def:extdom}.

\begin{definition}[Markovian/Feller couplings]\label{def:feller-couple}
	Let \( \mathcal{A}, \mathcal{B} \in \mathcal{G}(\Pi) \) be two Feller generators. 
	
	(i) A Markov semigroup $\{T_t\}_{t\ge 0}$ is a \emph{Markovian coupling semigroup} of $\mA,\mB$ if it holds for all $\vphi\in C(\mA),\psi\in C(\mB)$ and $t\ge 0$ 
	\begin{align*}
		T_t[\vphi\otimes 1]= e^{t\mA\vphi}\otimes 1,\qquad T_t[1\otimes\psi]= 1\otimes e^{t\mB\psi}. 
	\end{align*}
	Here, \( \varphi \otimes 1 \) and \( 1 \otimes \psi \) denote the functions on \( \Pi^2 \) defined by
	\[
	(\varphi \otimes 1)(x, y) := \varphi(x), \qquad (1 \otimes \psi)(x, y) := \psi(y).
	\]
	
	(ii) 
	A \emph{Feller coupling generator} of $\mA,\mB$ is a Feller generator \( \mathcal{J} \in \mathcal{G}(\Pi^2) \) such that $\{e^{t\mJ}\}_{t\ge0}$ is a Markovian coupling semigroup of $\mA,\mB$. That is, 
	for all \( \varphi \in C(\mA),\psi\in C(\mB) \) and all \( t \ge 0 \), we have
	\[
	e^{t\mathcal{J}}[\varphi \otimes 1] = e^{t\mathcal{A}} \varphi \otimes 1, \qquad
	e^{t\mathcal{J}}[1 \otimes \psi] = 1 \otimes e^{t\mathcal{B}} \psi.
	\]
	We denote by \( \Gamma(\mathcal{A}, \mathcal{B}) \) the set of all Feller coupling generators of \( \mathcal{A} \) and \( \mathcal{B} \). 
	
\end{definition}

\begin{remark}
	This definition coincides with that in \cite{NuskenPavliotis2019}. The notion of a coupling generator can be extended beyond the Feller setting by replacing ``Feller generators'' with their general \emph{Markovian} counterparts. Furthermore, a broader concept of coupling—formulated in terms of martingale problems—is also available, e.g., see \cite{KendallMajkaMijatovic2024}. In the present work, however, we will remain within the Feller framework.
\end{remark}

\begin{remark}
	Given two Feller generators $\mathcal{A}, \mathcal{B}$, the set of couplings $\Gamma(\mathcal{A}, \mathcal{B})$ is always nonempty. For instance, the operator
	\[
	\mathcal{J} := \mathcal{A} \otimes I + I \otimes \mathcal{B}
	\]
	belongs to $\Gamma(\mathcal{A}, \mathcal{B})$ and corresponds to the generator of the \emph{independent coupling} of the processes generated by $\mathcal{A}$ and $\mathcal{B}$.
\end{remark}

We now present an equivalent characterization of Feller coupling generators, stated as a proposition below. The proof is elementary and will be given in Appendix~\ref{Appen-Coupling}.

\begin{proposition}\label{prop:coup-char}
	Let $\mA,\mB\in\mcg(\Pi)$ and $\mJ\in \mcg(\Pi^2)$. The following are equivalent.
	
	(i) $\mcl{J}$ is a Feller coupling generator of $\mA,\mB$;
	
	(ii) for all $\varphi\in \bar D(\mA),\psi\in \bar D(\mB)$, it holds $\varphi\otimes 1 ,1\otimes \psi\in \bar D(\mJ)$ and
	\begin{align*}
		\mJ[\varphi\otimes 1]=\mA\varphi\otimes 1,\quad \mJ[1\otimes \psi]=1\otimes \mB\psi;
	\end{align*}	
	
	(iii) for all $\vphi\in D(\mA),\psi\in D(\mB)$, it holds $\vphi\otimes 1 ,1\otimes \vphi\in \bar D(\mJ)$, and the marginal condition from (ii) holds. 
\end{proposition}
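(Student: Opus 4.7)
The plan is to establish the cycle (i) $\Rightarrow$ (ii) $\Rightarrow$ (iii) $\Rightarrow$ (i). The implication (ii) $\Rightarrow$ (iii) is immediate from $D(\mA)\subset\bar{D}(\mA)$ and $D(\mB)\subset\bar{D}(\mB)$. For (i) $\Rightarrow$ (ii), I take $\varphi\in\bar{D}(\mA)$ and note that the hypothesis $e^{t\mJ}[\varphi\otimes 1]=(e^{t\mA}\varphi)\otimes 1$ transfers the $\bar{D}(\mA)$-regularity of $\varphi$---the continuity of $t\mapsto e^{t\mA}\varphi$ locally uniformly on $\Pi$ together with the existence of the right derivative $e^{t\mA}\mA\varphi\in C(\mA)$---verbatim to $\Pi^2$, yielding $\varphi\otimes 1\in\bar{D}(\mJ)$ with $\mJ[\varphi\otimes 1]=\mA\varphi\otimes 1$; the argument for $1\otimes\psi$ is symmetric.

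The main direction is (iii) $\Rightarrow$ (i). First I establish the marginal identity on $D(\mA)$: fix $\varphi\in D(\mA)$ and $t>0$, and introduce
\[
G(s) := e^{(t-s)\mJ}\bigl[(e^{s\mA}\varphi)\otimes 1\bigr], \qquad s\in[0,t],
\]
so that $G(0)=e^{t\mJ}[\varphi\otimes 1]$ and $G(t)=(e^{t\mA}\varphi)\otimes 1$. Because $D(\mA)$ is invariant under the Feller semigroup $\{e^{s\mA}\}_{s\ge 0}$, we have $e^{s\mA}\varphi\in D(\mA)$ for every $s$, so hypothesis (iii) gives $\mJ\bigl[(e^{s\mA}\varphi)\otimes 1\bigr]=(e^{s\mA}\mA\varphi)\otimes 1$. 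Combining this with the standard semigroup differentiation identity $\partial_r e^{r\mJ}h=e^{r\mJ}\mJ h$ valid on $\bar{D}(\mJ)$, a short chain-rule computation gives $\partial_s^+ G(s)\equiv 0$ in the locally uniform sense on $\Pi^2$. Reducing pointwise in $(x,y)\in\Pi^2$ to the real-variable fact that a continuous function on $[0,t]$ with vanishing right Dini derivative must be constant, I conclude $G\equiv G(0)$, so $e^{t\mJ}[\varphi\otimes 1]=(e^{t\mA}\varphi)\otimes 1$ for every $\varphi\in D(\mA)$.

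To upgrade this identity to the full continuity domain $C(\mA)$ as required by (i), I pass to the transition-kernel representations $\ka_t^\mJ((x,y),\cdot)$ and $\ka_t^\mA(x,\cdot)$ of the two semigroups. Since $D(\mA)$ is dense in $C_0(\Pi)$ and $C_0(\Pi)$ is measure-determining on the Polish space $\Pi$, the previous step forces the first marginal of $\ka_t^\mJ((x,y),\cdot)$ to coincide with $\ka_t^\mA(x,\cdot)$. Integrating any $\varphi\in C(\mA)$ against both kernels then yields $e^{t\mJ}[\varphi\otimes 1](x,y)=e^{t\mA}\varphi(x)$, and the argument for $1\otimes\psi$ is entirely analogous. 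I expect the main technical obstacle to be the rigorous handling of the semigroup commutation and the \emph{zero right derivative implies constant} step within the locally uniform topology intrinsic to $\bar{D}(\mJ)$; reducing everything to pointwise scalar Dini arguments in $(x,y)$, as sketched above, sidesteps the need for any full Banach-valued calculus.
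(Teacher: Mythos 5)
Your proof is correct, but it takes a genuinely different route from the paper's. The paper's Appendix~\ref{Appen-Coupling} proves an extended version of this proposition with six equivalent conditions, organised as a cycle that passes through the resolvent characterisation: after establishing the generator-level marginal identity on $D(\mA),D(\mB)$, the paper shows the \emph{resolvent} marginal identity $R_\la(\mJ)[\vph\otimes 1]=(R_\la(\mA)\vph)\otimes 1$ by algebraic manipulation, and then recovers the semigroup marginal identity from the resolvent via a Kato--Trotter type approximation (Lemma~\ref{lem:tech}), extended from $C_0(\Pi)$ to $C_b(\Pi)$ by a monotone-truncation argument. Your approach replaces the entire resolvent/Kato--Trotter leg with a Duhamel-type transmutation: showing that $s\mapsto G(s)=e^{(t-s)\mJ}[(e^{s\mA}\vph)\otimes 1]$ has vanishing right derivative and hence is constant, then passing from the resulting identity on $D(\mA)$ to $C(\mA)$ by a transition-kernel/measure-determining argument. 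What each buys: your route avoids resolvent calculus entirely and the $C_0\to C_b$ extension machinery, since the kernel-marginal step hands you the identity on all of $C(\mA)$ at once; the paper's route, while heavier, establishes the useful resolvent characterisation (item (vi) of the appendix proposition) as a by-product. One point you flagged and should not leave implicit when writing this out: when you apply the ``semigroup differentiation identity'' in computing $\partial_s^+G$, the term $\bigl(e^{(t-s-h)\mJ}-e^{(t-s)\mJ}\bigr)\Phi_s/h$ is a \emph{left} difference quotient of $r\mapsto e^{r\mJ}\Phi_s$ at $r=t-s$, whereas Definition~\ref{def:extdom} only provides the right derivative; the left derivative exists because the right derivative $e^{r\mJ}\mJ\Phi_s$ is continuous in $r$ and the integral identity \eqref{eq:int-eq} then upgrades the trajectory to $C^1$. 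With that supplied, the argument is complete.
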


\subsubsection{Optimal Markovian couplings}
Let \( \{X_t\} \) and \( \{Y_t\} \) be Feller processes on a Polish space \( (\Pi, c) \), with generators \( \mathcal{A} \) and \( \mathcal{B} \), respectively. We are interested in constructing a coupling of these processes that is \emph{optimal} with respect to the transport cost \( c \).
A natural approach is to consider a family of \emph{pointwise} couplings \( \{\gamma_t\}_{t \ge 0} \), where each \( \gamma_t \in \mathcal{P}(\Pi^2) \) is a \( c \)-optimal coupling of the marginals \( \mu_t = \mathrm{law}(X_t) \) and \( \nu_t = \mathrm{law}(Y_t) \). While this yields the minimal transport cost at each time, as mentioned earlier it lacks temporal structure for stochastic processes: there is no guarantee that \( \{\gamma_t\}_{t \ge 0} \) corresponds to the time marginals of any joint process \( \{(X_t', Y_t')\}_{t \ge 0} \) on \( \Pi^2 \).

To incorporate temporal dynamics, one instead seeks a coupling at the level of Markov processes: a \( \Pi^2 \)-valued Markov/Feller process \( \{Z_t^* = (X_t^*, Y_t^*)\}_{t \ge 0} \) whose marginals coincide with those of \( \{X_t\} \) and \( \{Y_t\} \). The aim is to construct such a process-level (Markovian) coupling that satisfies an optimality criterion with respect to the cost function \( c \), for example:
\[
    \mathbb{E}^{(x,y)}[c(X_t^*, Y_t^*)] = \inf_{\{(X_t', Y_t')\}} \mathbb{E}^{(x,y)}[c(X_t', Y_t')],
    \qquad \text{for all } (t,x,y) \in [0,\infty) \times \Pi^2,
\]
where the infimum is taken over all Markovian/Feller couplings \( \{(X_t', Y_t')\}_{t \ge 0} \) of \( \{X_t\} \) and \( \{Y_t\} \). 
Alternatively, in the semigroup formulation, we are to identify the Feller generator $\mJ_*\in\Ga(\mA,\mB)$ such that the following holds for all $\mJ\in\Ga(\mA,\mB)$
\begin{align}\label{bdd:glob-opt}
	e^{t\mJ_*}c(x,y)\le  e^{t\mJ}c(x,y),\qquad \forall (t,x,y)\in[0,\infty)\times \Pi^2.
\end{align}

However, this global-in-time optimality may be too strong: the infimum need not be attained or even well-defined at each time \( t \ge 0 \). To circumvent this, one instead considers couplings that minimize the \emph{initial rate of change} of the cost—that is, the time derivative of \( \mathbb{E}[c(X_t', Y_t')] \) at \( t = 0 \):
\begin{align*}
	\drv{}{t} \Big|_{t=0}\mbe^{(x,y)}[c(X_t',Y_t')]=\drv{}{t} \Big|_{t=0} (e^{t\mJ} c)(x,y)= (\mJ c)(x,y),
\end{align*} 
where $\mJ$ is the generator of the coupling process $\{(X'_t,Y'_t)\}$, which is a Feller coupling generator of $\mA,\mB$. 
This leads to the following optimality condition, as formulated by Chen in \cite{Chen1994}:

\begin{definition}[$c$-optimal coupling generator and Markovian transport derivative]\label{c-opt-gen}
Let $\Pi$ be a state space, $c:\Pi^2\to[0,\infty)$ be a cost function, 
and let \( \mathcal{A}, \mathcal{B} \in \mathcal{G}(\Pi) \) be two Feller generators on \( \Pi \). The \emph{Markovian transport derivative} is the function \( \ta_c(\cdot, \cdot; \mathcal{A}, \mathcal{B}) : \Pi^2 \to [-\infty, \infty] \) defined by
\begin{align}\label{ta-def}
    \ta_c(x, y; \mathcal{A}, \mathcal{B}) := \inf_{\mathcal{J} \in \Gamma(\mathcal{A}, \mathcal{B})} (\mathcal{J}^- c)(x, y),
\end{align}
where $\mJ^-$ is the lower generator given in Definition \ref{def:lower-gen}. We say that \( \mathcal{J}_* \in \Gamma(\mathcal{A}, \mathcal{B}) \) is a \emph{\( c \)-optimal coupling} of \( \mathcal{A} \) and \( \mathcal{B} \) if $c\in \bar D(\mJ_*)$ and it holds for all $(x,y)\in\mbr^{2d}$:
\begin{align}\label{inf-attain}
    (\mathcal{J}_* c)(x, y) = \ta_c(x, y; \mathcal{A}, \mathcal{B}).
\end{align}
\end{definition}

\begin{remark}
	A mild technical requirement in the definition of $\ta_c$ is that $e^{t\mJ}c$ be well defined for small times $t\ge 0$.  
	This is guaranteed, for example, if $c$ admits a subadditive bound of the form
	\begin{align}\label{eq:mildcd}
	c(x,y)\;\le\; \Phi(x)+\Psi(y), \qquad \Phi\in C(\mA),\;\Psi\in C(\mB),	
	\end{align}
	where $C(\mA)$ and $C(\mB)$ denote the continuity domains of $\mA$ and $\mB$. 
\end{remark}

\begin{remark}
	In \eqref{ta-def} we use the lower generator $\mJ^-$ to avoid the domain issue of whether $c$ belongs to $\bar D(\mJ)$.  
	In the jump-process setting of \cite{Chen1994}, the situation is simpler: $\mA$ and $\mB$ are bounded Markov jump operators with jump kernels $\{\lambda(x,dx')\}_{x\in\Pi}$, and under mild assumptions, e.g., \eqref{eq:mildcd}, the cost $c$ always lies in the extended domain of every coupling $\mJ\in\Gamma(\mA,\mB)$.  
	There the infimum in \eqref{ta-def} is taken over all Markovian couplings (not necessarily Feller), and the generator itself can be used directly.  
	For unbounded generators, however, $c$ may fail to belong to $\bar D(\mJ)$, so the lower generator $\mJ^-$ provides a robust formulation.
\end{remark}

\begin{remark}\label{rem:2.16}
	An alternative notion of the Markovian transport derivative, denoted by $\tilde{\ta}_c$, can be introduced in integral form, motivated by the semigroup identity \eqref{eq:int-eq}. 
	Specifically, $\tilde{\ta}_c$ is defined as the largest function $\ta:\Pi^2 \to [-\infty,\infty]$ such that for every $t \ge 0$, $(x,y)\in\Pi^2$, and every coupling $\mJ\in\Gamma(\mA,\mB)$,
	\[
	(e^{t\mJ}c)(x,y) \;\;\ge\;\; c(x,y) + \int_0^t (e^{s\mJ}\ta)(x,y)\,ds.
	\]
	Letting $\mathcal{H}$ denote the collection of all such admissible functions $\ta$, one has $\tilde{\ta}_c = \sup_{\ta\in\mathcal{H}}\ta$ pointwise. 
	It is immediate from the definitions that $\ta_c \le \tilde{\ta}_c$. 
	The main advantage of this integral formulation lies in its natural extension to general Markovian (not necessarily Feller) couplings, or even coupling processes. 
	On the other hand, it also introduces certain technical challenges, such as in establishing weak duality (cf. Proposition~\ref{om-ta-prop}). 
	In the L\'evy setting considered here, however, the two notions coincide.
	
\end{remark}

Given two Feller generators \( \mathcal{A}, \mathcal{B} \in \mathcal{G}(\Pi) \), the problem of determining whether there exists a $c$-optimal Markovian coupling, that is, \( \mathcal{J}_* \in \Gamma(\mathcal{A}, \mathcal{B}) \) that attains the infimum in \eqref{inf-attain}, is referred to as the \emph{\( c \)-optimal Markovian coupling problem of $\mA,\mB$}.

\begin{problem}[$c$-optimal Markovian coupling problem]\label{prob:ocp}
	Given $\mA, \mB \in \mcg(\Pi)$, does there exist a $c$-optimal coupling $\mJ_* \in \Ga(\mA, \mB)$ in the sense of Definition~\ref{c-opt-gen}?
\end{problem}

We emphasize that the above is a \emph{global optimization} problem: the quantity $(\mJ_* c_2)(x,y)$ is minimized simultaneously for all $(x,y) \in \Pi^2$, rather than at a fixed point.

\begin{remark}
	In the literature, the \emph{optimal Markovian coupling problem} is often formulated for general \emph{Markov generators}. In this work, we adopt the \emph{Feller generator} setting. This restriction is deliberate: it provides a natural topological structure on the state space, avoids certain technical difficulties arising in the absence of topology, and aligns with the functional analytic approach in our work. Consequently, all ``Markovian couplings'' considered in this paper are, in fact, \emph{Feller} couplings. We retain the term ``Markovian'' in the problem name for consistency with the established literature, but the analysis is carried out entirely within the Feller framework. 
\end{remark}

Before proceeding, we record a simple observation showing how the global-in-time optimality condition \eqref{bdd:glob-opt} immediately yields the pointwise (infinitesimal) optimality condition \eqref{ta-def} at the level of generators. We note it was also proved in \cite{Chen1994} for the case of bounded jump operators. 

\begin{proposition}\label{prop:glob-loc}
	Let $\mathcal{A}, \mathcal{B} \in \mathcal{G}(\Pi)$ and $\mathcal{J}, \mathcal{L} \in \Gamma(\mathcal{A}, \mathcal{B})$.  
	If
	\[
	(e^{t\mathcal{L}} c)(x,y) \le (e^{t\mathcal{J}} c)(x,y), \quad \forall\,x,y \in \Pi,\ \forall\, t \ge 0,
	\]
	then
	\[
	(\mathcal{L}^- c)(x,y) \le (\mathcal{J}^- c)(x,y), \quad \forall\,x,y \in \Pi.
	\]
	In particular, if $\mathcal{J}_*$ satisfies \eqref{bdd:glob-opt} for all $\mathcal{J} \in \Gamma(\mathcal{A}, \mathcal{B})$ and $c\in \bar D(\mJ)$, then $\mathcal{J}_*$ is a $c$-optimal coupling of $\mathcal{A}$ and $\mathcal{B}$.
\end{proposition}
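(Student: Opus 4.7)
The plan is to deduce the pointwise generator inequality directly from the semigroup inequality by a difference-quotient argument, using only the definition of the lower generator $\mJ^-$ in Definition~\ref{def:lower-gen}. The key observation is that the hypothesis, applied at $t = h > 0$ and subtracted by $c(x,y)$ on both sides, yields
\[
\frac{(e^{h\mathcal{L}} c)(x,y) - c(x,y)}{h} \;\le\; \frac{(e^{h\mathcal{J}} c)(x,y) - c(x,y)}{h}, \qquad \forall\, h > 0,\ (x,y)\in\Pi^2.
\]
This is a pointwise inequality in $h$, so I would then pass to the $\liminf$ as $h \searrow 0$ on both sides. Since $\liminf$ respects pointwise inequalities between real-valued functions (namely, $f \le g$ on $(0,\infty)$ implies $\liminf_{h\searrow 0} f(h) \le \liminf_{h\searrow 0} g(h)$), the definition of the lower generator gives exactly $(\mathcal{L}^- c)(x,y) \le (\mathcal{J}^- c)(x,y)$. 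This completes the first assertion, and all steps are essentially bookkeeping.

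For the ``in particular'' part, I would apply the already-proved first assertion with $\mathcal{L} = \mathcal{J}_*$ and $\mathcal{J}$ ranging over $\Gamma(\mathcal{A},\mathcal{B})$. This gives $(\mathcal{J}_*^- c)(x,y) \le (\mathcal{J}^- c)(x,y)$ for every $\mathcal{J}\in\Gamma(\mathcal{A},\mathcal{B})$, hence
\[
(\mathcal{J}_*^- c)(x,y) \;\le\; \inf_{\mathcal{J}\in\Gamma(\mathcal{A},\mathcal{B})} (\mathcal{J}^- c)(x,y) \;=\; \theta_c(x,y;\mathcal{A},\mathcal{B}).
\]
Since $c\in\bar D(\mathcal{J}_*)$, the remark following Definition~\ref{def:lower-gen} gives $\mathcal{J}_*^- c = \mathcal{J}_* c$, so $(\mathcal{J}_* c)(x,y) \le \theta_c(x,y;\mathcal{A},\mathcal{B})$. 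The reverse inequality is automatic because $\mathcal{J}_*\in\Gamma(\mathcal{A},\mathcal{B})$, so equality holds and $\mathcal{J}_*$ satisfies \eqref{inf-attain}, i.e., it is a $c$-optimal coupling.

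There is no genuine obstacle here; the argument is essentially tautological once one keeps track of the distinction between $\mathcal{J}$ and $\mathcal{J}^-$. The only subtle point worth flagging in the writeup is the hypothesis $c\in\bar D(\mathcal{J}_*)$, which is precisely what is needed to upgrade the lower-generator identity $\mathcal{J}_*^- c \le \theta_c$ to the generator identity $\mathcal{J}_* c = \theta_c$ demanded by Definition~\ref{c-opt-gen}. Without that regularity, one would only recover a one-sided inequality in the lower generator and not the stated optimality.
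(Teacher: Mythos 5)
Your argument is correct and is exactly the paper's proof: subtract $c(x,y)$, divide by $h>0$, take $\liminf$ as $h\searrow 0$, and invoke monotonicity of $\liminf$ and the definition of the lower generator. The "in particular" part, including the careful use of $c\in\bar D(\mathcal{J}_*)$ to upgrade $\mathcal{J}_*^-c$ to $\mathcal{J}_*c$, is also handled correctly and in the same spirit as the paper.
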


\begin{proof}
	Fix $x,y\in\Pi$ and $t>0$. Subtract $c(x,y)$ from both sides of the assumed inequality, divide by $t$, and let $t\searrow 0$. The semigroup definition of $\mJ$ and $\mcl{L}$ then yields the claim.
\end{proof}

\begin{remark}
	It is natural to ask whether the converse of Proposition~\ref{prop:glob-loc} holds: namely, if $\mathcal{J}_*$ is a $c$-optimal coupling of $\mathcal{A}$ and $\mathcal{B}$, does it necessarily satisfy the global minimality condition \eqref{bdd:glob-opt} for all $\mathcal{J} \in \Gamma(\mathcal{A}, \mathcal{B})$?  
	Preliminary considerations suggest that this implication is \emph{not} valid in general, and constructing an explicit counterexample would be of independent interest.
\end{remark}

\subsection{Comparison of transport derivative: pointwise, dual, and Markovian}\label{sec2.5}
Given two Feller generators \( \mathcal{A}, \mathcal{B} \in \mathcal{G}(\Pi) \), generating Feller processes \( \{X_t\} \) and \( \{Y_t\} \), we have introduced three notions of transport derivative associated with the pair: the \emph{pointwise} $\om_c^\pm$, the \emph{dual} $\om_c^*$, and the \emph{Markovian} transport derivatives $\ta_c$, see Definitions \ref{diff-opt-def}, \ref{diff-dual-def} and \ref{c-opt-gen}. 
Particularly, the pointwise version captures the (least upper bound) rate of change of the optimal transport cost between the marginal laws under \emph{pointwise couplings} of \( \{X_t\} \) and \( \{Y_t\} \). 
In contrast, the Markovian version measures the derivative of the cost under the most efficient \emph{Feller coupling} of the two processes. 
Since every Feller coupling induces a pointwise coupling, the pointwise infinitesimal transport cost provides a lower bound on the Markovian counterpart. Hence we have the following proposition, whose proof is presented in Appendix \ref{Appen-genresult}. 
\begin{proposition}\label{om-ta-prop}
Let \( \Pi \) be a state space, $c:\Pi^2\to\mbr^+$ be a cost function, and let \( \mathcal{A}, \mathcal{B} \in \mathcal{G}(\Pi) \). Let \( \omega_c^\pm,\om_c^* \) and \( \ta_c \) be the transport derivative functions defined by Definitions \ref{diff-opt-def}, \ref{diff-dual-def}, and \ref{c-opt-gen}, respectively. Then for all \( x, y \in \Pi \), it holds that
\[
\omega^*_c(x, y; \mathcal{A}, \mathcal{B})\le \omega_c^-(x, y; \mathcal{A}, \mathcal{B})\le \omega_c^+(x, y; \mathcal{A}, \mathcal{B}) \le \ta_c(x, y; \mathcal{A}, \mathcal{B}).
\]
\end{proposition}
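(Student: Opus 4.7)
The proposition contains three inequalities, which I would treat in order. The middle bound $\omega_c^-(x,y;\mA,\mB)\le\omega_c^+(x,y;\mA,\mB)$ is immediate from Definitions~\ref{diff-opt-def}, since $\liminf\le\limsup$ at every point, so nothing is required there.

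For the dual bound $\omega_c^*(x_0,y_0;\mA,\mB)\le\omega_c^-(x_0,y_0;\mA,\mB)$, the plan is to reproduce an infinitesimal version of the Kantorovich duality. Fix an admissible pair $(\vph,\psi)\in\mcl{D}(x_0,y_0)$; by Remark~\ref{rem:supdom} I may normalize so that $\vph(x)+\psi(y)\le c(x,y)$ everywhere with equality at $(x_0,y_0)$. For any $\gamma_t\in\Gamma(\delta_{x_0}e^{t\mA},\delta_{y_0}e^{t\mB})$, integrating the touching inequality against $\gamma_t$ and using its marginal constraints yields $e^{t\mA}\vph(x_0)+e^{t\mB}\psi(y_0)\le\int_{\Pi^2}c\,d\gamma_t$, and then taking the infimum over $\gamma_t$ gives $e^{t\mA}\vph(x_0)+e^{t\mB}\psi(y_0)\le\mcc_c(\delta_{x_0}e^{t\mA},\delta_{y_0}e^{t\mB})$. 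At $t=0$ both sides equal $c(x_0,y_0)=\vph(x_0)+\psi(y_0)$; subtracting, dividing by $t>0$, and taking $\liminf_{t\searrow 0}$ converts the left side to $\mA\vph(x_0)+\mB\psi(y_0)$ (using $\vph\in\bar D(\mA)$, $\psi\in\bar D(\mB)$ from Definition~\ref{diff-dual-def}) and the right to $\omega_c^-(x_0,y_0;\mA,\mB)$. Taking the supremum over admissible pairs closes the estimate.

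For the Markovian bound $\omega_c^+\le\ta_c$, I would fix any $\mJ\in\Gamma(\mA,\mB)$ and exploit the marginal compatibility from Definition~\ref{def:feller-couple}(ii) (or equivalently Proposition~\ref{prop:coup-char}). The measure $\delta_{(x,y)}e^{t\mJ}$ on $\Pi^2$ has marginals $\delta_xe^{t\mA}$ and $\delta_ye^{t\mB}$, so it is a competitor in the primal problem for $\mcc_c(\delta_xe^{t\mA},\delta_ye^{t\mB})$. Hence the pointwise-in-$t$ inequality $\mcc_c(\delta_xe^{t\mA},\delta_ye^{t\mB})\le (e^{t\mJ}c)(x,y)$ holds for all $t\ge 0$, with equality at $t=0$. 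Subtracting $c(x,y)$, dividing by $t>0$, and taking the appropriate limit will give $\omega_c^+(x,y;\mA,\mB)\le(\mJ^-c)(x,y)$ for each $\mJ$; taking $\inf_{\mJ}$ then yields $\omega_c^+\le\ta_c$.

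The main obstacle lies in this last limiting step, because of the asymmetry between the $\limsup$ defining $\omega_c^+$ and the $\liminf$ built into the lower generator $\mJ^-$. A naive order-preserving passage through $\limsup$ only produces $\omega_c^+\le\mJ^+c$, which in general is weaker than $\mJ^-c$. To sharpen it, I would either (a) invoke a mild regularity restriction, noting that couplings $\mJ$ with $c\in\bar D(\mJ)$ satisfy $\mJ^-c=\mJ^+c=\mJ c$ and that, by a density/approximation argument, the infimum over $\Gamma(\mA,\mB)$ is unchanged when restricted to such $\mJ$; or (b) argue via the equivalent integral formulation of $\ta_c$ hinted at in Remark~\ref{rem:2.16}, where the order passes cleanly through integration in time. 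In the L\'evy setting with quadratic cost $c_2$, every Feller coupling of L\'evy generators with finite second moment satisfies $c_2\in\bar D(\mJ)$, so the distinction collapses and the argument goes through without issue.
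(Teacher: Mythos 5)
Your handling of the first two inequalities matches the paper's proof exactly: the middle bound is definitional, and for $\omega_c^* \le \omega_c^-$ you integrate the touching inequality against an optimal plan for $\mcc_c(\delta_{x_0}e^{t\mA},\delta_{y_0}e^{t\mB})$, subtract $c(x_0,y_0)$, divide by $t$, and take $\liminf$; since $\varphi\in\bar D(\mA)$ and $\psi\in\bar D(\mB)$ the left-hand side has a genuine limit, so passing to $\liminf$ on the right is legitimate.

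Your suspicion about the third bound is well-founded, and in fact it exposes a gap in the paper's own proof. In Appendix~B.2 the paper writes, for a fixed $\mJ\in\Gamma(\mA,\mB)$,
\[
\omega_c^+(x,y) = \limsup_{t\searrow 0}\frac{\mcc_c(\delta_x e^{t\mA},\delta_y e^{t\mB})-c(x,y)}{t} \;\le\; \liminf_{t\searrow 0}\frac{(e^{t\mJ}c)(x,y)-c(x,y)}{t} = (\mJ^- c)(x,y),
\]
justified only by the pointwise bound $\mcc_c(\delta_x e^{t\mA},\delta_y e^{t\mB})\le (e^{t\mJ}c)(x,y)$. But pointwise $f(t)\le g(t)$ gives $\limsup f\le\limsup g$ and $\liminf f\le\liminf g$, not $\limsup f\le\liminf g$, so the argument as written only yields $\omega_c^+\le\mJ^+ c$ (and $\omega_c^-\le\mJ^- c$) — exactly the obstruction you flagged. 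Your repairs do not fully close the gap in the abstract generality in which Proposition~\ref{om-ta-prop} is stated: workaround (a) requires a density argument in $\Gamma(\mA,\mB)$ that is never supplied, and your claim that every Feller coupling of L\'evy generators satisfies $c_2\in\bar D(\mJ)$ overstates Proposition~\ref{prop:c2-reg} — part (ii) only gives $c_2\in C(\mJ)$ for a general Feller coupling $\mJ$, with $c_2\in\bar D(\mJ)$ reserved for $\mJ$ that is itself a L\'evy generator (part (iii)); workaround (b) changes the definition of $\ta_c$ rather than proving the stated inequality. Where the paper does handle this point carefully is Lemma~\ref{lem:dini-lbbd}, whose proof uses the two-sided a priori estimate of Proposition~\ref{prop:c2-reg}(i) and Fatou's lemma to pass the $\liminf$ through $T_t$, and Theorem~\ref{main2} eventually forces all four quantities to coincide in the L\'evy/quadratic setting, making the $\mJ^-$ versus $\mJ^+$ distinction moot there. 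In the abstract Proposition~\ref{om-ta-prop}, however, the inequality $\omega_c^+\le\ta_c$ is not established by the argument given.
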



Our second research problem addresses the potential gap between the pointwise and Markovian notions of transport derivative at the level of individual states. This leads to the following question, which we refer to as the \emph{optimality gap problem}.

\begin{problem}[Optimality gap problem]\label{prob:ogp}
	Given a state space $\Pi$, a cost function $c:\Pi^2\to\mbr^+$, and two Feller generators $\mathcal{A}, \mathcal{B} \in \mathcal{G}(\Pi)$, does the following identities hold:
	\[
	\omega_c^*(x, y; \mathcal{A}, \mathcal{B}) = \omega_c^-(x, y; \mathcal{A}, \mathcal{B})=\omega_c^+(x, y; \mathcal{A}, \mathcal{B}) = \theta_c(x, y; \mathcal{A}, \mathcal{B}), 
	\qquad \text{for all } x, y \in \Pi?
	\]
\end{problem}

\begin{remark}
	If the equality holds, then $\om_c^-=\om_c^+$ implies the limit \eqref{om-def}, i.e., the derivative of $t\mapsto \mcc_c(\de_x e^{t\mA},\de_y e^{t\mA})$ at $t=0$, exists. 
\end{remark}

\begin{remark}
	The equality in Problem~\ref{prob:ogp}, if it holds, has several noteworthy implications.  
	In particular, the identity \( \theta_c = \omega_c^\pm \) would mean that for any \(\varepsilon > 0\), one can construct a coupling process \(\{(X_t, Y_t)\}_{t \ge 0}\) between the processes generated by \(\mathcal{A}\) and \(\mathcal{B}\) such that the infinitesimal growth rate of the transportation cost \(\mathbb{E}^{(x,y)}[c(X_t, Y_t)]\) at \(t = 0\) is within \(\varepsilon\) of the optimal value given by \(\omega_c^\pm\), i.e., the derivative of the optimal transport cost between \(\delta_x e^{t \mathcal{A}}\) and \(\delta_y e^{t \mathcal{B}}\), which serves as the lower bound for \(c(X_t, Y_t)\) over all possible (generic) couplings.  
	Moreover, if a \(c\)-optimal coupling process exists, the equality would ensure that the instantaneous cost growth at the level of the coupling process coincides with the pointwise optimal value obtained from the dual formulation.  
	Such a property can be particularly relevant in the analysis of interacting particle systems, where the short-time behavior of couplings between generators plays a crucial role in quantifying convergence rates, stability, or propagation of chaos. 
\end{remark}

\subsection{Existing results for optimal Markovian coupling problem}\label{subsec-history-ocp}\label{sec2.6}
Let us briefly review the existing results for the optimal Markovian coupling problem. 

Prior to the development of the optimal Markovian coupling problem with transport-type cost criteria, coupling methods were extensively studied in the context of maximizing the probability that two processes meet by a fixed time. The study of such maximal couplings dates back at least to the pioneering works of Griffeath~\cite{Griffeath1975} and Goldstein~\cite{Goldstein1978}, who constructed maximal couplings for Markov chains and more general probability spaces, respectively. In the setting of continuous-time diffusions, Lindvall and Rogers~\cite{LindvallRogers1986} established that the reflection coupling of Brownian motion achieves this maximal meeting probability, thus providing an explicit optimal coupling construction. Related problems have also been investigated in the setting of general L\'evy processes; see \cite{SchillingWang2011, BoettcherSchillingWang2011, LiangSchillingWang2020}.

The idea of constructing Markovian couplings to bound the Wasserstein distance between the laws of two Markov processes was, to the best of our knowledge, first introduced by Chen and Li~\cite{ChenLi1989}.  
They showed that one can obtain estimates in the Wasserstein \(p\)-metric, \(p = 1, 2\), between the laws of two processes by constructing a Markovian coupling that satisfies certain appropriate bounds.  
Furthermore, they established an optimality criterion and proved the existence of such couplings for the case \(p=2\), which includes the drift–diffusion setting considered in the present work.

Subsequently, Chen~\cite{Chen1994} formulated this optimality criterion~\eqref{ta-def}, along with the optimal Markovian coupling problem in a general abstract setting of Markov processes on an arbitrary state space.  
He also solved the optimal coupling problem for diffusion processes on \(\mathbb{R}^d\) with a concave cost \(c(x,y) = f(|x-y|)\), where \(f\) satisfies \(f(0) = 0\), \(f' > 0\), and \(f'' \le 0\), together with several applications~\cite{Chen2020OptimalCouplings}.  
Under a similar setting in cost, the optimal Markovian coupling problem was also studied in the L\'evy setting
\cite{LiangSchillingWang2020,KendallMajkaMijatovic2024}. 
In a related development, Zhang~\cite{Zhang2000} proved the existence of \(c\)-optimal Markovian couplings for bounded Markov jump generators on an abstract state space \(\Pi\), where \(c\) is a nonnegative lower semicontinuous cost function.

\subsection{Optimal Markovian coupling and optimality gap problem for L\'evy generators}\label{sec2.7}

We now return to the two research problems posed earlier, and specialize to the setting of a quadratic cost on the Euclidean space:
\[
\Pi = \mathbb{R}^d, 
\qquad c(x,y) = c_2(x,y) = \tfrac{1}{2}|x-y|^2.
\]
In this case, we adopt the subscript ``$2$'' in place of the general cost function $c$ in the notation for transport derivatives, e.g.\ $\theta_c = \theta_2$, $\omega_c^\pm = \omega_2^\pm$, and $\omega_c^* = \omega_2^*$.

In principle, the problems of optimal Markovian coupling and the associated optimality gap can be posed for arbitrary Feller generators $\mA,\mB\in \mcg(\mbr^d)$, or more specific class, \emph{L\'evy-type generators}. 
Such a level of generality, however, appears presently out of reach due to the analytic complexity. 
As a tractable yet structurally rich first step, we therefore focus on the translation-invariant case, namely on L\'evy generators $\mA,\mB\in \mcg_2^\Lambda(\mbr^d)$. 
This restriction preserves many of the essential features of the general problem, while allowing a more transparent development of the ideas introduced above.

Recall the notion of the dual transport derivative $\omega_c$ from Definition~\ref{diff-dual-def}.  
Since $\mathcal{A},\mathcal{B}$ are L\'evy generators, the space $C_2^2(\mathbb{R}^d)$ of quadratically bounded continuous functions serves as a common core for both.  
In this case it is more convenient to work with~\eqref{om-dual-def} restricted to this domain.  
This leads to the following definition.

\begin{definition}[Restricted dual supremum]\label{def:omega-prime}
	Let $\mathcal{A}, \mathcal{B} \in \mathcal{G}_2^\Lambda(\mathbb{R}^d)$ and $(x_0,y_0) \in \mathbb{R}^{2d}$. 
	We define the \emph{restricted dual transport derivative}
	\begin{align}\label{eq:omega-prime}
		\omega_2'(x_0, y_0; \mathcal{A}, \mathcal{B}) 
		:= \sup_{(\varphi,\psi)\in\mcl{D}'(x_0,y_0)} 
		\Big[ \mathcal{A} \varphi(x_0) + \mathcal{B} \psi(y_0) \Big],
	\end{align}
	where $\mcl{D}'(x_0,y_0)$ consists of all pairs 
	$(\varphi,\psi) \in [C^2_2(\mathbb{R}^d)]^2$ satisfying the \emph{touching condition}
	\begin{align}\label{eq:touch}
		\varphi(x) + \psi(y) \,\le\, \tfrac{1}{2}|x-y|^2 
		\quad \text{for all $(x,y)$, with equality at } (x_0,y_0).
	\end{align}
	Equivalently, the definition in~\eqref{eq:omega-prime} is unchanged if condition~\eqref{eq:touch} is replaced by requiring that 
	\(
	c_2 - \varphi \oplus \psi
	\)
	attains a global minimum at $(x_0,y_0)$; see Remark~\ref{rem:supdom}.
\end{definition}

\begin{remark}\label{rem:om'-bdd}
	Since for $\mA,\mB\in\mcg_2^\La(\mbr^d)$, $C_2^2(\mathbb{R}^d)\subset \bar D(\mA)\cap\bar D(\mB)$, clearly it holds for all $x_0,y_0\in\mbr^d$:
	\[
	\omega_2'(x_0, y_0; \mathcal{A}, \mathcal{B}) \le \omega_2^*(x_0, y_0; \mathcal{A}, \mathcal{B}).
	\]
\end{remark}

Before stating our main results, we address a technical question: does the quadratic cost $c_2$ belong to the continuity domain of a coupling generator $\mJ$ associated with two L\'evy generators?  
The following proposition gives an affirmative answer. Its proof is elementary and is included in Appendix \ref{Appen-genresult}.  

\begin{proposition}\label{prop:c2-reg}
	Let $\mA,\mB\in\mcg_2^\Lambda(\mbr^d)$ be L\'evy generators with mean vectors $m_\mA,m_\mB$ and covariance matrices $Q_\mA,Q_\mB$.  
	Define
	\[
	U(t,x,y) := (x-y+t(m_\mA-m_\mB))^\top(m_\mA-m_\mB),
	\]
	and let $\{T_t\}_{t\ge 0}$ be a Markovian coupling semigroup of $\mA,\mB$ and $\mJ\in\Gamma(\mA,\mB)$ be a Feller coupling generator.
	
	(i) It holds for all $t,h\ge 0$ and $(x,y)\in\mbr^{2d}$:
	\begin{align*}
		\frac{h}{2}\,U(t,x,y)
		&\le (T_{t+h}c_2)(x,y)-(T_tc_2)(x,y) \\
		&\le h^2\big(|m_\mA|^2+|m_\mB|^2\big)
		+ h\big[\tr(Q_\mA+Q_\mB)+U(t,x,y)\big].
	\end{align*}
	In particular, the map $t\mapsto (e^{t\mJ}c_2)(x,y)$ is locally Lipschitz. 
	
	(ii) One has $c_2\in C(\mJ)$.
		
	(iii) If $\mJ$ itself is a L\'evy generator, then $c_2\in\bar D(\mJ)$.
\end{proposition}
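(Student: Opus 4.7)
The overall approach is to exploit that every Markovian coupling preserves the L\'evy marginals, so the marginal moments under $T_t$ are determined entirely by $\mA$ and $\mB$ and are independent of the coupling. This drives a closed-form expansion of $(T_h c_2)(\tilde x,\tilde y)$ via the variance decomposition, and the semigroup identity $(T_{t+h}c_2)(x,y) = \mbe^{(x,y)}[(T_h c_2)(X_t,Y_t)]$ then yields the difference estimates in part (i). Parts (ii) and (iii) follow by truncation combined with Feller extension, and by the stated inclusion $C_2^2(\mbr^{2d}) \subset \bar D(\mJ)$ for L\'evy generators with finite second moments.

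For part (i), I would expand
\[
\mbe^{(\tilde x,\tilde y)}|X_h - Y_h|^2 = \bigl|\mbe^{(\tilde x,\tilde y)}(X_h - Y_h)\bigr|^2 + \Var^{(\tilde x,\tilde y)}(X_h - Y_h),
\]
substitute the marginal moments $\mbe X_h = \tilde x + hm_\mA$ and $\Var(X_h) = hQ_\mA$ (and analogously for $Y_h$), and control the cross-covariance via Cauchy--Schwarz, obtaining $|\tr \mathrm{Cov}^{(\tilde x,\tilde y)}(X_h,Y_h)| \le h\sqrt{\tr Q_\mA \cdot \tr Q_\mB} \le \tfrac{h}{2}\tr(Q_\mA + Q_\mB)$ uniformly in the starting point. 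Applying the semigroup identity together with $\mbe^{(x,y)}[X_t - Y_t] = (x-y) + t(m_\mA - m_\mB)$ produces
\[
(T_{t+h}c_2 - T_t c_2)(x,y) = h U(t,x,y) + \tfrac{h^2}{2}|m_\mA - m_\mB|^2 + \tfrac{h}{2}\tr(Q_\mA + Q_\mB) - \mbe^{(x,y)}\!\bigl[\tr \mathrm{Cov}^{(X_t,Y_t)}(X'_h,Y'_h)\bigr].
\]
The upper bound in the proposition then follows from $|m_\mA - m_\mB|^2 \le 2(|m_\mA|^2 + |m_\mB|^2)$ combined with the Cauchy--Schwarz covariance bound, and the lower bound follows by bounding the covariance term from above by the same quantity.

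For part (ii), pointwise local Lipschitz continuity of $t \mapsto (e^{t\mJ}c_2)(x,y)$ is immediate from (i) with constants locally bounded in $(x,y,t)$. To establish $c_2 \in C(\mJ)$, I would approximate by the bounded continuous truncations $c_2^N := \min(c_2,N)$; the standard extension of the Feller semigroup from $C_0$ to $C_b$ (which preserves continuity via tightness of $\{\mbp^{(x,y)}((X_t,Y_t) \in \cdot) : (x,y) \in K\}$ on compacts $K$) gives $e^{t\mJ}c_2^N \in C_b(\mbr^{2d})$ for each $N$. Monotone convergence yields $e^{t\mJ}c_2^N \to e^{t\mJ}c_2$ pointwise, and the upgrade to local uniform convergence reduces to the estimate $\sup_{(x,y)\in K}\mbe^{(x,y)}[c_2(X_t,Y_t)\,\mbo_{\{c_2(X_t,Y_t) \ge N\}}] \to 0$ for every compact $K$. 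This uniform integrability follows from $c_2(X_t,Y_t) \le |X_t|^2 + |Y_t|^2$, the splitting $|X_t|^2 \le 2|x|^2 + 2|X_t - x|^2$, and the fact that the centered increment $X_t - x$ has distribution independent of $x$ (marginal L\'evy structure) with finite second moment. Combining spatial continuity of $e^{t\mJ}c_2$ with the $(x,y)$-locally-uniform Lipschitz in $t$ then yields the local uniform continuity required by Definition~\ref{def:extdom}.

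For part (iii), if $\mJ \in \mcg^\La(\mbr^{2d})$, the marginal identity $e^{t\mJ}[|x|^2 + |y|^2](0) = e^{t\mA}[|\cdot|^2](0) + e^{t\mB}[|\cdot|^2](0) < \infty$ gives $\mJ \in \mcg_2^\La(\mbr^{2d})$, and the stated inclusion $C_2^2(\mbr^{2d}) \subset \bar D(\mJ)$ together with $c_2 \in C_2^2(\mbr^{2d})$ (a quadratic polynomial) yields $c_2 \in \bar D(\mJ)$. The main technical hurdle is the uniform integrability step in part (ii): since general Feller couplings need not be translation invariant, the joint law of $(X_t,Y_t)$ may depend on $(x,y)$ in a nontrivial way; the argument nonetheless succeeds because the marginal second moments are coupling-independent and the quadratic cost $c_2$ is dominated by the sum of marginal squared norms.
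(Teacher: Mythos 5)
Your approach to part (i) is genuinely different from the paper's: you use the variance decomposition of $\mbe^{(\tilde x,\tilde y)}|X_h-Y_h|^2$ together with Cauchy--Schwarz for the cross-covariance, whereas the paper constructs a supporting affine barrier $a$ and a dominating quadratic barrier $b$, both of direct-sum form, and invokes order preservation of the Markovian semigroup. Both are elementary and arrive at the same place; your route has the advantage of being naturally symmetric and of making the coupling-independence of the estimate transparent (the marginal moments pin down everything except the cross-covariance, which you then bound uniformly). One cosmetic point: you never need the Cauchy--Schwarz bound $|\tr\mathrm{Cov}| \le h\sqrt{\tr Q_\mA\,\tr Q_\mB}$; the one-sided bound $\tr\mathrm{Cov}(X_h,Y_h)\le \tfrac{h}{2}\tr(Q_\mA+Q_\mB)$ follows directly from $\tr\Var(X_h-Y_h)\ge 0$, and the other side from $\tr\Var(X_h+Y_h)\ge 0$, which is even cleaner.

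You should be aware, however, that your argument produces the lower bound $h\,U(t,x,y)$, not the stated $\tfrac{h}{2}\,U(t,x,y)$. Observe that $\tfrac{h}{2}U > hU$ whenever $U<0$, so the stated bound is strictly stronger than what you establish, and in fact it is false as written. A direct counterexample: let $\mA,\mB$ be pure drifts in $d=1$ with $m_\mA=1$, $m_\mB=0$, so $T_tc_2(x,y)=\tfrac12(x-y+t)^2$, $T_{t+h}c_2-T_tc_2 = h(x-y+t)+\tfrac{h^2}{2}$, and $U(t,x,y)=x-y+t$. Taking $x-y+t=-2$ and $h=1$, the difference equals $-1.5$ while $\tfrac{h}{2}U=-1$. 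The source of the discrepancy is the paper's lower barrier $a(x,y)=\tfrac12(x_0-y_0)^\top(x-y)$: the spurious factor $\tfrac12$ means $a\le c_2$ fails (take $x-y$ a small multiple of $x_0-y_0$). The correct supporting plane is $a(x,y)=(x_0-y_0)^\top(x-y)-\tfrac12|x_0-y_0|^2$, which yields $(T_ha)(x_0,y_0)-a(x_0,y_0)=hU$; this matches your bound and suffices for everything downstream (local Lipschitz in part (i), and Lemma~\ref{lem:dini-lbbd} only needs a uniform-in-$h$ lower bound). You should note this discrepancy rather than let "the lower bound follows" pass without comment.

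For part (ii), your argument via truncation $c_2^N$, the Feller extension to $C_b$, and uniform integrability of $\{c_2(X_t,Y_t)\}$ over compacts in $(x,y)$ is sound and correctly exploits that the marginal laws of $X_t-x$ and $Y_t-y$ are translation invariant. The paper instead proves $\mcW_2$-continuity of $(x,y)\mapsto\kappa_t(x,y)$ directly from weak continuity plus continuity of the marginal second moments, which shortcuts the truncation; the two are essentially dual ways of packaging the same uniform integrability. Part (iii) matches the paper's argument.
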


Our first main theorem, addressing to the optimal Markovian coupling problem between two L\'evy generators, is stated as follows. 

\begin{theorem}[Existence of $2$-optimal Markovian coupling]\label{main1}
	Let \( \mathcal{A}, \mathcal{B} \in \mathcal{G}_2^\Lambda(\mathbb{R}^d) \) be two L\'evy generators with finite second moments, and let \( \theta_2(\cdot, \cdot; \mathcal{A}, \mathcal{B}) \) denote the Markovian transport derivative of $\mA,\mB$ defined in Definition~\ref{c-opt-gen}. 
	
	(i) It holds that
	\[
	\theta_2(x, y; \mathcal{A}, \mathcal{B}) 
	= \theta_2(0, 0; \mathcal{A}, \mathcal{B}) + (m_\mA - m_\mB)^\top (x - y),
	\]
	where \( m_\mathcal{A}, m_\mathcal{B} \in \mathbb{R}^d \) are the mean vectors associated with \( \mathcal{A} \) and \( \mathcal{B} \), respectively.
	
	(ii) A $2$-optimal Markovian coupling \( \mathcal{J}_* \in \Gamma(\mathcal{A}, \mathcal{B}) \) exists. In particular, if the generators are in global form with L\'evy triplets $\mA=\Lag(\ka,\al,\mu)$ and $\mB=\Lag(\zeta,\beta,\nu)$, then $\mJ_*=\Lag(\eta_*,\si_*,\ga_*)\in \mcg_2^\La(\mbr^{2d})$ is a $2$-optimal coupling, where:
	\begin{itemize}
		\item \( \eta_* = \kappa\oplus\zeta \in \mathbb{R}^{2d} \);
		\item \( \sigma_* = \begin{bmatrix} \alpha & K_* \\ K_*^\top & \beta \end{bmatrix} \), where \( K_* \in M(\mathbb{R}^d) \) solves the semidefinite program \eqref{posdef-prog};
		\item \( \gamma_* \in \Lambda_2(\mathbb{R}^{2d}) \) is a $2$-optimal L\'evy coupling of \( \mu \) and \( \nu \), given by Definition \ref{levy-opt-prob}.
	\end{itemize}
	
	(iii) For any $2$-optimal coupling \( \mathcal{J}_* \) of $\mA,\mB$, it holds that
	\begin{align}\label{eq:optgrowth}
		(e^{t\mJ_*}c_2)(x,y)
		= c_2(x,y)+t\ta_2(x,y;\mA,\mB)+ \f {t^2}{2}|m_\mA-m_\mB|^2. 
	\end{align}
	In particular, the semigroup $\{e^{t\mJ_*}\}_{t\ge 0}$ satisfies the following \emph{minimal growth condition}: for every Markovian coupling semigroup $\{T_t\}_{t\ge 0}$ of $\mA,\mB$,  
	\begin{align}\label{eq:mingrowth}
		(e^{t\mJ_*}c_2)(x,y) \;\le\; (T_t c_2)(x,y), 
		\qquad \forall\,x,y\in\mathbb{R}^d,\; t\ge 0.
	\end{align}
\end{theorem}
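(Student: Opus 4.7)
The plan is to combine translation invariance, an explicit optimization over L\'evy coupling triplets, and a Dynkin argument, invoking strong duality to certify the lower bound.

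Part (i) follows from translation invariance of $\mA, \mB$ alone. For any $a \in \mbr^d$ and $\mJ \in \Ga(\mA, \mB)$, the shifted operator $\tilde\mJ\vphi(x, y) := (\mJ[\vphi(\cdot + (a, 0))])(x - a, y)$ is again a Feller coupling of $\mA, \mB$ (using translation invariance of $\mA$). A direct computation based on $c_2(u + a, v) = c_2(u, v) + a\cdot(u - v) + \f12|a|^2$ shows $(\tilde\mJ^- c_2)(x, y) = (\mJ^- c_2)(x - a, y) + a\cdot(m_\mA - m_\mB)$. Since $\mJ \mapsto \tilde\mJ$ is a bijection on $\Ga(\mA, \mB)$, taking the infimum yields $\ta_2(x, y) = \ta_2(x - a, y) + a\cdot(m_\mA - m_\mB)$; a similar shift in $y$ gives the claimed affine dependence on $x - y$.

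For part (ii), I would parametrize L\'evy couplings $\mJ = \Lag(\eta, \si, \ga) \in \Ga(\mA, \mB)$: marginal compatibility forces $\eta = \ka \oplus \zeta$, diagonal blocks of $\si$ equal to $\al, \be$ with free off-diagonal $K$ subject to $\si \succeq 0$, and $\ga \in \La_2(\mbr^{2d})$ with marginals $\mu, \nu$. Applying the global L\'evy--Khintchine form to $c_2$ and using the cancellation $c_2(x+x', y+y') - c_2(x, y) - (x', y')\cdot\nabla c_2(x, y) = \f12|x' - y'|^2$ yields
\begin{align*}
	(\mJ c_2)(x, y) = (m_\mA - m_\mB)\cdot(x-y) + \f12[\tr\al + \tr\be - 2\tr K] + \int \f12|x' - y'|^2\, d\ga(x', y').
\end{align*}
The three terms decouple: maximize $\tr K$ subject to $\si \succeq 0$ (an SDP with solution $K_*$), and minimize the jump cost over L\'evy couplings (the L\'evy 2-optimal transport problem of Section \ref{sec-lotp}, with minimizer $\ga_*$), giving the explicit $\mJ_* = \Lag(\eta_*, \si_*, \ga_*)$ and the upper bound $\ta_2(x, y) \le (\mJ_* c_2)(x, y)$. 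To turn the chain $\om_2^* \le \om_2^- \le \om_2^+ \le \ta_2 \le (\mJ_* c_2)$ from Proposition \ref{om-ta-prop} into equalities, I would invoke strong duality for $\om_2^*$ (established elsewhere in the paper), constructing admissible test pairs $(\vphi, \psi) \in \mcl{D}(x, y)$ from Kantorovich potentials of the L\'evy OT problem and dual multipliers of the SDP so that $\mA\vphi(x) + \mB\psi(y) = (\mJ_* c_2)(x, y)$.

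For part (iii), since $\mJ_* \in \mcg_2^\La(\mbr^{2d})$, Proposition \ref{prop:c2-reg}(iii) gives $c_2 \in \bar D(\mJ_*)$, hence by \eqref{eq:int-eq},
$(e^{t\mJ_*} c_2)(x, y) = c_2(x, y) + \int_0^t (e^{s\mJ_*}\mJ_* c_2)(x, y)\, ds$.
Since $\mJ_* c_2 = \ta_2(\cdot, \cdot; \mA, \mB)$ is affine in $x - y$ and the marginal property gives $e^{s\mJ_*}[x' - y'](x, y) = (x - y) + s(m_\mA - m_\mB)$, the integrand evaluates to $\ta_2(x, y) + s|m_\mA - m_\mB|^2$, and integrating gives \eqref{eq:optgrowth}. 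For the minimal growth \eqref{eq:mingrowth}, let $\{Z_t = (X'_t, Y'_t)\}$ be the coupled Markov process underlying $\{T_t\}$; using Dynkin's formula (with a localizing truncation justified by the local Lipschitz bound of Proposition \ref{prop:c2-reg}(i)), the pointwise inequality $(\mJ^- c_2)(x', y') \ge \ta_2(x', y')$, and the first-moment identity $\mbe^{(x, y)}[X'_s - Y'_s] = (x - y) + s(m_\mA - m_\mB)$, I would obtain $(T_t c_2)(x, y) \ge c_2(x, y) + t\ta_2(x, y) + \f{t^2}{2}|m_\mA - m_\mB|^2 = (e^{t\mJ_*} c_2)(x, y)$. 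The main obstacle is the lower bound step in part (ii)—certifying that no exotic non-L\'evy Feller coupling outperforms $\mJ_*$—which is precisely where strong duality of $\om_2^*$ enters, through a construction of matching dual test functions that closes the optimality gap and justifies the reduction to L\'evy couplings.
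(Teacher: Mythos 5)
Your overall strategy tracks the paper closely: part (i) via translation invariance is essentially the paper's argument (cf.\ Lemma~\ref{trans-lem} and the $h$-translated generator $\mJ^h=\si_h^{-1}\mJ\si_h$), and part (iii) is the paper's integral-identity argument for \eqref{eq:optgrowth}; for \eqref{eq:mingrowth} you propose Dynkin's formula where the paper instead uses absolute continuity of $t\mapsto(T_tc_2)(x,y)$ plus a Fatou bound on the lower Dini derivative (Lemma~\ref{lem:dini-lbbd}), which is a more robust route for general, non-Feller Markovian coupling semigroups but is morally the same estimate. Your explicit formula for $(\mJ c_2)(x,y)$ in terms of a L\'evy coupling triplet is a nice, self-contained way to obtain the upper bound $\ta_2\le\mJ_*c_2$ and to see the three terms decouple; the paper instead assembles $\mJ_*$ piecewise and then invokes additivity of $\ta_2$.

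The genuine gap is exactly where you flag it: certifying the lower bound. You propose to ``invoke strong duality for $\om_2^*$\dots constructing admissible test pairs $(\vphi,\psi)\in\mcD(x,y)$ from Kantorovich potentials of the L\'evy OT problem and dual multipliers of the SDP so that $\mA\vphi(x)+\mB\psi(y)=(\mJ_*c_2)(x,y)$.'' But this is precisely what requires the nontrivial work, and the sketch as written does not go through: if $(\vphi^\De,\psi^\De)$ is a near-maximizer for the diffusion part with $\vphi^\De\oplus\psi^\De\le c_2$, and $(\vphi^J,\psi^J)$ is a near-maximizer for the jump part with $\vphi^J\oplus\psi^J\le c_2$, then the sums $\vphi^\De+\vphi^J$, $\psi^\De+\psi^J$ satisfy only $\le 2c_2$, which is not admissible. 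The constraint $\vphi\oplus\psi\le c_2$ is not stable under superposition of near-optimizers. Moreover, strong duality (Theorem~\ref{main2}) is itself \emph{proved} using this additivity of $\om_2'$ with respect to the L\'evy--Khintchine decomposition, so you cannot use it as a black box to close the circle; the additivity lemma \emph{is} the content of strong duality here. The paper resolves this by splitting $c_2=c_2^\ep+\delta_2^\ep$ (truncated and residual cost) and building separate admissible pairs $(\vphi_\ep^J,\psi_\ep^J)$ with $\vphi_\ep^J\oplus\psi_\ep^J\le c_2^\ep$ and $(\vphi_\ep^\De,\psi_\ep^\De)$ with $\vphi_\ep^\De\oplus\psi_\ep^\De\le\delta_2^\ep$ (Lemmas~\ref{lem:approx-C3} and~\ref{lem:approx-C4}), so that the superposition satisfies $\le c_2$. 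The further subtlety is showing the cross-terms $\mA^J\vphi_\ep^\De$ and $\mB^J\psi_\ep^\De$ vanish in the limit (since the diffusion-part potentials are nonlocal objects seen by the jump operator), which is handled by forcing $\vphi_\ep^\De,\psi_\ep^\De\to 0$ in $L^1(\mu)\oplus L^1(\nu)$ via the contraction map $T_\ep$. Without some version of this cost-splitting construction, the proposal's lower bound step remains open.
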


\begin{remark}
 	We emphasize that the minimal growth property in \eqref{eq:mingrowth} holds among \emph{all Markovian coupling semigroups} of $\mA,\mB$, not only the \emph{Feller} ones. 
\end{remark}

\begin{remark}
	If $\mA,\mB$ are of global form with $\mA=\Lag(\ka,\al,\mu)$, $\mB=\Lag(\zeta,\be,\nu)$, the identity from (i) becomes
	\begin{align*}
		\theta_2(x, y; \mathcal{A}, \mathcal{B}) = \theta_2(0, 0; \mathcal{A}, \mathcal{B}) + (\ka-\zeta)^\top (x - y).
	\end{align*}
\end{remark}

Our second main result addresses the \emph{optimality gap problem}.

\begin{theorem}[Strong duality]\label{main2}
Let \( \mathcal{A}, \mathcal{B} \in \mathcal{G}^\Lambda_2(\mathbb{R}^d) \), and let \( \om_2^\pm,\om_2^*,\theta_2,\om_2'\) be defined as in Definitions \ref{diff-opt-def}, \ref{diff-dual-def}, \ref{c-opt-gen} and \ref{def:omega-prime}. Then, for all \( x, y \in \mathbb{R}^d \), it holds that
\begin{align*}
    \omega_2'(x, y; \mathcal{A}, \mathcal{B}) =\omega_2^*(x, y; \mathcal{A}, \mathcal{B}) =\omega_2^\pm(x, y; \mathcal{A}, \mathcal{B}) = \theta_2(x, y; \mathcal{A}, \mathcal{B}).
\end{align*}
\end{theorem}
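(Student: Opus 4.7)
By weak duality established in Proposition~\ref{om-ta-prop} together with Remark~\ref{rem:om'-bdd}, the chain
\[
\omega_2'(x_0,y_0;\mA,\mB)\le \omega_2^*(x_0,y_0;\mA,\mB)\le \omega_2^-(x_0,y_0;\mA,\mB)\le \omega_2^+(x_0,y_0;\mA,\mB)\le \theta_2(x_0,y_0;\mA,\mB)
\]
holds automatically, so the whole task is to prove the reverse inequality $\theta_2\le \omega_2'$ at an arbitrary $(x_0,y_0)$. The plan is to use the explicit optimal coupling $\mJ_*=\Lag(\eta_*,\si_*,\ga_*)$ from Theorem~\ref{main1}(ii) to compute $\theta_2(x_0,y_0;\mA,\mB)$ as a sum of three contributions,
\[
\theta_2(x_0,y_0;\mA,\mB)=(m_\mA-m_\mB)^\top(x_0-y_0)+\tfrac12\tr(\al+\be-2K_*)+\tfrac12\int_{\mbr^{2d}}|x'-y'|^2\,d\ga_*(x',y'),
\]
and then to construct, for any $\ep>0$, a test pair in $\mcl{D}'(x_0,y_0)$ whose dual value recovers all three contributions up to $\ep$.

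Guided by this decomposition, I would take test functions of the form
\[
\vph_\ep(x)=(x_0-y_0)\cdot(x-x_0)+\tfrac12(x-x_0)^\top P(x-x_0)+f_\ep(x-x_0)+c_\vph,
\]
\[
\psi_\ep(y)=(y_0-x_0)\cdot(y-y_0)+\tfrac12(y-y_0)^\top Q(y-y_0)+g_\ep(y-y_0)+c_\psi,
\]
where the linear parts have gradients matching $\nabla c_2(x_0,y_0)$ and therefore produce the drift term $(m_\mA-m_\mB)^\top(x_0-y_0)$ via the identity $\mA[\ta\cdot x]=\ta\cdot m_\mA$ for L\'evy generators; the symmetric matrices $(P,Q)$ are chosen as dual optimizers of the SDP characterizing $K_*$ (cf.\ the Bures--Wasserstein / semidefinite program referenced in Theorem~\ref{main1}(ii)); the pair $(f_\ep,g_\ep)$ is a $C_b^2$-regularization of $\ep$-optimal Kantorovich potentials for the L\'evy OT problem between $\mu$ and $\nu$ with the normalization $f_\ep(0)=g_\ep(0)=0$; and $c_\vph+c_\psi=\tfrac12|x_0-y_0|^2$ is set to enforce equality at $(x_0,y_0)$. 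Admissibility requires $(\vph_\ep,\psi_\ep)\in C_2^2(\mbr^d)^2$ and the touching condition of Definition~\ref{def:omega-prime}. Writing $\tilde x=x-x_0$, $\tilde y=y-y_0$, a direct computation reduces the inequality $\vph_\ep+\psi_\ep\le c_2$ to the pointwise bound
\[
\tfrac12\tilde x^\top(I-P)\tilde x+\tfrac12\tilde y^\top(I-Q)\tilde y-\tilde x\cdot\tilde y-f_\ep(\tilde x)-g_\ep(\tilde y)\ge 0,
\]
which I would verify by exploiting the quadratic growth control of $f_\ep,g_\ep$ inherent to the finite-second-moment class $\La_2(\mbr^d)$.

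The main obstacle is precisely this simultaneous compatibility: the quadratic ``budget'' $(I-P,I-Q)$ must be large enough to dominate the cross term $-\tilde x\cdot\tilde y$ together with the residual quadratic tails of $f_\ep$ and $g_\ep$, while at the same time leaving $\tfrac12\tr(\al P)+\tfrac12\tr(\be Q)$ close to the Bures--Wasserstein value $\tfrac12\tr(\al+\be-2K_*)$. I expect to resolve this by additively redistributing a small multiple of $|{\cdot}|^2$ between the quadratic and L\'evy components — absorbing whatever quadratic slack is needed from $(f_\ep,g_\ep)$ into $(P,Q)$ — using the strong duality for the L\'evy OT problem (the analogue of Theorem~\ref{main-LK} invoked earlier in the paper) to keep $\int f_\ep\,d\mu+\int g_\ep\,d\nu$ within $\ep$ of $\mcc_2^\La(\mu,\nu)$, and standard convolution mollification to obtain the required $C^2_2$ regularity without loss in the dual value.

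Once admissibility is secured, summing the contributions gives
\[
\mA\vph_\ep(x_0)+\mB\psi_\ep(y_0)=(m_\mA-m_\mB)^\top(x_0-y_0)+\tfrac12\tr(\al P)+\tfrac12\tr(\be Q)+\int f_\ep\,d\mu+\int g_\ep\,d\nu,
\]
where I use that linear, purely quadratic, and translated-integrand jump parts of a L\'evy generator act in the closed-form way recalled in Section~\ref{sec2.2}, and that the cross-contributions of the linear and quadratic parts to the jump integrals vanish by the global-form compensation (together with the normalization $f_\ep(0)=g_\ep(0)=0$ and the moment assumption $\mu,\nu\in\La_2(\mbr^d)$). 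Combining the strong-duality bounds for the Bures--Wasserstein and L\'evy OT subproblems yields $\mA\vph_\ep(x_0)+\mB\psi_\ep(y_0)\ge\theta_2(x_0,y_0;\mA,\mB)-C\ep$, and letting $\ep\searrow 0$ gives $\omega_2'(x_0,y_0;\mA,\mB)\ge\theta_2(x_0,y_0;\mA,\mB)$, closing the loop and establishing the full chain of equalities asserted in the theorem.
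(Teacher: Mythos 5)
Your overall structure is aligned with the paper's: the chain $\om_2'\le\om_2^*\le\om_2^-\le\om_2^+\le\ta_2$ is weak duality from Proposition~\ref{om-ta-prop} and Remark~\ref{rem:om'-bdd}, and the content lies in showing $\ta_2\le\om_2'$ by exhibiting a near-optimal test pair that splits into drift, diffusion, and jump contributions. This is precisely the strategy behind the paper's additivity lemma (Lemma~\ref{om-add-lem}) combined with the componentwise duality results (Corollary~\ref{cor:sd-dd} and Proposition~\ref{prop:J-opt}), so your high-level plan is correct.

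However, there is a genuine gap at the step you flag as ``the main obstacle.'' You propose to handle the admissibility conflict — that $(P,Q)$ and $(f_\ep,g_\ep)$ each independently use up the full budget $\vph\oplus\psi\le c_2$, so their sum cannot satisfy it — by ``additively redistributing a small multiple of $|\cdot|^2$'' between the quadratic and L\'evy components, ``absorbing whatever quadratic slack is needed from $(f_\ep,g_\ep)$ into $(P,Q)$.'' This does not work as stated: when $(f_\ep,g_\ep)$ are $\ep$-optimal Kantorovich potentials for $\mcc_2^\La(\mu,\nu)$, there is no uniform quadratic slack $f_\ep(x)+g_\ep(y)\le c_2(x,y)-\eta|x-y|^2$ available — optimality forces them to saturate $c_2$ on the support of an optimal coupling, which is generally unbounded. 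Likewise, shrinking $(P,Q)$ by a uniform multiple immediately costs a fixed fraction of $\tr[\al P+\be Q]$, so you cannot take $\ep\searrow 0$. The two budgets are additive in $c_2$ itself, not in a small quadratic perturbation, so a scalar redistribution cannot reconcile them.

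The paper's resolution is a sharper decomposition, $c_2 = c_2^\ep + \delta_2^\ep$ with $c_2^\ep(x,y)=\tfrac12\max\{0,|x-y|-\ep\}^2$ and $\delta_2^\ep = c_2 - c_2^\ep$. The key properties are that $\delta_2^\ep$ agrees with $c_2$ exactly near the diagonal (hence near the touching point $(x_0,y_0)$, after the reduction to $(0,0)$), while $c_2^\ep$ vanishes there. Lemma~\ref{lem:approx-C4} then produces a diffusion test pair bounded by $\delta_2^\ep$ that \emph{coincides with the original $(P,Q)$-pair in a neighborhood of the origin}, so the local operators $\mA^\De,\mB^\De$ don't see the truncation at all and their dual contribution is preserved exactly; Lemma~\ref{lem:approx-C3} produces a jump test pair bounded by $c_2^\ep$ vanishing near the origin and converging to $(f_\ep,g_\ep)$ in $L^1(\mu)\oplus L^1(\nu)$, so the jump contribution is preserved in the limit. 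The sum automatically satisfies $\le c_2^\ep+\delta_2^\ep=c_2$. This local--global dichotomy — exploit locality of the diffusion operator and $L^1$-density for the jump operator — is what makes the superposition admissible at no cost, and it is the step your sketch does not supply. Without an argument of this type, the proof does not close.
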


As a byproduct of Theorem~\ref{main2}, we obtain an additivity property of the transport derivative with respect to the L\'evy–Khintchine decomposition. 

\begin{theorem}[Additivity w.r.t L\'evy-Khintchine decomposition]\label{main-LK}
	Let $\mA,\mB\in\mcg_2^\La(\mbr^d)$ be in the global form, and $\mA^\bullet,\mB^\bullet$, $\bullet\in\{\nabla,\De,J\}$ be the drift, diffusion,  and jump part of $\mA,\mB$ respectively. It holds for all $x,y\in \mbr^d$: 
	\begin{align*}
		\ta_2(x_0,y_0;\mA,\mB)&= \sum_{\bullet \in \{\nabla,\De,J\}}\ta_2(x,y;\mA^\bullet,\mB^\bullet).  
	\end{align*}
\end{theorem}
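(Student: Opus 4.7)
The plan is to establish additivity by proving the two opposite inequalities between $\ta_2(x,y;\mA,\mB)$ and $\sum_{\bullet}\ta_2(x,y;\mA^\bullet,\mB^\bullet)$, using the strong duality of Theorem~\ref{main2} for one direction and the explicit $2$-optimal coupling from Theorem~\ref{main1} for the other. Throughout, I work in the global form, so that $\mA^\bullet,\mB^\bullet\in\mcg_2^\La(\mbr^d)$ for each $\bullet\in\{\nabla,\De,J\}$ and all objects involved admit finite second moments.

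For the upper bound $\ta_2(x,y;\mA,\mB)\le \sum_{\bullet}\ta_2(x,y;\mA^\bullet,\mB^\bullet)$, I first invoke Theorem~\ref{main2} to replace $\ta_2$ by the restricted dual transport derivative $\om_2'$ from Definition~\ref{def:omega-prime}. The admissibility class $\mcl{D}'(x,y)$ depends only on the cost $c_2$ and the touching condition at $(x,y)$, not on the generators; hence any pair $(\vphi,\psi)\in\mcl{D}'(x,y)$ is simultaneously admissible for each $\om_2'(\cdot;\mA^\bullet,\mB^\bullet)$. Since $\mA=\mA^\nabla+\mA^\De+\mA^J$ and $\mB=\mB^\nabla+\mB^\De+\mB^J$ act linearly on $C_2^2(\mbr^d)\subset\bar D(\mA)\cap\bar D(\mB)$, one has
\begin{align*}
\mA\vphi(x)+\mB\psi(y)
&=\sum_{\bullet}\bigl[\mA^\bullet\vphi(x)+\mB^\bullet\psi(y)\bigr]\\
&\le\sum_{\bullet}\om_2'(x,y;\mA^\bullet,\mB^\bullet).
\end{align*}
Taking the supremum over $(\vphi,\psi)\in\mcl{D}'(x,y)$ and applying strong duality on both sides yields the claimed inequality.

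For the reverse bound $\ta_2(x,y;\mA,\mB)\ge \sum_{\bullet}\ta_2(x,y;\mA^\bullet,\mB^\bullet)$, I use the explicit $2$-optimal coupling $\mJ_*=\Lag(\eta_*,\si_*,\ga_*)$ furnished by Theorem~\ref{main1}(ii). As a L\'evy generator in global form, $\mJ_*$ admits its own L\'evy--Khintchine decomposition $\mJ_*=\mJ_*^\nabla+\mJ_*^\De+\mJ_*^J$. By direct computation using the marginal characterization of Proposition~\ref{prop:coup-char}, the drift $\eta_*=\ka\oplus\zeta$, the block diffusion $\si_*$ with diagonal blocks $\al,\be$, and the coupled L\'evy measure $\ga_*\in\Ga^\La(\mu,\nu)$ each separately guarantee that $\mJ_*^\bullet$ is a Feller coupling generator of $\mA^\bullet$ and $\mB^\bullet$. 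Since $c_2\in\bar D(\mJ_*)$ by Proposition~\ref{prop:c2-reg}(iii), and $\mJ_*$ acts linearly on $c_2$, one concludes
\begin{align*}
\ta_2(x,y;\mA,\mB)
&=(\mJ_*c_2)(x,y)\\
&=\sum_{\bullet}(\mJ_*^\bullet c_2)(x,y)\\
&\ge\sum_{\bullet}\ta_2(x,y;\mA^\bullet,\mB^\bullet),
\end{align*}
since each $\mJ_*^\bullet$ competes in the infimum defining $\ta_2(x,y;\mA^\bullet,\mB^\bullet)$. Combining the two inequalities yields the claimed identity.

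The main technical check lies in the lower-bound step, namely verifying that each component $\mJ_*^\bullet$ in the L\'evy--Khintchine decomposition of $\mJ_*$ is itself a Feller coupling generator of the corresponding components $\mA^\bullet$ and $\mB^\bullet$. This reduces, by Proposition~\ref{prop:coup-char}, to confirming the marginal identities $\mJ_*^\bullet[\vphi\otimes 1]=\mA^\bullet\vphi\otimes 1$ and $\mJ_*^\bullet[1\otimes\psi]=1\otimes\mB^\bullet\psi$ for $\vphi\in\bar D(\mA^\bullet),\psi\in\bar D(\mB^\bullet)$. The drift and diffusion cases are transparent from the block structure of $\eta_*$ and $\si_*$; the jump case follows from the fact that $\ga_*\in\Ga^\La(\mu,\nu)$ has L\'evy marginals $\mu$ and $\nu$. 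Once these component-wise marginal identities are in place, additivity is immediate from linearity of the generators' action on $c_2$.
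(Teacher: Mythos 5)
The upper-bound direction of your proposal is fine, and in fact there is an even more direct route to it: the superposition $\mJ_*=\mJ_*^\nabla+\mJ_*^\De+\mJ_*^J$ of the componentwise optimal couplings lies in $\Ga^\La(\mA,\mB)$ and gives $\ta_2(x,y;\mA,\mB)\le (\mJ_*c_2)(x,y)=\sum_\bullet(\mJ_*^\bullet c_2)(x,y)=\sum_\bullet\ta_2(x,y;\mA^\bullet,\mB^\bullet)$, with the componentwise equalities coming from Propositions~\ref{dd-opt-prop} and \ref{prop:J-opt}. So the inequality $\ta_2\le\sum_\bullet\ta_2^\bullet$ is indeed the easy direction, and both your duality argument and the superposition argument handle it.

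The gap is in the reverse inequality. Your lower-bound argument opens with the identity $\ta_2(x,y;\mA,\mB)=(\mJ_*c_2)(x,y)$, which is precisely the optimality statement of Theorem~\ref{main1}(ii). But in the paper Theorem~\ref{main1}(ii) is deduced \emph{from} Theorem~\ref{main-LK}: the proof in Section~5.2 asserts that the superposition $\mJ_*$ is optimal exactly by summing $(\mJ_*^\bullet c_2)(0,0)=\ta_2(0,0;\mA^\bullet,\mB^\bullet)$ and invoking Theorem~\ref{main-LK} to identify the sum with $\ta_2(0,0;\mA,\mB)$. So your argument is circular: without the additivity theorem one only knows $\ta_2(x,y;\mA,\mB)\le(\mJ_*c_2)(x,y)$, never equality, and the superposition construction cannot by itself rule out the existence of some cleverer Feller coupling $\mJ$ achieving $(\mJ c_2)(x,y)<\sum_\bullet\ta_2(x,y;\mA^\bullet,\mB^\bullet)$. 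The hard direction $\ta_2\ge\sum_\bullet\ta_2^\bullet$ is equivalent, via strong duality, to $\om_2'\ge\sum_\bullet\om_2'^\bullet$, which is the genuinely technical direction of Lemma~\ref{om-add-lem}: for each component one has near-optimal test pairs $(\varphi^\bullet,\psi^\bullet)$, but naively adding them violates the constraint $\varphi\oplus\psi\le c_2$, and the paper resolves this by splitting the cost $c_2=c_2^\ep+\de_2^\ep$ and building approximants (Lemmas~\ref{lem:approx-C3} and~\ref{lem:approx-C4}) whose sum stays below $c_2$ while recovering each componentwise value in the $\ep\searrow 0$ limit. Your proposal omits this step entirely, and it cannot be replaced by citing Theorem~\ref{main1}(ii).
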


%
%

\subsection{Wasserstein-type metric on the space \texorpdfstring{$\mcg_2^\Lambda(\mathbb{R}^d)$}{G\_\{2\}^\textbackslash Lambda(R^d)}}\label{sec2.8}

An interesting application of Theorems~\ref{main1} and~\ref{main2} is the introduction of a Wasserstein-type metric on the space of L\'evy generators \(\mathcal{G}_2^\Lambda(\mathbb{R}^d)\) and the space $\La_2(\mbr^d)$ of L\'evy measures with finite second moments. 

In what follows, we define the corresponding functional, termed the \emph{Wasserstein generator metric}, as
\begin{align}\label{def:Wg}
	\mcW_\mcg(\mA, \mB)^2 := \frac{1}{2} |m_\mA - m_\mB|^2+ \theta_2(0,0;\mA,\mB) ,
\end{align}
where \( m_\mA \) and \( m_\mB \) denote the mean vectors associated with the generators \( \mA, \mB \in\mcg_2^\La(\mbr^d)\), respectively.
Since any L\'evy generator in $\mcg_2^\La(\mbr^d)$ is associated with a L\'evy triplet (in the global form \eqref{LK-rep2}), the metric \eqref{def:Wg} can be expressed in term of the triplet. 
As seen in Theorem \ref{main-LK}, this functional admits a natural decomposition into separate contributions from the drift, diffusion, and jump parts (in the global form). Specifically, we have the drift part $\ta_2(0,0;\mA^\nabla,\mB^\nabla)=0$, and thus
\begin{align*}
	\wg(\mA,\mB)^2=\f 12|m_\mA-m_\mB|^2+\ta_2(0,0;\mA^\De,\mB^\De)+\ta_2(0,0;\mA^J,\mB^J). 
\end{align*}
Moreover, if 
\(
\mA = \Lag(\kappa, \alpha, \mu), \mB = \Lag(\eta, \beta, \nu),
\)
then the above reduces to
\[
\wg(\mA, \mB)^2 = \frac{1}{2} \left( |\kappa - \eta|^2 + \mcl{W}_{\mcl{S}}(\alpha, \beta)^2 + \mcl{W}_\Lambda(\mu, \nu)^2 \right),
\]
where: \( |\kappa - \eta| \) is the standard Euclidean norm,
\( \mcl{W}_{\mcl{S}} \) denotes the \emph{Bures--Wasserstein distance} between symmetric nonnegative definite matrices,
\( \mcl{W}_\Lambda := (\mcc_2^\Lambda)^{1/2} \) is the square root of the \emph{L\'evy transport cost} between Lévy measures, which will be called the \emph{L\'evy-Wasserstein metric}. These functional will be defined in Sections \ref{sec-lotp} and \ref{sec-opt-markov}.

The main result of this subsection can be stated as follows.

\begin{theorem}\label{main3}
	The functional $\mathcal{W}_{\mathcal{G}}$ from \eqref{def:Wg} defines a complete and separable metric on the space of L\'evy generators $\mathcal{G}_2^\Lambda(\mathbb{R}^d)$.
\end{theorem}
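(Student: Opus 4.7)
The plan is to exploit the additive decomposition of $\wg$ afforded by Theorem~\ref{main-LK}, which identifies $\mcG_2^\La(\mbr^d)$ isometrically (via the global-form L\'evy triplet) with the product space $\mbr^d \times \mcl{S}_{\geq 0}(\mbr^d) \times \La_2(\mbr^d)$ equipped with the $\ell^2$-combination of the Euclidean norm, the Bures--Wasserstein distance $\mcW_\mcS$, and the L\'evy--Wasserstein metric $\mcW_\La=(\mcC_2^\La)^{1/2}$. Once this identification is set up, every property of $\wg$ becomes a corresponding property of the product metric, which in turn reduces to establishing that each of the three factors is a complete separable metric space.

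First, I would verify the basic metric axioms. Nonnegativity is immediate from the decomposition. The identity of indiscernibles uses that a L\'evy triplet in global form uniquely determines its generator (via the L\'evy--Khintchine formula), combined with the fact that each of the three component distances separates points. Symmetry follows from the symmetry of $\ta_2(0,0;\cdot,\cdot)$ in its two arguments, visible from either the variational dual formulation in Theorem~\ref{main2} or from relabeling the two coordinates of a coupling generator. The triangle inequality is handled by a Minkowski-type argument: if $d_1,d_2,d_3$ are metrics on spaces $E_1,E_2,E_3$, then $(d_1^2+d_2^2+d_3^2)^{1/2}$ is a metric on the product $E_1\times E_2\times E_3$, which applies here componentwise.

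Next, I would establish that each factor is a complete separable metric space. The Euclidean factor is classical. For the Bures--Wasserstein metric on $\mcl{S}_{\geq 0}(\mbr^d)$, completeness and separability are well known from the matrix-analysis and optimal-transport literature and can be quoted. For the L\'evy--Wasserstein metric on $\La_2(\mbr^d)$, these properties are the substantive content of the dedicated Section~\ref{sec-wass-metric}; once available, completeness and separability of the product metric follow from the standard fact that finite products of complete separable metric spaces, equipped with any $\ell^p$-product metric, are themselves complete and separable.

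The hard part will be the analysis of $\mcW_\La$ on $\La_2(\mbr^d)$, not the product-space bookkeeping. Unlike the classical Wasserstein $2$-metric on probability measures, L\'evy measures may have infinite total mass (and may be singular near the origin), so the notion of L\'evy coupling from Section~\ref{sec-lotp} must be formulated carefully and the usual Prokhorov-type tightness arguments do not apply directly. Completeness in particular requires controlling a Cauchy sequence $\{\mu_n\}\subset \La_2(\mbr^d)$ simultaneously on compact sets away from the origin (where the $\mu_n$ are finite measures with uniformly controlled mass) and in the second-moment tails (to ensure that the limit measure still belongs to $\La_2(\mbr^d)$); separability is proved by exhibiting a countable dense family, for instance finite sums of Dirac masses at rational points with rational weights, approximating a given L\'evy measure outside a shrinking neighbourhood of the origin. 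Once these two ingredients are established in Section~\ref{sec-wass-metric}, the conclusion of Theorem~\ref{main3} is a direct consequence.
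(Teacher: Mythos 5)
Your plan inverts the paper's order of proof and, as written, contains a circularity that the paper's ordering of lemmas is designed to avoid. You decompose $\mcg_2^\La(\mbr^d)$ isometrically as $\mbr^d\times\mcl{S}_{\ge 0}(\mbr^d)\times\La_2(\mbr^d)$ with an $\ell^2$-product metric and reduce the metric axioms to each factor; in particular, the Minkowski-type argument for the triangle inequality of $\mcW_\mcg$ presupposes that $\mcW_\La$ already satisfies a triangle inequality on $\La_2(\mbr^d)$. But the paper does not establish this independently: Corollary~\ref{cor:main4} (that $\mcW_\La$ is a metric) is \emph{derived from} Theorem~\ref{main3}, by viewing $\La_2(\mbr^d)$ as the pure-jump slice of $\mcg_2^\La(\mbr^d)$. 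The remark following that corollary explains the obstruction you would face going the other direction: the gluing lemma, which is the standard route to the triangle inequality for transport costs, becomes ``less transparent and technically more involved'' for L\'evy couplings, where the two marginals need not have equal (or even finite) mass and the diagonal singularity at the origin interferes with the usual disintegration. The paper's proof instead applies the classical $\mcW_2$ triangle inequality to the push-forwards $\de_0 e^{t\mA},\,\de_0 e^{t\mB},\,\de_0 e^{t\mC}$ and sends $t\searrow 0$, which bypasses any gluing of L\'evy measures. To salvage your product-space route you would need to supply an independent gluing-type lemma for $\Ga^\La(\mu,\nu)$, which is a genuine missing ingredient.

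The completeness step likewise runs in the opposite direction. The paper proves it for $\mcW_\mcg$ directly: Proposition~\ref{prop:weak-comp} shows that $\mcW_\mcg$-bounded families are relatively compact in the weak transition-kernel topology (using L\'evy's continuity theorem and closedness of the infinitely divisible laws), and Proposition~\ref{prop:wglsc} gives lower semicontinuity of $\mcW_\mcg$ along such limits; completeness of $\mcW_\La$ is then again a byproduct of the embedding. Your sketch of a direct completeness argument for $\mcW_\La$ (controlling mass on compacts away from the origin together with second-moment tails) is not circular and could in principle be carried out, but it would effectively redo the $\La$-tightness and Prokhorov machinery of Section~\ref{sec-lotp} at the measure level without the structural shortcut that working with $\de_0 e^{t\mA_n}\in\mcP_2(\mbr^d)$ provides. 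The separability step is essentially the same as the paper's: reduce to the three factors and exhibit a countable dense subset of $\La_2(\mbr^d)$; Proposition~\ref{prop:WLa-sep} uses finite-mass truncations with rational total mass and the domination $\mcW_\La\le\mcW_2$, which is close in spirit to what you propose.
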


As a direct consequence, $\mathcal{W}_\Lambda$ also induces a complete and separable metric on the space $\Lambda_2(\mathbb{R}^d)$ of L\'evy measures with finite second moments.

\begin{corollary}\label{cor:main4}
	Let $\mathcal{W}_\Lambda := (\mathcal{C}_2^\Lambda)^{1/2}$, where $\mathcal{C}_2^\Lambda(\mu, \nu)$, $\mu, \nu \in \Lambda_2(\mathbb{R}^d)$, is the L\'evy optimal transport cost defined in \eqref{levy-cost}.  
	Then $\mathcal{W}_\Lambda$ is a complete and separable metric on $\Lambda_2(\mathbb{R}^d)$.
\end{corollary}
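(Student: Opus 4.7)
The plan is to transfer the metric and topological structure of $(\mcg_2^\La(\mbr^d), \wg)$ established in Theorem~\ref{main3} to $(\La_2(\mbr^d), \mcW_\La)$ through a natural isometric embedding of L\'evy measures into L\'evy generators. Specifically, I would introduce the map
\begin{align*}
	\iota : \La_2(\mbr^d) \to \mcg_2^\La(\mbr^d), \qquad \iota(\mu) := \Lag(0, 0, \mu),
\end{align*}
sending each L\'evy measure to the pure-jump L\'evy generator with zero drift and zero diffusion. This map is clearly injective, since distinct L\'evy triplets produce distinct generators. Invoking the additive decomposition of $\wg^2$ recorded just before Theorem~\ref{main3} (a direct consequence of Theorem~\ref{main-LK} together with the vanishing $\ta_2(0,0;\mA^\nabla,\mB^\nabla)=0$ for the drift part, and $m_\mA = 0 = m_\mB$ in the global form), one obtains
\begin{align*}
	\wg(\iota(\mu), \iota(\nu))^2 = \tfrac{1}{2}\mcW_\La(\mu, \nu)^2, \qquad \mu, \nu \in \La_2(\mbr^d),
\end{align*}
so that $\iota$ is an isometric embedding up to the constant factor $1/\sqrt{2}$.

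With this isometry in hand, the metric properties of $\mcW_\La$ --- symmetry, nonnegativity, the triangle inequality, and positive definiteness --- transfer directly from the corresponding properties of $\wg$ supplied by Theorem~\ref{main3}, with positive definiteness following because $\mcW_\La(\mu,\nu) = 0$ forces $\iota(\mu) = \iota(\nu)$, hence $\mu = \nu$ by injectivity. For completeness, the key step is to verify that the image $\iota(\La_2(\mbr^d))$ is closed in $(\mcg_2^\La(\mbr^d), \wg)$. Suppose $\iota(\mu_n) \to \mA = \Lag(\ka, \al, \mu)$ in $\wg$; then the decomposition
\begin{align*}
	\wg(\iota(\mu_n), \mA)^2 = \tfrac{1}{2}\bigl(|\ka|^2 + \mcW_\mcS(0, \al)^2 + \mcW_\La(\mu_n, \mu)^2\bigr) \to 0
\end{align*}
forces $\ka = 0$, $\al = 0$, and $\mcW_\La(\mu_n, \mu) \to 0$, using the positive definiteness of the Bures--Wasserstein metric for the matrix component. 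Hence $\mA = \iota(\mu)$. Any Cauchy sequence in $\mcW_\La$ therefore maps under $\iota$ to a Cauchy sequence in $\wg$, which by Theorem~\ref{main3} converges in the complete space $\mcg_2^\La(\mbr^d)$ to an element that must lie in $\iota(\La_2(\mbr^d))$, yielding the desired limit back in $\La_2(\mbr^d)$.

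Separability is then inherited automatically, since every subspace of a separable metric space is separable. The only genuine obstacle is ensuring the applicability of the additive decomposition of $\wg^2$; once Theorem~\ref{main-LK} furnishes this decomposition along with the correct identification of the three summands as squared Euclidean distance, squared Bures--Wasserstein distance, and $\mcW_\La^2$, all remaining arguments are standard topological transfers along the isometry $\iota$.
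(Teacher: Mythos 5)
Your proposal correctly identifies the right mechanism for the metric property: the pure-jump embedding $\iota(\mu)=\Lag(0,0,\mu)$ is precisely what the paper uses. However, two things need correction.

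First, a minor computational point: $\iota$ is an \emph{exact} isometry, not one with a factor $1/\sqrt 2$. For $\mA=\Lag(0,0,\mu)$, $\mB=\Lag(0,0,\nu)$ one has $m_\mA=m_\mB=0$ in the global form, so by the definition \eqref{def:Wg} and Proposition~\ref{prop:J-opt},
\[
\mcW_\mcg(\iota(\mu),\iota(\nu))^2 = \ta_2(0,0;\mA,\mB) = \mcc_2^\La(\mu,\nu) = \mcW_\La(\mu,\nu)^2,
\]
which is also consistent with the decomposition \eqref{eq:wass-LK}, where the $\tfrac12$ sits only in front of the drift term. (The display in Section~\ref{sec2.8} that puts $\tfrac12$ in front of all three summands is inconsistent with \eqref{eq:wass-LK}; you appear to have followed it, but the factor disappears once one traces through the definitions.) The mistaken constant does not affect any of your conclusions, since a positive rescaling preserves all the properties in question.

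Second, and more substantively, the separability step is \emph{circular} relative to the paper's logical structure. You propose to inherit separability of $(\La_2(\mbr^d),\mcW_\La)$ from separability of $(\mcg_2^\La(\mbr^d),\wg)$ via the subspace argument. But the paper's proof of separability in Theorem~\ref{main3} runs in the \emph{opposite} direction: it decomposes $\wg^2$ as in \eqref{eq:wass-LK} and invokes separability of each factor, \emph{including} $(\La_2(\mbr^d),\mcW_\La)$, which must therefore be established independently — this is exactly what Proposition~\ref{prop:WLa-sep} does, by approximating a general $\mu\in\La_2(\mbr^d)$ with finite L\'evy measures of rational total mass and appealing to separability of the ordinary Wasserstein space on each mass level together with the bound $\mcW_\La\le\mcW_2$. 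So you cannot cite Theorem~\ref{main3} to get separability of $\mcW_\La$ without first having proved it by some direct means.

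Your completeness argument, by contrast, is sound and in fact more explicit than the paper: the paper proves completeness of $\wg$ via weak compactness (Proposition~\ref{prop:weak-comp}) and lower semicontinuity (Proposition~\ref{prop:wglsc}), neither of which depends on completeness of $\mcW_\La$, so your transfer via closedness of $\iota(\La_2(\mbr^d))$ — which follows from positive definiteness of the Euclidean and Bures--Wasserstein components — introduces no circularity. The paper's own proof of Corollary~\ref{cor:main4} only explicitly treats the metric axioms; your reconstruction of the completeness half is a useful supplement.
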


Finally, we state our last theorem in this subsection, which establishes an $L^2$ maximal-type estimate 
on the pathwise discrepancy between two L\'evy processes under an optimal coupling of their generators.

\begin{theorem}\label{main5}
	Let $\mA,\mB \in \mcg_2^\La(\mbr^d)$, and let $\mJ_* \in \Ga(\mA,\mB)$ be a 2-optimal L\'evy coupling of $\mA$ and $\mB$. 
	Denote by $\{Z_t=(X_t,Y_t)\}_{t\ge 0}$ the L\'evy process on $\mbr^{2d}$ with generator $\mJ_*$. 
	It holds for all $T \ge 0$:
	\[
	\mathbb{E}^{(0,0)} \!\left[ \sup_{t \in [0,T]} |X_t - Y_t|^2 \right] 
	\;\leq\; 8 \max\{T,\, T^2\} \; \wg(\mA,\mB)^2.
	\]
	If, in addition, both processes have zero mean (i.e.\ $m_\mA = m_\mB = 0$), then the sharper bound holds:
	\[
	\mathbb{E}^{(0,0)} \!\left[ \sup_{t \in [0,T]} |X_t - Y_t|^2 \right] 
	\;\leq\; 4T \; \wg(\mA,\mB)^2.
	\]
\end{theorem}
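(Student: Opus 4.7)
The plan is to reduce the claim to a standard $L^2$ maximal inequality for martingales by exploiting the L\'evy structure of the coupling process $\{Z_t=(X_t,Y_t)\}_{t\ge 0}$ constructed in Theorem~\ref{main1}. The key observation is that $M_t := X_t - Y_t$ is the image of $Z_t$ under the linear map $(x,y)\mapsto x-y$, so $\{M_t\}_{t\ge 0}$ is itself a L\'evy process on $\mathbb{R}^d$ with $M_0=0$. By the marginal property of $\mJ_*$ (Definition~\ref{def:feller-couple}), the one-dimensional marginals of $X_t$ and $Y_t$ coincide with those of the L\'evy processes generated by $\mA$ and $\mB$, respectively, and in particular $\mathbb{E}^{(0,0)}[M_t]=t(m_\mA-m_\mB)$. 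The second moment of $M_t$ can be read off directly from the explicit growth formula \eqref{eq:optgrowth} evaluated at $(x,y)=(0,0)$:
\[
\mathbb{E}^{(0,0)}[|X_t - Y_t|^2] \;=\; 2t\,\theta_2(0,0;\mA,\mB)+t^2|m_\mA-m_\mB|^2,
\]
which in the zero-mean case simplifies to $\mathbb{E}[|M_T|^2]=2T\,\theta_2(0,0)=2T\,\wg(\mA,\mB)^2$.

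I would next treat the zero-mean case. There $\{M_t\}$ is a centered vector-valued square-integrable L\'evy martingale, so $|M_t|$ is a nonnegative submartingale by convexity of the Euclidean norm. Doob's $L^2$ maximal inequality applied to $|M_t|$ yields $\mathbb{E}[\sup_{t\le T}|M_t|^2]\le 4\,\mathbb{E}[|M_T|^2]$, and plugging in the moment identity above produces the desired bound; the paper's sharper constant $4T$ would be obtained by working directly with the predictable quadratic variation of the L\'evy martingale $M$ rather than invoking Doob as a black box.

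For the general drifted case, I would decompose $M_t = t(m_\mA-m_\mB)+\widetilde M_t$, where $\widetilde M_t := M_t - t(m_\mA-m_\mB)$ is the compensated L\'evy process---itself a zero-mean square-integrable L\'evy martingale, whose second-moment finiteness is guaranteed by $\mJ_* \in \mcg^\Lambda_2(\mathbb{R}^{2d})$ in Theorem~\ref{main1}(ii). Applying the zero-mean bound to $\widetilde M$, controlling the deterministic drift via $\sup_{t\le T} t^2|m_\mA-m_\mB|^2 = T^2|m_\mA-m_\mB|^2\le 2T^2\wg(\mA,\mB)^2$, and combining through the elementary inequality $|a+b|^2\le 2(|a|^2+|b|^2)$ (or Minkowski in $L^2(\Omega)$) yields the $8\max\{T,T^2\}\wg(\mA,\mB)^2$ estimate.

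The routine ingredients are thus the explicit second-moment identity \eqref{eq:optgrowth} from Theorem~\ref{main1}(iii) and Doob's $L^2$ maximal inequality, together with the elementary decomposition into drift and martingale parts. The main anticipated obstacle is obtaining the sharp numerical constants claimed in the theorem; a naive application of Doob to the submartingale $|M_t|$ loses a factor of $2$ relative to the stated bound, so the sharp version likely requires a direct argument exploiting the L\'evy-martingale structure of $\widetilde M$ (for example, via its predictable quadratic variation and the explicit contribution of the L\'evy triplet of $\mJ_*$ identified in Theorem~\ref{main1}(ii)) rather than a black-box submartingale inequality.
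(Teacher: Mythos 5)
Your approach coincides with the paper's: $M_t := X_t - Y_t$ is a square-integrable L\'evy martingale in the zero-mean case, so $|M_t|$ is a nonnegative submartingale and Doob's $L^2$ maximal inequality applies; the second moment $\mathbb{E}|M_T|^2 = 2(e^{T\mJ_*}c_2)(0,0)$ is read off from \eqref{eq:optgrowth}; and the drifted case follows by centering and $|a+b|^2 \le 2(|a|^2+|b|^2)$. The paper's packaging via a symmetric matrix $Q$ with $Q^2 = U$ (where $U$ is the block matrix encoding the quadratic form $|x-y|^2 = (x,y)^\top U (x,y)$) and the identity $|X_t-Y_t| = |QZ_t|$ is cosmetically different but the underlying argument is the same.

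Your flagged concern about the constant in the zero-mean case is substantive and is \emph{not} resolved by the paper. You correctly compute that Doob yields $\mathbb{E}[\sup_{t\le T}|M_t|^2] \le 4\,\mathbb{E}|M_T|^2 = 8\,(e^{T\mJ_*}c_2)(0,0) = 8T\,\wg(\mA,\mB)^2$, and you speculate that the stated $4T$ must come from a L\'evy-specific sharpening (e.g., via predictable quadratic variation). In fact the paper applies the same black-box Doob inequality and then writes $4\,\mathbb{E}|QZ_T|^2 = 2\,\mathbb{E}|X_T-Y_T|^2$, but this is inconsistent with its own definitions: since $Q^2=U$, one has $|QZ_T|^2 = Z_T^\top U Z_T = |X_T-Y_T|^2$ \emph{pointwise}, so $4\,\mathbb{E}|QZ_T|^2 = 4\,\mathbb{E}|X_T-Y_T|^2 = 8\,(e^{T\mJ_*}c_2)(0,0) = 8T\,\wg(\mA,\mB)^2$. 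The argument as presented therefore supports the constant $8T$ in the zero-mean case (and, carrying this through the centering decomposition, $16\max\{T,T^2\}$ in the general case), not $4T$ and $8\max\{T,T^2\}$. Your instinct that the sharper constant requires more than a naive application of Doob is thus correct; to claim $4T$ you would actually need to supply the quadratic-variation or BDG-type argument you alluded to, since the paper's proof does not furnish one.
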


We remark that the metrics $\mathcal{W}_{\mathcal{G}}$ and $\mathcal{W}_\Lambda$ were recently introduced in \cite{LimTeoh2025}, while the latter already appeared earlier in \cite{Kolokoltsov2010LevyKhintchine}. In both works, these metrics were used to establish important results, such as propagation of chaos and the generation of L\'evy-type generators, under Lipschitz continuity assumptions w.r.t. these metrics. Although it was suggested in \cite{LimTeoh2025} that $\mathcal{W}_{\mathcal{G}}$ and $\mathcal{W}_\Lambda$ indeed define valid metrics, a formal proof was not given. Theorem~\ref{main3} completes this picture by rigorously verifying their metric properties.

We emphasize that Theorem~\ref{main3} and Corollary~\ref{cor:main4} provide the first rigorous construction of a transport-type metric on the space of L\'evy generators $\mathcal{G}_2^\Lambda(\mathbb{R}^d)$ and the space of L\'evy measures $\La_2(\mbr^d)$. 
While, as mentioned earlier, Wasserstein-type metrics on L\'evy measures have appeared earlier in the literature, our work further establishes their fundamental metric properties. 
This advancement opens new avenues for quantitative analysis in stochastic processes, including stability estimates, propagation of chaos for L\'evy-type mean-field systems, and potential applications in optimal transport and probabilistic numerics.

Further structural and topological properties of the space \( \mcg_2^\Lambda(\mathbb{R}^d) \) under the metric \( \mcW_\mcg \) and the space $\La_2(\mbr^d)$ under the metric $\mcW_\La$, including compactness criteria and characterization of convergence—will be developed in Section~\ref{sec-wass-metric}.

\subsection{Discussion, methods of proof, and organization}\label{sec2.9}
Let us highlight our contributions in this work. As mentioned earlier, the optimal Markovian coupling problem for diffusion processes, including the drift–diffusion generator considered in Theorem~\ref{main1}, was established by \cite{ChenLi1989}. In fact, as we will see later, in this case the optimal coupling can be explicitly constructed using the results of \cite{givens1984class} on the optimal coupling between Gaussian measures. On the other hand, the case of bounded jump operators has been settled by Zhang \cite{Zhang2000}. Consequently, our main contribution lies in addressing the jump component for generators with \emph{unbounded} Lévy measures, where previous techniques do not directly apply.

Let us next discuss the difference in our proof strategy compared to Zhang~\cite{Zhang2000}. There, bounded jump operators are considered, so any coupling $\mathcal{J}$ is also bounded and can be represented via a jump kernel $\lambda(x,y,dx',dy')$:
\begin{align*}
	(\mathcal{J} \Phi)(x,y) &= \int_{\Pi^2} [\Phi(x',y')-\Phi(x,y)] \, d\lambda(x,y,dx',dy').
\end{align*}
Zhang constructed the jump kernel corresponding to an optimal Markovian coupling by taking the pointwise (in $(x,y) \in \Pi$) limit along a subsequence as $t \searrow 0$:
\begin{align*}
	\lambda(x,y,dx',dy') &= \lim_{t \searrow 0} \frac{1}{t} \big[ \gamma_t - \delta_{(x,y)} \big],
\end{align*}
where $\gamma_t$ is a $c$-optimal coupling between the laws $\delta_x e^{t\mathcal{A}}$ and $\delta_y e^{t\mathcal{B}}$.  
The existence of this limit along a subsequence follows from the compactness of measures. A key technical challenge is establishing the measurability of the kernel $\lambda(x,y)$ with respect to $(x,y)$, which is achieved using a measurable selection theorem.

Our construction strategy is to tackle the optimal Markovian coupling problem for L\'evy generators by considering each component—drift, diffusion, and jump—separately, and then combining the results using the additivity property of the problem and the strong duality (to be established later).  
The main challenge, of course, lies in constructing the optimal coupling for the jump components. While the pointwise limit approach used by Zhang~\cite{Zhang2000} could potentially be adapted to this setting, we adopt a different strategy.  
Specifically, we construct an optimal coupling by solving an \emph{optimal transport problem between L\'evy measures}, analogous to the classical optimal transport problem for probability measures.  
Given two L\'evy measures $\mu, \nu$ on $\mathbb{R}^d$, we seek a coupling (in an appropriate sense) $\gamma$ of $\mu$ and $\nu$, which is itself a L\'evy measure on $\mathbb{R}^{2d}$, that minimizes the transport cost between these two measures.  
To the best of our knowledge, this is the first time an optimal transport problem has been studied systematically for L\'evy measures.  
We refer to this as the \emph{L\'evy optimal transport problem}. Once a minimizer is identified, it directly yields the optimal Markovian jump operator for the pair of processes.

Let us remark on the connection between the optimal Markovian coupling problem and the optimality gap problem. At first sight these may appear to be independent questions, but in fact they are closely intertwined. Specifically, once the optimal coupling generators have been constructed for each component --- drift, diffusion, and jump --- it is in fact straightforward to verify that their superposition yields an optimal coupling, though only within the class of \emph{L\'evy} (or even L\'evy-type) coupling generators. To extend this optimality to the larger class of all Feller coupling generators, or even Markovian coupling semigroups, one needs to appeal to the duality: the dual problem provides the verification principle that confirms the superposition is indeed minimal.

\subsubsection*{Future directions}
Let us also outline some possible future directions of this research: 
\begin{enumerate}
	\item \textbf{General \(p \in [1, \infty)\).}  
	The optimal Markovian coupling problem for L\'evy generators can be formulated for the general \(p\)-cost
	\(
	c_p(x,y) = \frac{1}{p} |x-y|^p.
	\)  
	In this work, we focus on the case \(p = 2\), primarily because of the self-duality of \(\frac{1}{2}|x|^2\). This property simplifies certain arguments (e.g., Proposition \ref{0isenough}), since the Legendre transform of \(\frac{1}{2}|x|^2\) is itself. For general \(p\), this convenience is lost, and the problem may become substantially more challenging. For instance, an optimal Markovian coupling between $\mA,\mB$, if exists, may not be translation-invariant (i.e., a L\'evy generator).
	
	\item \textbf{General L\'evy-type generators.}  
	One may pose the optimal coupling problem for a pair of L\'evy-type generators, i.e., operators of the form \eqref{LK-decomp} where the triplet \((\kappa, \alpha, \mu)\) depends on \(x \in \mathbb{R}^d\).  
	A L\'evy-type generator with bounded second moment can be viewed as a family of L\'evy generators \(\{\mathcal{A}(x)\}_x\) with \(\mathcal{A}(x) \in \mathcal{G}_2^\Lambda(\mathbb{R}^d)\). A natural candidate for an optimal coupling is then \(\{\mathcal{J}_*(x,y)\}_{x,y}\), where \(\mathcal{J}_*(x,y)\) is an optimal coupling of \(\mathcal{A}(x)\) and \(\mathcal{B}(y)\) guaranteed by Theorem \ref{main1}.  
	However, several challenges remain: the optimal coupling may not be unique, which can create measurability issues, and the family \(\{\mathcal{J}_*(x,y)\}_{x,y}\) may not generate a Feller semigroup. These problems warrant further investigation.
	
	\item \textbf{Applications of the metric \(\mathcal{W}_\mathcal{G}\).}  
	Given the Wasserstein generator metric \(\mathcal{W}_\mathcal{G}\) defined in \eqref{def:Wg}, one can explore a variety of potential applications, as illustrated in \cite{Kolokoltsov2010LevyKhintchine,LimTeoh2025}.  
	For instance, consider a L\'evy-type generator \(\{\mathcal{A}(x)\}_x\) that is Lipschitz with respect to \(\mathcal{W}_\mathcal{G}\), meaning that
	\[
	\mathcal{W}_\mathcal{G}(\mathcal{A}(x), \mathcal{A}(y)) \le C |x-y| \quad \text{for some constant } C \ge 0.
	\]  
	A natural question arising from this observation is whether such a Lipschitz condition guarantees that \(\mathcal{A}\) indeed generates a Feller semigroup.
	
	\item \textbf{Optimal Markovian coupling problem for generalized couplings}. 
	In the present work, the optimal coupling is sought within the class of \emph{Feller generators/ processes}. 
	In more general settings, however, the Feller property may be too restrictive. 
	It is therefore natural to enlarge the framework and search for optimal couplings within broader notions, such as general Markov semigroups, Markov processes, or solutions to martingale problems. 
	This calls for a suitably generalized formulation of the coupling problem; see Remark~\ref{rem:2.16}.
	
\end{enumerate}

\subsubsection*{Organization}
The remainder of this article is organized as follows. Section~\ref{sec-lotp} introduces and solves the L\'evy optimal transport problem between L\'evy measures. The development closely parallels the classical theory of optimal transport, including the existence of minimizers (optimal couplings), fundamental structural properties, and a Kantorovich-type duality. Section~\ref{sec-opt-markov} addresses the optimal Markovian coupling problem and the associated optimality gap problem for each component of a L\'evy generator --- drift, diffusion, and jump --- as an intermediate step. Building on this, Section~\ref{sec-dual-gap} establishes the additivity property, which allows us to combine the componentwise results and thereby prove the main theorems stated in Section~\ref{sec2.7} (Theorems~\ref{main1}, \ref{main2}, and \ref{main-LK}). Section~\ref{sec-wass-metric} is devoted to the study of the Wasserstein-type metrics $\mathcal{W}_\mathcal{G},\mcW_\La$, where we establish Theorem~\ref{main3}, \ref{main5}, Corollary \ref{cor:main4}, and further investigate their topological properties. Finally, the appendix collects the proofs of several technical lemmas used throughout Sections~\ref{sec-lotp}--\ref{sec-wass-metric}.

\section{L\'evy Optimal Transport Problem}\label{sec-lotp}

In this section, we formulate and solve a variation of the classical optimal transport problem between two L\'evy measures \(\mu\) and \(\nu\), including cases where \(\mu\) and \(\nu\) have different, or even infinite, total mass. We refer to this formulation as the \emph{L\'evy optimal transport problem}. 
As will be seen in the next section, this problem is closely related to the optimal coupling problem of pure jump L\'evy processes.

We begin this section with a brief overview of the classical optimal transport problem on \(\mathbb{R}^d\) with quadratic cost. The L\'evy optimal transport problem is then introduced in the subsequent subsection, along with motivating examples and preliminary results. In the final two subsections, we establish the existence of minimizers for the L\'evy transport problem and develop its dual formulation, drawing parallels with the classical theory.

In the proof, to simplify notation for computations we set $c_2(x,y) := \frac12 |x-y|^2$ for the squared cost, and use the natural pairing
\[
\langle \mu, f \rangle := \int f \, d\mu,
\]
where the integral is taken over the space ($\mathbb{R}^d$ or $\mathbb{R}^{2d}$) on which the (L\'evy) measure $\mu$ is defined. This notation can also be extended to certain unbounded functions $f$ provided that the integral is well-defined.

\subsection{Classical optimal transport problem}
Let us begin our discussion with the \emph{classical optimal transport problem} on $\mbr^{d}$ with respect to the squared cost. 

\begin{definition}[Coupling of measures and 2-optimality]\label{coupling_optimality_definition}
    Let $\mu,\nu\in \mcl{M}(\mbr^d)$ be a pair of bounded measures with equal mass: $\mu(\mbr^d)=\nu(\mbr^d)$. 
    
    (i) A \emph{coupling measure} is a (bounded) measure $\ga\in\mcl{M}(\mbr^{2d})$ such that it holds for all measurable $E\subset \mbr^{d}$
    \begin{align*}
        \gamma(E\times \mbr^d)=\mu(E),\qquad \gamma(\mbr^d\times E)=\nu(E). 
    \end{align*}
    Denote $\Ga(\mu,\nu)$ \emph{the family of all coupling}. 

    (ii) The \emph{2-optimal transport cost between $\mu,\nu$} is defined by
    \begin{align*}
        \mcc_2(\mu,\nu):=\inf_{\ga\in\Ga(\mu,\nu)} \inn{\ga,c_2}=\inf_{\ga\in\Ga(\mu,\nu)} \int_{\mbr^{2d}}\frac 12 |x-y|^2 d\gamma(x,y).
    \end{align*}
    
    (iii) A \emph{2-optimal coupling} of the pair $\mu,\nu$ is a coupling measure $\ga_*\in\Ga(\mu,\nu)$ such that 
    \begin{align*}
        \inn{\ga_*,c_2}=\int_{\mbr^{2d}} \frac 12 |x-y|^2 d\ga_*(x,y)&= \mcc_2(\mu,\nu). 
    \end{align*}

    (iv) A bounded measure $\ga\in \mcl{M}(\mbr^{2d})$ is \emph{2-optimal} if it is it is 2-optimal w.r.t. its marginal.
\end{definition}

\begin{remark}
	In the literature, (e.g., \cite{MR3050280,MR2459454}), the optimal transport problem is typically studied in the setting of \emph{probability measures}, that is, when \( \mu(\mathbb{R}^d) = \nu(\mathbb{R}^d) = 1 \). The problem and many of the associated results to be presented later, e.g., existence of minimizer, duality, can be naturally extended to the more general setting of bounded measures with equal mass, i.e., \( \mu(\mathbb{R}^d) = \nu(\mathbb{R}^d) < \infty \).
\end{remark}

In the classical theory of optimal transport, the fundamental theorem of optimal transport establishes a deep connection between optimal coupling measures, cyclical monotonicity, and Kantorovich duality. The following definition and theorem formalize this relationship. While they can be stated in the general setting of a Polish space \( \Pi \) and cost $c$, we restrict our attention to the case \( \Pi = \mathbb{R}^d \) and $c(x,y)=c_2(x,y)=\f 12 |x-y|^2$. 

\begin{definition}[$2$-cyclical monotonicity]
	A measurable set $E\subset \mbr^{d}\times \mbr^{d}$ is \emph{$2$-cyclically monotone} if for any finite sequences $\{(x_k,y_k)\}_{k=1}^N\subset E$, and any permutations $\si$ on $\{1,2,\cdots, N\}$, it holds
	\begin{align*}
		\sum_{k=1}^N \f 12  |x_k-y_k|^2 &\le \sum_{k=1}^N \frac 12 |x_k-y_{\si(k)}|^2.
	\end{align*}
\end{definition}

\begin{theorem}[The fundamental theorem of optimal transport \cite{MR3050280}]\label{FTOT}
	Let $E\subset \mbr^{2d}$ be measurable. The following are equivalent:
	
	(i) $E$ is $2$-cyclically monotone;
	
	(ii) there are lower semicontinuous functions $\varphi,\psi:\mbr^d\to[-\infty,\infty)$ with $\varphi\oplus\psi\le c_2$ such that
	\begin{align*}
		\varphi(x)+\psi(y)= c_2(x,y),\qquad \mbox{ for all }(x,y)\in E;
	\end{align*}
	
	(iii) for any bounded measures $\ga$ with $\mathrm{supp}(\ga)\subset E $, $\ga$ is classically 2-optimal. 
\end{theorem}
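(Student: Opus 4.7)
The plan is to prove the theorem by the cyclic chain $(i) \Rightarrow (ii) \Rightarrow (iii) \Rightarrow (i)$, adapting the classical Rockafellar--R\"uschendorf framework to the quadratic cost setting.

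For $(i) \Rightarrow (ii)$, I would construct $\varphi$ explicitly as a Rockafellar-type potential. Fix a reference point $(x_0, y_0) \in E$ (if $E$ is empty the statement is vacuous) and set
\[
\varphi(x) := \inf \sum_{k=0}^{N-1} \bigl[c_2(x_{k+1}, y_k) - c_2(x_k, y_k)\bigr],
\]
where the infimum runs over finite chains $\{(x_k, y_k)\}_{k=1}^{N} \subset E$ with $x_N := x$. The 2-cyclical monotonicity of $E$ is precisely the condition forcing $\varphi(x_0) \ge 0$ (and in fact $\varphi(x_0) = 0$ via the trivial chain), so $\varphi$ is $[-\infty, \infty)$-valued and not identically $-\infty$. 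Since $c_2$ is continuous, $\varphi$ is an infimum of continuous functions; to achieve lower semicontinuity I would pass to the double $c_2$-transform $\varphi^{cc}$, which remains admissible and still touches $c_2$ on $E$. Define $\psi$ as the $c_2$-transform $\psi(y) := \inf_{x \in \mathbb{R}^d} [c_2(x,y) - \varphi(x)]$. By construction $\varphi \oplus \psi \le c_2$ globally, and unraveling the definition together with the cyclic monotonicity shows $\varphi(x) + \psi(y) = c_2(x,y)$ for every $(x,y) \in E$.

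For $(ii) \Rightarrow (iii)$, let $\gamma \in \mathcal{M}(\mathbb{R}^{2d})$ be bounded with $\supp(\gamma) \subset E$ and marginals $\mu, \nu$. For any competitor $\gamma' \in \Gamma(\mu, \nu)$ the chain
\[
\langle \gamma, c_2 \rangle = \langle \gamma, \varphi \oplus \psi \rangle = \langle \mu, \varphi \rangle + \langle \nu, \psi \rangle = \langle \gamma', \varphi \oplus \psi \rangle \le \langle \gamma', c_2 \rangle
\]
gives the 2-optimality of $\gamma$: the first equality uses the touching condition on $E$, the second and third the common marginals, and the last the global inequality $\varphi \oplus \psi \le c_2$. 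The delicate point is integrability, which one handles by noting that $\gamma$ has finite mass so $\langle \gamma, c_2 \rangle < \infty$, and that the identity $\varphi + \psi = c_2$ on $E$ together with the freedom to replace $(\varphi, \psi)$ by $(\varphi - C, \psi + C)$ transfers $\mu$- and $\nu$-integrability to each factor individually. For $(iii) \Rightarrow (i)$ I argue by contrapositive: if $E$ fails to be 2-cyclically monotone, there exist $\{(x_k, y_k)\}_{k=1}^{N} \subset E$ and a permutation $\sigma$ of $\{1,\dots,N\}$ with $\sum_k c_2(x_k, y_k) > \sum_k c_2(x_k, y_{\sigma(k)})$. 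Then $\gamma := \sum_{k=1}^{N} \delta_{(x_k, y_k)}$ is bounded and supported in $E$, with marginals $\mu = \sum_k \delta_{x_k}$ and $\nu = \sum_k \delta_{y_k}$, while $\gamma' := \sum_{k=1}^{N} \delta_{(x_k, y_{\sigma(k)})} \in \Gamma(\mu, \nu)$ realizes strictly smaller cost, contradicting the 2-optimality of $\gamma$.

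The principal obstacle is the Rockafellar construction in $(i) \Rightarrow (ii)$: producing $\varphi$ and $\psi$ that are simultaneously lower semicontinuous, $[-\infty, \infty)$-valued, and preserve the touching equality on \emph{all} of $E$ (not merely on a $\gamma$-full subset). The integrability concern in $(ii) \Rightarrow (iii)$ is a secondary technical point, resolved by exploiting the boundedness of $\gamma$ and the affine renormalization freedom afforded by the fact that both generators and the cost are invariant under adding opposite constants to the potentials.
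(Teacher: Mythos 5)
The paper states Theorem~\ref{FTOT} without proof, citing Ambrosio--Gigli \cite{MR3050280}, so there is no paper proof to compare against; what follows evaluates your proposal on its own terms. Your overall strategy — the cyclic chain $(i)\Rightarrow(ii)\Rightarrow(iii)\Rightarrow(i)$ via the Rockafellar--R\"uschendorf construction — is the standard and correct route, and $(iii)\Rightarrow(i)$ by contrapositive is flawless. Two steps need repair.

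First, the regularity claim in $(i)\Rightarrow(ii)$ is backwards. The Rockafellar potential $\varphi$ is an infimum over chains of expressions of the form $c_2(\cdot,y_{N-1})-\text{const}$, i.e.\ an infimum of continuous functions of $x$; it is therefore \emph{upper} semicontinuous, not lower. Passing to the double transform $\varphi^{cc}$ does not help: $\varphi^{cc}$ is again of the form $\inf_y[c_2(\cdot,y)-\varphi^c(y)]$ and is hence also usc (and in fact $\varphi^{cc}=\varphi$, since the Rockafellar $\varphi$ is already $c_2$-concave, so the step is a no-op). Writing $\varphi(x)=\tfrac12|x|^2+u(x)$ exposes $u$ as an infimum of affine functions, i.e.\ concave usc, confirming $\varphi$ is usc. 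The theorem as printed says ``lower semicontinuous,'' but the codomain $[-\infty,\infty)$ is the natural range for usc (not lsc) functions, so this is almost certainly a typo in the source; you should state the conclusion as usc, or work with the equivalent formulation that $\tfrac12|x|^2-\varphi(x)$ is convex lsc.

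Second, your integrability resolution for $(ii)\Rightarrow(iii)$ does not work: replacing $(\varphi,\psi)$ by $(\varphi-C,\psi+C)$ shifts by a constant on a finite-mass space, which does not alter $L^1$-membership of either factor. The genuine fix is pointwise: fix $y_0$ with $\psi(y_0)>-\infty$ and use $\varphi(x)\le c_2(x,y_0)-\psi(y_0)$ to bound $\varphi^+(x)\le c_2(x,y_0)+|\psi(y_0)|$; when $\langle\gamma,c_2\rangle<\infty$ the marginal $\mu$ has finite second moment, so $\varphi^+\in L^1(\mu)$, and similarly $\psi^+\in L^1(\nu)$. This makes $\langle\mu,\varphi\rangle,\langle\nu,\psi\rangle\in[-\infty,\infty)$ well-defined, legitimizing the split $\langle\gamma,\varphi\oplus\psi\rangle=\langle\mu,\varphi\rangle+\langle\nu,\psi\rangle$; finiteness of $\langle\gamma,c_2\rangle$ then forces both summands to be finite. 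The case $\langle\gamma,c_2\rangle=\infty$ needs a separate word (optimality is either vacuous or handled by restriction to large balls). Finally, the chain indexing in your Rockafellar formula is slightly garbled — it is $(x_0,y_0),\dots,(x_{N-1},y_{N-1})\in E$ with $x_N:=x$ free, not $\{(x_k,y_k)\}_{k=1}^N\subset E$ with $x_N:=x$ — but the intent is clear.
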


\subsection{L\'evy optimal transport problem}

Let \( \mu, \nu \in \La(\mathbb{R}^d) \) be two L\'evy measures. In this work, we are interested in a \emph{coupling} L\'evy measure \( \gamma \in \La(\mathbb{R}^{2d}) \) in the following sense: if \( \{X_t\} \) and \( \{Y_t\} \) are pure jump L\'evy processes on \( \mathbb{R}^d \) with L\'evy measures \( \mu \) and \( \nu \), respectively, and \( \{Z_t\} \) is a pure jump L\'evy process on \( \mathbb{R}^{2d} \) with L\'evy measure \( \gamma \), then \( \{Z_t\} \) is a \emph{Feller coupling process} of \( \{X_t\} \) and \( \{Y_t\} \). This motivates the following definition.

\begin{definition}[Coupling of L\'evy measures]\label{Levy_Coupling}
    Let $\mu,\nu\in\La(\mbr^d)$ be two Lévy measures on $\mathbb{R}^d$. A \emph{Lévy coupling} between $\mu,\nu$ is a Lévy measure $\gamma$ on $\mathbb{R}^{2d}$ such that for all measurable sets $E\subset\mathbb{R}^d$ that \emph{does not contain a neighbourhood of $0$}, it holds
    \begin{equation}\label{levy-marg-cond}
        \gamma(E\times\mathbb{R}^d)=\mu(E), \qquad \gamma(\mathbb{R}^d\times E)=\nu(E).
    \end{equation}
    We denote the family of all Lévy couplings of $\mu,\nu$ as $\Gamma^\Lambda(\mu,\nu)$
\end{definition}

\begin{remark}
	Unlike the classical coupling of probability measures, the marginal condition \eqref{levy-marg-cond} for Lévy couplings is only required to hold on measurable sets that \emph{do not} contain a neighborhood of the origin. At first glance, this restriction may seem unusual, but it is the natural choice in the context of Lévy measures, see Proposition \ref{coup-equiv-prop} below.
\end{remark}

\begin{remark}\label{rem:ganonempty}
    For \(\mu, \nu \in \La(\mathbb{R}^d)\), the set \(\Gamma^\La(\mu, \nu)\) is nonempty. For instance, let \(\gamma \in \La(\mathbb{R}^{2d})\) be defined by
    \begin{align*}
        \gamma = \mu \otimes \delta_0 + \delta_0 \otimes \nu,
    \end{align*}
    where \(\otimes\) denotes the product (tensor) of measures. In particular, \(\gamma\) is a Lévy measure supported on \(\mathbb{R}^d \times \{0\} \cup \{0\} \times \mathbb{R}^d\). One can readily verify that \(\gamma\) is a Lévy coupling of \(\mu\) and \(\nu\). 
    Indeed, if \(\mu\) and \(\nu\) are the Lévy measures associated with processes \(\{X_t\}\) and \(\{Y_t\}\), respectively, then \(\gamma\) corresponds to the Lévy measure of the process \(\{Z_t\} = \{(X_t, Y_t)\}\), assuming that \(\{X_t\}\) and \(\{Y_t\}\) are independent.
\end{remark}

In the coming proposition and lemma, let us provide some equivalent conditions for L\'evy couplings of $\mu,\nu$.

\begin{proposition}\label{equivalentLevyCoupling}
    Let $\mu,\nu\in \La(\mbr^d)$ and $\ga\in \La(\mbr^{2d})$. The following are equivalent:

    (i) $\ga\in \Ga^\La(\mu,\nu)$;

    (ii) for all bounded measurable functions $f$ that vanish on a neighbourhood of $0$, we have
    \begin{equation}\label{marg-int}
        \int_{\mbr^{2d}} f(x)d\gamma(x,y)= \int_{\mbr^d} f(x)d\mu(x),\qquad \int_{\mbr^{2d}} f(y)d\gamma(x,y)=\int_{\mbr^d} f(y)d\nu(x);
    \end{equation}
    
    (iii) for all bounded twice differentiable functions $f\in C_b^2(\mathbb{R}^d)$ that vanish on a neighbourhood of $0$, \eqref{marg-int} holds;

    (iv) for all bounded continuous functions $f\in C_b(\mathbb{R}^d)$ that vanish on a neighbourhood of $0$, \eqref{marg-int} holds;

    (v) for all bounded continuous functions $f\in C_b(\mbr^{d})$ such that $|f(x)|\le C|x|^2$ for some $C\ge 0$, \eqref{marg-int} holds.
\end{proposition}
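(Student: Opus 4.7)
The plan organizes the five assertions as $(i)\Leftrightarrow(ii)\Leftrightarrow(iii)\Leftrightarrow(iv)$ together with $(ii)\Leftrightarrow(v)$. Several implications are essentially immediate: $(ii)\Rightarrow(iv)\Rightarrow(iii)$ by inclusion of the listed function classes; $(v)\Rightarrow(iv)$ since any $f\in C_b(\mathbb{R}^d)$ vanishing on $B_r(0)$ obeys $|f(x)|\le\|f\|_\infty r^{-2}|x|^2$; and $(ii)\Rightarrow(i)$ by specialization to $f=\mathbf{1}_E$ for measurable $E\subset B_r(0)^c$. For $(i)\Rightarrow(ii)$ I would run a routine monotone-class argument on the family of bounded measurable functions supported in $B_r(0)^c$ for which \eqref{marg-int} holds: this family is a vector space closed under bounded monotone convergence and, by $(i)$, contains all such indicators.

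This leaves the two substantive steps $(iii)\Rightarrow(ii)$ and $(ii)\Rightarrow(v)$. For $(iii)\Rightarrow(ii)$, fix a bounded measurable $f$ vanishing on $B_r(0)$. The Lévy condition $\int\min\{1,|z|^2\}\,d\mu(z)<\infty$ yields $\mu(B_{r/2}(0)^c)<\infty$; and decomposing $\{(x,y):|x|>r/2\}\subset\mathbb{R}^{2d}$ into its intersections with $\{|(x,y)|\le 1\}$ and $\{|(x,y)|>1\}$ (using $|(x,y)|\ge|x|\ge r/2$ on the first set and the finiteness of $\gamma$ outside the unit ball on the second) shows that the first-coordinate push-forward of $\gamma$ is also finite on $B_{r/2}(0)^c$. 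Against the sum of these two finite Borel measures, Lusin's theorem combined with mollification and a smooth cutoff produces a uniformly bounded sequence $f_n\in C_c^\infty(\mathbb{R}^d\setminus B_{r/4}(0))\subset C_b^2(\mathbb{R}^d)$, vanishing on $B_{r/4}(0)$, that approximates $f$ in $L^1$ against both measures simultaneously. Passing to the limit in the identity from $(iii)$ applied to each $f_n$ yields \eqref{marg-int} for $f$.

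For $(ii)\Rightarrow(v)$, let $f\in C_b(\mathbb{R}^d)$ satisfy $|f(x)|\le C|x|^2$, and set $f_n:=(1-\chi_n)f$ for smooth cutoffs $\chi_n$ equal to $1$ on $B_{1/n}(0)$ and $0$ outside $B_{2/n}(0)$. Each $f_n$ is bounded measurable and vanishes on $B_{1/n}(0)$, so $(ii)$ applies. The quadratic bound near the origin together with the sup-norm bound elsewhere, combined with the Lévy integrability on $\mathbb{R}^d$ and on $\mathbb{R}^{2d}$ (using $|f(x)|\le C|(x,y)|^2$), gives $\int|f|\,d\mu<\infty$ and $\int|f(x)|\,d\gamma(x,y)<\infty$. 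Since $f_n\to f$ pointwise with $|f_n|\le|f|$, dominated convergence promotes the identity for $f_n$ to the identity for $f$.

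The main obstacle is the simultaneous $L^1$-approximation required in $(iii)\Rightarrow(ii)$: one must produce smooth approximants that lie in $C_b^2(\mathbb{R}^d)$, vanish on a fixed neighborhood of the origin, and converge in $L^1$ against both $\mu$ and the appropriate marginal of $\gamma$. The Lévy integrability, which reduces these measures to finite Borel measures on $B_{r/2}(0)^c$, is precisely what makes the classical Lusin--mollification machinery applicable; without finiteness on sets bounded away from the origin, the density argument would fail.
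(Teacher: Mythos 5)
Your proof is correct, but it takes a genuinely different route from the paper's. The paper runs a single cycle $(i)\to(ii)\to(iii)\to(iv)\to(v)\to(i)$: the only direction going from a smaller test-function class to a larger one is $(v)\to(i)$, which it handles by Urysohn approximation of indicators of open sets bounded away from the origin, followed by a $\pi$-$\lambda$ argument. You instead close the equivalence web by proving $(iii)\Rightarrow(ii)$ directly, which is the sharpest jump in regularity and requires the Lusin--Tietze--mollification machinery (i.e., density of smooth functions in $L^1$ of a finite Borel measure). Your preliminary reduction to finite measures on $B_{r/2}(0)^c$ --- noting $\mu(B_{r/2}^c)<\infty$ by the L\'evy condition and bounding $\gamma(\{|x|>r/2\})$ by splitting on $\{|(x,y)|\le 1\}$ and its complement --- is exactly what makes this density argument work, and you are right to emphasize it as the crux. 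Where your approach is lighter is in $(v)\Rightarrow(iv)$: the observation that any $f\in C_b$ vanishing on $B_r(0)$ automatically satisfies $|f(x)|\le \|f\|_\infty r^{-2}|x|^2$ is a clean shortcut that lets you skip the paper's Urysohn-plus-$\pi$-$\lambda$ step entirely. The trade-off is that the paper's route uses only simple-function and locally-uniform approximation plus dominated convergence, all elementary, while yours front-loads the technical work into a single Lusin argument. One small stylistic note: in $(iii)\Rightarrow(ii)$ you don't actually need the approximants $f_n$ to be uniformly bounded --- $L^1(\lambda)$ convergence alone suffices to pass to the limit in the identity, so the boundedness refinement is harmless but unnecessary.
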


\begin{remark}
	It is instructive to contrast this with the classical setting of probability measure couplings. In that case, the equivalence stated in Proposition \ref{equivalentLevyCoupling} holds without the restriction that the functions vanish on a neighborhood of the origin.
\end{remark}

\begin{remark}
	Using the natural pairing notation, the marginal integral condition \eqref{marg-int} can be rewritten in the more compact form:
	\begin{align}\label{marg-pairing}
		\inn{\ga, f\otimes 1}=\inn{\mu,f},\qquad \inn{\ga,1\otimes f}=\inn{\nu,f}. 
	\end{align}
\end{remark}

\begin{proof}
(i) $\to$ (ii). It is straightforward from Definition \ref{Levy_Coupling} that \eqref{marg-int} holds for all characteristic functions $f$ of sets that does not contain a neighborhood of $0$. One may then approximate any bounded measurable functions vanishing around 0 as a sequence of simple functions. The dominated convergence theorem implies \eqref{marg-int} holds for all such functions.

(ii) $\to$ (iii). Trivial.

(iii) $\to$ (iv). Let \( f \in C_b(\mathbb{R}^d) \) vanish on \( B_r(0) \) for some \( r>0 \). Then there exists a sequence \( \{f_n\}_{n} \subset C_b^2(\mathbb{R}^d) \), each vanishing on a smaller ball (e.g., \( B_{r/2}(0) \)), such that \( f_n \to f \) locally uniformly. Since \eqref{marg-int} holds for all \( f_n \), letting \( n \to \infty \) and applying the bounded convergence theorem yields the desired equality for \( f \).

(iv) $\to$ (v). 
Given a bounded continuous functions $f\in C_b(\mbr^{d})$ such that $|f(x)|\le C|x|^2$ for some $C\ge 0$, we may define a sequence of bounded continuous functions $\{f_n\}$ such that $f_n$ vanishes in $B_{1/n}(0)$, the sequence converges pointwise to $f$, and $|f_n|\le C|x|^2$.
Since \eqref{marg-int} holds for all $f_n$ and $\min\cb{1,|x|^2}\in L^1(\mu),L^1(\nu)$, invoking the dominated convergence theorem, \eqref{marg-int} holds for $f$ as well.  

(v) $\to$ (i). Given an open set $E\subset \mbr^d\setminus B_\ep(0)$ for some $\ep>0$, by Urysohn lemma we may find a sequence of continuous functions $\{f_n\}$ vanishing on $B_{\ep/2}(0)$ such that $f_n\searrow \chi_E$ pointwise. We note that for each $n$, $f_n$ satisfies Condition (iii) and thus \eqref{marg-int} holds. Applying the dominated convergence theorem, passing $n\to\infty$, \eqref{marg-int} holds for $f=\chi_E$, that is, \eqref{levy-marg-cond} holds for all open sets that does not contain a neighborhood of origin. Applying a standard argument involving $\pi$-$\lambda$ theorem, \eqref{levy-marg-cond} extends to all measurable sets $E$ that does not contain a neighborhood of origin. This proves (i). 
\end{proof}

Let us record the following simple lemma, which will be useful later.

\begin{lemma}\label{FTOLOT3to1lemma}
    Suppose $\gamma \in \Gamma^{\Lambda}(\mu, \nu)$ and let $f:\mathbb{R}^d \to \mathbb{R}$ be measurable such that $f^+ \in L^1(\mbr^d,d\mu)$. Then $f^+ \otimes 1 \in L^1(\mbr^{2d},d\gamma)$ and
    \begin{align}\label{marg2}
        \int_{\mathbb{R}^{2d}} f(x)\,d\gamma(x,y) = \int_{\mathbb{R}^d} f(x)\,d\mu(x).
    \end{align}
    The integrals above are understood to take values in $[-\infty, \infty)$.
\end{lemma}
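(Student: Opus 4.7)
The plan is to reduce the assertion to Proposition \ref{equivalentLevyCoupling}(ii) via a double-truncation and the monotone convergence theorem. The two obstacles in applying the marginal identity \eqref{marg-int} directly are (a) $f^\pm$ may be unbounded, and (b) $\gamma$ is a L\'evy measure and may have infinite mass near the origin, so the marginal identity is only available against functions that vanish on a neighborhood of $0$.

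I would first split $f = f^+ - f^-$ and treat each nonnegative piece separately, with values in $[0,\infty]$. For the positive part, I would introduce the double truncation
$$
f_n^+(x) \;:=\; \min\{f^+(x),\,n\}\,\chi_{\{|x|>1/n\}}(x), \qquad n \ge 1.
$$
Each $f_n^+$ is bounded, measurable, and vanishes on $B_{1/n}(0)$, so viewing it as a function of the first variable on $\mathbb{R}^{2d}$, Proposition \ref{equivalentLevyCoupling}(ii) applies and yields
$$
\int_{\mathbb{R}^{2d}} f_n^+(x)\,d\gamma(x,y) \;=\; \int_{\mathbb{R}^d} f_n^+(x)\,d\mu(x).
$$
Because both the ceiling $\min\{\cdot,n\}$ and the indicator $\chi_{\{|x|>1/n\}}$ expand with $n$, the sequence $\{f_n^+\}$ is monotone increasing and converges pointwise to $f^+$ off the origin. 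Applying monotone convergence on both sides, together with $\mu(\{0\})=0$, gives
$$
\int_{\mathbb{R}^{2d}} f^+(x)\,d\gamma(x,y) \;=\; \int_{\mathbb{R}^d} f^+(x)\,d\mu(x) \;<\; \infty,
$$
which simultaneously delivers the integrability $f^+ \otimes 1 \in L^1(\gamma)$ and the marginal identity for $f^+$.

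The identical argument applied to $f^-$ (whose $\mu$-integral may be infinite, which is allowed) yields
$$
\int_{\mathbb{R}^{2d}} f^-(x)\,d\gamma(x,y) \;=\; \int_{\mathbb{R}^d} f^-(x)\,d\mu(x) \;\in\; [0,\infty],
$$
and subtracting in the extended sense---permitted because the $f^+$ integral is finite---produces \eqref{marg2} with value in $[-\infty,\infty)$. The proof is a routine MCT exercise once the truncations are chosen compatibly; the only points to verify are the monotonicity of $\{f_n^+\}$ and the admissibility of each $f_n^+$ for Proposition \ref{equivalentLevyCoupling}(ii). I do not anticipate a genuine obstacle here.
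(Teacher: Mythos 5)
Your proof is correct and follows essentially the same route as the paper's: truncate near the origin to make Proposition~\ref{equivalentLevyCoupling}(ii) applicable, pass to the limit by monotone convergence, and then combine positive and negative parts in the extended sense. In fact your double truncation $\min\{f^+,n\}\chi_{\{|x|>1/n\}}$ is slightly more careful than the paper's single spatial cutoff $f^\pm\chi_{B_\varepsilon^c(0)}$: Proposition~\ref{equivalentLevyCoupling}(ii) is stated only for \emph{bounded} measurable functions vanishing near $0$, and $f^\pm\chi_{B_\varepsilon^c(0)}$ need not be bounded, so the paper's direct invocation of that proposition implicitly requires exactly the ceiling truncation you made explicit. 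The only point you should state explicitly rather than leave implicit is that the pointwise convergence $f_n^+ \to f^+$ fails only on $\{0\}$ (resp.\ $\{0\}\times\mathbb{R}^d$), and that $\mu(\{0\})=0$ and $\gamma(\{0\}\times\mathbb{R}^d)=\mu(\{0\})=0$ make this a $\mu$- and $\gamma$-null set, so MCT applies.
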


\begin{proof}
	For $\ep>0$, define the cutoff functions $f_\ep := f\chi_{B_{\ep}^c(0)}$. We then have
    \[
        f_\ep^\pm=\max\cb{\pm f,0} = f^\pm\chi_{B_{\ep}^c(0)},
    \]
    and the sequences $\{f_\ep^\pm\}$ increase pointwise to $f^\pm$ as $\ep \nearrow 0$.

    Since $\gamma \in \Gamma^{\Lambda}(\mu, \nu)$, by Proposition \ref{equivalentLevyCoupling}, \eqref{marg2} holds with $f^\pm_\ep$ in place of $f$.
    By the monotone convergence theorem, \eqref{marg2} also holds with $f^\pm$ in place of $f$.
    In particular, since $f^+ \in L^1(\mu)$, the integral of $f^+$ with respect to $\gamma$ is finite, i.e., $f^+ \otimes 1 \in L^1(\gamma)$.
    Finally, since $f^+ - f^- = f$, \eqref{marg2} follows by subtracting the integrals of positive and negative parts.
\end{proof}

As mentioned earlier, the motivation behind the above definition is to relate a coupling of pure jump Lévy processes with their corresponding Lévy measures. The proposition below establishes this connection. 


\begin{proposition}\label{coup-equiv-prop}
	Let $\mu,\nu\in \La_2(\mbr^d)$, $\ga\in \La_2(\mbr^{2d})$,
	$\mA=\Lag(0,0,\mu),\mB=\Lag(0,0,\nu)$ and $\mJ=\Lag (0,0,\ga)$. Then $\mJ\in \Ga(\mA,\mB)$ if and only if $\ga\in \Ga^\La(\mu,\nu)$. 
\end{proposition}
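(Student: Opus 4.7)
Both implications will be reduced, via Proposition~\ref{prop:coup-char}, to the algebraic identities $\mJ[\vphi\otimes 1]=\mA\vphi\otimes 1$ and $\mJ[1\otimes\psi]=1\otimes\mB\psi$ on a sufficiently rich class of test functions. Since $\mu,\nu\in\La_2(\mbr^d)$ and $\gamma\in\La_2(\mbr^{2d})$, all three generators $\mA,\mB,\mJ$ lie in $\mcg_2^\La$, and consequently the quadratic space $C_2^2$ sits inside each of their extended domains. My plan is to carry out the entire verification on this common smooth subspace, using Lemma~\ref{FTOLOT3to1lemma} as the bridge between the generator-level marginal identity and the measure-level L\'evy coupling condition.

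For the forward direction ($\gamma\in\Gamma^\La(\mu,\nu)\Rightarrow\mJ\in\Gamma(\mA,\mB)$), I would fix $\vphi\in C_2^2(\mbr^d)$, note that $\vphi\otimes 1\in C_2^2(\mbr^{2d})\subset\bar D(\mJ)$, and write out the global L\'evy--Khintchine form
\[
\mJ[\vphi\otimes 1](x,y)=\int_{\mbr^{2d}\setminus\{0\}}[\vphi(x+x')-\vphi(x)-x'\cdot\nabla\vphi(x)]\,d\gamma(x',y').
\]
The integrand depends only on $x'$; a Taylor expansion near $0$ together with the quadratic growth of $\vphi$ shows it is dominated by a $\mu$-integrable function, using $\mu\in\La_2$. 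Lemma~\ref{FTOLOT3to1lemma} will then equate the $\gamma$- and $\mu$-integrals, giving $\mJ[\vphi\otimes 1]=\mA\vphi\otimes 1$; the $1\otimes\psi$ identity is symmetric. A standard closure argument extends the identity from the core $C_2^2$ to $\bar D(\mA),\bar D(\mB)$, and Proposition~\ref{prop:coup-char}(ii) delivers $\mJ\in\Gamma(\mA,\mB)$.

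For the backward direction ($\mJ\in\Gamma(\mA,\mB)\Rightarrow\gamma\in\Gamma^\La(\mu,\nu)$), I would invoke Proposition~\ref{prop:coup-char}(ii) to obtain $\mJ[\vphi\otimes 1]=\mA\vphi\otimes 1$ for every $\vphi\in C_2^2\subset\bar D(\mA)$, expand both sides via the L\'evy--Khintchine formula, and evaluate at $x=0$ to get, for each $\vphi\in C_c^2(\mbr^d)$,
\[
\int_{\mbr^{2d}\setminus\{0\}}[\vphi(x')-\vphi(0)-x'\cdot\nabla\vphi(0)]\,d\gamma(x',y')=\int_{\mbr^d\setminus\{0\}}[\vphi(x')-\vphi(0)-x'\cdot\nabla\vphi(0)]\,d\mu(x').
\]
For $g\in C_c^2(\mbr^d)$ vanishing on a neighbourhood of $0$, substituting $\vphi=g$ (so that $g(0)=0,\nabla g(0)=0$) collapses this identity to $\int g\,d\gamma=\int g\,d\mu$. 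A bounded-convergence argument, exploiting that L\'evy measures are finite on sets bounded away from the origin, will extend this to every $g\in C_b^2(\mbr^d)$ vanishing near $0$, and Proposition~\ref{equivalentLevyCoupling}(iii) then yields the first marginal condition of Definition~\ref{Levy_Coupling}; the second follows verbatim from the $1\otimes\psi$ identity.

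The main technical point, shared by both directions, is securing the integrability of the compensated integrand $\vphi(x+x')-\vphi(x)-x'\cdot\nabla\vphi(x)$ simultaneously near $x'=0$ (where L\'evy measures may have infinite mass, but Taylor's theorem produces the quadratic cancellation needed) and at $|x'|\to\infty$ (where the finite-second-moment hypothesis is exactly what controls the linear compensation term). This two-sided control is precisely what makes Lemma~\ref{FTOLOT3to1lemma} applicable and ties the two halves of the equivalence together.
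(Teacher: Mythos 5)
Your proof takes essentially the same route as the paper: both directions are reduced to the marginal identity on a smooth core via a Taylor-remainder computation, evaluation at the origin, Proposition~\ref{equivalentLevyCoupling}, and Proposition~\ref{prop:coup-char}. The differences are cosmetic: you work on $C_2^2$ and invoke Lemma~\ref{FTOLOT3to1lemma}, whereas the paper works on $C_0^2$, where the Taylor remainder satisfies the sharper bound $|\mathcal{R}_{x_0}f(x')|\le C\min\{1,|x'|^2\}$, so Proposition~\ref{equivalentLevyCoupling}(v) applies directly without the one-sided integration lemma; and your detour through $C_c^2$ plus a bounded-convergence step in the backward direction is superfluous, since the argument works verbatim for $C_b^2$ functions vanishing near the origin. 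One small imprecision: you invoke Proposition~\ref{prop:coup-char}(ii) after claiming a "standard closure argument" extending the identity from $C_2^2$ to $\bar D(\mA),\bar D(\mB)$; that extension is not straightforward (the extended domain carries only local uniform convergence, not graph-norm density). The cleaner move, which is what the paper does, is to note that $C_0^2$ is a graph-norm core of $D(\mA)$ and appeal to the weaker criterion Proposition~\ref{prop:coup-char}(iii) instead.
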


\emph{Remark.} The assumption that \( \mathcal{A}, \mathcal{B} \) are given in the \emph{global form} is essential for this proposition. In particular, the identification \( \mathcal{J} = \Lag(0, 0, \gamma) \) relies on the compensated jump operator being expressed without a cutoff function. In the localized form (e.g., with a cutoff such as \( \chi_{B_1(0)} \)), the generator depends on the choice of cutoff, and the marginal condition would no longer correspond directly to \( \gamma \in \Gamma^\Lambda(\mu, \nu) \).

\begin{proof}
	Before the proof, let us introduce some notations to simplify the computation. 
	Fix $f\in C^2(\mathbb{R}^d)$ and $x_0\in \mathbb{R}^d$. Write the (second-order) Taylor remainder in the direction $x$ as
	\[
	\mathcal{R}_{x_0} f(x)\;:=\; f(x_0+x)-f(x_0)-x\cdot \nabla f(x_0).
	\]
	For $F\in C^2(\mathbb{R}^{2d})$ and $(x_0,y_0)\in \mathbb{R}^{2d}$ define analogously
	\[
	\mathcal{R}_{(x_0,y_0)} F(x,y)\;:=\; F(x_0+x,y_0+y)-F(x_0,y_0)
	-(x,y)^\top \cdot \nabla F(x_0,y_0).
	\]
	With this notation, the jump-type L\'evy generators act as
	\begin{align}\label{id:temp1}
		\mathcal{A} f(x_0)=\int_{\mathbb{R}^d} \mathcal{R}_{x_0} f(x)\, \mu(dx)=\langle \mu, \mathcal{R}_{x_0} f\rangle,\quad
		\mathcal{B} f(y_0)=\int_{\mathbb{R}^d} \mathcal{R}_{y_0} f(y)\, \nu(dy)=\langle \nu, \mathcal{R}_{y_0} f\rangle,	
	\end{align}
	and for $\mathcal{J}$,
	\begin{align}\label{id:temp2}
		\mathcal{J} F(x_0,y_0)=\int_{\mathbb{R}^{2d}} \mathcal{R}_{(x_0,y_0)} F(x,y)\, \gamma(dx,dy)
		=\langle \gamma, \mathcal{R}_{(x_0,y_0)} F\rangle.	
	\end{align}
	The remainder operator tensorizes in the following way:
	\begin{equation}\label{eq:tensor-rem}
		\mathcal{R}_{(x_0,y_0)}[f\otimes 1]\;=\;(\mathcal{R}_{x_0} f)\otimes 1,\qquad
		\mathcal{R}_{(x_0,y_0)}[1\otimes f]\;=\;1\otimes (\mathcal{R}_{y_0} f).
	\end{equation}

	$\to$. 
	Suppose $\mJ \in \Ga(\mA,\mB)$.  
	This means that for any $f \in C_b^2(\mathbb{R}^d)$ we have  
	\[
	\mJ[f \otimes 1] = \mA f \otimes 1, 
	\qquad 
	\mJ[1 \otimes f] = 1 \otimes \mB f.
	\]  
	Fix $f \in C_b^2(\mathbb{R}^d)$ vanishing on a neighbourhood of $0$.  
	Then $f(0) = 0$ and $\nabla f(0) = 0$, hence $\mathcal{R}_0 f = f$ and by \eqref{eq:tensor-rem} $\mcl{R}_{(0,0)}[f\otimes 1]=f\otimes 1$.  Combining these with \eqref{id:temp1}, \eqref{id:temp2}, we find 
	\[
	\langle \gamma, f \otimes 1 \rangle
	= \langle \gamma, \mcl{R}_{(0,0)}[f \otimes 1] \rangle
	= \mJ[f \otimes 1](0,0)
	= \mA f(0)
	= \langle \mu, \mathcal{R}_0 f \rangle
	= \langle \mu, f \rangle.
	\]  
	Similarly,  we have $\langle \gamma, 1 \otimes f \rangle
	= \langle \nu, f \rangle.$
	By Proposition \ref{equivalentLevyCoupling} (see \eqref{marg-pairing}), we conclude that 
	$\gamma \in \Ga^\Lambda(\mu,\nu)$.
	
	$\leftarrow$.  
	Suppose $\gamma \in \Ga^\Lambda(\mu,\nu)$.  
	Let $f \in C_0^2(\Pi)$ and fix $x_0, y_0 \in \mathbb{R}^d$.  
	By Taylor's theorem, the remainder $\mathcal{R}_{x_0} f$ satisfies  
	\(
	|\mathcal{R}_{x_0} f(x)| \le C |x|^2
	\)
	for some $C \ge 0$ depending on the $C^2$-norm of $f$.  
	Invoking Proposition \ref{equivalentLevyCoupling} and using \eqref{id:temp1}--\eqref{eq:tensor-rem}, we obtain
	\[
	\mathcal{J}[f \otimes 1](x_0, y_0)
	= \langle \gamma, \mathcal{R}_{(x_0, y_0)}(f \otimes 1) \rangle
	= \langle \mu, \mathcal{R}_{x_0} f \rangle
	= \mathcal{A} f(x_0).
	\]
	By a similar argument,  
	\(
	\mathcal{J}[1 \otimes f](x_0, y_0)
	= \mathcal{B} f(y_0),
	\)
	and hence by Proposition \ref{prop:coup-char} $\mathcal{J} \in \Ga(\mathcal{A}, \mathcal{B})$. Note we use the fact that $C_0^2(\mbr^d)$ is dense in $D(\mA),D(\mB)$ respectively. 	
\end{proof}

We now formulate the optimality condition for L\'evy couplings with respect to the squared cost, interpreted in the sense of L\'evy transport.

\begin{definition}[2-Optimal L\'evy transport cost and couplings]\label{levy-opt-prob}
	Let $\mu, \nu \in \Lambda_2(\mathbb{R}^d)$ be two L\'evy measures on $\mathbb{R}^d$ with finite second moments. The \emph{2-optimal L\'evy transport cost} is defined by
	\begin{align}\label{levy-cost}
		\mathcal{C}_2^\Lambda(\mu, \nu) := \inf_{\gamma \in \Gamma^\Lambda(\mu, \nu)} \int_{\mathbb{R}^{2d}} \frac{1}{2} |x - y|^2 \, d\gamma(x, y).
	\end{align}
	A L\'evy measure $\gamma_* \in \Lambda(\mathbb{R}^{2d})$ is called a \emph{2-optimal L\'evy coupling} of $\mu$ and $\nu$ if
	\begin{align}\label{LOTP}
		\int_{\mathbb{R}^{2d}} \frac{1}{2} |x - y|^2 \, d\gamma_*(x, y) = \mathcal{C}_2^\Lambda(\mu, \nu).
	\end{align}
\end{definition}

As seen in the definition above, the objective functional \eqref{levy-cost} coincides formally with that of the classical optimal transport problem. However, the key distinction lies in the admissible set of couplings. The classical formulation considers all couplings between probability measures of equal mass, while the L\'evy formulation restricts to \emph{L\'evy couplings} between possibly unequal L\'evy measures. This change in admissibility conditions fundamentally alters the structure and interpretation of the problem. 

This formulation will later be shown to coincide with the optimal coupling problem for L\'evy jump processes under suitable conditions, a connection that will be explored in the next section. In that context, the L\'evy measure associated with the optimal Markovian coupling of two jump processes with L\'evy measures \(\mu\) and \(\nu\) corresponds precisely to a 2-optimal L\'evy coupling.

We remark that several variants of optimal transport, in which the cost functional is optimized under alternative constraints, have also been extensively studied. Notable examples include unbalanced optimal transport \cite{chizat2018unbalanced}, partial optimal transport \cite{Caffarelli2010Partial,DavilaKim2016}, and martingale optimal transport \cite{BackhoffVeraguas2022,Beiglbock2013Martingale}, among others.

\begin{convention}
	Throughout this section, we will use the terms \emph{classical} and \emph{L\'evy} to distinguish between the two parallel frameworks of optimal transport: the classical formulation (Definition~\ref{coupling_optimality_definition}) and the L\'evy-based formulation (Definition~\ref{levy-opt-prob}). Accordingly, we will refer to notions such as the \emph{classical} optimal transport problem, \emph{classical} 2-optimal coupling, and, in contrast, the \emph{L\'evy} optimal transport problem and \emph{L\'evy} 2-optimal coupling. We encourage the reader to be mindful of this distinction, as many definitions and results will appear formally similar but belong to distinct theoretical settings.
\end{convention}

\subsection{Some preliminary results of the L\'evy optimal transport cost}

Before we proceed with solving the L\'evy optimal transport problem, we collect some elementary properties of the transport cost.

\begin{proposition}\label{prop-prelim}
	Let $\mu,\nu,\mu',\nu'\in \La(\mathbb{R}^d)$, and let $\mcc_2^\La$ be the L\'evy transport cost defined in \eqref{levy-cost}. Then the following properties hold:
	
	\begin{enumerate}[label=(\roman*)]
		\item $\mcc_2^\La(\mu,\nu)\in [0,\infty]$. 
		\item $\mcc_2^\La(\mu,\nu)=\mcc_2^\La(\nu,\mu)$.
		\item $\mcc_2^\La(\mu,\mu)=0$.
		\item If $\mu,\nu\in \La_2(\mathbb{R}^d)$, then $\mcc_2^\La(\mu,\nu)<\infty$.
		\item $\mcc_2^\La(\mu,0)=\frac{1}{2}\int_{\mathbb{R}^d} |x|^2 \, d\mu(x)$.
		\item $\mcc_2^\La(\mu+\mu',\nu+\nu') \le \mcc_2^\La(\mu,\nu) + \mcc_2^\La(\mu',\nu')$.
		\item For any $\alpha\ge 0$, $\mcc_2^\La(\alpha\mu,\alpha\nu) = \alpha \, \mcc_2^\La(\mu,\nu)$.
	\end{enumerate}
\end{proposition}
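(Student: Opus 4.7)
The plan is to verify each of the seven properties directly from Definition \ref{Levy_Coupling} of L\'evy couplings and the infimum definition \eqref{levy-cost}, leaning on the fact (Remark \ref{rem:ganonempty}) that $\Ga^\La(\mu,\nu)$ is always nonempty. Five of the seven items are essentially bookkeeping and I would dispatch them first. For (i), the integrand $\tfrac12|x-y|^2$ is nonnegative, so the infimum lies in $[0,\infty]$. For (ii), the swap map $T(x,y)=(y,x)$ induces a bijection $\ga\mapsto T_\#\ga$ from $\Ga^\La(\mu,\nu)$ to $\Ga^\La(\nu,\mu)$ that preserves the cost (since $|x-y|^2=|y-x|^2$). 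For (vi), I would check that if $\ga\in\Ga^\La(\mu,\nu)$ and $\ga'\in\Ga^\La(\mu',\nu')$, then $\ga+\ga'$ is itself a L\'evy measure on $\mbr^{2d}$ (by additivity of the integrability bound \eqref{Levy-m-def}) lying in $\Ga^\La(\mu+\mu',\nu+\nu')$; summing the costs and taking infima on the right yields the stated inequality. For (vii), $\ga\mapsto\al\ga$ gives a bijection of coupling sets for $\al>0$ with the cost scaling linearly (the $\al=0$ case is trivial), giving equality.

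Next I would handle (iii) and (iv) by exhibiting explicit admissible couplings. For (iii), I would use the diagonal measure $\ga_\De=(\mathrm{id},\mathrm{id})_\#\mu$, supported on $\{(x,x):x\in\mbr^d\}$. It is a L\'evy measure on $\mbr^{2d}$ since $\int\min\{1,2|x|^2\}\,d\mu\le 2\int\min\{1,|x|^2\}\,d\mu<\infty$, and its marginals on sets not containing a neighbourhood of $0$ equal $\mu$ by construction; as $c_2$ vanishes on the diagonal, we get $\mcc_2^\La(\mu,\mu)\le 0$, and nonnegativity from (i) closes the loop. For (iv), I would take the explicit coupling $\ga=\mu\otimes\de_0+\de_0\otimes\nu$ from Remark \ref{rem:ganonempty}; direct computation gives $\inn{\ga,c_2}=\tfrac12\inn{\mu,|x|^2}+\tfrac12\inn{\nu,|y|^2}$, which is finite precisely because $\mu,\nu\in\La_2(\mbr^d)$.

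The one step that genuinely requires care, and which I expect to be the main (mild) obstacle, is (v), because it is where the \emph{restricted} form of the marginal condition in Definition \ref{Levy_Coupling} has to be used decisively. The claim reduces to showing $\Ga^\La(\mu,0)=\{\mu\otimes\de_0\}$. For any $\ga\in\Ga^\La(\mu,0)$, the $y$-marginal condition applied to $E=B_{1/n}(0)^c$ gives $\ga(\mbr^d\times B_{1/n}(0)^c)=0$ for every $n$; by $\sigma$-additivity along this monotone sequence, $\ga(\mbr^d\times(\mbr^d\setminus\{0\}))=0$, so $\ga$ is concentrated on $\mbr^d\times\{0\}$. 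Since $\ga$ is a L\'evy measure on $\mbr^{2d}$ it assigns no mass to $\{(0,0)\}$ by \eqref{Levy-m-def}, and a second $\sigma$-additivity argument, using the $x$-marginal condition on the sets $B_{1/n}(0)^c\times\{0\}$, forces $\ga=\mu\otimes\de_0$. Evaluating the cost then yields $\tfrac12\int|x|^2\,d\mu$, and the proposition is complete.
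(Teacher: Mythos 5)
Your proof is correct and follows essentially the same approach as the paper's: all seven items are handled by the same bookkeeping arguments (swap map, diagonal coupling, additivity/scaling of couplings, explicit trivial coupling). You actually supply a more careful $\sigma$-additivity argument for item (v) than the paper, which merely asserts that $\Gamma^\Lambda(\mu,0)$ is a singleton; your elaboration is valid and fills in that detail.
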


\begin{proof}
	Recall $c_2(x,y)=\f 12|x-y|^2$ and the notation $\inn{\cdot,\cdot}$ of natural pairings between L\'evy measures and functions. 
	
	(i) The nonnegativity $\mcc_2^\La(\mu,\nu) \ge 0$ is immediate from the definition.
	
	(ii) Let $T:(\mathbb{R}^d)^2 \to (\mathbb{R}^d)^2$ be the coordinate swap map. Then $\gamma \in \Ga^\La(\mu,\nu)$ if and only if $T_\sharp \gamma \in \Ga^\La(\nu,\mu)$, which gives symmetry.
	
	(iii) Nonnegativity follows from (i). The diagonally supported measure 
	\(
	\gamma_*(E) := \mu(\{x:(x,x)\in E\})
	\) 
	lies in $\Ga^\La(\mu,\mu)$ and satisfies $\langle \gamma_*, c_2 \rangle = 0$, hence $\mcc_2^\La(\mu,\mu)=0$.
	
	(iv) Using $c_2(x,y)\le |x|^2+|y|^2$, for any $\gamma \in \Ga^\La(\mu,\nu)$ (non-empty by Remark \ref{rem:ganonempty}),
	\[
	\langle \gamma, c_2 \rangle \le \int_{\mathbb{R}^{2d}} (|x|^2 + |y|^2) \, d\gamma(x,y) = \int_{\mathbb{R}^d} |x|^2 \, d\mu(x) + \int_{\mathbb{R}^d} |y|^2 \, d\nu(y) < \infty.
	\]
	This follows $\mcc_2^\La(\mu,\nu)<\infty$. 
	
	(v) There is only one L\'evy coupling between $\mu$ and $0$, given by $\gamma = \mu \otimes \delta_0$, supported on $\mathbb{R}^d \times \{0\}$.
	
	(vi) Fix $\epsilon>0$ and choose $\gamma \in \Ga^\La(\mu,\nu)$, $\gamma' \in \Ga^\La(\mu',\nu')$ such that $\langle \gamma, c_2\rangle \le \mcc_2^\La(\mu,\nu)+\epsilon$, $\langle \gamma', c_2 \rangle \le \mcc_2^\La(\mu',\nu')+\epsilon$. Then $\gamma + \gamma' \in \Ga^\La(\mu+\mu',\nu+\nu')$, and by linearity,
	\[
	\mcc_2^\La(\mu+\mu',\nu+\nu') \le \langle \gamma+\gamma', c_2 \rangle = \langle \gamma, c_2 \rangle + \langle \gamma', c_2 \rangle \le \mcc_2^\La(\mu,\nu) + \mcc_2^\La(\mu',\nu') + 2\epsilon.
	\]
	Letting $\epsilon \to 0$ gives the desired inequality.
	
	(vii) The identity trivially holds for $\alpha=0$. For $\alpha>0$, note that $\gamma \in \Ga^\La(\mu,\nu)$ if and only if $\alpha \gamma \in \Ga^\La(\alpha \mu, \alpha \nu)$, which yields the identity.
\end{proof}

The next result provides a comparison between the classical and L\'evy transport costs, under the assumption that the two L\'evy measures have equal (finite) total mass.

\begin{proposition}\label{prop:prelim2}
	Let $\mu,\nu \in \La(\mathbb{R}^d)$ satisfy $\mu(\mathbb{R}^d) = \nu(\mathbb{R}^d) < \infty$. Then
	\[
	\mcc_2^\La(\mu,\nu) \le \mcc_2(\mu,\nu).
	\]
\end{proposition}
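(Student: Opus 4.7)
The strategy is to show the inclusion $\Gamma(\mu,\nu)\subset \Gamma^\Lambda(\mu,\nu)$ and then conclude by passing to the infimum over a larger admissible set. Since the objective functional $\langle\gamma,c_2\rangle$ is the same on both sides, enlarging the feasible set can only decrease the infimum, which is precisely the desired inequality.

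To carry this out, I fix an arbitrary classical coupling $\gamma\in\Gamma(\mu,\nu)$ and verify the two conditions required by Definition~\ref{Levy_Coupling}. First, I check that $\gamma$ is a L\'evy measure on $\mathbb{R}^{2d}$. Boundedness of $\gamma$ (total mass equal to $\mu(\mathbb{R}^d)=\nu(\mathbb{R}^d)<\infty$) immediately yields $\int_{\mathbb{R}^{2d}}\min\{1,|(x,y)|^2\}\,d\gamma(x,y)\le \gamma(\mathbb{R}^{2d})<\infty$. The point mass at the origin vanishes because
\[
\gamma(\{(0,0)\})\;\le\;\gamma(\{0\}\times\mathbb{R}^d)\;=\;\mu(\{0\})\;=\;0,
\]
using that $\mu$ is a L\'evy measure and $\gamma$ has $\mu$ as its first marginal.

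Second, I verify the L\'evy marginal condition \eqref{levy-marg-cond}. Since $\gamma$ is a classical coupling, the identities $\gamma(E\times\mathbb{R}^d)=\mu(E)$ and $\gamma(\mathbb{R}^d\times E)=\nu(E)$ hold for \emph{all} measurable $E\subset\mathbb{R}^d$, hence in particular for those that do not contain a neighbourhood of $0$. This establishes $\gamma\in\Gamma^\Lambda(\mu,\nu)$ and therefore $\Gamma(\mu,\nu)\subset \Gamma^\Lambda(\mu,\nu)$.

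Taking the infimum of the common functional $\gamma\mapsto\int_{\mathbb{R}^{2d}}\tfrac12|x-y|^2\,d\gamma$ over the larger set $\Gamma^\Lambda(\mu,\nu)$ yields a value no greater than the infimum over $\Gamma(\mu,\nu)$, giving $\mathcal{C}_2^\Lambda(\mu,\nu)\le \mathcal{C}_2(\mu,\nu)$. There is no real obstacle here; the only subtlety worth noting is the check that the point mass at $(0,0)$ vanishes, which is precisely the reason why the assumption $\mu(\mathbb{R}^d)=\nu(\mathbb{R}^d)<\infty$ is invoked together with the L\'evy property of $\mu,\nu$ to make the two frameworks compatible.
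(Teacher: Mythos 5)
Your proof is correct and takes essentially the same approach as the paper, namely establishing the inclusion $\Gamma(\mu,\nu)\subset\Gamma^\Lambda(\mu,\nu)$ and then comparing infima over nested admissible sets. You additionally spell out the verification that a classical coupling $\gamma$ is a Lévy measure on $\mathbb{R}^{2d}$, which the paper leaves implicit; this is a welcome clarification and the details are sound.
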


\begin{proof}
	Every classical coupling is also a L\'evy coupling, i.e., $\Gamma(\mu,\nu) \subset \Gamma^{\Lambda}(\mu,\nu)$. The inequality follows immediately from the definitions of $\mcc_2$ and $\mcc_2^\La$.
\end{proof}

An immediate question naturally arises: if two L\'evy measures have equal total mass, does the L\'evy transport cost necessarily coincide with the classical optimal transport cost? Surprisingly, the answer is \emph{no}. The following example illustrates this phenomenon. Later, the fundamental theorem of L\'evy optimal transport (Theorem \ref{FTOLOT}) will provide a necessary and sufficient condition under which the two costs indeed coincide, see Remark \ref{rem:cost-equal}.

\begin{example}
    Consider a probability measure $\mu\in \mcp_2(\mbr^d)$ with finite second moment, and assume the first moment is nonzero:
    \begin{align*}
        m_1(\mu):=\int_{\mathbb{R}^d}x d\mu(x)\neq0,\qquad m_2(\mu):= \int_{\mbr^d} |x|^2 d\mu(x). 
    \end{align*}
    Fix $h\in \mbr^d$ and let $\mu_h$ be the $h$-translation of $\mu$, namely, $\mu_h(E)= \mu(E+h)$ for all measurable $E$.
    It is well-known here that the classical optimal transport cost is given by $\mcc_2(\mu,\mu_h)= \frac 12 |h|^2$. 
    We now show that the L\'evy transport cost $\mcc_2^\La(\mu,\mu_h)$ is strictly less than $\mcc_2(\mu,\mu_h)$ for some $h\in \mbr^d$. 
    
    Let $\gamma$ be a classical 2-optimal coupling of $\mu$ and $\mu_h$, so that $\inn{\ga,c_2}=\mcc_2(\mu,\mu_h)=\f 12 |h|^2$. For $\ta\in (0,1]$ we consider the following L\'evy coupling of $\mu$ and $\mu_h$:
    \begin{align*}
        \tilde{\gamma}=(1-\theta)\gamma+\theta(\mu\otimes\delta_0+\delta_0\otimes\mu_h).
    \end{align*}
    That is, $\tilde\ga$ is a convex combination of a classical, and hence L\'evy coupilng $\ga$ of $\mu,\nu$ and their trivial L\'evy coupling (see Remark \ref{rem:ganonempty}). Hence it is also a L\'evy coupling of $\mu,\nu$. 
    We then compute the following integral:
    \begin{align*}
    	\inn{\tilde \ga,c_2}&= (1-\ta)\inn{\ga,c_2}+ \ta\inn{\mu\otimes \de_0,c_2} +\ta \inn{\de_0\otimes \mu_h,c_2} \\
    	&=(1-\ta)\inn{\ga,c_2}+ \f{\ta}2 \int_{\mbr^d} |x|^2 d\mu(x)+\f{\ta}2 \int_{\mbr^d} |y|^2 d\mu_h(y)\\
    	&=\frac{1-\theta}{2}\lvert h\rvert^2+\theta m_2(\mu)+\theta h\cdot m_1(\mu)+\frac{\ta\lvert h\rvert^2}{2}
    	\\&=\frac{\lvert h\rvert^2}{2}+\theta (m_2(\mu)+ h\cdot m_1(\mu)),
    \end{align*}
    where we use the fact:
    \begin{align*}
        \int_{\mbr^d} |y|^2 d\mu_h(y) = \int_{\mbr^d} |y+h|^2 d\mu(y) = m_2(\mu)+ 2h\cdot m_1(\mu)+ h^2. 
    \end{align*}
    Since $m_1(\mu)\neq 0$, we may choose $h\in\mbr^d$ such that $m_2(\mu)+h\cdot m_1(\mu)<0$. This provides a L\'evy coupling with transport cost $<\f{|h|^2}{2}$. Thus, $\mcc_2^\La(\mu,\mu_h)<\mcc_2(\mu,\mu_h)$. 
\end{example}

\subsection{Construction of optimal L\'evy couplings}
The main theorem of this section is the following, which states the existence of a 2-optimal L\'evy coupling for any pairs of L\'evy measures $\mu,\nu\in\La_2(\mbr^d)$ with finite second moment. 

\begin{theorem}[Existence of optimal L\'evy couplings]
	\label{lotp-thm}
	Given $\mu,\nu\in\La_2(\mbr^d)$, a 2-optimal L\'evy coupling $\ga_*$ exists. Moreover, $\ga_*\in \La_2(\mbr^{2d})$ 
\end{theorem}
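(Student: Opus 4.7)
The strategy is the direct method of calculus of variations, adapted to the L\'evy framework. I take a minimizing sequence $\{\gamma_n\}_n \subset \Gamma^\Lambda(\mu,\nu)$, so that $\inn{\gamma_n, c_2} \searrow \mathcal{C}_2^\Lambda(\mu,\nu)$, which is finite by Proposition~\ref{prop-prelim}~(iv). The crucial uniform estimate comes from Lemma~\ref{FTOLOT3to1lemma} applied to the coordinate squares:
\[
\int |x|^2\, d\gamma_n = \int |x|^2\, d\mu, \qquad \int |y|^2\, d\gamma_n = \int |y|^2\, d\nu,
\]
both finite and independent of $n$. The plan is then to extract a candidate limit $\gamma_*$, show it is a L\'evy coupling, and verify its optimality via lower semicontinuity of the cost.

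For compactness I will use a restricted-and-diagonal scheme. For each $\epsilon>0$, the finite measure $\gamma_n|_{\{|x|>\epsilon\}}$ has uniformly bounded total mass $\mu(\{|x|>\epsilon\})$, a fixed $x$-marginal $\mu|_{\{|x|>\epsilon\}}$, and a $y$-marginal of second moment at most $\int|y|^2\, d\nu$; this family is therefore tight on $\mbr^{2d}$. An analogous conclusion applies to $\gamma_n|_{\{|y|>\epsilon\}}$ and $\gamma_n|_{\{|x|>\epsilon,\,|y|>\epsilon\}}$, so by inclusion-exclusion the restrictions $\gamma_n^\epsilon := \gamma_n|_{\{|x|>\epsilon\,\text{or}\,|y|>\epsilon\}}$ are tight. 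A diagonal argument along $\epsilon=1/k$ extracts a subsequence (still denoted $\gamma_n$) with $\gamma_n^{1/k} \to \gamma_*^{1/k}$ weakly for every $k$. These limits are monotone nondecreasing in $k$, and I define $\gamma_*$ on $\mbr^{2d}\setminus\{0\}$ as their increasing limit, extended by $\gamma_*(\{0\})=0$. Portmanteau-type Fatou together with the uniform second-moment bound gives $\int \min\{1,|z|^2\}\, d\gamma_* < \infty$, so $\gamma_* \in \Lambda(\mbr^{2d})$.

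The decisive step is to transfer both the marginal condition and the cost bound to $\gamma_*$. For the marginal, I appeal to Proposition~\ref{equivalentLevyCoupling}~(iv): for any $f\in C_b(\mbr^d)$ vanishing on $B_\delta(0)$, the function $f\otimes 1$ vanishes on the open set $\{|x|\le\delta\}$, so $\inn{\gamma_n,f\otimes 1} = \inn{\gamma_n^\delta,f\otimes 1}$, and weak convergence of $\gamma_n^\delta$ gives $\inn{\gamma_*,f\otimes 1} = \inn{\mu,f}$ (and analogously on the $y$-side); thus $\gamma_* \in \Gamma^\Lambda(\mu,\nu)$. For lower semicontinuity of the cost, I use the splitting $\inn{\gamma_n,c_2} = \inn{\gamma_n^\epsilon,c_2} + \inn{\gamma_n,c_2\chi_{\{|x|\le\epsilon,\,|y|\le\epsilon\}}}$ together with the uniform tail estimate
\[
\inn{\gamma_n,c_2\chi_{\{|x|\le\epsilon,\,|y|\le\epsilon\}}} \;\le\; \int_{|x|\le\epsilon}|x|^2\, d\mu + \int_{|y|\le\epsilon}|y|^2\, d\nu \;\xrightarrow[\epsilon\searrow 0]{}\; 0
\]
coming from Lemma~\ref{FTOLOT3to1lemma}. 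A truncation argument applied to $c_2\wedge R$ combined with the Portmanteau lower bound for nonnegative continuous functions yields $\liminf_n \inn{\gamma_n^\epsilon,c_2} \ge \inn{\gamma_*^\epsilon,c_2}$, and monotone convergence in $\epsilon\searrow 0$ then gives $\mathcal{C}_2^\Lambda(\mu,\nu) = \liminf_n \inn{\gamma_n,c_2} \ge \inn{\gamma_*,c_2}$. Combined with feasibility, $\gamma_*$ is a $2$-optimal L\'evy coupling, and $\gamma_* \in \Lambda_2(\mbr^{2d})$ is immediate from Lemma~\ref{FTOLOT3to1lemma}.

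The main obstacle is twofold: L\'evy measures can carry infinite mass near the origin, obstructing global tightness, and the cost $c_2$ is unbounded, obstructing direct application of weak convergence. The restricted-and-diagonal scheme handles the first difficulty by exploiting the finiteness of $\mu$ and $\nu$ outside every neighborhood of $0$, while the second-moment bounds provide just enough control in the transverse direction. The unboundedness of $c_2$ then forces the truncation-plus-cutoff step, which is the most delicate piece of the argument; a naive attempt to replace $\gamma_n$ by the single weighted measures $(|x|^2+|y|^2)\,\gamma_n(dx,dy)$ would fail, because the ``mixed'' integrals $\int |y|^2\,\chi_{|x|>R}\, d\gamma_n$ are not obviously controlled uniformly in $n$, which is precisely why the separated restrictions must be used.
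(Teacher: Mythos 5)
Your proof is correct, and it takes a genuinely different route from the paper's. The paper builds and deploys a general framework of $\La$-weak convergence and $\La$-tightness for Lévy measures (Section~\ref{sec-La-conv} and Appendix~\ref{appen-weaktop}): it shows $\Gamma^\La(\mu,\nu)$ is $\La$-tight (Lemma~\ref{lem:couplingtight}), invokes a Prokhorov-type compactness theorem (Theorem~\ref{thm:prokhorov}) to extract a $\La$-weakly convergent subsequence, and then uses a Portmanteau lemma for $\La$-weak convergence (Lemma~\ref{lem:port}) both to verify feasibility of the limit and to establish lower semicontinuity of the cost. The core mechanism there is the rescaling $d\tilde\gamma = \min\{1,|x|^2+|y|^2\}\,d\gamma$, which converts everything into classical weak convergence of bounded measures on $\mathbb{R}^{2d}\setminus\{0\}$. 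Your scheme avoids that machinery entirely: you restrict to $\{|x|>1/k\text{ or }|y|>1/k\}$, where the marginal bound from Lemma~\ref{FTOLOT3to1lemma} makes the restricted couplings genuine tight families of \emph{finite} measures, then diagonalize and assemble $\gamma_*$ as a monotone increasing limit. This is more elementary and self-contained, while the paper's machinery pays off elsewhere (notably in Section~\ref{sec-wass-metric} on lower semicontinuity, completeness, and compactness of $\mcW_\La$). A couple of small remarks: (1)~your tail estimate on $\inn{\gamma_n,c_2\chi_{\{|x|\le\epsilon,|y|\le\epsilon\}}}$ is actually superfluous, since the split already gives $\inn{\gamma_n^\epsilon,c_2}\le\inn{\gamma_n,c_2}$ by nonnegativity, which is all you need; (2)~your closing worry about ``mixed integrals $\int|y|^2\chi_{\{|x|>R\}}\,d\gamma_n$ not being controlled'' is misplaced as stated --- they are trivially bounded by $\int|y|^2\,d\nu$ via the marginal condition --- and in any case the paper uses the \emph{capped} weight $\min\{1,|x|^2+|y|^2\}$ rather than $|x|^2+|y|^2$, which sidesteps the issue you describe. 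Neither remark affects the validity of your argument.
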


\subsubsection{Weak convergence and tightness in the space of L\'evy measures}\label{sec-La-conv}

The main idea behind the construction is to obtain a minimizer as the weak limit of a subsequence of minimizing L\'evy couplings \( \gamma_n \). A technical challenge arises from the fact that the sequence \( \{\gamma_n\}_n \) may lack tightness in the classical sense due to potential singularity near the origin. This issue is well known in the literature and is typically handled by introducing a topology—comprising both convergence and compactness criteria—tailored to the integrability structure of L\'evy measures.

Let us now briefly review the notion of weak convergence and tightness that are suitable for the space of L\'evy measures. A more detailed treatment will be presented in Appendix~\ref{appen-weaktop}.
We say that a sequence of L\'evy measures \( \{ \mu_n \}_n \subset \Lambda(\mathbb{R}^d) \) \emph{converges L\'evy-weakly}, or simply \emph{\( \Lambda \)-weakly}, to \( \mu \in \Lambda(\mathbb{R}^d) \) if
\[
\lim_{n \to \infty}\inn{\mu_n,\varphi}=
\lim_{n \to \infty} \int_{\mathbb{R}^d} \varphi(x) \, d\mu_n(x) = \int_{\mathbb{R}^d} \varphi(x) \, d\mu(x)=\inn{\mu,\varphi}.
\]
for all bounded continuous test functions \( \varphi \in C_b(\mathbb{R}^d) \) satisfying a quadratic bound for some constant \( C \ge 0 \):
\[
|\varphi(x)| \le C \min\{1, |x|^2\}.
\]
Next, a family \( \mathcal{M} \subset \Lambda(\mathbb{R}^d) \) of L\'evy measures is said to be \emph{L\'evy-tight} (or simply \emph{\( \Lambda \)-tight}) if it satisfies the following two conditions:
\begin{itemize}
	\item[(i)] Uniform moment bound:
	\[
	\sup_{\mu \in \mathcal{M}} \int_{\mathbb{R}^d} \min\{1, |x|^2\} \, d\mu(x) < \infty;
	\]
	\item[(ii)] Tightness away from the origin: for every \( \varepsilon > 0 \), there exists a compact set \( K \subset \mathbb{R}^d \setminus \{0\} \) such that
	\[
	\sup_{\mu \in \mathcal{M}} \int_{K^c} \min\{1, |x|^2\} \, d\mu(x) < \varepsilon.
	\]
\end{itemize}

\medskip

In the construction of minimizers for the L\'evy optimal transport problem, the following two foundational results concerning \( \Lambda \)-weak convergence and \( \Lambda \)-tightness are essential:
\begin{itemize}
	\item \emph{Prokhorov-type compactness:} Every \( \Lambda \)-tight sequence of L\'evy measures admits a \( \Lambda \)-weakly convergent subsequence.
	\item \emph{Portmanteau-type theorem:} Let \( \mu_n \to \mu \) \( \Lambda \)-weakly. Then for every function \( \varphi \in C_b(\mathbb{R}^d) \) satisfying \( |\varphi(x)| \le C \min\{1, |x|^2\} \), it holds that
	\[
	\liminf_{n \to \infty} \inn{\mu_n,\varphi}= \liminf_{n \to \infty} \int \varphi \, d\mu_n \ge \int \varphi \, d\mu = \inn{\mu,\varphi},
	\]
	provided \( \varphi \) is lower semicontinuous; and similarly, the \( \limsup \) inequality holds for upper semicontinuous functions.
\end{itemize}
Though these results are standard and often used implicitly in the literature, we include detailed proofs in Appendix~\ref{appen-weaktop} for completeness.

\subsubsection{Construction of minimizers}

One of the key steps in the construction of a minimizer to the optimal transport problem is to establish the tightness of the family of couplings. Let us prove this in a slightly more general setting, whose proof is similar to that of the classical counterpart.

\begin{lemma}\label{lem:couplingtight}
	Let $\mcl{M},\mcl{N}\subset \La(\mbr^d)$ be two $\La$-tight families, and define
	\begin{align*}
		\Ga^\La(\mcl{M},\mcl{N}) = \cb{\ga\in\La(\mbr^{2d}) : \ga \in\Ga^\La(\mu,\nu),\ \mu\in\mcl{M},\ \nu\in\mcl{N} }.
	\end{align*}
	That is, $\Ga^\La(\mcl{M},\mcl{N})$ is the family of all L\'evy couplings $\ga$ of $\mu\in\mcl{M},\ \nu\in\mcl{N}$. Then $\Ga(\mcl{M},\mcl{N})$ is $\La$-tight in $\La(\mbr^{2d})$.
\end{lemma}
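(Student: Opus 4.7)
The plan is to reduce $\Lambda$-tightness of $\Gamma^\Lambda(\mathcal{M},\mathcal{N})$ on $\mathbb{R}^{2d}$ to the two given $\Lambda$-tightness properties of $\mathcal{M}$ and $\mathcal{N}$ on $\mathbb{R}^d$, using the marginal identities for L\'evy couplings established in Proposition \ref{equivalentLevyCoupling} and Lemma \ref{FTOLOT3to1lemma}. The underlying principle is that any moment estimate on $\gamma \in \Gamma^\Lambda(\mu,\nu)$ involving $|x|^2$ or $|y|^2$ (with appropriate cutoffs near or away from $0$) can be pushed down to the corresponding marginal integrals against $\mu$ or $\nu$, to which the hypotheses apply.

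For the uniform moment bound, I would use the elementary inequality
\[
\min\{1,\,|x|^2+|y|^2\} \;\le\; \min\{1,|x|^2\}+\min\{1,|y|^2\}, \qquad (x,y)\in\mathbb{R}^{2d},
\]
which is verified by a short case analysis. Integrating against $\gamma \in \Gamma^\Lambda(\mu,\nu)$ and applying Proposition \ref{equivalentLevyCoupling}(v) to the continuous functions $f(x)=\min\{1,|x|^2\}$ (which is bounded and satisfies $f(x)\le |x|^2$), one obtains
\[
\int_{\mathbb{R}^{2d}} \min\{1,|x|^2+|y|^2\}\,d\gamma \;\le\; \int_{\mathbb{R}^d}\min\{1,|x|^2\}\,d\mu + \int_{\mathbb{R}^d}\min\{1,|y|^2\}\,d\nu,
\]
and the right-hand side is uniformly bounded over $\mu\in\mathcal{M}$, $\nu\in\mathcal{N}$ by property (i) of $\Lambda$-tightness.

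For the tightness-away-from-origin condition, given $\varepsilon>0$, I would choose $0<r<R$ such that, uniformly over $\mu\in\mathcal{M}$ and $\nu\in\mathcal{N}$, the quantities $\int_{|x|<r}|x|^2\,d\mu$, $\mu(|x|>R/\sqrt{2})$, $\int_{|y|<r}|y|^2\,d\nu$, and $\nu(|y|>R/\sqrt{2})$ are each smaller than $\varepsilon/4$; this is possible by the $\Lambda$-tightness of $\mathcal{M}$ and $\mathcal{N}$. Then I would set $K:=\{(x,y)\in\mathbb{R}^{2d}: r\le |(x,y)|\le R\}$, which is compact and disjoint from the origin. The complement splits as $K^c=\{|(x,y)|<r\}\cup\{|(x,y)|>R\}$. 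On the inner part, one uses $|(x,y)|^2=|x|^2+|y|^2$ together with the inclusions $\{|(x,y)|<r\}\subseteq\{|x|<r\}$ and $\{|(x,y)|<r\}\subseteq\{|y|<r\}$ to bound the integral by $\int_{|x|<r}|x|^2\,d\mu+\int_{|y|<r}|y|^2\,d\nu$. On the outer part, the inclusion $\{|(x,y)|>R\}\subseteq\{|x|>R/\sqrt{2}\}\cup\{|y|>R/\sqrt{2}\}$ gives the bound $\mu(|x|>R/\sqrt{2})+\nu(|y|>R/\sqrt{2})$. The desired marginal identities for the indicator-type test functions involved are not continuous, but they are nonnegative, bounded, and dominated by $|x|^2$ (respectively $|y|^2$), hence fall within the scope of Lemma \ref{FTOLOT3to1lemma}.

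The only technical point that deserves care is the justification of the marginal identities against the truncated test functions $\min\{1,|x|^2\}\mathbf{1}_{\{|x|<r\}}$ and $\mathbf{1}_{\{|x|>R/\sqrt{2}\}}$, since these are measurable but not continuous. Each is nonnegative and lies in $L^1(\mu)$ (being dominated respectively by $|x|^2$ and by $1$ on a set bounded away from $0$), so Lemma \ref{FTOLOT3to1lemma} applies and converts the $\gamma$-integrals into the corresponding $\mu$- or $\nu$-integrals. Beyond this, the argument is a direct bookkeeping exercise, and no deeper obstacle arises.
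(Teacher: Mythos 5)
Your proposal is correct, and it diverges from the paper's proof in a substantive way: in the choice of compact set $K$ for the tightness-away-from-origin condition. The paper takes a product $K = \tilde K \times \tilde K$ with $\tilde K\subset\mbr^d\setminus\{0\}$ compact, whereas you take the annulus $K = \{(x,y): r \le |(x,y)| \le R\}$. The annular choice is the right one, and the paper's product form in fact fails. To see why, consider the trivial L\'evy coupling $\gamma = \mu\otimes\delta_0 + \delta_0\otimes\nu$ from Remark~\ref{rem:ganonempty}, carried on the union of coordinate hyperplanes $\{y=0\}\cup\{x=0\}$. Since $0\notin\tilde K$, the set $\tilde K\times\tilde K$ is disjoint from this support, so $\gamma(K)=0$ and
\[
\int_{K^c}\min\{1,|x|^2+|y|^2\}\,d\gamma = \int_{\mbr^d}\min\{1,|x|^2\}\,d\mu + \int_{\mbr^d}\min\{1,|y|^2\}\,d\nu,
\]
which is the full truncated mass and cannot be made small by shrinking $\tilde K^c$. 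Correspondingly, the second inequality in the paper's displayed chain fails for this $\gamma$ (the right-hand side restricts the marginals to $\tilde K^c$, but the left-hand side still sees the entire coupling, because the term $\min\{1,|y|^2\}$ is not controlled on $\tilde K^c\times\tilde K$). Your annular $K$ avoids the problem precisely because it meets each coordinate axis in a compact annulus, which is what is needed to capture the mass of couplings that concentrate on the axes. Two small points worth making explicit in a polished write-up: assume $r\le 1$ and $R\ge\sqrt2$ so that $\min\{1,|(x,y)|^2\}=|x|^2+|y|^2$ on the inner region and $\equiv 1$ on the outer region, as your estimates require; and note that each indicator-type test function is dominated by $\min\{1,|x|^2\}$ (not merely by $|x|^2$ or $1$), which is exactly the hypothesis under which Lemma~\ref{FTOLOT3to1lemma} gives the marginal identity.
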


\begin{proof}
	Let us abbreviate $\Ga = \Ga^\La(\mcl{M},\mcl{N})$ and introduce the minimum operator $a\wedge b = \min\cb{a,b}$. We aim to show: (1) it holds:
	\begin{align*}
		\sup_{\ga\in\Ga} \int_{\mbr^{2d}}[1\wedge(|x|^2+|y|^2)] \, d\ga(x,y) < \infty;
	\end{align*}
	(2) for every $\ep>0$, there exists a compact set $K \subset \mbr^{2d}\setminus \{(0,0)\}$ such that
	\begin{align*}
		\sup_{\ga\in\Ga} \int_{K^c} [1\wedge(|x|^2+|y|^2)] \, d\ga(x,y) < \ep.
	\end{align*}
		
	To prove (1), observe the bound
	\(
	[1\wedge(|x|^2+|y|^2)]\le (1\wedge|x|^2) + (1\wedge |y|^2).
	\)
	Using the marginal condition, for any $\ga \in \Ga^\La(\mu,\nu)$ with $\mu \in \mcl{M},\ \nu \in \mcl{N}$, we have
	\begin{align*}
		\int_{\mbr^{2d}} 1\wedge(|x|^2+|y|^2) \, d\ga(x,y)
		&\le \int_{\mbr^d} 1\wedge |x|^2 \, d\mu(x) + \int_{\mbr^d} 1\wedge |y|^2 \, d\nu(y).
	\end{align*}
	Since $\mcl{M}$ and $\mcl{N}$ are $\La$-tight, the right-hand side is uniformly bounded. This proves (1).
	
	To prove (2), by $\La$-tightness of $\mcl{M}$ and $\mcl{N}$, there exists a compact set $\tilde K \subset \mbr^d \setminus \{0\}$ such that
	\[
	\sup_{\mu \in \mcl{M} \cup \mcl{N}} \int_{\tilde K^c} 1\wedge |x|^2 \, d\mu(x) < \f{\ep}{2}.
	\]
	Let $K = \tilde K \times \tilde K$, which is a compact subset of $\mbr^{2d} \setminus \{(0,0)\}$. Then for any $\ga \in \Ga^\La(\mu,\nu)$ with $\mu \in \mcl{M},\ \nu \in \mcl{N}$,
	\begin{align*}
		\int_{K^c} 1\wedge(|x|^2+|y|^2) \, d\ga(x,y)
		&\le \int_{(\tilde K^c \times \mbr^d) \cup (\mbr^d \times \tilde K^c)} [(1\wedge |x|^2)+(1\wedge |y|^2)] d\ga(x,y) \\
		&\le \int_{\tilde K^c} 1\wedge |x|^2 \, d\mu(x) + \int_{\tilde K^c} 1\wedge |y|^2\, d\nu(y) \\
		&< \ep.
	\end{align*}
	This proves (2), hence $\Ga$ is $\La$-tight.
\end{proof}

We may now proceed to the proof of Theorem \ref{lotp-thm}

\begin{proof}[Proof of Theorem \ref{lotp-thm}]
	We first observe that for any $\mu, \nu \in \Lambda_2(\mathbb{R}^d)$, the set of L\'evy couplings $\Gamma^\Lambda(\mu, \nu)$ is nonempty, see Remark \ref{rem:ganonempty}. 
	Hence, the L\'evy cost $\mathcal{C}_2^\Lambda(\mu, \nu)$ is well-defined and finite.
	
	Let $\{\gamma_n\} \subset \Gamma^\Lambda(\mu, \nu)$ be a minimizing sequence, i.e.,
	\[
	\lim_{n\to\infty} \inn{\ga_n,c_2}
	= \mathcal{C}_2^\Lambda(\mu, \nu).
	\]
	By Lemma \ref{lem:couplingtight}, $\Ga^\La(\mu,\nu)$ is $\La$-tight. Hence by the Prokhorov theorem (see Theorem \ref{thm:prokhorov}), there is $\ga_*\in \La(\mbr^{2d})$ and a subsequence $\{\ga_{n_k}\}_{k}$ such that $\ga_{n_k}\to\ga_*$ $\La$-weakly. 

	Let us first show that $\ga\in \La_2(\mbr^d)$. Indeed, it follows by Portmanteau lemma (Lemma \ref{lem:port}):
	\begin{align*}
		\inn{\ga_*,|x|^2+|y|^2}\le \liminf_{n\to\infty} \inn{\ga_n,|x|^2+|y|^2} = \inn{\mu,|x|^2}+ \inn{\nu,|y|^2}<\infty.
	\end{align*}
	Hence $\ga_*$ has a finite second moment.
	
	Next we show $\ga_*\in \Ga^{\La}(\mu,\nu)$. 	
    For any $f\in C_b(\mathbb{R}^d)$ that vanishes on some neighbourhood of $0$, clearly $|f(x)|\le C\min\cb{1,|x|^2}$ for some constant $C\ge 0$. Since $\ga_n\to\ga$ $\La$-weakly we have
    \begin{align*}
    	\inn{\ga_*,f\otimes 1} = \lim_{n\to\infty} \inn{\ga_n,f\otimes 1} = \inn{\mu,f}.
    \end{align*}
    A similar computation also leads to $\inn{\ga_*,1\otimes f}=\inn{\nu,f}$. 
    The above holds for all $f\in C_b(\mbr^d)$ that vanishes on a neighborhood of 0. 
    By Proposition \ref{equivalentLevyCoupling} (see \eqref{marg-pairing}) $\ga_*\in\Ga^\La(\mu,\nu)$.

    Finally let us prove $\ga_*$ is L\'evy 2-optimal. Indeed, by Portmanteau lemma we find 
    \begin{align*}
    	\inn{\ga_*,c_2}&\le \liminf_{n\to\infty} \inn{\ga_n,c_2}=\mcc_2^\La(\mu,\nu). 
    \end{align*}
    This shows $\ga_*$ is 2-optimal. 
    The construction of 2-optimal coupling is completed.
\end{proof}

\subsection{The fundamental theorem of Lévy optimal transport}






In the L\'evy optimal transport problem, a similar connection between optimal couplings, cyclical monotonicity, and duality also holds. We present the corresponding result below, which we refer to as the \emph{fundamental theorem of L\'evy optimal transport}.

\begin{theorem}[The fundamental theorem of L\'evy optimal transport] \label{FTOLOT}
	Let $\mu,\nu\in \La_2(\mbr^d)$ and $\ga\in \La_2(\mbr^{2d})$. The following are equivalent. 
	
	(i) $\ga$ is a 2-optimal L\'evy coupling of $\mu,\nu$.
	
	(ii) The set $\supp(\ga)\cup\{(0,0)\}$ is $2$-cyclically monotone.
	
	(iii) There exists a pair of lower semicontinuous functions $\varphi,\psi:\mbr^d\to[-\infty,\infty)$ such that $\varphi\in L^1(\mu),\psi\in L^1(\nu)$, $\varphi\oplus \psi\le c_2$, $\varphi(0)=\psi(0)=0$, and $\varphi(x)+\psi(y)= c_2(x,y)$ for all $x,y\in \supp(\ga)$.
\end{theorem}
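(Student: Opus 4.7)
The plan is to follow the classical cyclic scheme (i)$\Rightarrow$(ii)$\Rightarrow$(iii)$\Rightarrow$(i), adapted to the L\'evy setting. The essential departure from the classical fundamental theorem (Theorem~\ref{FTOT}) is the distinguished role of the origin: the L\'evy marginal condition places no constraint on mass concentrated near $(0,0)$ (the trivial coupling $\mu\otimes\delta_0+\delta_0\otimes\nu$ from Remark~\ref{rem:ganonempty} is always admissible), so mass near $0$ may be freely inserted or removed. This flexibility is exactly what forces the inclusion of $(0,0)$ in the cyclically monotone set in~(ii) and pins the normalization $\varphi(0)=\psi(0)=0$ in~(iii).

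For (i)$\Rightarrow$(ii), I would run a standard perturbation argument. Given a finite sequence $\{(x_k,y_k)\}_{k=1}^N$ in $\supp(\gamma)\cup\{(0,0)\}$ and a permutation $\sigma$, split the indices into $I=\{k:(x_k,y_k)\neq(0,0)\}$ and $J=\{k:(x_k,y_k)=(0,0)\}$. For each $k\in I$, localize a piece $\tilde\gamma_k$ of $\gamma$-mass $\epsilon$ near $(x_k,y_k)$ (legitimate since $\gamma$ is Radon away from the origin and $(x_k,y_k)\in\supp(\gamma)$), and for each $k\in\{1,\ldots,N\}$ pick a bump $\tilde\gamma_k'$ of mass $\epsilon$ concentrated near $(x_k,y_{\sigma(k)})$. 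Form
\begin{align*}
    \gamma' \;:=\; \gamma \;-\; \epsilon\sum_{k\in I}\tilde\gamma_k \;+\; \epsilon\sum_{k=1}^N\tilde\gamma_k'.
\end{align*}
A direct bookkeeping check shows that the marginal discrepancies between $\gamma'$ and $\gamma$ are supported in arbitrarily small neighborhoods of $0$: the $I$-contributions pair up via $\sigma$, while the $J$-contributions live entirely at $(0,\cdot)$ or $(\cdot,0)$. Since the L\'evy marginal condition is blind to neighborhoods of $0$, this gives $\gamma'\in\Gamma^\La(\mu,\nu)\cap\La_2(\mbr^{2d})$ for $\epsilon$ small. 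Using $c_2(0,0)=0$, optimality of $\gamma$ then yields, on dividing by $\epsilon$ and letting $\epsilon\searrow 0$, the $2$-cyclic monotonicity inequality $\sum_k c_2(x_k,y_k)\le\sum_k c_2(x_k,y_{\sigma(k)})$.

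For (ii)$\Rightarrow$(iii), I would apply Rockafellar's construction to the $2$-cyclically monotone set $E:=\supp(\gamma)\cup\{(0,0)\}$ anchored at $(0,0)$, producing a lower semicontinuous potential $\varphi:\mbr^d\to[-\infty,\infty)$ together with its $c_2$-conjugate $\psi(y):=\inf_x[c_2(x,y)-\varphi(x)]$, satisfying $\varphi\oplus\psi\le c_2$ on $\mbr^{2d}$ with equality on $E$. Since $(0,0)\in E$, this forces $\varphi(0)+\psi(0)=0$, and a constant shift preserving this sum reduces to $\varphi(0)=\psi(0)=0$. The pointwise bounds $\varphi(x)\le c_2(x,0)=\tfrac12|x|^2$ and $\psi(y)\le\tfrac12|y|^2$, combined with $\mu,\nu\in\La_2(\mbr^d)$, give $\varphi^+\in L^1(\mu)$ and $\psi^+\in L^1(\nu)$. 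For the negative parts, Lemma~\ref{FTOLOT3to1lemma} applied to $\varphi$ yields $\int(\varphi\otimes 1)\,d\gamma=\int\varphi\,d\mu$ in $[-\infty,\infty)$, and similarly for $\psi$; summing and using the pointwise equality $\varphi\oplus\psi=c_2$ on $\supp(\gamma)$ produces
\begin{align*}
    \int\varphi\,d\mu+\int\psi\,d\nu \;=\; \int c_2\,d\gamma \;<\;\infty.
\end{align*}
Since each summand is bounded above, finiteness of the sum forces each to be finite, hence $\varphi\in L^1(\mu)$ and $\psi\in L^1(\nu)$.

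For (iii)$\Rightarrow$(i), the argument is pure L\'evy duality: for any competitor $\gamma'\in\Gamma^\La(\mu,\nu)$, the pointwise inequality $\varphi\oplus\psi\le c_2$ combined with Lemma~\ref{FTOLOT3to1lemma} gives
\begin{align*}
    \int c_2\,d\gamma' \;\geq\; \int(\varphi\oplus\psi)\,d\gamma' \;=\; \int\varphi\,d\mu+\int\psi\,d\nu,
\end{align*}
where the reduction of coupled integrals to marginals crucially uses $\varphi(0)=\psi(0)=0$, so that the ``gap'' at the origin in the L\'evy marginal condition contributes nothing. Applying the same identity to $\gamma$ itself and using pointwise equality on $\supp(\gamma)$ gives $\int c_2\,d\gamma=\int\varphi\,d\mu+\int\psi\,d\nu$, so $\gamma$ attains the infimum. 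The main technical obstacle throughout will be the interaction between L\'evy marginals and potentially unbounded potentials: the marginal identity of Lemma~\ref{FTOLOT3to1lemma} only gives an $[-\infty,\infty)$-valued answer, so finiteness of the marginal integrals must be deduced \emph{a posteriori} from finiteness of $\int c_2\,d\gamma$, and the normalization $\varphi(0)=\psi(0)=0$ (together with lower semicontinuity and the moment assumption $\mu,\nu\in\La_2(\mbr^d)$) is exactly what allows the L\'evy marginal to behave like a classical marginal when integrating the potentials; the secondary delicacy of the perturbation in (i)$\Rightarrow$(ii), when several chain vertices coincide at $(0,0)$, is handled by the asymmetric bookkeeping above.
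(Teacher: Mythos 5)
Your proposal is correct, and it follows the same route as the paper for implications (ii)$\Rightarrow$(iii) and (iii)$\Rightarrow$(i): both invoke the classical Rockafellar/conjugate construction (Theorem~\ref{FTOT}(ii)) anchored at $(0,0)$, normalize so $\varphi(0)=\psi(0)=0$, deduce $\varphi^+\in L^1(\mu)$, $\psi^+\in L^1(\nu)$ from the pointwise bound $\varphi(x),\psi(x)\le\tfrac12|x|^2$, and then use Lemma~\ref{FTOLOT3to1lemma} together with finiteness of $\int c_2\,d\gamma$ to upgrade to full $L^1$-integrability; the duality step for (iii)$\Rightarrow$(i) is likewise identical.

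Where you genuinely diverge is (i)$\Rightarrow$(ii). You adapt the direct perturbation (``rewiring'') argument from the classical theory: excise $\epsilon$-pieces of $\gamma$ near each non-origin chain vertex, insert product-of-marginals bumps at the rewired pairs $(x_k,y_{\sigma(k)})$, and observe that all marginal discrepancies concentrate near the origin, where the L\'evy marginal condition \eqref{levy-marg-cond} imposes nothing. This works, but it carries all the usual delicacies of the perturbation scheme — the bumps $\tilde\gamma_k'$ must be the precise product measures $\tfrac1\epsilon(\pi^1_\#\tilde\gamma_k)\otimes(\pi^2_\#\tilde\gamma_{\sigma(k)})$ for the cancellations to be exact rather than approximate, you need disjoint localizing neighborhoods, and for the degenerate case $\sigma(k)\in J$ you need to make an arbitrary but consistent choice of $y$-bump near $0$ — plus the new $I$/$J$ asymmetry you point out. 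The paper instead \emph{reduces} to Theorem~\ref{FTOT}: it writes $\gamma=\gamma_0+\zeta$ with $d\gamma_0=\min\{1,|x|^2+|y|^2\}\,d\gamma$, augments to $\tilde\gamma_0=\gamma_0+\alpha\delta_{(0,0)}$, shows by contradiction that $\tilde\gamma_0$ is a \emph{classically} $2$-optimal bounded coupling (any improvement $\tilde\gamma_0'$ would yield a better L\'evy coupling $\tilde\gamma_0'+\zeta$, using again that discrepancies near $0$ are invisible), and then applies Theorem~\ref{FTOT}(iii)$\Rightarrow$(i) directly to get cyclic monotonicity of $\supp(\tilde\gamma_0)=\supp(\gamma)\cup\{(0,0)\}$. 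The reduction delegates the combinatorial bookkeeping entirely to the classical theorem and is therefore tighter and less error-prone, at the mild cost of the density-rescaling trick; your perturbation argument is more self-contained but would require a more careful writeup of the rewiring to be airtight. Both buy you the same conclusion.
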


\begin{remark}\label{rem:cost-equal}
	As established in Proposition~\ref{prop:prelim2}, if $\mu,\nu$ have finite equal mass, the L\'evy transport cost is always bounded above by the classical transport cost, that is,
	\(
	\mcc_2^\Lambda(\mu,\nu) \;\le\; \mcc_2(\mu,\nu).
	\)
	As a consequence of the preceding theorem, the two costs coincide if and only if the point $(0,0)$ lies in the support of a classical optimal coupling of $\mu$ and $\nu$.
\end{remark}

\begin{remark}\label{dual-equal}
	The pair of functions $(\varphi,\psi)$ from (iii) is called \emph{a Kantorovich potential} for the pair $\mu,\nu$. Particularly it holds
	\begin{align*}
		\int_{\mbr^d} \varphi(x)d\mu(x)+\int_{\mbr^d} \psi(y)d\mu(y)=\inn{\mu,\varphi}+\inn{\nu,\psi}&= \inn{\ga,\varphi\oplus\psi}= \inn{\ga,c_2} =\mcc_2^\La(\mu,\nu).
	\end{align*}
	The second equality is due to Proposition \ref{FTOLOT3to1lemma} and  that $\varphi\in L^1(\mu),\psi\in L^1(\nu)$. 
\end{remark}

\begin{proof}
    (i) $\to$ (ii). Suppose $\ga$ is a 2-optimal L\'evy coupling of $\mu,\nu$. 
    Consider the decomposition $\ga = \ga_0+ \zeta$, where 
    \begin{align*}
        d\ga_0(x,y)&= \min\cb{1,|x|^2+|y|^2}d\ga(x,y), \qquad d\zeta(x,y) = (1-\min\cb{1,|x|^2+|y|^2})d\ga(x,y). 
    \end{align*}
    Note that $\ga_0$ is a bounded Borel measure on $\mbr^{2d}$ with $\supp(\ga_0)=\supp(\ga). $    

    Fix $\al>0$ (e.g., $\al=1$) and consider the bounded measure $\tilde \ga_0 = \ga_0+ \al\de_{(0,0)}$, for then $\supp(\tilde \ga_0)= \supp(\ga_0)\cup\{(0,0)\}$. To establish (ii), we will use Theorem \ref{FTOT}, (iii)$\to$(i). So it reduces to show that $\tilde \ga_0$ is classically 2-optimal with respect to its marginals:
%
    \begin{align*}
        \tilde \mu_0 &= \#_1 (\ga_0)+ \al \de_0,\qquad \tilde \nu_0  = \#_2(\ga_0) +\al \de_0. 
    \end{align*}
    (Note: $\tilde \mu_0,\tilde \nu_0$ have equal mass, that is, $\tilde \mu_0(\mbr^d)=\tilde \nu_0(\mbr^d)=\ga_0(\mbr^{2d})+\al$.)
    Suppose $\tilde \ga_0$ is not classically 2-optimal, that is, there exists $\tilde \ga_0'\in \Ga(\tilde \mu_0,\tilde \nu_0)$ such that 
    \begin{align*}
    	\inn{\tilde\ga_0',c_2}<\inn{\tilde \ga_0,c_2}. 
    \end{align*}    
    Consider the measure $\ga' = \tilde \ga_0'+\zeta$. 
    We verify that $\ga'$ is a L\'evy coupling of $\mu,\nu$. For any continuous function $\varphi\in C_b(\mbr^d)$ vanishing on a neighborhood of $0$, we have $\inn{\de_0,\varphi}=0$. Therefore,
    \begin{align*}
    	\inn{\ga',\varphi\otimes 1}&= \inn{\tilde\ga_0',\varphi\otimes 1}+\inn{\zeta,\varphi\otimes 1} = \inn{\#_1(\ga_0)+\al \de_0,\varphi } + \inn{\zeta, \varphi\otimes 1}\\
    	&= \inn{\ga_0,\varphi\otimes 1}+ \inn{\zeta,\varphi\otimes 1} = \inn{\ga,\varphi\otimes 1} = \inn{\mu,\varphi}. 
    \end{align*}
%
    The last second equality is due to that $\ga_0+\zeta = \ga$. 
    A similar argument also shows $\inn{\ga',1\otimes \varphi}=\inn{\nu,\varphi}$. 
    Hence $\ga'$ is a L\'evy coupling of $\mu,\nu$, i.e., 
    $\ga' \in \Ga^\La(\mu,\nu)$. 
    Moreover, we find
	\begin{align*}
		\inn{\ga',c_2}&= \inn{\tilde\ga_0',c_2}+ \inn{\zeta,c_2} < \inn{\tilde\ga_0,c_2} + \inn{\zeta,c_2} = \inn{\ga_0,c_2}+\inn{\zeta,c_2} = \inn{\ga,c_2}.
	\end{align*}    
    This contradicts with the L\'evy 2-optimality of $\ga$. Hence $\tilde \ga_0$ is classically 2-optimal.

    (ii) $\to$ (iii).  
    By the fundamental theorem of (classical) optimal transport (Theorem \ref{FTOT}(ii)),
    there is a pair of lower semicontinous potentials $(\varphi,\psi)$ such that $\varphi\oplus\psi\le c_2$ and $\varphi(x)+\psi(y)=c_2(x,y)$ for all $(x,y)\in \supp(\gamma)\cup\{(0,0)\}$. Particularly we have $\varphi(0)+\psi(0)=0$. By adding and subtracting a constant if necessary, we may normalize so that \(\varphi(0) = \psi(0) = 0\). 
    
    It remains to verify that \( \varphi \in L^1(\mu),\psi\in L^1(\nu) \). Note that \( \varphi^+\in L^1(\mu), \psi^+ \in L^1(\nu) \) since  
    \(
    0 \le \varphi^+(x), \psi^+(x) \le \tfrac{1}{2}|x|^2,
    \)
    and both \( \mu \) and \( \nu \) have finite second moments. By Lemma \ref{FTOLOT3to1lemma},
    \begin{align*}
    	\inn{\mu,\varphi}+\inn{\nu,\psi}= \inn{\ga,\varphi\otimes 1}+\inn{\ga,1\otimes \psi}= \inn{\ga,\varphi\oplus\psi}= \inn{\ga,c_2}.
    \end{align*}
%
    where the integrals are understood in the extended real line \( [-\infty, \infty) \). The linearity used above is valid in this context. Since \( \gamma \) also has finite second moment, the integral $\inn{\ga,c_2}$ is finite nonnegative, implying
    \[
    \inn{\mu,\varphi}+\inn{\nu,\psi} \in(0,\infty).
    \]
    As \( \varphi = \varphi^+ - \varphi^- \) and \( \varphi^+ \in L^1(\mu) \), it follows that \( \varphi^- \in L^1(\mu) \), and similarly \( \psi^- \in L^1(\nu) \). Hence, \( \varphi \in L^1(\mu) \) and \( \psi \in L^1(\nu) \).

    (iii) $\to$ (i).  We have $\varphi(0)=\psi(0)=0$, $\varphi\oplus\psi \le c_2$, with the equality holds when $(x,y)\in\supp(\ga)\cup\{(0,0)\}$. Hence, for any $\tilde{\gamma}\in\Gamma^{\Lambda}(\mu,\nu)$
    \begin{align*}
    	\inn{\ga,c_2}&= \inn{\ga,\varphi\oplus\psi} = \inn{\mu,\varphi}+\inn{\nu,\psi} = \inn{\tilde \ga,\varphi\oplus\psi}\le \inn{\tilde \ga,c_2}. 
    \end{align*}
    Here, the second and third equalities  use Lemma \ref{FTOLOT3to1lemma}. 
     Therefore (i) is established.
\end{proof}

\subsection{L\'evy-Kantorovich duality}
A consequence of Theorem \ref{FTOLOT} is a Kantorovich-type duality for the L\'evy optimal transport problem. In classical optimal transport theory, the Kantorovich duality states that for any \( \mu, \nu \in \mathcal{P}(\mathbb{R}^d) \),
\begin{align*}
	\mathcal{C}_2(\mu, \nu) &= \sup_{\varphi, \psi} \left[ \int_{\mathbb{R}^d} \varphi(x) \, d\mu(x) + \int_{\mathbb{R}^d} \psi(y) \, d\nu(y) \right],
\end{align*}
where the supremum is taken over all continuous functions \( \varphi, \psi \) such that \( \varphi \oplus \psi \leq c_2 \), with \( c_2(x, y) := \frac{1}{2} |x - y|^2 \). By a density argument, one can show that this identity continues to hold when the supremum is restricted to smaller but dense classes of functions, such as \( C_b(\mathbb{R}^d) \), \( C_0(\mathbb{R}^d) \), \( C_c(\mathbb{R}^d) \), or \( C_0^2(\mathbb{R}^d) \).

We now turn to the duality principle for the L\'evy optimal transport problem, which we refer to as the \emph{L\'evy–Kantorovich duality}.

\begin{theorem}[L\'evy-Kantorovich duality]\label{LK-duality-thm}
	Let \(\mu, \nu \in \Lambda_2(\mathbb{R}^d)\). Then
	\begin{align}\label{K-dual}
		\mathcal{C}^\Lambda_2(\mu, \nu) = \sup_{\varphi, \psi} \left[ \int_{\mathbb{R}^d} \varphi(x) \, d\mu(x) + \int_{\mathbb{R}^d} \psi(y) \, d\nu(y) \right],
	\end{align}
	where the supremum is taken over all pairs \((\varphi, \psi)\) satisfying: \(\varphi \oplus \psi \le c_2\), \(\varphi(0) = \psi(0)=0\), and any of the following regularity conditions:
	\begin{enumerate}[label=(\roman*)]
		\item \(\varphi, \psi\) are lower semicontinuous;
		\item \(\varphi, \psi \in C_2^2(\mathbb{R}^d)\).
		\item \(\varphi, \psi\in C_2^2(\mbr^d)\), and vanish on a neighborhood of the origin; 
		\item \(\varphi, \psi\in C_b^2(\mbr^d)\), and vanish on a neighborhood of the origin; 
	\end{enumerate}
\end{theorem}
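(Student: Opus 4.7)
I plan to prove $S_4 \le S_3 \le S_2 \le S_1 \le \mcc_2^\La(\mu,\nu) \le S_4$, where $S_k$ denotes the supremum in class (k). The first three inequalities follow from the inclusions of admissible classes: (iv) $\subset$ (iii) $\subset$ (ii), and (ii) $\subset$ (i) since continuous implies lower semicontinuous. Weak duality supplies $S_1 \le \mcc_2^\La(\mu,\nu)$: for any admissible $(\vphi,\psi)$ in (i), combining $\vphi(0)=\psi(0)=0$ with the constraint $\vphi\oplus\psi\le c_2$ evaluated at $(x,0)$ and $(0,y)$ yields the pointwise bounds $\vphi^+\le \tfrac12|x|^2\in L^1(\mu)$ and $\psi^+\le\tfrac12|y|^2\in L^1(\nu)$ (using $\mu,\nu\in\La_2$); Lemma~\ref{FTOLOT3to1lemma} then yields $\inn{\mu,\vphi}+\inn{\nu,\psi} = \inn{\ga,\vphi\oplus\psi}\le\inn{\ga,c_2}$ for every $\ga\in\Ga^\La(\mu,\nu)$. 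Attainment in (i) is immediate from Theorem~\ref{FTOLOT}(iii) applied to a 2-optimal coupling $\ga_*$ provided by Theorem~\ref{lotp-thm}: the resulting lsc Kantorovich potentials $(\vphi_*,\psi_*)$ satisfy the constraints of (i) and, by Remark~\ref{dual-equal}, achieve $\inn{\mu,\vphi_*}+\inn{\nu,\psi_*}=\mcc_2^\La(\mu,\nu)$. Thus $S_1 = \mcc_2^\La(\mu,\nu)$, and the task reduces to $\mcc_2^\La(\mu,\nu)\le S_4$.

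\textbf{Upgrading to smooth potentials (class (ii)).} Here I exploit the quadratic structure of $c_2$. Replacing $(\vphi_*,\psi_*)$ by their double $c_2$-transforms, I may assume $\vphi_*(x)=\tfrac12|x|^2-u_*(x)$ and $\psi_*(y)=\tfrac12|y|^2-v_*(y)$, where $u_*,v_*$ are proper lsc convex, nonnegative, with $u_*(0)=v_*(0)=0$ and $u_*(x)+v_*(y)\ge x\cdot y$ for all $x,y$. Regularize by convolving with a symmetric $C^\infty_c$ mollifier $\rho_\ep$: set $u_\ep:=u_**\rho_\ep$, $v_\ep:=v_**\rho_\ep$. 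Jensen's inequality applied with a symmetric kernel yields $u_\ep\ge u_*$ and $v_\ep\ge v_*$, so $u_\ep(x)+v_\ep(y)\ge u_*(x)+v_*(y)\ge x\cdot y$; hence $\vphi_\ep:=\tfrac12|x|^2-u_\ep$, $\psi_\ep:=\tfrac12|y|^2-v_\ep$ lie in $C^2_2(\mbr^d)$ and satisfy $\vphi_\ep\oplus\psi_\ep\le c_2$. Since $u_*,v_*$ are continuous where finite, $u_\ep\to u_*$ and $v_\ep\to v_*$ pointwise; dominated convergence with integrable bounds (using $\vphi_\ep\le\tfrac12|\cdot|^2\in L^1(\mu)$ and a matching lower bound coming from the uniform $\ep$-control of the mollification) yields $\inn{\mu,\vphi_\ep}+\inn{\nu,\psi_\ep}\to\mcc_2^\La(\mu,\nu)$. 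The residual at the origin $\vphi_\ep(0)+\psi_\ep(0)=-(u_\ep(0)+v_\ep(0))=o(1)$ as $\ep\to 0$ by continuity of $u_*,v_*$ at $0$, and is absorbed by a coupled affine correction that restores $\vphi_\ep(0)=\psi_\ep(0)=0$ exactly while preserving $\vphi\oplus\psi\le c_2$. This gives $S_2=\mcc_2^\La(\mu,\nu)$.

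\textbf{Vanishing near origin, boundedness, and main obstacle.} The passage from (ii) to (iii)/(iv) exploits that L\'evy couplings only test the marginals against functions vanishing near $0$. Given $(\vphi_\ep,\psi_\ep)\in C_2^2$ from the previous step, I first approximate the measures by their restrictions $\mu|_{B_\delta(0)^c},\nu|_{B_\delta(0)^c}$ and correspondingly modify the potentials to vanish on $B_\delta(0)$ by a combination of cutoff and convex repair on an annulus (replacing $u_\ep$ by a strictly larger convex function that coincides with $\tfrac12|x|^2$ on $B_\delta(0)$), which enlarges $u_\ep$ and hence preserves $u_\ep+v_\ep\ge x\cdot y$; the error in the dual integrals tends to $0$ as $\delta\to 0$ since $|\vphi_\ep(x)|\le C_\ep|x|^2$ near $0$ while $|x|^2\in L^1(\mu|_{B_1(0)})$. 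For class (iv), truncate from above by $\vphi_M:=\vphi_\ep\wedge M$ (the constraint is preserved since $\vphi_M\le\vphi_\ep$) and re-mollify with a kernel of support strictly smaller than $\delta$ to maintain vanishing near the origin; sending $M\to\infty$ and using dominated convergence with $\vphi^+\in L^1(\mu)$ completes the argument. The chief technical obstacle throughout is the simultaneous preservation of $\vphi(0)=\psi(0)=0$ and $\vphi\oplus\psi\le c_2$: these conditions collapse at $(0,0)$ into the rigid equality $\vphi(0)+\psi(0)=c_2(0,0)=0$, leaving no slack that generic perturbations could exploit. The entire approximation scheme must therefore be engineered so that any perturbation-induced defect at the origin is of size $o(1)$ and can be absorbed through coupled affine corrections whose magnitudes are controlled by the modulus of continuity of $u_*,v_*$ at $0$.
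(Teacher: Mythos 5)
Your part (i) argument (weak duality via Lemma~\ref{FTOLOT3to1lemma}, attainment via Theorem~\ref{FTOLOT}(iii) and Remark~\ref{dual-equal}) and the reduction of the whole theorem to $\mcc_2^\La(\mu,\nu)\le S_4$ both match the paper. The decisive gap is in your upgrade to smooth potentials: you mollify the conjugates $u_*,v_*$, observe that $\vphi_\ep(0)+\psi_\ep(0)=-(u_\ep(0)+v_\ep(0))<0$, and propose to absorb this defect via a ``coupled affine correction.'' That move is not available here. A L\'evy measure $\mu$ typically has $\mu(\mathbb{R}^d)=\infty$, and even for $\mu\in\Lambda_2(\mathbb{R}^d)$ the first absolute moment $\int_{B_1(0)}|x|\,d\mu$ can be infinite; adding any nonzero constant (or affine function) to $\vphi_\ep$ therefore makes $\int\vphi_\ep\,d\mu$ undefined. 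The normalization $\vphi(0)=\psi(0)=0$ is a rigid constraint precisely because the measures are unbounded; it cannot be restored a posteriori.

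This is exactly why the paper's proof reduces instead to a density statement (Lemma~\ref{lem:denseL1}) whose proof is engineered to never disturb the origin condition: one first truncates the lsc potentials to bounded pairs vanishing on a neighborhood of $0$ (Lemma~\ref{lem:F1-dense}), then applies an infimal convolution with $n|x|$, which for large $n$ preserves that vanishing (Lemma~\ref{lem:F01dense}), and only then mollifies with a kernel supported in a ball strictly smaller than the vanishing radius, so the smooth approximants still vanish at $0$ by construction (Lemma~\ref{lem:F02dense}). Two secondary issues in your scheme would also require repair: the conjugates $u_*,v_*$ may equal $+\infty$ outside a proper convex set (Theorem~\ref{FTOLOT}(iii) only gives $\vphi_*,\psi_*:\mathbb{R}^d\to[-\infty,\infty)$), so $u_**\rho_\ep$ can be $+\infty$ on an $\ep$-shell near the domain boundary and $\vphi_\ep\notin C_2^2(\mathbb{R}^d)$ there; and the ``convex repair by a max with another convex function'' for classes (iii)/(iv) produces only a Lipschitz function, forcing a further mollification that re-creates the same origin shift. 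All of these vanish once you adopt the paper's ordering: truncate to bounded, locally vanishing potentials first, and regularize afterward.
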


Before proving the theorem, we first record the following consequence.  

\begin{corollary}\label{cor:lopt-dual}
	Let $\mu,\nu \in \Lambda_2(\mbr^d)$ and set $\mA=\La(0,0,\mu)$, $\mB=\La(0,0,\nu)$.  
	Then, for the restricted dual transport derivative $\om_2'$ from Definition~\ref{def:omega-prime}, one has
	\[
	\mcc_2^\La(\mu,\nu) = \om_2'(0,0;\mA,\mB).
	\]
\end{corollary}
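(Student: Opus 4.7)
The plan is to identify the variational principle defining $\omega_2'(0,0;\mathcal{A},\mathcal{B})$ with the Kantorovich-type dual in Theorem \ref{LK-duality-thm}(ii), exploiting that $\mathcal{A},\mathcal{B}$ are pure-jump generators written in global form. First I would expand the action: for any $\varphi \in C_2^2(\mathbb{R}^d)$,
\[
\mathcal{A}\varphi(0) \;=\; \int_{\mathbb{R}^d}\bigl[\varphi(x)-\varphi(0)-x\cdot\nabla\varphi(0)\bigr]\,d\mu(x),
\]
and analogously for $\mathcal{B}\psi(0)$. Next, for $(\varphi,\psi) \in \mathcal{D}'(0,0)$, I would use that the smooth function $c_2-\varphi\oplus\psi$ attains a global minimum equal to $0$ at the interior point $(0,0)$; this forces $\varphi(0)+\psi(0)=0$ together with the first-order conditions $\nabla\varphi(0)=\nabla\psi(0)=0$.

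Since the global-form L\'evy generators annihilate constants, the sum $\mathcal{A}\varphi(0)+\mathcal{B}\psi(0)$ is invariant under a joint constant shift $\varphi \mapsto \varphi+a$, $\psi \mapsto \psi-a$, so I may further normalize so that $\varphi(0)=\psi(0)=0$. Substituting these zero conditions into the expansions above yields the collapse
\[
\mathcal{A}\varphi(0)+\mathcal{B}\psi(0) \;=\; \int_{\mathbb{R}^d}\varphi\,d\mu + \int_{\mathbb{R}^d}\psi\,d\nu.
\]
Finiteness of both integrals follows from the $C_2^2$-growth bound combined with the second-order vanishing at the origin: one gets $|\varphi(x)|,|\psi(x)| \le C|x|^2$ locally and $\le C(1+|x|^2)$ globally, which is integrable against any $\mu,\nu \in \Lambda_2(\mathbb{R}^d)$, since the constant part contributes only on $\{|x|\ge 1\}$ where $\mu,\nu$ have finite mass.

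To conclude, I would match the admissibility classes. The computation above shows
\[
\omega_2'(0,0;\mathcal{A},\mathcal{B}) \;=\; \sup\Bigl\{\,\textstyle\int\varphi\,d\mu+\int\psi\,d\nu \;:\; \varphi,\psi \in C_2^2(\mathbb{R}^d),\ \varphi\oplus\psi\le c_2,\ \varphi(0)=\psi(0)=0\,\Bigr\},
\]
where the reverse inclusion uses that, conversely, every pair appearing on the right automatically lies in $\mathcal{D}'(0,0)$: the equality $\varphi(0)+\psi(0)=0=c_2(0,0)$ under the global bound $\varphi\oplus\psi\le c_2$ makes $(0,0)$ a global minimum of $c_2-\varphi\oplus\psi$. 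The right-hand supremum is precisely the dual quantity in Theorem \ref{LK-duality-thm}(ii), hence equals $\mathcal{C}_2^\Lambda(\mu,\nu)$.

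I do not anticipate a serious obstacle; the only delicate point is the constant-normalization step, which rests on the fact that the jump operators $\mathcal{A},\mathcal{B}$ annihilate constants in the global form, together with the identification of the touching condition at $(0,0)$ with $\varphi(0)=\psi(0)=0$ under $\varphi\oplus\psi\le c_2$.
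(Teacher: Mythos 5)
Your proposal is correct and follows essentially the same route as the paper: observe that the touching condition forces $\nabla\varphi(0)=\nabla\psi(0)=0$ (and, after a constant shift, $\varphi(0)=\psi(0)=0$), which collapses the compensated jump operator to $\mathcal{A}\varphi(0)=\int\varphi\,d\mu$, and then invoke the L\'evy--Kantorovich duality of Theorem~\ref{LK-duality-thm}(ii). The only difference is cosmetic: you spell out the constant-shift normalization and the integrability argument, which the paper leaves implicit via Remark~\ref{rem:supdom}.
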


\begin{proof}
	Take $\vphi,\psi \in C_2^2(\mbr^d)$ with $\vphi(0)=\psi(0)=0$ and $\vphi \oplus \psi \le c_2$.  
	In particular, $\vphi$ and $\psi$ touch $\tfrac12|x|^2$ at $0$ from below, which forces $\nabla \vphi(0)=\nabla \psi(0)=0$.  
	Thus,
	\begin{align}\label{eq:temp3.11}
		\mA\vphi(0) 
		= \int_{\mbr^d} \big[\vphi(x+x')-\vphi(x)-(x')^\top\nabla\vphi(x)\big]\Big|_{x=0}\, d\mu(x') 
		= \int_{\mbr^d}\vphi(x')\,d\mu(x'),	
	\end{align}
	and similarly, $\mB\psi(0) = \int_{\mbr^d}\psi(y')\,d\nu(y')$.  
	Hence, for all admissible $\vphi,\psi$,
	\[
	\mA\vphi(0)+\mB\psi(0)
	= \int_{\mbr^d}\vphi(x)\,d\mu(x) + \int_{\mbr^d}\psi(y)\,d\nu(y).
	\]
	Taking the supremum over such pairs $(\vphi,\psi)$, the definition of $\om_2'$ together with Theorem~\ref{LK-duality-thm}(ii) yields the claim.
\end{proof}

Let us now return to the proof of Theorem \ref{LK-duality-thm}. The formula \eqref{K-dual} holds with the supremum taken over lower semicontinuous pair is contained in Theorem \ref{FTOLOT}(iii). To extend the formula to classes of functions with stronger regularity conditions, we need the following lemmas. 

\begin{lemma}\label{lem:supgap}
	Let $X$ be a topological space and $F:X\to \mbr\cup\{-\infty\}$ be an upper semicontinuous function. Suppose $Y\subset X$ is dense. Then it holds
	\begin{align*}
		\sup_{x\in X} F(x)=\sup_{x\in Y} F(x). 
	\end{align*}
\end{lemma}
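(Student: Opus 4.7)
Since $Y \subset X$ the inequality $\sup_Y F \le \sup_X F$ is immediate; the task is the reverse. The natural plan is a density argument: for each $x_0 \in X$ with $F(x_0) > -\infty$ and each $\varepsilon > 0$, produce a neighbourhood $U$ of $x_0$ on which $F > F(x_0) - \varepsilon$, then use density of $Y$ to pick $y \in Y \cap U$ satisfying $F(y) > F(x_0) - \varepsilon$. Taking the supremum over $x_0$ and letting $\varepsilon \searrow 0$ would yield $\sup_X F \le \sup_Y F$ and close the proof; the degenerate case $F \equiv -\infty$ is trivial.

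The delicate point — and the main obstacle — is that the existence of the neighbourhood $U$ above is precisely the defining property of \emph{lower} semicontinuity, $\liminf_{x \to x_0} F(x) \ge F(x_0)$, not of upper semicontinuity, $\limsup_{x \to x_0} F(x) \le F(x_0)$, as the hypothesis is worded. In fact, for a genuinely upper semicontinuous function a point $x_0 \notin Y$ at which $F$ spikes upward is invisible from $Y$: take $X = \mbr$, $Y = \mbr \setminus \{0\}$, and $F = \chi_{\{0\}}$, so that $\sup_X F = 1 > 0 = \sup_Y F$. Thus the lemma as literally worded fails without additional hypotheses, and any would-be proof under the USC hypothesis alone must break down at exactly this approximation step.

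Accordingly, my proposal is to read the hypothesis as a typographical inversion for \emph{lower} semicontinuity. This reading is consistent with the codomain $\mbr \cup \{-\infty\}$, which is the natural range for LSC (rather than USC) functions, and with the use of the lemma inside the proof of Theorem \ref{LK-duality-thm}: there the relevant functional $(\varphi,\psi) \mapsto \inn{\mu,\varphi} + \inn{\nu,\psi}$ is linear and continuous in the topology used to realise the smaller test-function classes as dense, so it is both upper and lower semicontinuous and the lemma applies regardless of the labelling. Under this LSC reading, the one-line density argument in the first paragraph completes the proof with no further technical subtleties.
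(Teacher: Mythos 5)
Your analysis is correct, and the central observation is the right one: as literally stated, with $F$ upper semicontinuous, the lemma is false. Your counterexample $X=\mbr$, $Y=\mbr\setminus\{0\}$, $F=\chi_{\{0\}}$ is valid ($\chi_{\{0\}}$ is the indicator of a closed set, hence USC, and $\sup_X F=1>0=\sup_Y F$). The inequality $\sup_Y F\ge \sup_X F$ amounts to saying that every nonempty superlevel set $\{F>c\}$ with $c<\sup_X F$ meets $Y$, and it is the openness of these sets --- i.e.\ \emph{lower} semicontinuity --- that lets density of $Y$ do its work. Under the LSC (or merely continuous) reading, your one-line argument is complete: for $x_0$ with $F(x_0)>-\infty$ and $\varepsilon>0$, the open set $\{F>F(x_0)-\varepsilon\}$ contains $x_0$ and hence a point of $Y$, giving $\sup_Y F\ge F(x_0)-\varepsilon$, and the degenerate case is trivial.

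The paper declares the proof ``elementary and omitted,'' so there is no argument of the authors' to compare yours against; the standard proof of the corrected statement is precisely the one you give. Your sanity check on the downstream use is also the right one: in the proof of Theorem~\ref{LK-duality-thm}, the functional $(\varphi,\psi)\mapsto\inn{\mu,\varphi}+\inn{\nu,\psi}$ is a continuous linear functional on $L^1(\mu)\oplus L^1(\nu)$, hence simultaneously upper and lower semicontinuous, so the duality argument is unaffected by the mislabelled hypothesis. One small quibble: by the usual convex-analysis convention it is USC functions that are valued in $\mbr\cup\{-\infty\}$ and LSC ones in $\mbr\cup\{+\infty\}$, so the codomain is not by itself evidence for the LSC reading; however, the paper's own Kantorovich potentials (Theorems~\ref{FTOT} and~\ref{FTOLOT}) are lower semicontinuous with values in $[-\infty,\infty)$, so your reading is consistent with the conventions actually used here.
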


\begin{lemma}\label{lem:denseL1}
	Let $\mu,\nu\in \La_2(\mbr^d)$ be two L\'evy measures. For any pairs $(\varphi,\psi)\in L^1(\mu)\oplus L^1(\nu)$ satisfying $\varphi\oplus \psi\le c_2$, there is a sequence of pairs $\{(\varphi_n,\psi_n)\}_n$ such that 
	\begin{itemize}
		\item $\varphi_n,\psi_n\in C_b^2(\mbr^d)$ and vanish at a neighborhood of the origin;
		\item $\varphi_n\oplus \psi_n\le c_2$;
		\item $\varphi_n\to\varphi$ in $L^1(\mu)$, $\psi_n\to\psi\in L^1(\nu)$. 
	\end{itemize}
\end{lemma}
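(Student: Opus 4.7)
The plan is to approximate $(\varphi, \psi)$ through three successive modifications: truncation to obtain bounded functions, a cutoff producing vanishing near the origin, and mollification to gain $C^2$ regularity. Since $\mu$ and $\nu$ do not charge $\{0\}$, modifying $\varphi, \psi$ at the single point $0$ will not affect their $L^1$-norms, so one may assume the normalization $\varphi(0) = \psi(0) = 0$; combining this with the constraint at $y = 0$ and $x = 0$ then yields the sharper pointwise bounds $\varphi(x) \le \tfrac12|x|^2$ and $\psi(y) \le \tfrac12|y|^2$, which will be needed throughout.

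First, I will replace $(\varphi, \psi)$ by $(\varphi^{(N)}, \psi^{(N)})$, where $\varphi^{(N)} := (\varphi \vee -N) \wedge N$ and $\psi^{(N)}$ is defined analogously. A case analysis using $c_2 \ge 0$ gives $\varphi^{(N)} \oplus \psi^{(N)} \le c_2$, and dominated convergence with dominant $|\varphi|$ delivers $L^1(\mu)$-convergence as $N \to \infty$. Second, with $(\varphi, \psi)$ now bounded, I will set
\[
\varphi_n(x) := \begin{cases} 0, & |x| \le 1/n, \\ \varphi(x) \wedge \tfrac12(|x| - 1/n)^2, & |x| > 1/n, \end{cases}
\]
and $\psi_n$ analogously. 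A case check, whose only nontrivial part is the mixed case $|x| \le 1/n < |y|$---handled by $|x-y| \ge |y| - 1/n$, giving $\psi_n(y) \le \tfrac12(|y|-1/n)^2 \le c_2(x,y)$---shows $\varphi_n \oplus \psi_n \le c_2$; $L^1$-convergence then follows by dominated convergence, since $\tfrac12(|x|-1/n)^2 \to \tfrac12|x|^2 \ge \varphi(x)$ pointwise with $|\varphi_n - \varphi| \le |\varphi|$ as dominant.

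Third, given bounded $(\varphi, \psi)$ satisfying the constraint and vanishing on $B_r(0)$, I will mollify by $\varphi^\delta := \varphi * \eta_\delta$ and $\psi^\delta := \psi * \eta_\delta$ for a compactly supported mollifier $\eta_\delta$. The resulting pair is $C_b^\infty$ and vanishes on $B_{r - \delta}(0)$; crucially, the translation invariance $c_2(x-u, y-u) = c_2(x, y)$ yields
\[
\varphi^\delta(x) + \psi^\delta(y) = \int \big[\varphi(x-u) + \psi(y-u)\big]\, \eta_\delta(u)\, du \le \int c_2(x-u, y-u)\, \eta_\delta(u)\, du = c_2(x, y),
\]
so the constraint is preserved exactly, with no mollification error. $L^1(\mu)$-convergence as $\delta \to 0$ follows by dominated convergence, using that $|\varphi^\delta - \varphi|$ is uniformly bounded and supported in the finite-$\mu$-measure set $\{|x| > r - \delta\}$. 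A diagonal extraction in $(N, n, \delta)$ will then produce the required sequence.

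The main technical obstacle is Step 2: the cap $\tfrac12(|x|-1/n)^2$ must be simultaneously tight enough to enforce the constraint across the boundary $|x| = 1/n$---ensured precisely by $\tfrac12(|y|-1/n)^2 \le c_2(x,y)$ when $|x| \le 1/n$---and loose enough to approach $\tfrac12|x|^2$, which under the normalization is the effective upper envelope on $\varphi$, so that $\varphi_n \to \varphi$ in $L^1(\mu)$. Step 3's constraint preservation, by contrast, comes essentially for free, thanks to the translation invariance of $c_2$, a structural feature specific to the quadratic cost that sidesteps the $O(\delta^2)$ error one would encounter with a generic cost under standard tensorized mollification.
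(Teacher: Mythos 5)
Your overall roadmap --- truncate, cut off near the origin with a comparison quadratic, then mollify --- mirrors the paper's chain $\mathcal{F}\supset\mathcal{F}_0\supset\mathcal{F}_0^1\supset\mathcal{F}_0^2$ in Appendix~A, and your verification that the explicit cap $\tfrac12(|x|-1/n)^2$ preserves the constraint (via $|x-y|\ge|y|-1/n$ in the mixed case) is correct. The translation-invariance argument for constraint preservation under convolution is likewise the paper's Lemma~\ref{lem:convo}. But you skip the paper's Lipschitz intermediate stage $\mathcal{F}_0\to\mathcal{F}_0^1$, and this leaves a genuine gap in your final step. You invoke dominated convergence to get $\varphi*\eta_\delta\to\varphi$ in $L^1(\mu)$, yet DCT requires pointwise $\mu$-a.e.\ convergence, and mollification is only guaranteed to converge at Lebesgue points of $\varphi$, i.e.\ Lebesgue-a.e. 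A L\'evy measure $\mu$ may be entirely singular --- for example purely atomic --- and your step~2 output need not even be continuous (it can fail lower semicontinuity across the sphere $|x|=1/n$ when $\varphi<0$ nearby), so nothing prevents $\mu$ from charging precisely the non-Lebesgue points of $\varphi$, where the mollification need not converge. The paper resolves this by first passing to a Lipschitz approximant via the Moreau--Yosida formula $\varphi_n(x)=\inf_y[\varphi(y)+n|x-y|]$ (Lemma~\ref{lem:F01dense}); for Lipschitz $\varphi$ the mollification converges uniformly, and the $L^1(\mu)$ conclusion then follows on the finite-$\mu$-measure support away from~$0$.

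A second, smaller issue is your opening normalization. Redefining $\varphi,\psi$ only at the single point~$0$ so that $\varphi(0)=\psi(0)=0$ does not preserve the constraint: if originally $\varphi(0)<0$, the inequality $\varphi(0)+\psi(y)\le\tfrac12|y|^2$ may hold only because the negative $\varphi(0)$ compensates an excess of $\psi(y)$ over $\tfrac12|y|^2$, and raising $\varphi(0)$ to $0$ destroys this. The constraint $\varphi\oplus\psi\le c_2$ is global, so a pointwise surgery at $\{0\}$ is not a valid ``WLOG.'' (The paper's own proof of Lemma~\ref{lem:F1-dense} tacitly relies on $\varphi,\psi\le\tfrac12|\cdot|^2$, which is equivalent to $\varphi(0)=\psi(0)=0$; this is harmless in context since Lemma~\ref{lem:denseL1} is only invoked on such normalized pairs in the proof of Theorem~\ref{LK-duality-thm}, but the normalization should be stated as a hypothesis rather than derived the way you do.)
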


The proof of Lemma~\ref{lem:supgap} is elementary and omitted. In contrast, the proof of Lemma~\ref{lem:denseL1} is more technical, relying on the construction of an approximating sequence of smooth function pairs that converge in \( L^1 \) to a given admissible pair \((\varphi, \psi)\). To avoid disrupting the main flow of the presentation, we defer the proof to the appendix.
Let us now return to the proof of Proposition \ref{LK-duality-thm}.

\begin{proof}[Proof of Proposition \ref{LK-duality-thm}]
	(i) Let $\ga$ be a 2-L\'evy optimal coupling of $\mu,\nu$, whose existence is established in Theorem \ref{lotp-thm}.
	Consider any pair $(\varphi,\psi)$ of lower semicontinuous with $\varphi\oplus\psi\le c_2$ and $\varphi(0)=\psi(0)=0$. Particularly we have $\varphi(x),\psi(x)\le \frac 12 |x|^2$, and so $\varphi^+\in L^1(\mu),\psi^+\in L^1(\nu)$. By Proposition \ref{FTOLOT3to1lemma}, we then have
	\begin{align*}
			\int_{\mathbb{R}^d} \varphi(x) \, d\mu(x) + \int_{\mathbb{R}^d} \psi(y) \, d\nu(y)&= \int_{\mbr^{2d}} \sqb{\varphi(x)+\psi(y)}d\ga(x,y) \\
			&\le \int_{\mbr^{2d}}c_2(x,y)d\gamma(x,y)=\mcc_2^\La(\mu,\nu). 
		\end{align*}
	Taking the supremum over all such pairs we find \eqref{K-dual} holds with ``$\ge$" in place of ``=." The equality in fact holds, as the maximizer exists, as proved in Theorem \ref{FTOLOT} (see Remark \ref{dual-equal}). 
	
	For the remaining of the proof, denote $\mcl{F}_\bullet$ for $\bullet=$(i), (ii),  (iii) , (iv) 
	the family of all pairs $(\varphi,\psi)$ that satisfies the condition given by (i)--(iv) of Proposition \ref{LK-duality-thm} respectively, and $\mcc_{\bullet}$ for the supremum of 
	\begin{align*}
		\mcc_\bullet = \sup_{(\varphi,\psi)\in\mcl{F}_\bullet }F(\varphi,\psi):= \sup_{(\varphi,\psi)\in\mcl{F}_\bullet }\sqb{\int_{\mbr^d} \varphi(x)d\mu(x)+\int_{\mbr^d} \psi(y)d\nu(y)}.
	\end{align*}
	 Note we have the inclusion $\mcl{F}_{(i)}\supset \mcl{F}_{(ii)}\supset \mcl{F}_{(iii)}\supset \mcl{F}_{(iv)}, $
	 which then implies
	 \begin{align*}
	 	\mcc_{(i)}\ge \mcc_{(ii)}\ge \mcc_{(iii)}\ge \mcc_{(iv)}.
	 \end{align*}
	 The proof is complete if we prove $\mcc_{(i)}=\mcc_{(iv)}$. By Lemma \ref{lem:denseL1}, the space $\mcl{F}_{(iv)}$ is dense in $\mcl{F}_{(i)}$ w.r.t the metric topology of the product space $L^1(\mu)\oplus L^1(\nu)$. Lemma \ref{lem:supgap} then implies $\mcc_{(i)}=\mcc_{(iv)}$. This completes the proof. 
\end{proof}

\section{Optimal Markovian Couplings and Strong Duality for Drift, Diffusion and Jump Parts} \label{sec-opt-markov}

As an intermediate step toward proving the existence of optimal Markovian couplings (Theorem \ref{main1})
and establishing strong duality (Theorem \ref{main2}) for a full L\'evy generator, 
we begin by treating the drift, diffusion, and jump components separately. 
For each component, we construct an explicit $2$-optimal Markovian coupling 
and verify the corresponding strong duality relation. 
This decomposition will then serve as the basis for combining the components 
and addressing the general case in the coming section. 

Throughout this and the following sections, we simplify notation by suppressing the dependence on the fixed generators $\mA,\mB$ whenever the context is clear. In particular, for the transport derivatives $\ta_2, \om_2^\pm, \om_2^*, \om_2'$, etc., we write
\[
\ta_2 = \ta_2(\cdot,\cdot;\mA,\mB), \qquad 
\ta_2(x,y) = \ta_2(x,y;\mA,\mB).
\]

\subsection{Affine structure of $\ta_2,\om_2'$}
We begin by establishing the affine identity for $\ta_2$ from Theorem \ref{main1}(i), along with the corresponding identity for $\om_2'$.

\subsubsection{Affine structure of $\ta_2$}
Let $\mA,\mB\in \mcg_2^\La(\mbr^d)$. 
Recall the definition of Markovian transport derivative $\ta_2$ from Definition \ref{c-opt-gen}:
\begin{align*}
	\ta_2(x,y;\mA,\mB)&= \inf_{\mJ\in\Ga(\mA,\mB)} (\mJ^- c_2)(x,y),\qquad c_2(x,y)=\frac 12 |x-y|^2,
\end{align*}
where the infimum is taken over all Feller couplings $\mJ$ of $\mA,\mB$, and $\mJ^-$ is the lower generator of $\mJ$, as given in Definition \ref{def:lower-gen}. We note the lower generator $\mJ^-$ has the following additive property: if $\Phi\in C(\mJ), \Psi\in \bar D(\mJ)$, then 
\begin{align}\label{eq:liminf-add}
	\mJ^-(\Phi+\Psi)=\mJ^-\Phi + \mJ\Psi. 
\end{align}

To prove Theorem \ref{main1}(i), we first introduce a technical lemma. 
For $h \in \mbr^d$, denote by $\si_h : C(\mbr^d) \to C(\mbr^d)$ the translation operator defined by
\begin{align*}
	(\si_h \varphi)(x) := \varphi(x+h).
\end{align*}
Given a Feller generator $\mJ \in \mcg(\mbr^d)$, we define its \emph{$h$-translated generator} by
\begin{align*}
	\mJ^h := \si_h^{-1} \mJ \si_h = \si_{-h} \mJ \si_h.
\end{align*}
It is straightforward to check that $\mJ^h$ is again a Feller generator, and it generates the Feller semigroup
\(
e^{t \mJ^h} = \si_h^{-1} e^{t\mJ} \si_h,
\)
see \cite[Chapter~II, Section~2.1]{engel2000one}.
It is also easy to verify that the lower generator of $\mJ^h$ satisfies the same relation:
\begin{align}\label{eq:lg-trans}
	\mJ^{h-}:=(\mJ^{h})^-=\si_h^{-1}\mJ^-\si_h. 
\end{align}

\begin{lemma}\label{trans-lem}
	Let $\mA,\mB\in \mcg^\La_2(\mbr^d)$, $m_\mA,m_\mB\in\mbr^d$ be their mean vectors, and $\mJ\in\Ga(\mA,\mB)$ be a Feller coupling of two L\'evy generators. For any $h=(\tau,\tau')\in\mbr^{2d}$, it holds 
	\begin{align*}
		[\mJ^- \si_h c_2](x,y)&= \mJ^- c_2(x,y)+ (m_\mA-m_\mB)^\top (\tau-\tau'). 
	\end{align*}
\end{lemma}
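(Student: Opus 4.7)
The plan is to reduce the identity to a direct application of the additivity property \eqref{eq:liminf-add} of the lower generator, exploiting the fact that the discrepancy between $\si_h c_2$ and $c_2$ is an affine function on which $\mJ$ acts explicitly. First I would compute
\[
\si_h c_2(x,y) = \tfrac12\bigl|(x-y)+(\tau-\tau')\bigr|^2 = c_2(x,y) + \phi(x,y),
\]
where $\phi(x,y) := (\tau-\tau')^\top(x-y) + \tfrac12|\tau-\tau'|^2$ is affine in $(x,y)$. Setting $v := \tau-\tau'$, this decomposes as $\phi = \vphi\otimes 1 + 1\otimes\psi + \tfrac12|v|^2$ with $\vphi(x) := v^\top x$ and $\psi(y) := -v^\top y$ linear.

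Next I would verify the domain memberships required by \eqref{eq:liminf-add}. By Proposition~\ref{prop:c2-reg}(ii), $c_2 \in C(\mJ)$. Since $\mA,\mB \in \mcg_2^\La(\mbr^d)$, one has $C_2^2(\mbr^d) \subset \bar D(\mA) \cap \bar D(\mB)$, so the linear maps $\vphi,\psi$ lie in $\bar D(\mA)$ and $\bar D(\mB)$ respectively. Proposition~\ref{prop:coup-char} then gives $\vphi\otimes 1, 1\otimes\psi \in \bar D(\mJ)$, and since constants are trivially in $\bar D(\mJ)$, we conclude $\phi \in \bar D(\mJ)$.

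The final step is to evaluate $\mJ\phi$ using the marginal relations of Proposition~\ref{prop:coup-char}: $\mJ[\vphi\otimes 1] = (\mA\vphi)\otimes 1$ and $\mJ[1\otimes\psi] = 1\otimes(\mB\psi)$. Since $\vphi,\psi$ are linear, identity \eqref{mean-vec} yields $\mA\vphi \equiv v^\top m_\mA$ and $\mB\psi \equiv -v^\top m_\mB$, while $\mJ$ annihilates the constant $\tfrac12|v|^2$. Hence $\mJ\phi \equiv v^\top(m_\mA - m_\mB)$, and applying \eqref{eq:liminf-add} with $\Phi=c_2$, $\Psi=\phi$ gives
\[
\mJ^-\si_h c_2 = \mJ^-(c_2+\phi) = \mJ^- c_2 + \mJ\phi = \mJ^- c_2 + (m_\mA-m_\mB)^\top(\tau-\tau'),
\]
which is the claim. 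No serious obstacle is anticipated; the only point requiring care is verifying $\phi \in \bar D(\mJ)$, which follows at once from the coupling characterization once the L\'evy structure places linear functions inside $\bar D(\mA),\bar D(\mB)$.
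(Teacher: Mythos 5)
Your proof is correct and follows essentially the same route as the paper: decompose $\si_h c_2 - c_2$ into a tensor-sum of linear functions plus a constant, verify these lie in $\bar D(\mJ)$ via the coupling characterization, evaluate $\mJ$ on each piece using the mean vectors, and invoke the additivity identity \eqref{eq:liminf-add} of the lower generator. The only (minor) difference is that you make the domain memberships ($c_2\in C(\mJ)$ via Proposition~\ref{prop:c2-reg}(ii), and $\vphi\otimes 1, 1\otimes\psi\in\bar D(\mJ)$ via $C_2^2(\mbr^d)\subset\bar D(\mA)\cap\bar D(\mB)$ and Proposition~\ref{prop:coup-char}) more explicit than the paper does, which is if anything a small improvement in rigor.
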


\begin{proof}
	Observe the following basic identity for squared cost: 
	\begin{align*}
		(\si_h c_2)(x,y)= c_2(x+\tau,y+\tau')&= c_2(x,y) + c_2(\tau,\tau') + (x-y)^\top(\tau-\tau').
	\end{align*}
	Write $\Theta(x) = x^\top (\tau-\tau')$. Note that $\Ta\otimes 1,1\otimes \Ta\in \bar D(\mJ)$, by the definition of coupling generators.
	 Since $c_2(\tau,\tau')$ is a constant (w.r.t. $x,y$), acting $\mJ$ to it becomes zero.
	This follows from \eqref{eq:liminf-add}:
	\begin{align*}
		[\mJ^- \si_h c_2](x,y)&= (\mJ^- c_2)(x,y)+ \mJ (\Theta\otimes 1)(x,y)-\mJ(1\otimes \Theta)(x,y)\\
		&= (\mJ^- c_2)(x,y)+ m_\mA^\top (\tau-\tau') -m_\mB^\top(\tau-\tau')\\
		&= (\mJ^- c_2)(x,y)+ (m_\mA-m_\mB)^\top (\tau-\tau'). \qedhere
	\end{align*}
\end{proof}

Let us first establish Theorem \ref{main1}(i). 

\begin{proof}[Proof of Theorem \ref{main1}(i)]
	Let $x_1,y_1,x_2,y_2\in\mathbb{R}^{d}$, and $h=(\tau,\tau')=(x_2-x_1,y_2-y_1)$. Fix $\epsilon>0$ and let $\mathcal{J}_{\epsilon}\in\Gamma(\mathcal{A,B})$ be a coupling generator (depending on $x_1,y_1$) such that
	\begin{equation}\label{temp:4.3}
		\mathcal{J}_{\epsilon}^-(c_2)(x_1,y_1)\leq\inf_{\mathcal{J}\in\Gamma(\mathcal{A,B})}\mathcal{J}^-(c_2)(x_1,y_1)+\epsilon = \ta_2(x_1,y_1)+\ep,
	\end{equation}
	where $\mJ_{\ep}^-$ is the lower generator of $\mJ_\ep$. 
	Consider the $h$-translated generator $\mathcal{J}_{\epsilon}^h=\sigma_h^{-1}\mathcal{J}_{\epsilon}\sigma_h$. Note we have $\mJ^h_\ep\in \Ga(\mA,\mB)$.     
	To see why, notice for any $\varphi\in \bar D(\mA)$ we have $\si_h [\varphi\otimes 1]=(\si_\tau \varphi)\otimes 1$. Thus, by the translation invariance of L\'evy generators, particularly $\si_\tau \mA = \mA\si_\tau$, we have
	\begin{align*}
		\mJ_\ep^h[\varphi\otimes 1]=(\sigma_h^{-1}\mJ_\ep \sigma_h)[\varphi\otimes 1]=\sigma_h^{-1}[(\mA\sigma_\tau\varphi)\otimes 1]=(\si_\tau^{-1}\mA\sigma_\tau\varphi)\otimes 1=(\mathcal{A}\varphi)\otimes 1. 
	\end{align*}
	Similarly we can argue that $\mJ_\ep^h[1\otimes \psi]=\mB\psi$ for any $\psi \in \bar D(\mB)$. 
	Hence $\mJ_\ep^h \in \Ga(\mA,\mB)$, and we therefore conclude
	\begin{align}\label{temp:4.4}
		\ta_2(x_2,y_2)=\inf_{\mJ\in\Ga(\mA,\mB)}\mJ^-(c_2)(x_2,y_2)\le (\mJ_{\ep}^{h-} c_2)(x_2,y_2).
	\end{align}
	
	We now calculate $(\mcl{J}_{\ep-}^h c_2)(x_2,y_2)$. 
	First by Lemma \ref{trans-lem}, we have
	\begin{align*}
		(\mJ_\ep^- \si_h c_2)(x_2,y_2) 
		&= (\mJ_\ep^- c_2)(x_2,y_2) + (m_\mA-m_\mB)^\top(\tau-\tau'). 
	\end{align*}
	Applying the inverse translation $\si_{h}^{-1}$ and the identity \eqref{eq:lg-trans}, this follows 
	\begin{align*}
		(\mJ_\ep^{h-}c_2)(x_2,y_2)&=(\si_h^{-1}\mJ^-_\ep \si_h c_2)(x_2,y_2) =(\mJ^-_\ep c_2)(x_2-\tau,y_2-\tau') + (m_\mA-m_\mB)^\top(\tau-\tau')\\
		&= (\mJ^-_\ep c_2)(x_1,y_1) + (m_\mA-m_\mB)^\top((x_2-x_1)-(y_2-y_1)).
	\end{align*}
	By \eqref{temp:4.3} and \eqref{temp:4.4}, this follows
	\begin{align*}
		\ta_2(x_2,y_2)\le \ta_2 (x_1,y_1) - (m_\mA-m_\mB)^\top((x_2-x_1)-(y_2-y_1)) +\ep. 
	\end{align*}
	Passing $\ep\searrow 0,$
	\begin{align*}
		\ta_2(x_2,y_2)\le \ta_2 (x_1,y_1) + (m_\mA-m_\mB)^\top((x_2-x_1)-(y_2-y_1)). 
	\end{align*}
	Switching the role of $(x_1,y_1)$ and $(x_2,y_2)$, the above holds with $\leq$ replaced by $=$. The proposition follows as the special case $(x_1,y_1)=(0,0)$.
\end{proof}

\subsubsection{Affine structure of $\om'_2$}
We next prove the affine identity for $\om_2'$. 
Let us recall the definition of the \emph{restricted dual transport derivative}.  
For $\mA,\mB \in \mcg^\La_2(\mathbb{R}^d)$, it is given by 
\[
\omega_2'(x_0,y_0;\mA,\mB) 
:= \sup_{(\vphi,\psi)\in \mcD'(x_0,y_0)}[\mA\vphi(x_0)+\mB\psi(y_0)],
\]
where 
$\mathcal{D}'(x_0, y_0)$ denotes the set of admissible pairs $(\varphi, \psi)\in[ C_2^2(\mbr^d)]^2$ satisfying the \emph{touching condition}
\begin{align*}
	\varphi(x) + \psi(y) \,\le\, \tfrac{1}{2}|x-y|^2, 
	\quad \text{with equality at } (x_0,y_0).
\end{align*}

The first step towards proving the chain of equalities in \eqref{eq:om2'-chain} is to show that $\omega_2'$ admits the same affine structure as $\theta_2$, established in Theorem~\ref{main1}(i).

\begin{proposition}\label{om*-linear}
	Let $\mA,\mB\in \mcg_2^\La(\mbr^d)$. 
	It holds for all $x_0,y_0\in \mbr^d$:
	\begin{align*}
		\om_2'(x_0,y_0;\mA,\mB)&= \om_2'(0,0;\mA,\mB)+ (m_\mA-m_\mB)^\top (x_0-y_0). 
	\end{align*}
\end{proposition}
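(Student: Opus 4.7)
The plan is to mimic the translation argument used for Theorem~\ref{main1}(i), but now implemented at the level of admissible test pairs rather than coupling generators. The key is to exhibit an explicit bijection between $\mcD'(x_0,y_0)$ and $\mcD'(0,0)$ that distorts the dual functional by exactly the claimed affine term.

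Given $(\vphi,\psi)\in\mcD'(x_0,y_0)$, set
\begin{align*}
\tilde\vphi(x) &:= \vphi(x+x_0) - (x_0-y_0)^\top x - \tfrac14|x_0-y_0|^2, \\
\tilde\psi(y) &:= \psi(y+y_0) + (x_0-y_0)^\top y - \tfrac14|x_0-y_0|^2.
\end{align*}
Since $C_2^2(\mbr^d)$ is closed under translation and addition of affine functions, we have $\tilde\vphi,\tilde\psi\in C_2^2(\mbr^d)$. I would first verify that $(\tilde\vphi,\tilde\psi)\in\mcD'(0,0)$: expanding $\tfrac12|(x+x_0)-(y+y_0)|^2$ yields
\[
\tilde\vphi(x)+\tilde\psi(y) = \vphi(x+x_0)+\psi(y+y_0) - (x_0-y_0)^\top(x-y) - \tfrac12|x_0-y_0|^2 \le \tfrac12|x-y|^2,
\]
with equality at $(x,y)=(0,0)$ because $\vphi(x_0)+\psi(y_0) = \tfrac12|x_0-y_0|^2$. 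The map $(\vphi,\psi)\mapsto(\tilde\vphi,\tilde\psi)$ is clearly invertible (apply the same construction with $(x_0,y_0)$ replaced by $(-x_0,-y_0)$), hence it is a bijection between $\mcD'(x_0,y_0)$ and $\mcD'(0,0)$.

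Next I would compute $\mA\tilde\vphi(0)+\mB\tilde\psi(0)$. By translation invariance of the Lévy generator $\mA$ (i.e.\ $\mA$ commutes with the shift operator $\sigma_{x_0}$), we have $\mA[\vphi(\cdot+x_0)](0)=(\mA\vphi)(x_0)$. The action of $\mA$ on the linear function $x\mapsto -(x_0-y_0)^\top x$ equals $-(x_0-y_0)^\top m_\mA$ (constant in $x$), and $\mA$ annihilates the constant term. Thus
\begin{align*}
\mA\tilde\vphi(0) &= (\mA\vphi)(x_0) - (x_0-y_0)^\top m_\mA, \\
\mB\tilde\psi(0) &= (\mB\psi)(y_0) + (x_0-y_0)^\top m_\mB,
\end{align*}
and summing yields $\mA\tilde\vphi(0)+\mB\tilde\psi(0) = (\mA\vphi)(x_0)+(\mB\psi)(y_0) - (m_\mA-m_\mB)^\top(x_0-y_0)$.

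Taking the supremum of both sides over the bijectively identified sets $\mcD'(0,0)$ and $\mcD'(x_0,y_0)$ gives
\[
\om_2'(0,0;\mA,\mB) = \om_2'(x_0,y_0;\mA,\mB) - (m_\mA-m_\mB)^\top(x_0-y_0),
\]
which is the desired identity. No serious obstacle is expected: the only delicate point is the bookkeeping needed to ensure that the affine correction simultaneously (a) preserves the pointwise inequality $\tilde\vphi\oplus\tilde\psi\le c_2$, (b) restores the touching equality at the new point $(0,0)$, and (c) contributes exactly the linear drift term $(m_\mA-m_\mB)^\top(x_0-y_0)$ when the generators are applied. The splitting of the cross-term $-(x_0-y_0)^\top(x-y)$ into the symmetric pair $-(x_0-y_0)^\top x$ for $\tilde\vphi$ and $+(x_0-y_0)^\top y$ for $\tilde\psi$, together with the equal division of the constant $-\tfrac12|x_0-y_0|^2$, is precisely what makes all three requirements compatible.
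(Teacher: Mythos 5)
Your proof is correct and takes essentially the same route as the paper's: translate the admissible pair to the origin via an affine correction that exploits the gradient constraint $\nabla\vphi(x_0)=x_0-y_0$, $\nabla\psi(y_0)=y_0-x_0$ enforced by the touching condition, then use translation invariance of Lévy generators and their action on linear functions to compute the drift contribution. The only cosmetic difference is the normalizing constant: the paper uses the Taylor remainder (subtracting $\vphi(x_0)$, $\psi(y_0)$) so that $\tilde\vphi(0)=\tilde\psi(0)=0$ individually, whereas you split $-\tfrac12|x_0-y_0|^2$ equally, which is immaterial since the generators annihilate constants and only the sum $\tilde\vphi(0)+\tilde\psi(0)=0$ is needed for the touching condition.
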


\begin{proof}
	Let us assume that $\mA,\mB$ admit the global form $\mA = \Lag(\ka,\al,\mu),\ \mB=\Lag(\zeta,\be,\nu)$. Suppressing the generators $\mA,\mB$ in $\omega_2'$, we are to prove: for all $x_0,y_0\in\mbr^d$,
	\begin{align*}
		\omega_2'(x_0,y_0) = \omega_2'(0,0) + (\ka-\zeta)^\top (x_0-y_0). 
	\end{align*}
	
	Fix $(\varphi,\psi)\in \mcl{D}'(x_0,y_0)$ and consider the second-order Taylor remainder of $\varphi,\psi$ at $x_0,y_0$ respectively:
	\begin{align*}
		\tilde \varphi(x) := \varphi(x+x_0)-\varphi(x_0)-x^\top \nabla\varphi(x_0),\\
		\tilde \psi(y) := \psi(y+y_0)-\psi(y_0)-y^\top \nabla\psi(y_0).
	\end{align*}
	We claim that $(\tilde \varphi,\tilde \psi)\in \mcl{D}'(0,0)$. To see this, first note that since $\varphi\oplus\psi$ touches $c_2$ from below at $(x_0,y_0)$, their gradients agree at $(x_0,y_0)$, i.e.,
	\begin{align*}
		(\nabla \varphi(x_0),\nabla \psi(y_0))
		= (\nabla_{(x,y)}(\varphi\oplus \psi))(x_0,y_0) 
		= (\nabla_{(x,y)} c_2)(x_0,y_0) 
		= (x_0-y_0,\ y_0-x_0).
	\end{align*}
	This yields
	\begin{align*}
		\tilde \varphi(x)+\tilde \psi(y)
		&= \varphi(x+x_0)+\psi(y+y_0)-\big(\varphi(x_0)+\psi(y_0)\big) - x^\top \nabla\varphi(x_0) - y^\top \nabla\psi(y_0)\\
		&\le \frac12 |(x+x_0)-(y+y_0)|^2 - \frac12 |x_0-y_0|^2 - (x-y)^\top (x_0-y_0)\\
		&= \frac12 |x-y|^2 = c_2(x,y). 
	\end{align*}
	Clearly the pair is twice differentiable and satisfies the quadratic bound \eqref{eq:quad-bound}. 
	Since $\tilde \varphi(0)=\tilde \psi(0)=0$, this shows $(\tilde \varphi,\tilde\psi) \in \mcl{D}'(0,0)$. 
	
	We next observe that 
	\begin{align*}
		D^2\varphi(x_0)= D^2\tilde\varphi(0), \quad D^2\psi(y_0)= D^2\tilde\psi(0), \quad \nabla \tilde \varphi(0)=0=\nabla \tilde \psi(0). 
	\end{align*}
	If we decompose the generators $\mA,\mB$ into their diffusion, drift, and jump parts, we find 
	\begin{align*}
		\mA^\De\varphi(x_0)=\mA^\De \tilde \varphi(0),\qquad& \mA^\nabla \varphi(x_0)=\ka^\top(x_0-y_0),\\
		\mB^\De\psi(y_0)=\mB^\De \tilde \psi(0), \qquad &\mB^\nabla \psi(y_0)=- \zeta^\top (x_0-y_0). 
	\end{align*}
	Moreover, since $\tilde \varphi(0)=0$ and $\nabla \tilde \varphi(0)=0$, it follows that
	\begin{align*}
		\mA^J\varphi(x_0) 
		&= \int_{\mbr^d} \big[\varphi(x+x_0)-\varphi(x_0) - x\cdot \nabla \varphi(x_0)\big]\,d\mu(x) 
		= \int_{\mbr^d} \tilde \varphi(x)\,d\mu(x) = \mA^J\tilde \varphi(0). 
	\end{align*}
	Likewise, $\mB^J\psi(y_0)=\mB^J\tilde\psi(0)$. 
	Combining all these observations, we have
	\begin{align*}
		\mA \varphi(x_0)+ \mB \psi(y_0)
		&= \mA^\De \tilde \varphi(0)+\mB^\De \tilde \psi(0) + \mA^J \tilde \varphi(0)+\mB^J \tilde \psi(0) + (\kappa-\zeta)^\top (x_0-y_0)\\
		&= \mA\tilde \varphi(0)+\mB\tilde \psi(0) + (\kappa-\zeta)^\top (x_0-y_0)\\
		&\le \omega_2'(0,0)+(\kappa-\zeta)^\top (x_0-y_0). 
	\end{align*}
	Taking the supremum of the left-hand side over all $(\varphi,\psi)\in \mcl{D}'(x_0,y_0)$ we find the bound 
	\begin{align*}
		\omega_2'(x_0,y_0;\mA,\mB) \le \omega_2'(0,0;\mA,\mB)+(\kappa-\zeta)^\top (x_0-y_0). 
	\end{align*}
	
	The reverse inequality follows from the same argument above, together with the following observation: if $(\tilde\varphi,\tilde \psi)\in \mcl{D}'(0,0)$, and 
	\begin{align*}
		\varphi(x) &= \tilde\varphi(x-x_0)+ x^\top (x_0-y_0),\qquad 
		\psi(y) = \tilde\psi(y-y_0)+ y^\top (y_0-x_0),
	\end{align*}
	then $(\varphi,\psi)\in \mcl{D}'(x_0,y_0)$. 
\end{proof}

\subsection{Reduction of global to local minimization}

The general $c$-optimal Markovian coupling problem for a pair of Feller generators 
\(\mathcal{A}, \mathcal{B} \in \mcg(\Pi)\) is, by default, a \emph{global} optimization problem. 
That is, the aim is to find a \emph{Feller coupling}—a Feller generator on \(\Pi^2\)—denoted by \(\mathcal{J}_*\), 
such that the minimum is attained pointwise:
\[
(\mathcal{J}_* c)(x,y) = \theta_c(x,y;\mathcal{A},\mathcal{B}), 
\qquad (x,y)\in \Pi^2.
\]

In the case of optimal Markovian couplings between L\'evy generators with respect to the quadratic cost 
\(c_2(x,y) = \tfrac{1}{2}|x-y|^2\), the problem admits a significant simplification. 
Owing to the translation invariance of L\'evy generators—and, in particular, the affine structure of 
\(\ta_2\) and \(\om_2'\)—the global optimization problem reduces to a \emph{local} one. 
In fact, it suffices to minimize the pointwise expression \((\mathcal{J} c_2)(x,y)\) at a single 
reference point, such as \((x,y)=(0,0)\), among all \emph{L\'evy} (rather than merely \emph{Feller}) coupling generators.

In what follows, for $\mA,\mB \in \mcg_2^\La(\mbr^d)$, we denote by $\Ga^\La(\mA,\mB) \subset \Ga(\mA,\mB)$ 
the set of all L\'evy coupling generators of $\mA$ and $\mB$.

\begin{proposition}\label{0isenough}
	Let $\mA,\mB \in \mcg_2^\Lambda(\mbr^d)$.
	
	(i) Suppose there exists a L\'evy generator $\mJ_* \in \Ga^\Lambda(\mA,\mB)$ such that 
	\[
	(\mJ_* c_2)(0,0) = \ta_2(0,0;\mA,\mB).
	\]
	Then $\mJ_*$ is a $2$-optimal Markovian coupling of $\mA$ and $\mB$. 
	
	(ii) The same conclusion as in (i) holds if $\ta_2$ is replaced by $\om_2'$. 
	In this case, the strong duality relation is satisfied:
	\[
	\om_2'(x,y;\mA,\mB) = \ta_2(x,y;\mA,\mB).
	\]
\end{proposition}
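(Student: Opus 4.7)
\emph{Proof plan.} The plan is to exploit the fact that when $\mJ_*$ is itself a L\'evy generator on $\mbr^{2d}$, the function $(x,y)\mapsto (\mJ_* c_2)(x,y)$ is \emph{affine}, with precisely the same affine part as $\ta_2$ and $\om_2'$. First I would write $\mJ_*=\Lag(\eta_*,\si_*,\ga_*)$ in global form and use that $c_2$ is quadratic, so its second-order Taylor remainder at any base point equals $c_2(x',y')$ itself, to obtain
\begin{align*}
(\mJ_* c_2)(x,y) = \eta_*^\top \nabla c_2(x,y) + \tfrac12\tr(\si_* D^2 c_2) + \int_{\mbr^{2d}} c_2(x',y')\,d\ga_*(x',y').
\end{align*}
Identifying marginals in the L\'evy triplet of $\mJ_*\in\Ga^\La(\mA,\mB)$, via Proposition~\ref{coup-equiv-prop} combined with the coincidence of global drift and mean vector, will force $\eta_* = (m_\mA,m_\mB)$, so $\eta_*^\top\nabla c_2(x,y)=(m_\mA-m_\mB)^\top(x-y)$. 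The remaining diffusion and jump terms are independent of $(x,y)$, which yields
\begin{align*}
(\mJ_* c_2)(x,y) = (\mJ_* c_2)(0,0) + (m_\mA-m_\mB)^\top(x-y).
\end{align*}

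For (i), I would then compare this display with the affine identity for $\ta_2$ from Theorem~\ref{main1}(i): the two affine functions share the same linear part $(m_\mA-m_\mB)^\top(x-y)$, and the hypothesis equates their constants at the origin, so they agree pointwise. Since $\mJ_*\in\Ga(\mA,\mB)$ automatically implies $(\mJ_* c_2)(x,y)\ge\ta_2(x,y;\mA,\mB)$ by the very definition of $\ta_2$, equality at every $(x,y)$ means $\mJ_*$ is a $2$-optimal Markovian coupling.

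For (ii), the approach is to chain the bounds already at hand. Proposition~\ref{om-ta-prop} and Remark~\ref{rem:om'-bdd} give
\begin{align*}
\om_2'(0,0;\mA,\mB) \le \om_2^*(0,0;\mA,\mB) \le \ta_2(0,0;\mA,\mB) \le (\mJ_* c_2)(0,0).
\end{align*}
Under the hypothesis $(\mJ_* c_2)(0,0)=\om_2'(0,0;\mA,\mB)$, every inequality collapses to an equality; in particular $(\mJ_* c_2)(0,0)=\ta_2(0,0;\mA,\mB)$, and part (i) already established yields $2$-optimality of $\mJ_*$. Strong duality at arbitrary $(x,y)$ will then follow from the two affine formulas: Theorem~\ref{main1}(i) for $\ta_2$ and Proposition~\ref{om*-linear} for $\om_2'$ share the same linear part and, at the origin, the same constant, so $\om_2'(x,y;\mA,\mB)=\ta_2(x,y;\mA,\mB)$ for all $x,y$.

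The only delicate step is the identification $\eta_*=(m_\mA,m_\mB)$; everything else is symbolic manipulation. I therefore expect the main bookkeeping to lie in verifying that a L\'evy coupling, when written in global form, inherits the global drifts of its marginals, which should be a routine consequence of Proposition~\ref{coup-equiv-prop} together with the componentwise structure of a Feller coupling.
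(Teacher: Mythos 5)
Your proposal is correct, and for part (ii) it matches the paper's argument almost verbatim: both chain the inequalities $\om_2'(0,0)\le\ta_2(0,0)\le(\mJ_*c_2)(0,0)$, collapse them to equalities under the hypothesis, invoke part (i), and then propagate equality to all $(x,y)$ using the shared affine structure of $\ta_2$ (Theorem~\ref{main1}(i)) and $\om_2'$ (Proposition~\ref{om*-linear}).

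For part (i) you take a genuinely different route to the key intermediate identity
\[
(\mJ_* c_2)(x,y) = (\mJ_* c_2)(0,0) + (m_\mA-m_\mB)^\top(x-y).
\]
You obtain it by explicitly unpacking the global L\'evy--Khintchine representation of $\mJ_*=\Lag(\eta_*,\si_*,\ga_*)$, using that the second-order Taylor remainder of the quadratic $c_2$ reproduces $c_2$ itself, so the diffusion and jump contributions are constants in $(x,y)$ while the drift term is exactly $(\eta_1-\eta_2)^\top(x-y)$; you then pin down $\eta_*=(m_\mA,m_\mB)$ from the marginal condition. The paper instead derives the same formula abstractly: it writes $(\mJ_*c_2)(x_0,y_0)=(\si_h\mJ_*c_2)(0,0)$, commutes $\si_h$ past the L\'evy generator $\mJ_*$, and applies Lemma~\ref{trans-lem}, thereby avoiding any appeal to the triplet structure. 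Your version is more self-contained (it requires only the global L\'evy--Khintchine form plus the self-similarity of the quadratic cost), while the paper's version recycles the already-established translation lemma that it needs elsewhere. Both are correct; the delicate step you flag --- identifying $\eta_*$ with $(m_\mA,m_\mB)$ --- follows as you expect by testing the marginal condition on linear functions, though strictly speaking this uses the general marginal property of couplings rather than Proposition~\ref{coup-equiv-prop} (which only handles the pure-jump case). One ingredient worth making explicit is that $c_2\in\bar D(\mJ_*)$ (Proposition~\ref{prop:c2-reg}(iii)), so that $\mJ_*^-c_2=\mJ_*c_2$ and the lower-bound $(\mJ_*c_2)(x,y)\ge\ta_2(x,y)$ you invoke is indeed a direct consequence of the definition of $\ta_2$.
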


\begin{proof}
	Let us again suppress the generators $\mA,\mB$ in $\ta_2,\om_2'$. 
	
    (i) Let $\mJ_*$ be given in the corollary. To show $\mJ_*$ is optimal, we need to establish the identity $(\mJ_*c_2)(x_0,y_0)=\ta_2(x_0,y_0)$ for all $(x_0,y_0)\in\mbr^{2d}$. Let $h=(x_0,y_0)$, for then 
    \begin{align*}
        (\mJ_*c_2)(x_0,y_0)=(\si_h \mJ_* c_2)(0,0). 
    \end{align*}
    Since $\mJ_*$ is a L\'evy generator, it commutes with any translation operator, that is $\si_h \mJ_*=\mJ_*\si_h$. By Lemma \ref{trans-lem} we find 
    \begin{align*}
        (\mJ_*c_2)(x_0,y_0)&=(\si_h \mJ_* c_2)(0,0) = (\mJ_* \si_h c_2)(0,0) = (\mJ_*c_2)(0,0)+ (m_\mA-m_\mB)^\top(x_0-y_0)\\
        &= \ta_2(0,0)+(m_\mA-m_\mB)^\top(x_0-y_0) = \ta_2(x_0,y_0).
    \end{align*}
    The proof is complete.
    
    (ii) We note the inequality holds: $\om_2'(0,0)\le \ta_2(0,0)\le (\mJ_*c_2)(0,0)$. The first is due to Proposition \ref{om-ta-prop} and the second is from the definition of $\ta_2$. If $(\mJ_*c_2)(0,0)=\om_2'(0,0)$, then it forces the chain of inequality holds. Since $\om_2'(0,0)=\ta_2(0,0)$, the affine structure of $\ta_2,\om_2'$, particularly Theorem \ref{main1}(i) and Proposition \ref{om*-linear} imply $\ta_2=\om_2'$. 
\end{proof}

%

\subsection{Optimal Markovian couplings and strong duality of drift-diffusion parts}\label{sec:omc-dd}
Let us begin with considering the optimal Markovian couplings between a pair of drift-diffusion operators. Let $\al,\be\in \mcl{S}_{\ge 0}(\mbr^d)$ be two nonnegative definite $(d\times d)$-matrices, $\ka,\zeta\in \mbr^d$ be two drift vectors, and $\mA=\La_2 (\ka,\al,0),\mB=\La_2(\zeta,\be,0)$, namely,
\begin{align*}
	(\mA\varphi)(x)= \ka\cdot\nabla\varphi(x)+\f 12\tr[\al D^2\varphi](x),\qquad (\mB\psi)(y)=\zeta\cdot \nabla \varphi(y)+\f 12\tr[\be D^2\psi](y). 
\end{align*}
As discussed, the more general case of drift-diffusion generators with $x$-dependent coefficients was treated in \cite{ChenLi1989}. We present the construction here for the sake of completeness.

A prototypical coupling of the drift-diffusion operators \(\mathcal{A}\) and \(\mathcal{B}\) above is itself a diffusion operator  \(\mathcal{J} =\La_2 (\eta,\sigma, 0)\) on \(\mathbb{R}^{2d}\), where \(\sigma \in \mathcal{S}_{\ge 0}(\mathbb{R}^{2d})\) takes the block form
\begin{align}\label{si-def}
	\sigma = \begin{bmatrix}
		\alpha & K \\ K^\top & \beta
	\end{bmatrix}\ge 0,	
\end{align}
and $\eta =\ka\oplus \zeta\in \mbr^{2d}$. 
The off-diagonal blocks are transposes of each other to ensure that \(\sigma\) is symmetric. Furthermore, the matrix \(K\) is chosen so that \(\sigma\) remains nonnegative semidefinite.

The optimal Markovian coupling problem for two drift-diffusion generators is deeply related to the classical optimal transport problem between two \emph{Gaussian measures}, which has been thoroughly studied in the literature. Given a probability measure \(\mu \in \mathcal{P}(\mathbb{R}^d)\), we write
\[
\mu \sim \mathcal{N}(m, \sigma), \qquad m \in \mathbb{R}^d, \ \sigma \in \mathcal{S}_{\geq 0}(\mathbb{R}^d),
\]
to indicate that \(\mu\) is a $d$-dimensional Gaussian measure with mean \(m\) and covariance matrix \(\sigma\).
In \cite{givens1984class}, Givens and Shortt showed that if \(\mu \sim \mathcal{N}(m_1, \alpha)\) and \(\nu \sim \mathcal{N}(m_2, \beta)\), a (classical) 2-optimal transport plan \(\gamma_*\) between \(\mu\) and \(\nu\) is given by a Gaussian measure \(\gamma_* \sim \mathcal{N}(m, \sigma)\) on \(\mathbb{R}^{2d}\), where
\[
m = (m_1, m_2), \qquad \sigma = \begin{bmatrix}
\alpha & K_* \\
K_*^\top & \beta
\end{bmatrix},
\]
and \(K_*\) is a matrix that solves the following positive semidefinite programming problem:
\begin{align}\label{posdef-prog}
\text{minimize} \quad K \mapsto \tfrac{1}{2} \operatorname{tr}(\alpha + \beta - 2K), \qquad \text{subject to} \quad \begin{bmatrix}
\alpha & K \\
K^\top & \beta
\end{bmatrix} \geq 0.
\end{align}
We also note the uniqueness of such coupling holds if the covariance matrices $\al,\be$ are non-degenerate, i.e., strictly postive definite.

As a result, the 2-optimal transport cost is given by
\[
\mathcal{C}_2(\mu, \nu) = \tfrac{1}{2} \int_{\mathbb{R}^{2d}} |x - y|^2 \, d\gamma_*(x, y)
= \tfrac{1}{2} |m_1 - m_2|^2 + \mcW_{\mcS}(\alpha, \beta)^2,
\]
where
\begin{align}\label{D-def}
    \mcW_{\mcS}(\alpha, \beta)^2 := \frac{1}{2} \operatorname{tr}(\alpha + \beta - 2K_*) 
= \min_{K} \frac{1}{2} \operatorname{tr}(\alpha + \beta - 2K),    
\end{align}
with the minimum taken over all \(K\) satisfying the constraint in \eqref{posdef-prog}.
Moreover, \(\mcW_{\mcS}(\alpha, \beta)\) admits the following explicit formula:
\[
\mcW_\mcS(\al,\be)^2 :=\tfrac{1}{2} \operatorname{tr} \left[ \alpha + \beta - 2\left( \alpha^{1/2} \beta \alpha^{1/2} \right)^{1/2} \right].
\]
The functional $\mcW_\mcS$ defines on the space $\mcS_{\ge 0}(\mbr^d)$ of nonnegative definite matrices is called the \emph{Bures-Wasserstein distance}, which is a metric on the space.

With these results, we may now address the optimal Markovian coupling problem for drift-diffusion operators.

\begin{proposition}\label{dd-opt-prop}
Let \(\mathcal{A} =\Lag (\ka,\alpha, 0)\) and \(\mathcal{B} =\Lag (\zeta,\beta, 0)\) be drift-diffusion generators on \(\mathbb{R}^d\). Then the Markovian transport derivative is given by
\begin{align}\label{diff-min}
    \ta_2(x, y; \mathcal{A}, \mathcal{B}) =  (\ka-\zeta)^\top (x-y)+ \mcW_{\mcS}(\alpha, \beta)^2,
\end{align}
where \(\mcW_{\mcS}(\alpha, \beta)\) is as defined in \eqref{D-def}. Moreover, an optimal Markovian coupling of \(\mathcal{A}\) and \(\mathcal{B}\) is given by the drift diffusion generator \(\mathcal{J}=\Lag (\eta,\si, 0)\), where \(\sigma\) is defined as in \eqref{si-def} with \(K\) being a minimizer of the semidefinite program \eqref{posdef-prog}, and $\eta=(\ka,\zeta)\in\mbr^{2d}$.
\end{proposition}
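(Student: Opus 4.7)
The plan is to invoke Proposition~\ref{0isenough}(ii) in its strongest form: it suffices to exhibit a L\'evy coupling $\mJ_*\in\Ga^\La(\mA,\mB)$ and show $(\mJ_*c_2)(0,0)=\om_2'(0,0;\mA,\mB)$. First I would verify that the candidate $\mJ_*=\Lag(\eta,\si_*,0)$ with $\sigma_*$ as in~\eqref{si-def} is indeed a L\'evy coupling. Testing $\mJ_*$ on tensor products $\vphi\otimes 1$ and $1\otimes \psi$ for $\vphi,\psi\in C_2^2(\mbr^d)$, the off-diagonal blocks $K_*$ and $K_*^\top$ of $\sigma_*$ annihilate mixed derivatives (which vanish on such one-sided functions), leaving exactly $\mA\vphi\otimes 1$ and $1\otimes \mB\psi$; Proposition~\ref{prop:coup-char} then yields $\mJ_*\in\Ga(\mA,\mB)$.

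Next, a direct computation gives the upper bound. Using $\nabla c_2=(x-y,y-x)$ and
\[
D^2 c_2=\begin{pmatrix} I & -I \\ -I & I \end{pmatrix},
\]
a short block-trace calculation yields $\tfrac12\tr(\sigma_* D^2 c_2)=\tfrac12\tr(\alpha+\beta-2K_*)=\mcW_\mcS(\alpha,\beta)^2$. Combined with the drift contribution $\eta\cdot\nabla c_2=(\ka-\zeta)^\top(x-y)$, this gives
\[
(\mJ_* c_2)(x,y) = (\ka-\zeta)^\top(x-y) + \mcW_\mcS(\alpha,\beta)^2,
\]
which already matches the asserted formula for $\ta_2(x,y;\mA,\mB)$ and vanishes in the drift term at $(0,0)$.

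The main obstacle is to produce the matching dual lower bound $\om_2'(0,0;\mA,\mB)\ge \mcW_\mcS(\alpha,\beta)^2$. For this I would test against a quadratic pair drawn from the classical Gaussian optimal transport. Assuming first that $\alpha$ is strictly positive definite, set $T=\alpha^{-1/2}(\alpha^{1/2}\beta\alpha^{1/2})^{1/2}\alpha^{-1/2}$ (the Brenier map from $\mcN(0,\alpha)$ to $\mcN(0,\beta)$), and define
\[
\vphi(x) = \tfrac12 x^\top(I-T)x, \qquad \psi(y)=\tfrac12 y^\top(I-T^{-1})y.
\]
Both lie in $C_2^2(\mbr^d)$, vanish together with their gradients at the origin, and the identity $T=\nabla\Phi$ with $\Phi(x)=\tfrac12 x^\top T x$ combined with Legendre duality gives $\vphi\oplus\psi\le c_2$ with equality at $(0,0)$, so $(\vphi,\psi)\in\mcD'(0,0)$. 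A cyclic trace identity shows
\[
\mA\vphi(0)+\mB\psi(0)=\tfrac12\tr(\alpha+\beta)-\tr\bigl((\alpha^{1/2}\beta\alpha^{1/2})^{1/2}\bigr)=\mcW_\mcS(\alpha,\beta)^2,
\]
which is exactly $\tr(K_*)$ absorbed correctly. The degenerate case (singular $\alpha$ or $\beta$) is handled by a regularization $\alpha_\ep=\alpha+\ep I$, $\beta_\ep=\beta+\ep I$: the Bures--Wasserstein distance is continuous in its arguments, and $\om_2'$ is lower semicontinuous under the corresponding perturbation of generators, so the bound survives the limit $\ep\searrow 0$.

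With both bounds established, Proposition~\ref{0isenough}(ii) immediately yields strong duality $\om_2'(0,0;\mA,\mB)=\ta_2(0,0;\mA,\mB)=\mcW_\mcS(\alpha,\beta)^2$ and shows that $\mJ_*$ is a $2$-optimal Markovian coupling. The affine identity of Theorem~\ref{main1}(i), with mean vectors $m_\mA=\ka$ and $m_\mB=\zeta$ in the global form, then extends the formula to all $(x,y)\in\mbr^{2d}$, completing the proof.
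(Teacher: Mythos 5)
Your route differs genuinely from the paper's. The paper computes the pointwise transport derivative by noting that $\mathcal{C}_2(\delta_0 e^{t\mA},\delta_0 e^{t\mB})$ is exactly the quadratic Wasserstein cost between the Gaussians $\mathcal{N}(t\kappa,t\alpha)$ and $\mathcal{N}(t\zeta,t\beta)$, which by Givens--Shortt equals $\tfrac12 t^2|\kappa-\zeta|^2 + t\,\mcW_\mcS(\alpha,\beta)^2$; dividing by $t$ and sending $t\searrow 0$ identifies $\omega_2^\pm(0,0)=\mcW_\mcS(\alpha,\beta)^2$, and the sandwich $\omega_2^\pm\le\ta_2\le(\mJ_*c_2)(0,0)$ closes the argument. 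Strong duality $\theta_2=\omega_2'$ is then deferred to Corollary~\ref{cor:sd-dd}, which relies on the LMI duality of Lemma~\ref{lem:diff-dual}. You instead test the restricted dual $\omega_2'$ directly against the explicit quadratic Kantorovich potentials built from the Brenier map $T=\alpha^{-1/2}(\alpha^{1/2}\beta\alpha^{1/2})^{1/2}\alpha^{-1/2}$, and use Proposition~\ref{0isenough}(ii). This proves the strong duality simultaneously with the optimality of $\mJ_*$, and replaces the appeal to the time-parametrized Gaussian OT cost with a single, self-contained verification at $t=0$. The nondegenerate computations (the block-trace identity $\tfrac12\tr(\sigma_* D^2c_2)=\tfrac12\tr(\alpha+\beta-2K_*)$, the cyclic-trace identity $\tr(\alpha T)+\tr(\beta T^{-1})=2\tr((\alpha^{1/2}\beta\alpha^{1/2})^{1/2})$, and the Young-inequality verification that $(\varphi,\psi)\in\mcl{D}'(0,0)$) are all correct.

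The one genuine gap is the degenerate case. Your appeal to ``lower semicontinuity of $\omega_2'$ under perturbation of generators'' points in the wrong direction: since for fixed $(\varphi,\psi)$ the map $\alpha\mapsto\tfrac12\tr(\alpha D^2\varphi(0))$ is linear, $\omega_2'$ as a supremum is indeed lower semicontinuous in $(\alpha,\beta)$, but that gives $\liminf_\ep\omega_2'(0,0;\mA_\ep,\mB_\ep)\ge\omega_2'(0,0;\mA,\mB)$, which is the reverse of what you need. To close the gap, observe that the touching condition $\varphi\oplus\psi\le c_2$ with equality at $(0,0)$ forces $D^2\varphi(0)\oplus D^2\psi(0)\le D^2c_2$, hence $\tr D^2\varphi(0)+\tr D^2\psi(0)\le 2d$. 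With $\alpha_\ep=\alpha+\ep I$, $\beta_\ep=\beta+\ep I$, this yields, for every admissible pair,
\[
\mA\varphi(0)+\mB\psi(0) = \mA_\ep\varphi(0)+\mB_\ep\psi(0) - \tfrac{\ep}{2}\bigl[\tr D^2\varphi(0)+\tr D^2\psi(0)\bigr] \ge \mA_\ep\varphi(0)+\mB_\ep\psi(0)-\ep d,
\]
so taking suprema, $\omega_2'(0,0;\mA,\mB)\ge\omega_2'(0,0;\mA_\ep,\mB_\ep)-\ep d\ge\mcW_\mcS(\alpha_\ep,\beta_\ep)^2-\ep d$, and the right side converges to $\mcW_\mcS(\alpha,\beta)^2$ by continuity of the Bures--Wasserstein distance. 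With that substitution your proof is complete and constitutes a valid alternative to the one in the paper.
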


\begin{proof}
    By Theorem \ref{main1}(i), it suffices to prove \eqref{diff-min} for $x=y=0$, that is, 
    \begin{align}\label{diff-min2}
    	\ta_2(0,0;\mA,\mB)&= \mcW_{\mcS}(\alpha, \beta)^2. 
    \end{align}

	Let $\mu_t =\de_0 e^{t\mA},\nu_t = \de_0 e^{t\mB}$. We have $\mu_t \sim \mcl{N}(t\ka, t\al),\nu_t\sim \mcl{N}(t\zeta,t\be)$. From the discussion earlier, $\ga_t\sim \mcl{N}(t\eta,t\si_*)$, $\eta = (\ka,\zeta)\in\mbr^{2d}$, is a 2-optimal coupling of $\mu_t,\nu_t$, where $\si_* \in \mcl{S}_{\ge 0}(\mbr^{2d})$ is given by \eqref{si-def}, \eqref{posdef-prog}. If we let $\mJ_*=\Lag(\eta,\si_*,0)$, then $\mJ_*\in\Ga(\mA,\mB)$ and we have $\ga_t=\de_{(0,0)} e^{t\mJ_*}$. 
    We specifically point out that the semidefinite programming problem \eqref{posdef-prog} has the following linear property:
    \begin{align*}
    	\mcW_{\mcS}(t\alpha, t\beta)^2=t\mcW_{\mcS}(\alpha, \beta)^2.
    \end{align*}
    Hence we have 
	\begin{align*}
		(e^{t\mJ_*}c_2)(0,0)=\int_{\mbr^{2d}} c_2(x,y) d\gamma_t=\mcc_2(\de_0 e^{t\mA},\de_0 e^{t\mB})&= \mcc_2(\mu_t,\nu_t)=\frac 12 t^2|\ka-\zeta|^2+ t\mcW_{\mcS}(\alpha, \beta)^2, 
	\end{align*}
    Subtracting $c_2(0,0)=0$ and dividing $t$ then passing $t\searrow 0$, it leads to
    \begin{align*}
        (   \mJ_*c_2)(0,0)=\om_2^+(0,0;\mA,\mB)&= \limsup_{t\searrow 0} \frac{\mcc_2(\de_0 e^{t\mA},\de_0 e^{t\mB})}{t}= \mcW_{\mcS}(\alpha, \beta)^2,
    \end{align*}
where we recall the pointwise upper transport derivative $\om_2^+$ from Definition \ref{diff-opt-def}. We point out the above also holds if $\om_2^+$ is replaced by the lower derivative $\om_2^-$ (and hence $\om_2^+=\om_2^-$). 
    Since $\ta_2(0,0;\mA,\mB)$ is the infimum of $(\mJ c_2)(0,0)$ over all $\mJ\in \Ga(\mA,\mB)$, it follows
    \begin{align*}
        \ta_2(0,0;\mA,\mB)\le \om_2^\pm(0,0;\mA,\mB)=\mcl{W}_\mcS(\al,\be)^2. 
    \end{align*}
    Recall the bound $\om_2^\pm\le \ta_2$ was established in Proposition \ref{om-ta-prop}. This proves \eqref{diff-min2}. Moreover, $\mJ_*$ is the 2-optimal coupling generator of $\mA,\mB$, because $(\mJ_* c_2)(0,0)=\ta_2(0,0;\mA,\mB)$. 
\end{proof}

We now present a duality formula for the minimization problem \eqref{diff-min}, which will play a key role in the next chapter. This duality arises from the theory of positive semidefinite linear programming. For completeness, we provide a proof using tools from optimal transport.

\begin{lemma}\label{lem:diff-dual}
	Let $\alpha,\beta \in \mathcal{S}_{\ge 0}(\mathbb{R}^d)$, and let $\mathcal{W}_{\mcS}(\alpha,\beta)^2$ denote the minimum value of the primal problem \eqref{posdef-prog}. 
	\begin{align}
		\mathcal{W}_{\mcS}(\alpha,\beta)^2 = \sup_{A,B} \operatorname{tr}[\alpha A + \beta B], \label{dual-id}
	\end{align}
	where the supremum is taken over all pairs of positive semidefinite matrices $(A,B)$ satisfying the matrix inequality
	\begin{align}\label{eq:mat-cond}
		\begin{bmatrix}
			A & 0 \\ 0 & B
		\end{bmatrix}
		\le \frac{1}{2}
		\begin{bmatrix}
			I & -I \\ -I & I
		\end{bmatrix}.
	\end{align}
\end{lemma}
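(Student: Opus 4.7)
My plan is to prove the identity via Kantorovich duality for the classical optimal transport problem between centered Gaussian measures. Setting $\mu=\mathcal{N}(0,\alpha)$ and $\nu=\mathcal{N}(0,\beta)$, the Givens--Shortt formula recalled above yields $\mathcal{W}_{\mathcal{S}}(\alpha,\beta)^2 = \mathcal{C}_2(\mu,\nu)$. The key observation is that, although Kantorovich duality a priori ranges over all admissible test function pairs, for Gaussian marginals it suffices to restrict to the finite-dimensional family of quadratic test functions $\varphi(x)=x^\top Ax$ and $\psi(y)=y^\top By$ with symmetric $A,B$; this is exactly the restriction corresponding to the right-hand side of \eqref{dual-id}.

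For the inequality ``$\le$'' in \eqref{dual-id}, I would observe that Kantorovich admissibility $\varphi\oplus\psi\le c_2$ for such quadratics is, by a direct calculation completing the square in $(x,y)$, precisely the matrix inequality \eqref{eq:mat-cond}. Using the Gaussian second-moment identities $\int x^\top A x\,d\mu(x) = \operatorname{tr}(\alpha A)$ and $\int y^\top B y\,d\nu(y)=\operatorname{tr}(\beta B)$, Kantorovich weak duality yields $\operatorname{tr}(\alpha A + \beta B) \le \mathcal{C}_2(\mu,\nu) = \mathcal{W}_{\mathcal{S}}(\alpha,\beta)^2$ for every admissible pair, and taking the supremum yields the desired inequality.

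For the matching lower bound, I would first assume that $\alpha$ and $\beta$ are strictly positive definite and invoke Brenier's theorem: the optimal transport map from $\mu$ to $\nu$ is the linear map $T(x)=Sx$ with $S := \alpha^{-1/2}(\alpha^{1/2}\beta\alpha^{1/2})^{1/2}\alpha^{-1/2}$, arising from the convex Brenier potential $\phi(x)=\tfrac{1}{2}x^\top S x$ with Legendre dual $\phi^*(y) = \tfrac{1}{2}y^\top S^{-1}y$. The corresponding Kantorovich potentials $\tfrac{1}{2}|x|^2-\phi(x)$ and $\tfrac{1}{2}|y|^2-\phi^*(y)$ are then quadratic, giving the candidate pair $A_*=\tfrac{1}{2}(I-S)$ and $B_*=\tfrac{1}{2}(I-S^{-1})$. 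I would verify two claims: (i) admissibility \eqref{eq:mat-cond} for $(A_*,B_*)$, which reduces to showing that the symmetric $2d\times 2d$ block matrix with diagonal blocks $S$ and $S^{-1}$ and off-diagonal blocks $-I$ is positive semidefinite, and this is immediate from its outer-product factorization as a rank-$d$ PSD matrix with column $(S^{1/2},-S^{-1/2})^\top$; and (ii) the trace identity $\operatorname{tr}(\alpha A_*)+\operatorname{tr}(\beta B_*) = \mathcal{W}_{\mathcal{S}}(\alpha,\beta)^2$, which follows from the cyclic property of the trace and the symmetric identities $\operatorname{tr}(\alpha S) = \operatorname{tr}(\beta S^{-1}) = \operatorname{tr}((\alpha^{1/2}\beta\alpha^{1/2})^{1/2})$.

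The principal technical obstacle is the degenerate case where $\alpha$ or $\beta$ fails to be invertible, so that $S^{-1}$ need not exist. I plan to handle this by the standard perturbation $\alpha_\varepsilon := \alpha + \varepsilon I$ and $\beta_\varepsilon := \beta + \varepsilon I$, apply the nondegenerate argument to the perturbed pair, and pass to the limit $\varepsilon \searrow 0$ using continuity of $\mathcal{W}_{\mathcal{S}}$ on $\mathcal{S}_{\ge 0}(\mathbb{R}^d)$ (which is clear from the explicit formula) together with lower semicontinuity of the dual supremum under the constraint \eqref{eq:mat-cond}, thereby establishing \eqref{dual-id} in full generality.
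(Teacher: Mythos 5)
Your proof follows essentially the same route as the paper's: identify $\mathcal{W}_{\mcS}(\alpha,\beta)^2$ with the quadratic transport cost between centered Gaussians $\mathcal{N}(0,\alpha)$ and $\mathcal{N}(0,\beta)$, invoke Kantorovich duality, and observe that the dual can be restricted to quadratic test functions. You go further than the paper in two ways: you construct the near-optimal dual pair explicitly from the Brenier map $S=\alpha^{-1/2}(\alpha^{1/2}\beta\alpha^{1/2})^{1/2}\alpha^{-1/2}$ and verify the outer-product PSD factorization and the trace identities, whereas the paper simply asserts that ``the optimal Kantorovich potentials are known to be generalized quadratic functions''; and you handle the degenerate case by a clean $\varepsilon$-perturbation of $\alpha,\beta$, whereas the paper only gestures at ``limits of quadratic forms supported on subspaces.'' These additions make your argument tighter and more self-contained.

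One point deserves to be flagged explicitly, because it exposes an inconsistency between your construction and the lemma as stated. The lemma requires the supremum to run over pairs of \emph{positive semidefinite} matrices $(A,B)$, and the paper's own proof likewise asserts $A,B\ge 0$ for the optimal potentials. But your candidate $A_*=\tfrac{1}{2}(I-S)$ fails to be PSD whenever $S$ has an eigenvalue exceeding $1$. Concretely, in $d=1$ with $\alpha=1$, $\beta=4$, one has $S=2$, $A_*=-\tfrac{1}{2}$, $B_*=\tfrac{1}{4}$, and $\operatorname{tr}(\alpha A_*+\beta B_*)=\tfrac{1}{2}=\mathcal{W}_{\mcS}(1,4)^2$, but $A_*<0$. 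Worse, if one genuinely imposes $A,B\ge 0$ together with \eqref{eq:mat-cond} in this example, the only feasible pair is $(0,0)$, so the constrained supremum would be $0\neq\tfrac{1}{2}$, and the claimed identity fails. The correct constraint in \eqref{dual-id} is simply ``$A,B$ symmetric satisfying \eqref{eq:mat-cond}'' --- the PSD requirement must be dropped. Your proof actually locates the right dual pair but silently uses the corrected admissibility set; you should make this explicit (and it is worth noting the statement in the paper needs the same correction).
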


\begin{proof}
	As discussed above, $\mathcal{W}_\mcS(\alpha,\beta)$ is the $2$-Wasserstein distance between two centered Gaussian measures $\mu \sim \mathcal{N}(0,\alpha)$ and $\nu \sim \mathcal{N}(0,\beta)$. By classical Kantorovich duality, we have
	\begin{align*}
		\mathcal{W}_2^2(\mu,\nu)
		= \sup_{\varphi\oplus\psi\le c_2} \left\{ \int_{\mathbb{R}^d} \varphi(x) \, d\mu(x) + \int_{\mathbb{R}^d} \psi(y) \, d\nu(y) \right\}.
	\end{align*}
	
	For Gaussian marginals, the optimal Kantorovich potentials $(\varphi,\psi)$ are known to be generalized quadratic functions. When $\alpha$ and $\beta$ are positive definite, the optimal potentials take the form
	\[
	\varphi(x) = x^\top A x, \quad \psi(y) = y^\top B y,
	\]
	for some symmetric matrices $A,B \ge 0$ satisfying the constraint
	\[
	\varphi(x) + \psi(y) \le \frac{1}{2}|x - y|^2 \quad \text{for all } x,y.
	\]
	This pointwise inequality is equivalent to the matrix inequality \eqref{eq:mat-cond}.
	
	In the case where $\alpha$ or $\beta$ is singular, the optimal potentials are limits of such quadratic forms supported on subspaces, and the dual value is still attained as the supremum over such quadratic pairs. Therefore, it suffices to restrict the dual formulation to quadratic potentials satisfying the cost constraint. In this case, using the property of Gaussian measures, we compute
	\begin{align*}
		\int_{\mathbb{R}^d} \varphi(x) \, d\mu(x) + \int_{\mathbb{R}^d} \psi(y) \, d\nu(y)
		&= \int_{\mathbb{R}^d} x^\top A x \, d\mu(x) + \int_{\mathbb{R}^d} y^\top B y \, d\nu(y) 
		= \operatorname{tr}(\alpha A) + \operatorname{tr}(\beta B).
	\end{align*}
	This proves the identity \eqref{dual-id}.
\end{proof}

\begin{corollary}\label{cor:sd-dd}
	Assume the setting of Proposition \ref{dd-opt-prop}. The strong duality holds: for all $x,y\in \mbr^d$,
	\begin{align*}
		\om_2'(x,y;\mA,\mB)=\ta_2(x,y;\mA,\mB)= (\ka-\zeta)^\top (x-y)+\mcW_\mcl{S}(\al,\be)^2. 
	\end{align*}
\end{corollary}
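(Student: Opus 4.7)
The plan is to invoke Proposition \ref{0isenough}(ii) at the reference point $(0,0)$, using the drift--diffusion coupling $\mJ_*=\Lag(\eta,\si_*,0)$ already produced in Proposition \ref{dd-opt-prop}. That proposition gives $(\mJ_* c_2)(0,0)=\ta_2(0,0;\mA,\mB)=\mcW_\mcS(\al,\be)^2$, so it suffices to verify the matching lower bound $\om_2'(0,0;\mA,\mB)\ge \mcW_\mcS(\al,\be)^2$. Combined with the universal chain $\om_2'\le \om_2^*\le \om_2^\pm\le \ta_2$ furnished by Remark \ref{rem:om'-bdd} and Proposition \ref{om-ta-prop}, this collapses all four quantities at the origin, after which the affine formulas of Theorem \ref{main1}(i) and Proposition \ref{om*-linear}, sharing the common slope $(m_\mA-m_\mB)^\top(x-y)=(\ka-\zeta)^\top(x-y)$, propagate the equality to every $(x,y)\in\mbr^{2d}$.

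For the matching lower bound, I exploit the positive semidefinite duality of Lemma \ref{lem:diff-dual}. Given $\varepsilon>0$, select symmetric positive semidefinite $A,B$ satisfying the block inequality \eqref{eq:mat-cond} with $\tr[\al A+\be B]\ge \mcW_\mcS(\al,\be)^2-\varepsilon$, and set
\[
\vphi(x):=x^\top A x, \qquad \psi(y):=y^\top B y.
\]
Both are polynomials and hence lie in $C_2^2(\mbr^d)$. The block inequality \eqref{eq:mat-cond}, read as a quadratic form in $(x,y)$, is precisely equivalent to the pointwise bound $\vphi(x)+\psi(y)\le \tfrac12|x-y|^2$, and equality at $(0,0)$ is automatic since both sides vanish; therefore $(\vphi,\psi)\in\mcl{D}'(0,0)$. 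Because $\nabla\vphi(0)=\nabla\psi(0)=0$ and $D^2\vphi\equiv 2A$, $D^2\psi\equiv 2B$, the drift terms drop out and a direct computation gives $\mA\vphi(0)=\tr[\al A]$ and $\mB\psi(0)=\tr[\be B]$. Substituting these into \eqref{eq:omega-prime} and letting $\varepsilon\searrow 0$ yields $\om_2'(0,0;\mA,\mB)\ge \mcW_\mcS(\al,\be)^2$, as required.

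I do not anticipate any substantial obstacle. The only point meriting a brief verification is the equivalence between the matrix inequality \eqref{eq:mat-cond} and the global quadratic bound $x^\top Ax+y^\top By\le \tfrac12|x-y|^2$, which is immediate once the difference is rewritten as a quadratic form in $(x,y)$. Conceptually, the argument is genuinely self-dual: the touching condition at $(0,0)$ forces the gradients of $\vphi$ and $\psi$ to vanish, so the drift contributions of $\mA$ and $\mB$ disappear there, while the full drift discrepancy $(\ka-\zeta)^\top(x-y)$ is reinstated uniformly through the affine structure established in Section \ref{sec-opt-markov}.
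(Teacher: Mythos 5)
Your proof is correct and follows the paper's own proof almost verbatim: both bound $\om_2'(0,0)$ from below by testing the near-optimal quadratic pair furnished by Lemma~\ref{lem:diff-dual}, combine this with the chain $\om_2'\le\ta_2$ and the value $\ta_2(0,0)=\mcW_\mcS(\al,\be)^2$ from Proposition~\ref{dd-opt-prop}, and then propagate the equality from the origin using the affine structure in Theorem~\ref{main1}(i) and Proposition~\ref{om*-linear}. Incidentally, your normalization $\vphi(x)=x^\top Ax$ (so $D^2\vphi\equiv 2A$ and $\mA\vphi(0)=\tr[\al A]$) is the choice that makes the constants line up exactly with the dual formula $\mcW_\mcS(\al,\be)^2=\sup\tr[\al A+\be B]$ of Lemma~\ref{lem:diff-dual}; the paper's proof instead writes $\vphi(x)=\tfrac12 x^\top Ax$, which would only yield $\mA\vphi(0)+\mB\psi(0)=\tfrac12\tr[\al A+\be B]$, a factor-of-two slip that your version avoids.
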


\begin{proof}
	The second equality is established in Proposition \ref{dd-opt-prop}. To prove the first, by the affine structure of $\ta_2,\om_2'$, it suffices to verify for $x=y=0$, that is, 
	\begin{align*}
		\om_2'(0,0)=\ta_2(0,0)= \mcW_{\mcl S}(\al,\be)^2. 
	\end{align*}
	
	Since \( \omega_2'(0, 0) \le \ta_2(0, 0) \) is established in Proposition~\ref{om-ta-prop}, it suffices to prove the reverse inequality.
	By Lemma~\ref{lem:diff-dual}, for any \( \varepsilon > 0 \), there exist positive definite matrices \( A, B \in \mathcal{S}_{> 0}(\mathbb{R}^d) \) satisfying the matrix condition~\eqref{eq:mat-cond} such that
	\[
	\mathcal{W}_{\mathcal{S}}(\alpha, \beta)^2 \le \operatorname{tr}[\alpha A + \beta B] + \varepsilon.
	\]
	Define test functions \( \varphi(x) = \frac{1}{2} x^\top A x \) and \( \psi(y) = \frac{1}{2} y^\top B y \). Then \( \varphi \oplus \psi \le c_2 \) and \( \varphi(0) = \psi(0) = 0 \), so \( (\varphi, \psi) \in \mcl{D}'(0,0) \). Hence,
	\[
	\omega_2'(0, 0) \ge \mA \varphi(0) + \mB \psi(0) = \operatorname{tr}[\alpha A + \beta B] \ge \mathcal{W}_{\mathcal{S}}(\alpha, \beta)^2 - \varepsilon.
	\]
	Taking \( \varepsilon \searrow 0 \) gives the required inequality.
\end{proof}

\subsection{Optimal Markovian coupling and strong dualty of jump parts}

We now solve the optimal Markovian coupling problem for pure jump generators. As hinted in the previous section, the solution is closely related to the Lévy optimal transport problem between the underlying Lévy measures.

\begin{proposition}\label{prop:J-opt}
	Let $\mu,\nu\in \La_2(\mbr^d)$ and set $\mA=\Lag(0,0,\mu)$, $\mB=\Lag(0,0,\nu)$. 
	Let $\ga_*$ be a L\'evy 2-optimal coupling of $\mu,\nu$. Then the operator $\mJ_*:=\Lag(0,0,\ga_*)$ is an optimal Markovian coupling of $\mA,\mB$. Moreover, the strong duality holds: 
	\begin{align*}
		\om_2'(x,y;\mA,\mB)=\ta_2(x,y;\mA,\mB)= \mcc_2^\La(\mu,\nu). 
	\end{align*}
\end{proposition}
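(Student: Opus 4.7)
The plan is to assemble the proposition from the three main building blocks already in the excerpt: Proposition~\ref{coup-equiv-prop} (the bijection between L\'evy couplings of measures and Feller couplings of pure jump generators), Corollary~\ref{cor:lopt-dual} (the identification of $\mcc_2^\La(\mu,\nu)$ with $\om_2'(0,0;\mA,\mB)$), and Proposition~\ref{0isenough}(ii) (the local-to-global principle, which reduces everything to checking a single identity at $(0,0)$).

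First I would verify that $\mJ_*:=\Lag(0,0,\ga_*)$ is an admissible coupling. Since $\ga_*\in\Ga^\La(\mu,\nu)\subset \La_2(\mbr^{2d})$ (second moment finiteness follows from the proof of Theorem~\ref{lotp-thm}), Proposition~\ref{coup-equiv-prop} gives $\mJ_*\in \Ga^\La(\mA,\mB)\subset \Ga(\mA,\mB)$. By Proposition~\ref{prop:c2-reg}(iii), the quadratic cost $c_2$ lies in $\bar D(\mJ_*)$, so $(\mJ_*c_2)(0,0)$ is defined in the strong sense.

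Next I would compute $(\mJ_*c_2)(0,0)$ directly. The key observation is that $c_2(0,0)=0$ and $\nabla c_2(0,0)=0$, so in the global form of the jump generator the Taylor-compensated integrand reduces to $c_2$ itself:
\[
(\mJ_*c_2)(0,0) \;=\; \int_{\mbr^{2d}}\bigl[c_2(x,y)-c_2(0,0)-(x,y)\cdot \nabla c_2(0,0)\bigr]\,d\ga_*(x,y) \;=\; \inn{\ga_*,c_2} \;=\; \mcc_2^\La(\mu,\nu),
\]
using the L\'evy 2-optimality of $\ga_*$. By Corollary~\ref{cor:lopt-dual}, the right-hand side equals $\om_2'(0,0;\mA,\mB)$. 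Thus $(\mJ_*c_2)(0,0)=\om_2'(0,0;\mA,\mB)$, and Proposition~\ref{0isenough}(ii) (applied with $\mJ_*\in\Ga^\La(\mA,\mB)$) immediately yields that $\mJ_*$ is a $2$-optimal Markovian coupling and that $\om_2'(x,y;\mA,\mB)=\ta_2(x,y;\mA,\mB)$ for all $x,y\in\mbr^d$.

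Finally, to pin down the common value at an arbitrary $(x,y)$, I would invoke the affine identities: Theorem~\ref{main1}(i) gives $\ta_2(x,y;\mA,\mB)=\ta_2(0,0;\mA,\mB)+(m_\mA-m_\mB)^\top(x-y)$, and since both generators are pure-jump in global form with zero drift coefficient one has $m_\mA=m_\mB=0$. Hence $\ta_2(x,y;\mA,\mB)=\ta_2(0,0;\mA,\mB)=\mcc_2^\La(\mu,\nu)$, and similarly for $\om_2'$ by Proposition~\ref{om*-linear}. I do not anticipate a genuine obstacle here—everything reduces to the bookkeeping above once the extended domain issue is handled by Proposition~\ref{prop:c2-reg}(iii); the only point requiring a moment of care is to note that in the \emph{global} form of $\mA,\mB$ the mean vectors really do vanish, so no drift correction appears in the affine formula.
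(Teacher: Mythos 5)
Your proof is correct and follows essentially the same route as the paper's: compute $(\mJ_*c_2)(0,0)=\inn{\ga_*,c_2}=\mcc_2^\La(\mu,\nu)$ directly from the global-form jump generator, identify this with $\om_2'(0,0;\mA,\mB)$ via Corollary~\ref{cor:lopt-dual}, and invoke Proposition~\ref{0isenough}(ii) to get optimality and strong duality, finishing with the affine structure and $m_\mA=m_\mB=0$. The only difference is that you make the admissibility and domain bookkeeping (Propositions~\ref{coup-equiv-prop} and~\ref{prop:c2-reg}(iii)) explicit where the paper leaves them implicit, which is sound and if anything slightly cleaner.
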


\begin{proof}
	To show $\mJ_*$ is an optimal Markovian coupling and the strong duality (the first equality),
	by Proposition \ref{0isenough}(ii) it suffices to show $(\mJ_*c_2)(0,0)=\om_2'(0,0)$ . 
	Indeed, 
	\begin{align*}
		(\mJ_* c_2)(0,0)
		&=\int_{\mathbb{R}^{2d}} \Big( c_2(x+x',y+y') - c_2(x,y) - (x',y')^\top \nabla_{x,y}c_2(x,y) \Big)\big|_{(x,y)=(0,0)}\, d\ga_*(x',y').
	\end{align*}
	Since $c_2(0,0)=0$ and $\nabla_{x,y}c_2(0,0)=(0,0)$, the integrand reduces to $c_2(x',y')=\tfrac{1}{2}|x'-y'|^2$. Hence, by the L\'evy 2-optimality of $\ga_*$,
	\[
	(\mJ_*c_2)(0,0)= \int_{\mathbb{R}^{2d}} \tfrac{1}{2}|x'-y'|^2 \, d\ga_*(x',y')=\mcc_2^\La(\mu,\nu).
	\]
	The identity $\mcc_2^\La(\mu,\nu)=\om_2'(0,0)$ is established in Corollary \ref{cor:lopt-dual}. This follows $(\mJ_*c_2)(0,0)=\om_2'(0,0)$. 
	
	Now to show the second equality, we note in the global form, any pure jump operators have zero mean, i.e., $m_\mA=m_\mB=0$. We have shown $\ta_2(0,0)=\mcc_2^\La(\mu,\nu)$. The affine structure from Theorem \ref{main1}(i) implies the equality also holds for all $x,y\in \mbr^d$.
\end{proof}

\section{Strong Duality and Optimal Markovian Couplings for L\'evy Generators} 
\label{sec-dual-gap}

In this section, we establish strong duality and the existence of an optimal Markovian coupling 
for a given pair of full L\'evy generators $\mA,\mB \in \mcg_2^\La(\mbr^d)$. 
In the previous section, we verified these results separately for the drift, diffusion, and jump components. 
Here, we combine those componentwise constructions to handle the full L\'evy generator, 
thereby completing the proof of the general case.

\subsection{Strong duality: proofs of Theorems \ref{main2} and \ref{main-LK}}
Let us start with proving the strong duality Theorem \ref{main2}, that is, 
establishing the equality
\begin{align}\label{eq:om2'-chain}
	\om'_2(x,y;\mA,\mB)=\om^*_2(x,y;\mA,\mB)=\om_2^\pm(x,y;\mA,\mB) = \ta_2(x,y;\mA,\mB)
\end{align}
for all L\'evy generators $\mA, \mB \in \mcg_2^\La(\mbr^d)$ with bounded second moment and $x,y\in \mbr^d$. We remind the readers that the inequality $\om_2'\le \om_2^*\le \om_2^\pm\le \ta_2$ holds, see Proposition \ref{om-ta-prop} (and its proof in Appendix \ref{Appen-genresult}) and Remark \ref{rem:om'-bdd}. 

The main tool in the proof of the strong duality is the following lemma, which establishes the additivity of the functional \( \omega_2'(\cdot, \cdot; \mA, \mB) \) with respect to the Lévy–Khintchine decomposition of the operators \( \mA \) and \( \mB \). 
Due to the technical nature of the proof, we will state the lemma now but postpone its proof to the end of this section.

\begin{lemma}\label{om-add-lem}
	Let $\mA,\mB\in\mcg_2^\La(\mbr^d)$ be in the global form, and $\mA^\bullet,\mB^\bullet$, $\bullet\in\{\nabla,\De,J\}$ be the diffusion, drift, and jump part of $\mA,\mB$ respectively. It holds for all $x_0,y_0\in \mbr^d$: 
	\begin{align*}
		\om_2'(x_0,y_0;\mA,\mB)&= \sum_{\bullet \in \{\nabla,\De,J\}}\om_2'(x_0,y_0;\mA^\bullet,\mB^\bullet).  
	\end{align*}
\end{lemma}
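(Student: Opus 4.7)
The plan is as follows. First I reduce to $(x_0,y_0)=(0,0)$. Each side of the claimed identity is an affine function of $(x_0,y_0)$: the left-hand side by Proposition~\ref{om*-linear} applied to $(\mathcal{A},\mathcal{B})$, and the right-hand side by applying the same proposition to each component pair $(\mathcal{A}^\bullet,\mathcal{B}^\bullet)$ separately. In the global form only the drift component carries a nontrivial linear part, and it equals $(m_\mathcal{A}-m_\mathcal{B})^\top(x_0-y_0)=(\kappa-\zeta)^\top(x_0-y_0)$ on both sides; hence it suffices to establish the identity at the origin.

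The upper bound $\omega_2'(0,0;\mathcal{A},\mathcal{B})\le \sum_\bullet \omega_2'(0,0;\mathcal{A}^\bullet,\mathcal{B}^\bullet)$ is immediate: any pair $(\varphi,\psi)\in\mathcal{D}'(0,0)$ is admissible for each component subproblem, and on $C_2^2\subset \bar{D}(\mathcal{A}^\bullet)\cap\bar{D}(\mathcal{B}^\bullet)$ we have the linear decomposition $\mathcal{A}\varphi(0)+\mathcal{B}\psi(0)=\sum_\bullet[\mathcal{A}^\bullet\varphi(0)+\mathcal{B}^\bullet\psi(0)]$; bounding each summand by the corresponding component supremum gives the claim. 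For the reverse inequality, the drift contributes zero (since the touching condition at the origin forces $\nabla\varphi(0)=\nabla\psi(0)=0$), so it suffices to produce, for every $\varepsilon>0$, a pair $(\varphi,\psi)\in\mathcal{D}'(0,0)$ with
\[
\tfrac12\tr(\alpha D^2\varphi(0)+\beta D^2\psi(0))+\int\varphi\,d\mu+\int\psi\,d\nu \;\ge\; \mathcal{W}_\mathcal{S}(\alpha,\beta)^2+\mathcal{C}_2^\Lambda(\mu,\nu)-\varepsilon.
\]

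I select nearly-optimal data for the two component subproblems. By Lemma~\ref{lem:diff-dual} there exist symmetric matrices $(A,B)$ satisfying the strict matrix inequality $\mathrm{diag}(A,B)<\bigl(\begin{smallmatrix}I&-I\\-I&I\end{smallmatrix}\bigr)$ with $\tfrac12\tr(\alpha A+\beta B)\ge \mathcal{W}_\mathcal{S}(\alpha,\beta)^2-\varepsilon/2$; by Theorem~\ref{LK-duality-thm}(iv) there exist jump potentials $(\varphi_J,\psi_J)\in C_b^2$ vanishing on some neighborhood $B_{r_0}(0)$ of the origin, with $\varphi_J(0)=\psi_J(0)=0$, $\varphi_J\oplus\psi_J\le c_2$, and $\int\varphi_J\,d\mu+\int\psi_J\,d\nu\ge \mathcal{C}_2^\Lambda(\mu,\nu)-\varepsilon/2$. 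The candidate combined pair is
\[
\varphi(x)=\tfrac12 x^\top A x\,\eta_\rho(x)+\varphi_J(x),\qquad
\psi(y)=\tfrac12 y^\top B y\,\eta_\rho(y)+\psi_J(y),
\]
where $\eta_\rho$ is a smooth cutoff equal to $1$ on $B_{\rho/2}(0)$ and supported in $B_\rho(0)$, with $\rho<r_0/2$ to be chosen small. Since $\eta_\rho\equiv 1$ near the origin and $\varphi_J,\psi_J$ vanish on $B_{r_0}(0)$, the origin data are $\varphi(0)=\psi(0)=0$, $\nabla\varphi(0)=\nabla\psi(0)=0$, $D^2\varphi(0)=A$, $D^2\psi(0)=B$, yielding the correct diffusion contribution. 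Since $\mu,\nu\in\Lambda_2$, the parasitic contributions $\int \tfrac12 x^\top A x\,\eta_\rho\,d\mu$ and $\int \tfrac12 y^\top B y\,\eta_\rho\,d\nu$ are $O(\int_{B_\rho}|x|^2\,d\mu)\to 0$ as $\rho\to 0$ by dominated convergence, so the jump contribution remains within $\varepsilon/2$ of the optimum for $\rho$ small.

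The principal technical obstacle is verifying the admissibility $\varphi\oplus\psi\le c_2$ globally. Away from $B_\rho\times B_\rho$ at least one cutoff factor vanishes, so the inequality reduces either to $\varphi_J\oplus\psi_J\le c_2$ or to a mixed quadratic--jump bound that has to be analyzed separately; inside $B_\rho\times B_\rho$ the jump parts vanish and the inequality reduces to a cutoff-modified matrix inequality whose validity is not immediate when $A$ or $B$ is indefinite, because shrinking $\eta_\rho$ below $1$ can \emph{increase} a negative quadratic term. The resolution exploits the strict matrix inequality, which provides a quantitative gap $\tfrac12|x-y|^2-\tfrac12 x^\top A x-\tfrac12 y^\top B y\ge c(|x|^2+|y|^2)$ on $B_\rho\times B_\rho$ (for some $c>0$ depending on the strict gap but not on $\rho$), large enough to absorb the cutoff perturbation $O(\rho^2(\|A\|+\|B\|))$ once $\rho$ is taken small; a slight variant uses a joint cutoff in the variable $\max(|x|,|y|)$ to avoid the indefinite cross terms. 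This is the delicate step of the argument. Once the construction is carried through and admissibility is checked, letting $\varepsilon\to 0$ completes the proof.
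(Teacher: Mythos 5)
Your reduction to the origin via the affine structure, the upper bound from subadditivity of the supremum, and your identification of the admissibility check for the combined pair as the genuine obstacle are all correct and in the right spirit. But the lower bound argument has a real gap, and the sketched resolutions do not close it. The structural issue is this: adding a $c_2$-admissible diffusion pair to a $c_2$-admissible jump pair gives a $2c_2$-admissible pair, not a $c_2$-admissible one, and your localization-by-cutoff only controls the ``pure'' regions but not the mixed region $B_\rho(0)\times B_{r_0}(0)^c$ (and its reflection), where the cut-off quadratic $\tfrac12 x^\top A x\,\eta_\rho(x)$ and the jump potential $\psi_J(y)$ are simultaneously active. Your two proposed fixes do not patch this cleanly: (a) the quantitative gap
$\tfrac12|x-y|^2 - \tfrac12 x^\top A x - \tfrac12 y^\top B y \ge c(|x|^2+|y|^2)$
cannot hold uniformly, because $\tfrac12\begin{bsmallmatrix}I&-I\\-I&I\end{bsmallmatrix}$ is singular on the diagonal $\{(x,x)\}$; any near-optimal $(A,B)$ will have the constraint (nearly) active, so the available gap vanishes exactly where you need it, and making it strict by perturbation forces a delicate $\rho$ vs.\ perturbation trade-off you have not tracked; (b) the ``joint cutoff in $\max(|x|,|y|)$'' is too vague to assess. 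Moreover, a multiplicative cutoff is the wrong device for indefinite $A,B$ (which do arise for Gaussian Kantorovich potentials): multiplying a negative quadratic by $\eta_\rho\in[0,1]$ increases it, so even the pure diffusion inequality $\varphi_\Delta^\rho\oplus\psi_\Delta^\rho\le c_2$ is not preserved without extra work.

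The paper resolves this by a different, structurally cleaner mechanism: it decomposes the cost itself, $c_2 = c_2^\varepsilon + \delta_2^\varepsilon$ with $c_2^\varepsilon(x,y)=\tfrac12\max\{0,|x-y|-\varepsilon\}^2$ and $\delta_2^\varepsilon=c_2-c_2^\varepsilon$, and then constructs near-optimal jump potentials $\varphi^J_\varepsilon\oplus\psi^J_\varepsilon\le c_2^\varepsilon$ (Lemma~\ref{lem:approx-C3}, via infimal convolution, which automatically produces the truncated-cost bound once the potentials vanish near $0$) and near-optimal diffusion potentials $\varphi^\Delta_\varepsilon\oplus\psi^\Delta_\varepsilon\le\delta_2^\varepsilon$ (Lemma~\ref{lem:approx-C4}, via a \emph{compositional} cutoff $\varphi\circ T_\varepsilon$ with $T_\varepsilon$ $1$-Lipschitz and range in $B_\varepsilon$, which preserves $\varphi\oplus\psi\le c_2$ for arbitrary, possibly indefinite potentials because $|T_\varepsilon(x)-T_\varepsilon(y)|\le|x-y|$). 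Summing the two pairs then gives $\varphi_\varepsilon\oplus\psi_\varepsilon\le c_2^\varepsilon+\delta_2^\varepsilon=c_2$ by construction, with no case analysis and no sign restrictions on $A,B$. That cost-splitting idea, and the compositional rather than multiplicative localization, are exactly what your argument is missing. If you replace your multiplicative cutoff by the radial contraction $T_\varepsilon$ and arrange the jump potentials to satisfy the $c_2^\varepsilon$ bound (which the vanishing-near-$0$ construction already gives you implicitly), your proof essentially becomes the paper's.
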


We may now establish the strong duality.

\begin{proof}[Proof of Theorem \ref{main2}]
	As pointed out earlier, the chain of inequalities holds:
	\[
	\om_2'(x,y)\le \omega_2^*(x,y) \le \omega_2^-(x,y)\le \om_2^+(x,y) \le \theta_2(x,y), \qquad \text{for all } (x,y) \in \mathbb{R}^{2d}.
	\]
	To complete the proof, it suffices to show that $\theta_2(x,y) \le \omega_2'(x,y)$. By the affine structure of these functions, namely, Lemma~\ref{om*-linear} and Theorem~\ref{main1}(i), it is enough to verify the identity at the origin:
	\begin{equation} \label{goal1.5}
		\theta_2(0,0) = \omega_2'(0,0).
	\end{equation}
	
	As in Lemma \ref{om-add-lem}, for $\bullet \in\{\nabla,\De,J\}$ denote $\mA^\bullet,\mB^\bullet$ the drift, diffusion and jump part of $\mA,\mB$. 
	Let $\mJ_*^\bullet\in \Ga^\La(\mA^\bullet,\mB^\bullet)$ be an optimal L\'evy coupling generator of $\mA^\bullet,\mB^\bullet$, whose existence is guaranteed by Propositions \ref{dd-opt-prop} and \ref{prop:J-opt}. Specifically it holds for $\bullet = \nabla,\De,J$:
	\begin{align*}
		(\mJ^\bullet_*c_2)(0,0)=\ta_2(0,0;\mA^\bullet,\mB^\bullet). 
	\end{align*}
	Consider the superposition operator $\mJ_* = \mJ_*^\nabla+\mJ_*^\De+\mJ_*^J$.
	It is easy to verify that $\mJ_*$ is a L\'evy coupling generator of $\mA,\mB$, i.e., $\mJ_*\in\Ga^\La(\mA,\mB)$. Hence, by the definition of $\ta_2$, it holds
	\begin{align*}
		\ta_2(0,0)&\le (\mJ_* c_2)(0,0)= \sum_{\bullet \in\{\nabla,\De,J\}} \ta_2(0,0;\mA^\bullet,\mB^\bullet). 
	\end{align*}
	As verify earlier in Corollary \ref{cor:sd-dd} and Proposition \ref{prop:J-opt}, the strong duality $\ta_2=\om_2'$ holds for each part of generators. It then follows from Lemma \ref{om-add-lem} that 
	\begin{align*}
		\ta_2(0,0)\le \sum_{\bullet \in\{\nabla,\De,J\}} \ta_2(0,0;\mA^\bullet,\mB^\bullet)= \sum_{\bullet \in\{\nabla,\De,J\}} \om'_2(0,0;\mA^\bullet,\mB^\bullet)=\om_2'(0,0). 
	\end{align*}
	This is \eqref{goal1.5}. The strong duality is established.
\end{proof}

\begin{proof}[Proof of Theorem \ref{main-LK}]
	It follows immediately from Lemma \ref{om-add-lem} and Theorem \ref{main2}, particularly $\om'_2=\ta_2$. 
\end{proof}

\subsection{Optimal Markovian couplings: Proofs of Theorems \ref{main1}(ii, iii)}

With the strong duality and additivity of $\ta_2$ in place, we may now complete the construction of an optimal Markovian coupling for any given pair $\mA,\mB\in\mcg_2^\La(\mbr^d)$, and the proof of Theorem \ref{main1}(ii). 

\begin{proof}[Proof of Theorem \ref{main1}, (ii)]
	As in the theorem, let $\mA = \Lag(\ka,\al,\mu)$, $\mB = \Lag(\zeta,\be,\nu)$, and 
	$\mJ_* = \Lag(\eta_*,\si_*,\ga_*)$, where the triplet $(\eta_*,\si_*,\ga_*)$ is given as in the statement. 
	Clearly, $\mJ_* \in \Ga^\Lambda(\mA,\mB)$. 
	
	Decompose $\mJ_*$ into its drift, diffusion, and jump components,
	\(
	\mJ_* = \mJ_*^\nabla + \mJ_*^\Delta + \mJ_*^J.
	\)
	By construction of the triplet, each component $\mJ_*^\bullet,\bullet=\nabla,\De,J$, is an optimal Markovian coupling of 
	$\mA^\bullet$ and $\mB^\bullet$. In particular, it holds
	\[
	(\mJ_*^\bullet c_2)(0,0) = \ta_2(0,0;\mA^\bullet,\mB^\bullet).
	\]
	It then follows from Theorem \ref{main-LK} that
	\begin{align*}
		(\mJ_* c_2)(0,0) 
		&= \sum_{\bullet \in \{\nabla,\Delta,J\}} (\mJ_*^\bullet c_2)(0,0) 
		= \sum_{\bullet \in \{\nabla,\Delta,J\}} \ta_2(0,0;\mA^\bullet,\mB^\bullet) 
		= \ta_2(0,0;\mA,\mB). 
	\end{align*}
	By Proposition \ref{0isenough}, $\mJ_*$ is therefore a $2$-optimal Markovian coupling of $\mA$ and $\mB$, 
	which completes the construction.
\end{proof}

We now present the proof of Theorem~\ref{main1}(iii). The next lemma provides a lower bound on the right-hand lower Dini derivative of $t\mapsto (T_t c_2)(x,y)$, which is the only remaining ingredient.

\begin{lemma}\label{lem:dini-lbbd}
	Let $\mA,\mB\in\mcg_2^\La(\mbr^d)$, and let $\om_2^+=\om_2^+(\cdot,\cdot;\mA,\mB)$ denote the upper pointwise transport derivative.  
	Let $\{T_t\}_{t\ge 0}$ be a Markovian coupling semigroup of $\mA,\mB$.  
	For $(D^-T_t c_2)(x,y)$ denoting the right-hand lower Dini derivative of the map $t\mapsto (T_t c_2)(x,y)$, we have
	\[
	(D^- T_t c_2)(x,y)\;\ge\; (T_t\om_2^+)(x,y).
	\]
\end{lemma}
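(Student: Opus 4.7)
My strategy is to push the $\liminf$ defining the right-hand Dini derivative through the positivity-preserving operator $T_t$, and then control its argument by a pointwise infinitesimal bound derived from the coupling property.

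I would begin from the semigroup identity
\[
\frac{T_{t+h}c_2(x,y) - T_t c_2(x,y)}{h}\;=\;T_t\!\left[\frac{T_h c_2 - c_2}{h}\right](x,y),
\]
so that $(D^- T_t c_2)(x,y) = \liminf_{h\searrow 0} T_t\!\left[\tfrac{T_h c_2 - c_2}{h}\right]\!(x,y)$. For any reference point $(x',y')$, the probability measure $\de_{(x',y')}T_h$ is, by the marginal condition of a Markovian coupling semigroup (Definition~\ref{def:feller-couple}), a classical coupling of $\de_{x'}e^{h\mA}$ and $\de_{y'}e^{h\mB}$; integrating $c_2$ against it gives
\[
T_h c_2(x',y')\;\ge\;\mcc_2\!\left(\de_{x'}e^{h\mA},\,\de_{y'}e^{h\mB}\right).
\]
Subtracting $c_2(x',y')$, dividing by $h$, and taking $\liminf_{h\searrow 0}$ yields $\liminf_{h\searrow 0}\tfrac{T_h c_2 - c_2}{h}(x',y') \ge \om_2^-(x',y') = \om_2^+(x',y')$, the last equality being the strong duality in Theorem~\ref{main2}.

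The main obstacle is interchanging this $\liminf$ with $T_t$, for which Fatou's lemma (applied to the probability kernel $T_t((x,y),\cdot)$) is the natural tool but requires a uniform-in-$h$, $T_t$-integrable lower bound on the difference quotient. Such a bound is delivered by Proposition~\ref{prop:c2-reg}(i) at $t=0$:
\[
\frac{T_h c_2(x',y') - c_2(x',y')}{h}\;\ge\;\tfrac12\,(x'-y')^\top(m_\mA - m_\mB),
\]
whose right-hand side is affine in $(x',y')$ and hence $T_t((x,y),\cdot)$-integrable by the marginal identity and the finite second moment assumption on $\mA,\mB$. Fatou's lemma then yields
\[
(D^- T_t c_2)(x,y)\;\ge\; T_t\!\left[\liminf_{h\searrow 0}\tfrac{T_h c_2 - c_2}{h}\right]\!(x,y)\;\ge\;(T_t\om_2^+)(x,y),
\]
with the last quantity unambiguous because $\om_2^+ = \ta_2(\cdot,\cdot;\mA,\mB)$ is affine in $(x,y)$ by Theorem~\ref{main1}(i), hence has a well-defined $T_t$-action via the marginal means.
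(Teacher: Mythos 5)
Your proof is correct, and its skeleton matches the paper's exactly: factor the difference quotient through the semigroup identity, apply Fatou's lemma with the uniform affine lower bound furnished by Proposition~\ref{prop:c2-reg}(i), and combine with the infinitesimal coupling bound at $t=0$. The single place you diverge is in how that $t=0$ bound is closed. You observe that the pointwise inequality $T_h c_2(x',y')\ge\mcc_2(\de_{x'}e^{h\mA},\de_{y'}e^{h\mB})$ yields, after a $\liminf$-to-$\liminf$ comparison, $\liminf_{h\searrow0}\tfrac{T_hc_2-c_2}{h}\ge\om_2^-$, and you then upgrade $\om_2^-$ to $\om_2^+$ via the strong duality Theorem~\ref{main2}. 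The paper instead cites the argument in the proof of Proposition~\ref{om-ta-prop}, which passes from the same pointwise inequality to $\om_c^+\le\mJ^-c$ in one step; read literally, that step compares a $\limsup$ on one side to a $\liminf$ on the other, which does not follow from a pointwise bound alone. Your detour through $\om_2^-$ and strong duality sidesteps this, making the comparison a $\liminf$-to-$\liminf$ one, which is immediate, and there is no circularity since Theorem~\ref{main2} is established independently of this lemma and is already in force when Theorem~\ref{main1}(iii) invokes both results. So your proof buys a slightly tighter justification of \eqref{eq:temp5.2} at the modest cost of using a heavier, but already available, theorem.
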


\begin{proof}
	We begin with the case $t=0$. Observe that
	\begin{align}\label{eq:temp5.2}
		D^-\Big|_{t=0} T_t c_2(x,y)
		=\liminf_{h\searrow 0} \frac{T_h c_2(x,y)-c_2(x,y)}{h}
		\;\ge\; \om_2^+(x,y).
	\end{align}
	The proof of \eqref{eq:temp5.2} is identical to the argument establishing $\ta_c \ge \om_c^+$ in Proposition~\ref{om-ta-prop}; see Appendix~\ref{Appen-genresult}.
	
	For general $t\ge 0$, the semigroup property yields
	\begin{align*}
		(D^-T_t c_2)(x,y)
		&=\liminf_{h\searrow 0} \frac{T_{t+h}c_2 - T_t c_2}{h}(x,y) 
		=\liminf_{h\searrow 0} T_t\!\left(\frac{T_h c_2 - c_2}{h}\right)(x,y).
	\end{align*}
	By Fatou's lemma applied with respect to the transition kernel of $T_t$, and using that Proposition~\ref{prop:c2-reg}(i) provides a uniform lower bound in $h$, we obtain
	\[
	(D^-T_t c_2)(x,y)
	\;\ge\; T_t\!\left(\liminf_{h\searrow 0}\frac{T_h c_2 - c_2}{h}\right)(x,y).
	\]
	Finally, combining this with \eqref{eq:temp5.2} and the order-preserving property of Markov semigroups gives the desired inequality.
\end{proof}

\begin{proof}[Proof of Theorem \ref{main1}(iii)]
Fix a Markovian coupling semigroup $\{T_t\}_{t\ge0}$ of $\mA,\mB$.  
By Proposition~\ref{prop:c2-reg}(i), the map $t \mapsto (T_t c_2)(x,y)$ is locally Lipschitz, hence absolutely continuous.  
By the fundamental theorem of calculus for absolutely continuous functions, its derivative exists for almost every $t \ge 0$ and coincides with the lower Dini derivative $(D^-T_t c_2)(x,y)$.  
Invoking Lemma~\ref{lem:dini-lbbd} together with strong duality (namely $\om_2^+=\ta_2$), we obtain
\begin{align*}
	(T_t c_2)(x,y)=c_2(x,y)+\int_0^t (D^-T_sc_2)(x,y)ds \ge c_2(x,y) + \int_0^t (T_s \ta_2)(x,y)\,ds.
\end{align*}

By the affine structure of $\ta_2$ established in Theorem~\ref{main1}(i), and using the marginal property of $\{T_t\}$, we compute for any $t \ge 0$ and $(x,y)\in\mathbb{R}^{2d}$:
\begin{align*}
	(T_s \ta_2)(x,y)
	&= \ta_2(0,0) + (m_\mA - m_\mB)^\top \bigl[(x-y) + s(m_\mA - m_\mB)\bigr].
\end{align*}
Substituting this into the integral yields
\begin{align*}
	(T_t c_2)(x,y)
	&\ge c_2(x,y) + \int_0^t \left[ \ta_2(0,0) + (m_\mA - m_\mB)^\top \bigl[(x-y) + s(m_\mA - m_\mB)\bigr] \right] ds \\
	&= c_2(x,y) + t \bigl[ \ta_2(0,0) + (m_\mA - m_\mB)^\top (x-y) \bigr] + \frac{t^2}{2} |m_\mA - m_\mB|^2.
\end{align*}
Moreover, equality holds when $T_t = e^{t\mJ_*}$ for a $2$-optimal coupling $\mJ_*$, giving precisely the identity \eqref{eq:optgrowth}.  
Consequently, we conclude $T_tc_2\ge e^{t\mJ_*}c_2$. The proof is complete.
\end{proof}

\subsection{Additivity of $\om_2'$ (proof of Lemma \ref{om-add-lem})}

We now return to the proof of Lemma \ref{om-add-lem}, which asserts the additivity of \(\om_2'\) with respect to the L\'evy–Khintchine decomposition of the generators.
The key idea is to observe how each component of the generators interacts with the test functions \(\varphi, \psi\). When \(\mA\) and \(\mB\) are diffusion or drift operators, the expression \(\mA\varphi(x_0) + \mB\psi(y_0)\) depends only on the local behavior of \(\varphi\) and \(\psi\) near \(x_0\) and \(y_0\), respectively. In contrast, when \(\mA\) and \(\mB\) are pure jump operators, the same quantity depends on the global behavior of \(\varphi\) and \(\psi\), due to the nonlocal nature of jumps. This separation of influence—local for drift and diffusion, global for jumps—implies that the contributions from each component can be optimized independently. Consequently, the total value \(\om_2'(x_0, y_0; \mA, \mB)\) decomposes additively across the three types of components.

Although the underlying idea is straightforward, the proof involves some technical subtleties. The strategy is to consider, for each part of the L\'evy–Khintchine decomposition, an optimal (or approximately optimal) pair of test functions. 
The key idea is to combine these pairs in a suitable way to construct an admissible test function pair that approximates a near-maximizer for the full generators \(\mA, \mB\). The main technical difficulty lies in this construction: due to the differing local and nonlocal dependencies of the generator components, several careful modifications are required to ensure that the combined test functions satisfy the admissibility constraints while preserving near-optimality. These modifications form the core of the technical part of the proof.

Given \( \ep > 0 \), define the truncated cost functions \( c_2^\ep \) and the remainder \( \delta_2^\ep \) by
\begin{align*}
	c_2^\ep(x,y) = \f 12\max\cb{0,|x-y|-\ep}^2, \qquad \delta_2^\ep(x,y) = c_2(x,y) - c_2^\ep(x,y). 
\end{align*}
Specifically, \( \delta_2^\varepsilon \) can be computed explicitly: \( \delta_2^\varepsilon(x, y) =  q_2^\varepsilon(|x - y|) \), where $q_2^\ep:[0,\infty)\to[0,\infty)$ is given by
\begin{align}\label{def:q}
	q_2^\varepsilon(r) = \frac{1}{2} r^2 \chi_{\{ r \le \varepsilon \}}(r) + \left( \varepsilon r - \frac{1}{2} \varepsilon^2 \right) \chi_{\{ r > \varepsilon \}}(r).
\end{align}
In particular, \( \delta_2^\varepsilon \) is a \( C^1 \) function such that \( \delta_2^\varepsilon = c_2 \) for all \( |x - y| \le \varepsilon \), and is affine (i.e., linear) for \( |x - y| > \varepsilon \). Clearly we have the decomposition
\begin{align}\label{eq:c_2-decomp}
	c_2(x, y) = c_2^\varepsilon(x, y) + \delta_2^\varepsilon(x, y).
\end{align}

The key idea is to construct a pair of test functions of the form
\[
(\varphi_\varepsilon, \psi_\varepsilon) := (\varphi_\varepsilon^\Delta + \varphi_\varepsilon^J, \, \psi_\varepsilon^\Delta + \psi_\varepsilon^J),
\]
where each pair \( (\varphi_\varepsilon^\bullet, \psi_\varepsilon^\bullet) \) is a near-optimal pair for \( \omega_2'(0, 0; \mathcal{A}^\bullet, \mathcal{B}^\bullet) \), with \( \bullet \in \{ \Delta, J \} \). These are chosen to satisfy
\[
\varphi_\varepsilon^J \oplus \psi_\varepsilon^J \le c_2^\varepsilon, \qquad 
\varphi_\varepsilon^\Delta \oplus \psi_\varepsilon^\Delta \le \delta_2^\varepsilon.  
\]
Summing the two components, we obtain \( \varphi_\varepsilon \oplus \psi_\varepsilon \le c_2 \),
which ensures the admissibility of the combined pair in the dual formulation.

To proceed, we require two approximation lemmas. The first extends Lemma~\ref{lem:denseL1} by constructing a sequence of $C_2^2$-pairs suitable for the jump part. The second handles the diffusion component by building a localized approximation near the origin.

\begin{lemma}\label{lem:approx-C3}
	Let \( \mu, \nu \in \Lambda_2(\mathbb{R}^d) \) be L\'evy measures. For any pair \( (\varphi, \psi) \in L^1(\mu) \oplus L^1(\nu) \) satisfying \( \varphi \oplus \psi \le c_2 \), there exists a family \( \{(\varphi_\varepsilon, \psi_\varepsilon)\}_{\varepsilon > 0} \) such that for all sufficiently small \( \varepsilon > 0 \), the following hold: 
	\begin{enumerate}[label=(\roman*)]
		\item \( \varphi_\varepsilon, \psi_\varepsilon \in C_2^2(\mathbb{R}^d) \), and both vanish in a neighborhood of the origin;
		\item \( \varphi_\varepsilon \oplus \psi_\varepsilon \le c_2^\varepsilon \);
		\item \( \varphi_\varepsilon \to \varphi \) in \( L^1(\mu) \), and \( \psi_\varepsilon \to \psi \) in \( L^1(\nu) \) as \( \varepsilon \searrow 0 \).
	\end{enumerate}
\end{lemma}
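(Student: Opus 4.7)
The plan is to build on Lemma \ref{lem:denseL1}, absorbing the gap $\delta_2^\ep = c_2 - c_2^\ep$ into a subtractive correction. First, apply Lemma \ref{lem:denseL1} to obtain $(\bar\vphi_n, \bar\psi_n) \in C_b^2(\mbr^d) \times C_b^2(\mbr^d)$, each pair vanishing on some ball $B_{r_n}(0)$ with $r_n > 0$, satisfying $\bar\vphi_n \oplus \bar\psi_n \le c_2$, and converging to $(\vphi,\psi)$ in $L^1(\mu) \oplus L^1(\nu)$. The vanishing of $\bar\psi_n$ on $B_{r_n}(0)$ combined with the pointwise bound $\bar\vphi_n \oplus \bar\psi_n \le c_2$ yields the envelope estimate
\[
\bar\vphi_n(x) \;\le\; \inf_{y \in B_{r_n}(0)} c_2(x,y) \;=\; \tfrac{1}{2}\max(0,|x|-r_n)^2,
\]
and symmetrically for $\bar\psi_n$; this envelope will drive the subsequent casework.

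Next, for each $n$ and small $\ep \in (0, r_n/2)$, I would set
\[
\vphi_{n,\ep}(x) := \bar\vphi_n(x) - \ep\, h_n(x), \qquad \psi_{n,\ep}(y) := \bar\psi_n(y) - \ep\, h_n(y),
\]
where $h_n \in C^2(\mbr^d)$ is a nonnegative radial function chosen to vanish on $B_{r_n/2}(0)$, to equal $|x|$ outside $B_{r_n}(0)$, and to interpolate smoothly in the annulus (by a mollified radial cutoff). Then $\vphi_{n,\ep}, \psi_{n,\ep}$ lie in $C_b^2(\mbr^d) \subset C_2^2(\mbr^d)$ and vanish on $B_{r_n/2}(0)$. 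To verify the stricter bound $\vphi_{n,\ep} \oplus \psi_{n,\ep} \le c_2^\ep$, I would use the elementary inequality $q_2^\ep(r) \le \ep r$ (immediate from \eqref{def:q}) and perform a case analysis on $(x,y)$ relative to $B_{r_n/2}(0)$ and $B_{r_n}(0)$. In the regime $|x|,|y| \ge r_n$, one has $\ep h_n(x) + \ep h_n(y) = \ep(|x|+|y|) \ge \ep|x-y| \ge \delta_2^\ep(x,y)$, which combined with $\bar\vphi_n \oplus \bar\psi_n \le c_2$ delivers the target bound. In all other configurations, the envelope estimate, together with the elementary identity $\tfrac{1}{2}(|x|-r_n)^2 - \ep|x| \le \tfrac{1}{2}(|x|-r_n-\ep)_+^2 \le c_2^\ep(x,y)$ (valid when the distance of $y$ to the ball is controlled), closes the inequality.

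For $L^1$-convergence, since $h_n$ vanishes on $B_{r_n/2}(0)$ and grows at most linearly,
\[
\int_{\mbr^d} \ep\, h_n \, d\mu \;\le\; \ep \int_{|x|>r_n/2}(|x|+C)\, d\mu \;\le\; \tfrac{C\ep}{r_n} \int_{\mbr^d} \min(1,|x|^2)\, d\mu + \ep \int_{|x|>1} |x|^2\, d\mu \;=\; O(\ep/r_n),
\]
and similarly for $\nu$, using $\mu,\nu \in \La_2(\mbr^d)$. A diagonal selection $\ep \mapsto n(\ep)$ with $n(\ep) \to \infty$ and $\ep / r_{n(\ep)} \to 0$ then gives $\vphi_{n(\ep),\ep} \to \vphi$ in $L^1(\mu)$ and $\psi_{n(\ep),\ep} \to \psi$ in $L^1(\nu)$ as $\ep \searrow 0$, producing the desired family $(\vphi_\ep, \psi_\ep)$ after relabeling.

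The main technical obstacle is the casework verifying the pointwise bound $\vphi_{n,\ep} \oplus \psi_{n,\ep} \le c_2^\ep$ in the "boundary" configurations, especially when one of $x, y$ lies in the annulus $B_{r_n}(0) \setminus B_{r_n/2}(0)$ where $h_n$ is neither zero nor equal to $|\cdot|$; here one has to combine the envelope bound $\bar\vphi_n(x) \le \tfrac12(|x|-r_n)_+^2$ (which exploits that $\bar\psi_n$ vanishes on $B_{r_n}$) with a careful quadratic comparison to dominate $c_2^\ep(x,y)$ from below. Constructing a genuinely $C^2$ radial interpolant $h_n$ with the prescribed boundary conditions is routine but should be recorded explicitly, and the diagonalization step is standard once the pointwise inequality is established.
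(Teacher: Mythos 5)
Your proof is correct, but it takes a genuinely different route from the paper's. The paper first reduces (via Lemma~\ref{lem:F01dense}) to a bounded Lipschitz pair vanishing near the origin and then sharpens the cost constraint by \emph{infimal convolution} $\varphi \mapsto \varphi\,\square\,\chi_{\varepsilon/2}$ against the ball indicator; Lemma~\ref{lem:inf-convo} converts the $c_2$-constraint into the stricter $c_2^\varepsilon$, and a final mollification (via Lemma~\ref{lem:convo}) restores $C^2$ regularity. You instead start from a $C_b^2$ pair furnished by Lemma~\ref{lem:denseL1} and sharpen the constraint \emph{subtractively} by $\varepsilon h_n$, with $h_n$ a smooth truncation of $|\cdot|$; the inequality $\varphi_{n,\varepsilon}\oplus\psi_{n,\varepsilon}\le c_2^\varepsilon$ is then verified by casework using two ingredients: the elementary bound $\delta_2^\varepsilon(x,y)\le\varepsilon|x-y|$, and the envelope estimate $\bar\varphi_n(x)\le\tfrac12\max(0,|x|-r_n)^2$, which follows because $\bar\psi_n$ vanishes on $B_{r_n}(0)$. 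I have checked that the casework closes. In particular, in the mixed configuration $r_n/2\le |x|<r_n\le|y|$ the triangle bound $h_n(x)+h_n(y)\ge|x-y|$ is \emph{not} available (since $h_n\ne|\cdot|$ on the annulus), but the envelope estimate together with the quadratic comparison $\tfrac12 s^2-\varepsilon(s+r_n)\le\tfrac12\max(0,s-\varepsilon)^2$ for $s=|y|-r_n\ge 0$ finishes that case, exactly as you flag. Your route avoids the infimal-convolution machinery of Lemma~\ref{lem:inf-convo} at the price of a more explicit pointwise verification; the paper's version is algebraically more compact and reuses Lemma~\ref{lem:convo}. When writing yours out, record explicitly that $h_n$ can be built with $0\le h_n(x)\le|x|$, so that $\int h_n\,d\mu\le \tfrac{2}{r_n}\int\min(1,|x|^2)\,d\mu+\int_{|x|>1}|x|^2\,d\mu=O(1/r_n)$; and note that the diagonal selection $\varepsilon\mapsto n(\varepsilon)$ must simultaneously enforce $\varepsilon<r_{n(\varepsilon)}/2$ (so the casework applies) and $\varepsilon/r_{n(\varepsilon)}\to 0$ (so the $L^1$ error from $\varepsilon h_{n(\varepsilon)}$ vanishes).
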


\begin{lemma}\label{lem:approx-C4}
	Let \( \mu, \nu \in \Lambda_2(\mathbb{R}^d) \), and suppose \( \varphi, \psi \in C^2_2(\mathbb{R}^d) \) satisfy the quadratic growth condition \( |\varphi(x)|, |\psi(x)| \le C |x|^2 \) for some \( C \ge 0 \), along with \( \varphi \oplus \psi \le c_2 \). Then there exists a family \( \{(\varphi_\varepsilon, \psi_\varepsilon)\}_{\varepsilon > 0} \) such that for all \( \varepsilon > 0 \), the following hold:
	\begin{enumerate}[label=(\roman*)]
		\item \( \varphi_\varepsilon, \psi_\varepsilon \in C_b^2(\mathbb{R}^d) \), and \( \varphi_\varepsilon(x) = \varphi(x) \), \( \psi_\varepsilon(x) = \psi(x) \) for all \( |x| < \frac{\varepsilon}{4} \);
		\item \( \varphi_\varepsilon \oplus \psi_\varepsilon \le \delta_2^\varepsilon \);
		\item \( \varphi_\varepsilon \to 0 \) in \( L^1(\mu) \), and \( \psi_\varepsilon \to 0 \) in \( L^1(\nu) \) as \( \varepsilon \searrow 0 \).
	\end{enumerate}
\end{lemma}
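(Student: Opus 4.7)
The plan is to construct $\varphi_\varepsilon,\psi_\varepsilon$ by combining a smooth cutoff near the origin with a large negative constant shift outside $B_{\varepsilon/2}(0)$. Fix a smooth radial cutoff $\chi_\varepsilon\in C^\infty(\mathbb{R}^d)$ with $\chi_\varepsilon\equiv 1$ on $\overline{B_{\varepsilon/4}(0)}$, supported in $B_{\varepsilon/2}(0)$, and taking values in $[0,1]$; set $M_\varepsilon:=C\varepsilon^2/4$. Define
\[
\varphi_\varepsilon(x):=\chi_\varepsilon(x)\varphi(x)-M_\varepsilon(1-\chi_\varepsilon(x)),\qquad \psi_\varepsilon(y):=\chi_\varepsilon(y)\psi(y)-M_\varepsilon(1-\chi_\varepsilon(y)).
\]
Property (i) then follows essentially by construction: on $B_{\varepsilon/4}(0)$ both functions coincide with $\varphi,\psi$; outside $B_{\varepsilon/2}(0)$ they equal the constant $-M_\varepsilon$; and the quadratic bound $|\varphi|,|\psi|\le C|x|^2$ together with $M_\varepsilon=C\varepsilon^2/4$ gives $\|\varphi_\varepsilon\|_\infty,\|\psi_\varepsilon\|_\infty\le M_\varepsilon$, with $C^2$ regularity inherited from $\chi_\varepsilon$, $\varphi$, and $\psi$.

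For the compatibility bound in (ii), I would split into two complementary cases. If $\max(|x|,|y|)\ge\varepsilon/2$, say $|x|\ge\varepsilon/2$, then $\chi_\varepsilon(x)=0$ forces $\varphi_\varepsilon(x)=-M_\varepsilon$; together with $|\psi_\varepsilon|\le M_\varepsilon$, this yields $\varphi_\varepsilon(x)+\psi_\varepsilon(y)\le 0\le \delta_2^\varepsilon(x,y)$. If instead $|x|,|y|<\varepsilon/2$, then $|x-y|<\varepsilon$ so $\delta_2^\varepsilon(x,y)=c_2(x,y)$, and the quadratic bound gives $|\varphi(x)|,|\psi(y)|\le C\varepsilon^2/4=M_\varepsilon$, hence $\varphi(x)+M_\varepsilon\ge 0$ and $\psi(y)+M_\varepsilon\ge 0$. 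Rewriting
\[
\varphi_\varepsilon(x)+\psi_\varepsilon(y)=\varphi(x)+\psi(y)-(1-\chi_\varepsilon(x))[\varphi(x)+M_\varepsilon]-(1-\chi_\varepsilon(y))[\psi(y)+M_\varepsilon],
\]
both subtracted terms are nonnegative, so $\varphi_\varepsilon\oplus\psi_\varepsilon\le\varphi\oplus\psi\le c_2=\delta_2^\varepsilon$.

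For the $L^1$-convergence in (iii), I would bound $|\varphi_\varepsilon|\le\chi_\varepsilon|\varphi|+M_\varepsilon(1-\chi_\varepsilon)$ and integrate against $\mu$. The first piece is at most $C\int_{B_{\varepsilon/2}(0)}|x|^2\,d\mu$, which vanishes as $\varepsilon\searrow 0$ by dominated convergence since $|x|^2\in L^1(\mu)$. The second is bounded by $M_\varepsilon\,\mu(B_{\varepsilon/4}(0)^c)=\tfrac{C}{4}\varepsilon^2\mu(B_{\varepsilon/4}(0)^c)$, and a standard estimate shows $\delta^2\mu(B_\delta(0)^c)\to 0$ as $\delta\searrow 0$ for any $\mu\in\Lambda_2(\mathbb{R}^d)$. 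This last estimate follows from the layer-cake identity $\int|x|^2\,d\mu=\int_0^\infty 2r\,\mu(B_r(0)^c)\,dr$: integrability of the right-hand side forces $\int_\delta^{2\delta}2r\,\mu(B_r(0)^c)\,dr\to 0$, while monotonicity of $r\mapsto\mu(B_r(0)^c)$ bounds this integral from below by $2\delta^2\mu(B_{2\delta}(0)^c)$.

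The main obstacle is (ii) in the inner region $|x|,|y|<\varepsilon/2$. The naive choice $\varphi_\varepsilon=\chi_\varepsilon\varphi$ can break $\varphi\oplus\psi\le c_2$ there because multiplying a negative value of $\varphi$ or $\psi$ by $\chi_\varepsilon\in[0,1]$ makes it less negative and thus \emph{increases} the sum. The constant shift $-M_\varepsilon(1-\chi_\varepsilon)$ fixes this precisely when $M_\varepsilon$ dominates the worst negative value of $\varphi,\psi$ on $B_{\varepsilon/2}(0)$, which is exactly what the quadratic bound combined with the choice $M_\varepsilon=C\varepsilon^2/4$ supplies.
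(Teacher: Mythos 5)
Your construction is correct and achieves all three properties, but it takes a genuinely different route from the paper's proof. The paper precomposes with a smooth $1$-Lipschitz retraction $T_{\varepsilon/2}:\mathbb{R}^d\to\overline{B_{\varepsilon/2}(0)}$ that equals the identity on $B_{\varepsilon/4}(0)$, and sets $\varphi_\varepsilon:=\varphi\circ T_{\varepsilon/2}$, $\psi_\varepsilon:=\psi\circ T_{\varepsilon/2}$; property (ii) then drops out in one line from $\varphi\oplus\psi\le c_2$ and the estimate $|T_{\varepsilon/2}(x)-T_{\varepsilon/2}(y)|\le\min\{|x-y|,\varepsilon\}$, with no case distinction. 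Your construction---a multiplicative cutoff together with the constant shift $-M_\varepsilon(1-\chi_\varepsilon)$---is more elementary in that it avoids building a retraction, at the cost of the two-case verification of (ii); the exact tuning $M_\varepsilon=C\varepsilon^2/4$ is precisely what makes the outer case ($\max(|x|,|y|)\ge\varepsilon/2$) close and the subtracted corrections nonnegative in the inner case. Both routes consume the same two inputs, the admissibility constraint and the quadratic magnitude bound, and the arguments for (i) and (iii) are essentially parallel. For the tail estimate $\delta^2\mu(B_\delta(0)^c)\to 0$ in (iii), your layer-cake argument is sound, though a direct splitting is a touch shorter: given $\eta>0$ pick $r_0$ with $\int_{|x|<r_0}|x|^2\,d\mu<\eta$; then $\delta^2\mu(\delta<|x|<r_0)\le\int_{|x|<r_0}|x|^2\,d\mu<\eta$ for all $\delta<r_0$, while $\delta^2\mu(|x|\ge r_0)\to 0$ since $\mu(|x|\ge r_0)<\infty$.
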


The proof of Lemma~\ref{lem:approx-C3}, similar to that of Lemma~\ref{lem:denseL1}, is technical and thus deferred to the appendix. The proof of Lemma~\ref{lem:approx-C4} is presented below.

\begin{proof}[Proof of Lemma~\ref{lem:approx-C4}]
	Fix a $C^\infty$ function \( \rho : \mathbb{R}^+ \to \mathbb{R}^+ \) such that
	\[
	\rho(r) = r \text{ for } r \in [0, \tfrac{1}{2}], \quad 0 \le \rho(r) \le 1,\quad \rho(\infty)=1, \quad |\rho'(r)| \le 1 \text{ for all } r \ge 0.
	\]
	For each \( \ep > 0 \), define \( \rho_\ep : [0, \infty) \to [0, \infty) \) and the cutoof map $T_\ep:\mbr^d\to\mbr^d$ by
	\[
	\rho_\ep(r) := \ep \rho\left(\tfrac{r}{\ep}\right),\qquad T_\ep(x) := \frac{x}{|x|} \rho_\ep(|x|), \quad \text{with } T_\ep(0) := 0.
	\]
	One can easily verify the following properties:
	\begin{enumerate}
			\item \( T_\ep \) is smooth and 1-Lipschitz on \( \mathbb{R}^d \);
			\item \( T_\ep(x) = x \) for all \( x \in B_{\ep/2}(0) \);
			\item \( T_\ep(x) \in B_\ep(0) \) for all \( x \in \mathbb{R}^d \);
			\item \( T_\ep(x) = 0 \) for all sufficiently large \( |x| \).
		\end{enumerate}

	Following this, given $\vphi,\psi$ from the lemma, let us define for $\ep>0$:
	\[
	\varphi_\varepsilon(x) := \varphi(T_{\f \varepsilon 2}(x)), \qquad \psi_\varepsilon(x) := \psi(T_{\f \varepsilon 2}(x)).
	\]
	We now verify Properties (i)--(iii) holds for the family $\{(\vphi_\ep,\psi_\ep)\}_{\ep>0}$. 
	Property (i) follows directly from the construction.
	To verify (ii), note that \( T_{\f \varepsilon 2} \) is 1-Lipschitz and satisfies \( |T_{\f \varepsilon 2}(x)| \le \f \varepsilon 2 \). Since \( \varphi \oplus \psi \le c_2 \), we estimate:
	\[
	\varphi_\varepsilon(x) + \psi_\varepsilon(y) 
	= \varphi(T_{\f \varepsilon 2}(x)) + \psi(T_{\f \varepsilon 2}(y)) 
	\le \frac{1}{2} |T_{\f \varepsilon 2}(x) - T_{\f \varepsilon 2}(y)|^2 
	\le \tilde{q}_2^\varepsilon(|x - y|),
	\]
	where \( \tilde{q}_2^\varepsilon(r) := \frac{1}{2} \min\{r, \varepsilon\}^2 \). Since \( \tilde{q}_2^\varepsilon \le q_2^\varepsilon \) from \eqref{def:q}, it follows that \( \varphi_\varepsilon \oplus \psi_\varepsilon \le \delta_2^\varepsilon \), as desired.
	
	For (iii), observe that since \( T_{\f \varepsilon 2} \) is 1-Lipschitz and satisfies \( |T_{\f \varepsilon 2}(x)| \le \f \varepsilon 2 \), we have
	\[
	|\varphi_\varepsilon(x)| = |\varphi(T_{\f \varepsilon 2}(x))| \le C |T_{\f \varepsilon 2}(x)|^2 \le C \min\cb{|x|, \f \varepsilon 2}^2 \le C |x|^2.
	\]
	As \( |x|^2 \in L^1(\mu) \), the dominated convergence theorem implies \( \varphi_\varepsilon \to 0 \) in \( L^1(\mu) \). An identical argument shows \( \psi_\varepsilon \to 0 \) in \( L^1(\nu) \).
\end{proof}

We may now proceed to prove Lemma \ref{om-add-lem}.

\begin{proof}[Proof of Lemma~\ref{om-add-lem}]
	For notational simplicity, we suppress the subscript $2$ and the generator arguments $\mA,\mB$ in \( \om'=\omega_2' \) and write
	\[
	\omega_\bullet'(x, y) := \omega_2'(x, y; \mathcal{A}^\bullet, \mathcal{B}^\bullet), \qquad \bullet \in \{ \Delta, \nabla, J \}.
	\]
	By Proposition~\ref{om*-linear}, it suffices to verify the identity at the origin, i.e., \( x = y = 0 \). Since \( \omega_\nabla'(0, 0) = 0 \) by Corollary \ref{cor:sd-dd}, the goal reduces to proving
	\[
	\omega'(0, 0) = \omega_\Delta'(0, 0) + \omega_J'(0, 0),
	\]
	where we set \( \omega' := \omega_2'(0, 0; \mathcal{A}, \mathcal{B}) \).
	The inequality \( \omega'(0,0) \le \omega_\Delta'(0,0) + \omega_J'(0,0) \) follows immediately from the definition and subadditivity of the supremum. We now prove the reverse inequality. Fix \( \eta > 0 \), and we will show that
	\begin{equation}\label{eq:goal4}
		\omega'(0, 0) \ge \omega_\Delta'(0, 0) + \omega_J'(0, 0) - \eta.
	\end{equation}
	
	Let us introduce the shorthand for $\vphi,\psi\in C_2^2(\mbr^d)$:
	\begin{align*}
		\mathcal{K}(\varphi, \psi) := \mathcal{A} \varphi(0) + \mathcal{B} \psi(0),\qquad \mathcal{K}_\bullet(\varphi, \psi) := \mathcal{A}^\bullet \varphi(0) + \mathcal{B}^\bullet \psi(0)\mbox{ for }\bullet \in \{ \nabla, \Delta, J \}.
	\end{align*}
	Clearly, \( \mathcal{K} = \mathcal{K}_\nabla + \mathcal{K}_\Delta + \mathcal{K}_J \), and by definition
	\[
	\omega_\bullet' = \sup_{(\varphi, \psi) \in \mathcal{D}'(0, 0)} \mathcal{K}_\bullet(\varphi, \psi).
	\]
	We first make a key observation. For any \( (\varphi, \psi) \in \mathcal{D}'(0, 0) \), we may normalize by adding a constant so that \( \varphi(0) = \psi(0) = 0 \). By the same computation from \eqref{eq:temp3.11}, we have
	\begin{equation}\label{eq:Kj-prop}
		\mathcal{K}_J(\varphi, \psi)= \mA^J\varphi(0)+\mB^J\psi(0) = \int_{\mathbb{R}^d} \varphi(x) \, d\mu(x) + \int_{\mathbb{R}^d} \psi(y) \, d\nu(y).
	\end{equation}
	
	Let \( (\varphi^\bullet, \psi^\bullet) \in \mathcal{D}'(0, 0) \), \( \bullet \in \{ \Delta, J \} \), be such that
	\begin{equation}\label{eq:bound2}
		\mathcal{K}_\bullet(\varphi^\bullet, \psi^\bullet) \ge \omega_\bullet'(0, 0) - \frac{\eta}{2}.
	\end{equation}
	Let \( \{ (\varphi^\bullet_\varepsilon, \psi^\bullet_\varepsilon) \}_{\varepsilon > 0} \) be the approximating families constructed in Lemmas~\ref{lem:approx-C3} and~\ref{lem:approx-C4}, respectively. 
	For the jump part, the pair \( (\varphi^J_\varepsilon, \psi^J_\varepsilon) \in \mathcal{D}'(0, 0) \) vanishes near the origin and converges to \( (\varphi^J, \psi^J) \) in \( L^1(\mu) \oplus L^1(\nu) \). By \eqref{eq:Kj-prop}, we conclude:
	\begin{align}\label{eq:tem3}
		\mathcal{K}_\nabla(\varphi^J, \psi^J) = \mathcal{K}_\Delta(\varphi^J, \psi^J) = 0, \quad \text{and} \quad \lim_{\varepsilon \to 0} \mathcal{K}_J(\varphi^J_\varepsilon, \psi^J_\varepsilon) = \mathcal{K}_J(\varphi^J, \psi^J).	
	\end{align}
	For the diffusion part, the approximants satisfy
	\begin{align*}
		(\varphi^\Delta_\varepsilon, \psi^\Delta_\varepsilon) = (\varphi^\Delta, \psi^\Delta) \quad \text{on } B_{\varepsilon/4}(0), \quad \text{and} \quad (\varphi^\Delta_\varepsilon, \psi^\Delta_\varepsilon) \to (0, 0) \text{ in } L^1(\mu) \oplus L^1(\nu).
	\end{align*}
	Since \( \mathcal{A}^\Delta \), \( \mathcal{B}^\Delta \) are local operators, we have
	\begin{align}\label{eq:tem4}
		\mathcal{K}_\nabla(\varphi^\Delta_\varepsilon, \psi^\Delta_\varepsilon) = 0, \quad \mathcal{K}_\Delta(\varphi^\Delta_\varepsilon, \psi^\Delta_\varepsilon) = \mathcal{K}_\Delta(\varphi^\Delta, \psi^\Delta), \quad \text{and} \quad \lim_{\varepsilon \to 0} \mathcal{K}_J(\varphi^\Delta_\varepsilon, \psi^\Delta_\varepsilon) = 0.	
	\end{align}
	
	Now define the combined test pair:
	\[
	\varphi_\varepsilon := \varphi^\Delta_\varepsilon + \varphi^J_\varepsilon, \qquad \psi_\varepsilon := \psi^\Delta_\varepsilon + \psi^J_\varepsilon.
	\]
	By construction and Lemmas~\ref{lem:approx-C3} and~\ref{lem:approx-C4}, we have
	\[
	\varphi_\varepsilon \oplus \psi_\varepsilon \le \delta_2^\varepsilon + c_2^\varepsilon = c_2,
	\]
	and \( \varphi_\varepsilon(0) + \psi_\varepsilon(0) = 0 \), so \( (\varphi_\varepsilon, \psi_\varepsilon) \in \mathcal{D}'(0, 0) \).
	Using \eqref{eq:tem3} and \eqref{eq:tem4}, we compute
	\begin{align*}
		\mathcal{K}(\varphi_\varepsilon, \psi_\varepsilon)
		&= \sum_{\bullet \in \{ \nabla, \Delta, J \}} \sum_{\ast \in \{ \Delta, J \}} \mathcal{K}_\bullet(\varphi^\ast_\varepsilon, \psi^\ast_\varepsilon) \\
		&= \mathcal{K}_\Delta(\varphi^\Delta, \psi^\Delta) + \mathcal{K}_J(\varphi^J_\varepsilon, \psi^J_\varepsilon) + \mathcal{K}_J(\varphi^\Delta_\varepsilon, \psi^\Delta_\varepsilon).
	\end{align*}
	Passing to the limit as \( \varepsilon \searrow 0 \) and using \eqref{eq:tem3}, \eqref{eq:tem4} and \eqref{eq:bound2}, we obtain
	\[
	\omega'(0, 0) \ge \limsup_{\varepsilon \to 0} \mathcal{K}(\varphi_\varepsilon, \psi_\varepsilon) \ge \omega_\Delta'(0, 0) + \omega_J'(0, 0) - \eta,
	\]
	which proves \eqref{eq:goal4}. Since \( \eta > 0 \) was arbitrary, the result follows.
\end{proof}

\section{Wasserstein-type Metric on the Space $\mcg_2^\La(\mathbb{R}^d)$ and $\La_2(\mathbb{R}^d)$} \label{sec-wass-metric}

With the groundwork laid in the preceding sections, we now turn to the metric on the space $\mcg_2^\Lambda(\mathbb{R}^d)$ of L\'evy-type generators with finite second moments, induced by optimal couplings. The metric functional is given in \eqref{def:Wg}, namely  
\begin{align}\label{def:W_gmetric}
	\mcW_\mcg(\mA, \mB)^2 := \tfrac{1}{2} |m_\mA - m_\mB|^2 + \theta_2(0,0; \mA, \mB),
\end{align}
where $m_\mA, m_\mB \in \mathbb{R}^d$ denote the mean vectors associated with the generators $\mA$ and $\mB$. 
We will call this metric \emph{Wasserstein generator metric} on $\mcg_2^\La(\mbr^d)$. 
Since Theorem~\ref{main2} established the identity $\omega_2^* = \omega_2 = \theta_2$, the term $\theta_2$ in \eqref{def:W_gmetric} can be equivalently replaced by any of these expressions.  

The contribution of $\theta_2(0,0;\mA,\mB)$ depends only on the \emph{centered parts} of $\mA$ and $\mB$. Indeed, writing  
\[
\mA = \bar \mA + m_{\mA}\cdot \nabla, \qquad \mB = \bar\mB+m_{\mB}\cdot \nabla,
\]  
we see that $\bar \mA$ and $\bar\mB$ have vanishing mean vectors, and hence are referred to as centered. In particular, if $\mA,\mB$ are given in the global L\'evy--Khintchine form, the centered generator consists precisely of the diffusion and jump parts, i.e.\ $\bar \mA=\mA^\Delta+\mA^J$.  

By Theorem~\ref{main-LK}, the term $\theta_2$ decomposes naturally into drift, diffusion, and jump contributions. Moreover, Lemma~\ref{cor:sd-dd} shows that the drift contribution (evaluating at $x=y=0$) vanishes. The remaining contributions correspond to the squared Bures--Wasserstein distance in the diffusion part, and to the L\'evy optimal transport cost $(\mcW_\Lambda)^2 := \mcc_2^\Lambda$ (see Definition~\ref{levy-opt-prob}) in the jump part. Consequently, we arrive at the explicit expression  
\begin{align}\label{eq:wass-LK}
	\mcW_\mcg(\mA, \mB)^2 \;=\; \tfrac{1}{2} |\ka-\zeta|^2 \;+\; \mcW_\mcS(\alpha, \beta)^2 \;+\; \mcW_\Lambda(\mu, \nu)^2,
\end{align}
where $\mA=\Lag(\ka,\alpha,\mu)$ and $\mB=\Lag(\zeta,\beta,\nu)$.  The metric $\mcW_\La$ on $\La_2(\mbr^d)$ will be called \emph{L\'evy-Wasserstein metric}.

In the subsequent subsections, we establish several fundamental properties of the metric $\mcW_\mcg$ and $\mcW_\La$, 
contained in Theorems \ref{main3}, \ref{main5}, and Corollary \ref{cor:main4}, including:  
\begin{itemize}
	\item that $\mcW_\mcg,\mcW_\La$ define metrics;
	\item lower semicontinuity and weak compactness;
	\item completeness;
	\item separability;
	\item a maximal-type bound on the path space.
\end{itemize}

\subsection{Wasserstein generator metric}

We begin by establishing that the functional $\mcW_\mcg$ indeed defines a metric on the space $\mcg_2^\La(\mathbb{R}^d)$.  
This fact is a direct consequence of the strong duality result in Theorem~\ref{main2}.

\begin{proof}[Proof of Theorem \ref{main3}, $\mcW_\mcg$ is a metric]
	We first show that the functional is symmetric in $(\mA,\mB)$. By \eqref{def:W_gmetric} and $\ta_2=\om_2^+$, it suffices to verify that
	\(
	\omega_2^+(0,0;\mA,\mB) = \omega_2^+(0,0;\mB,\mA).
	\)
	Indeed, from the definition of the pointwise transport derivative, 
	\[
	\omega^+_2(0,0;\mA,\mB) = \limsup_{t \searrow 0} \frac{1}{t}\mcc_2(\delta_0 e^{t\mA}, \delta_0 e^{t\mB}) = \limsup_{t \searrow 0} \frac 1t{\mcc_2(\delta_0 e^{t\mB}, \delta_0 e^{t\mA})} = \omega^+_2(0,0;\mB,\mA).
	\]
	Hence, $\mcW_\mcg^\La$ is symmetric and clearly nonnegative.
	
	We now show that $\mcW_\mcg(\mA,\mB) = 0$ if and only if $\mA = \mB$.
	The reverse direction is immediate, since if $\mA = \mB$ then $m_\mA = m_\mB$, and
	\[
	\omega^+_2(0,0;\mA,\mA) = \limsup_{t \searrow 0} \frac{1}{t} \mcc_2(\delta_0 e^{t\mA}, \delta_0 e^{t\mA}) = 0.
	\]
	Conversely, suppose $\mcW_\mcg(\mA,\mB) = 0$. Then $m_\mA = m_\mB$ and $\ta_2(0,0;\mA,\mB) = 0$. By Theorem \ref{main1}(i), this implies $\ta_2(x,x;\mA,\mB) = 0$ for all $x\in \mbr^d$. 
	From Theorem \ref{main1}(iii), any 2-optimal Lévy coupling generator $\mJ_* \in \Gamma^\La(\mA,\mB)$ satisfies
	\(
	e^{t\mJ_*} c_2(x,x) = c_2(x,x) = 0.
	\)
	Since $\delta_{(x,x)} e^{t\mJ_*}$ is a coupling of $\delta_x e^{t\mA}$ and $\delta_x e^{t\mB}$, we obtain
	\[
	\mcc_2(\delta_x e^{t\mA}, \delta_x e^{t\mB}) \le \inn{\de_{(x,x)}e^{t\mJ_*},c_2}= (e^{t\mJ_*}c_2)(x,x)=0. 
	\]
	Hence, for all $x \in \mathbb{R}^d$ and $t \ge 0$, we have $\delta_x e^{t\mA} = \delta_x e^{t\mB}$, which implies $e^{t\mA} = e^{t\mB}$ and therefore $\mA = \mB$.
	
	It remains to show that $\mcW_\mcg$ satisfies the triangle inequality. By \eqref{def:W_gmetric}, it suffices to verify that the map $(\mA,\mB) \mapsto \omega^+_2(0,0;\mA,\mB)^{1/2}$ satisfies the triangle inequality.
	Take any $\mA, \mB, \mC \in \mcg_2^\La(\mathbb{R}^d)$. Then using the triangle inequality for Wasserstein-2 metric, we have
	\begin{align*}
		\omega^+_2(0,0;\mA,\mC)^{1/2} &= \limsup_{t \searrow 0}t^{-1/2} \mcW_2(\delta_0 e^{t\mA}, \delta_0 e^{t\mC}) \\
		&\le \limsup_{t \searrow 0} t^{-1/2}[\mcW_2(\delta_0 e^{t\mA}, \delta_0 e^{t\mB}) + \mcW_2(\delta_0 e^{t\mB}, \delta_0 e^{t\mC})] \\
		&\le \omega^+_2(0,0;\mA,\mB)^{1/2} + \omega^+_2(0,0;\mB,\mC)^{1/2}.
	\end{align*}
	This completes the proof.
\end{proof}

The following result provides an alternative characterization of the Wasserstein generator metric $\mcW_\mcg$.

\begin{proposition}\label{prop:WG-equiv}
	Let $\mA,\mB \in \mcg_2^\La(\mbr^d)$, and $\mJ_*$ be an optimal L\'evy coupling generator of $\mA,\mB$. One has
	\[
	\mcW_\mcg(\mA,\mB) = e^{\mJ_*}c_2(0,0)=\inf_{\mJ \in \Ga^\La(\mA,\mB)} \, e^{\mJ} c_2(0,0).
	\]
\end{proposition}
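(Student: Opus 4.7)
The plan is to deduce the proposition directly from Theorem \ref{main1}(iii), which already supplies both a closed-form expression for $(e^{t\mJ_*}c_2)(x,y)$ and the minimal growth property \eqref{eq:mingrowth}. No fresh analysis is required; the result is essentially a specialization of that theorem to $(x,y)=(0,0)$ and $t=1$. (I flag what appears to be a typographical issue in the statement: by Theorem \ref{main1}(iii), the natural identification is $\mcW_\mcg(\mA,\mB)^2=e^{\mJ_*}c_2(0,0)$, not $\mcW_\mcg(\mA,\mB)$ itself, since the right-hand side has the dimensions of a squared cost. I will write the proof proposal for the squared version; the stated version presumably intended the square.)

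First, I would invoke the identity from Theorem \ref{main1}(iii), namely
\[
(e^{t\mJ_*}c_2)(x,y) \;=\; c_2(x,y) + t\,\ta_2(x,y;\mA,\mB) + \tfrac{t^2}{2}|m_\mA-m_\mB|^2,
\]
valid for every $2$-optimal coupling $\mJ_*$. Evaluating at $(x,y)=(0,0)$ and $t=1$, using $c_2(0,0)=0$, gives
\[
(e^{\mJ_*}c_2)(0,0) \;=\; \ta_2(0,0;\mA,\mB) + \tfrac{1}{2}|m_\mA-m_\mB|^2,
\]
which is exactly the definition \eqref{def:W_gmetric} of $\mcW_\mcg(\mA,\mB)^2$. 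This settles the first equality.

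Next, for the infimum characterization, I would apply the minimal growth property \eqref{eq:mingrowth} from Theorem \ref{main1}(iii), which states that $(e^{t\mJ_*}c_2)(x,y)\le (T_tc_2)(x,y)$ for every Markovian coupling semigroup $\{T_t\}_{t\ge 0}$ of $\mA,\mB$, and in particular for $T_t=e^{t\mJ}$ with any $\mJ\in\Ga^\La(\mA,\mB)\subset \Ga(\mA,\mB)$. Specializing to $(x,y)=(0,0)$ and $t=1$ yields $(e^{\mJ_*}c_2)(0,0)\le (e^{\mJ}c_2)(0,0)$ for every $\mJ\in\Ga^\La(\mA,\mB)$. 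Since $\mJ_*$ itself lies in $\Ga^\La(\mA,\mB)$, the infimum is attained at $\mJ_*$ and equals $(e^{\mJ_*}c_2)(0,0)$, completing the identification.

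There is no serious obstacle here; the only mild subtlety worth double-checking is the admissibility of applying \eqref{eq:mingrowth} with the Feller semigroup generated by an arbitrary $\mJ\in\Ga^\La(\mA,\mB)$, which is immediate because every L\'evy coupling generator is in particular a Feller coupling generator (so its semigroup is a Markovian coupling semigroup in the sense of Definition \ref{def:feller-couple}), and because Proposition \ref{prop:c2-reg}(iii) ensures $c_2\in \bar D(\mJ)$ so that $e^{\mJ}c_2(0,0)$ is well defined and finite.
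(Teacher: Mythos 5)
Your proof is correct and follows essentially the same route as the paper's: evaluate the identity \eqref{eq:optgrowth} at $(0,0)$ and $t=1$ to get the first equality, then use the minimal growth property \eqref{eq:mingrowth} for the infimum. Your observation about the missing square is well taken—the stated proposition and the paper's own proof both write $\mcW_\mcg(\mA,\mB)$ where $\mcW_\mcg(\mA,\mB)^2$ is what actually follows from Definition \eqref{def:W_gmetric} and \eqref{eq:optgrowth}, so you have correctly identified a typographical error present in the source.
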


\begin{proof}
	Let $\mJ_*\in \La(\mA,\mB)$ be a 2-optimal coupling of $\mA,\mB$. By Theorem \ref{main1}(iii)
	\begin{align*}
		(e^{\mJ_*}c_2)(0,0)= \ta_2(0,0;\mA,\mB)+ \f 12|m_\mA-m_\mB|^2 =\mcl{W}_\mcg(\mA,\mB).
	\end{align*}
	From the same theorem, $e^{t\mJ_*}c_2\le  e^{t\mJ}c_2$ for all $\mJ\in \Ga(\mA,\mB)$ and $t\ge 0$. This implies the desired equality. 
\end{proof}

Before proceeding, we state the following proposition, which will serve as a useful tool for subsequent results. Its proof follows directly from \eqref{eq:optgrowth} in Theorem~\ref{main1}.

\begin{proposition}\label{prop:Wg-bdd}
	Let $\mA,\mB\in \mcg_2^\La(\mbr^d)$. Then it holds for all $x,y\in\mbr^d$ and $t\ge 0$:
	\begin{align*}
		\mcW_2(\de_x e^{t\mA},\de_y e^{t\mB})^2\le \f 12 |x-y|^2+t[\ta(0,0;\mA,\mB)+(x-y)^\top(m_\mA-m_\mB)] +\f 12 t^2 |m_\mA-m_\mB|^2.
	\end{align*}
	Specifically, when $x=y$,
	\begin{align*}
		\mcW_2(\de_x e^{t\mA},\de_x e^{t\mB})^2 \le t\ta_2(0,0;\mA,\mB)+ \f 12 t^2 |m_\mA-m_\mB|^2\le \max\cb{t, t^2}\mcW_\mcg(\mA,\mB)^2. 
	\end{align*}
\end{proposition}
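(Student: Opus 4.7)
The plan is to obtain both bounds by evaluating the explicit growth formula~\eqref{eq:optgrowth} along a 2-optimal Markovian coupling, after noting that such a coupling pushes forward to a classical coupling of the evolved marginals.

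First I would invoke Theorem~\ref{main1}(ii) to fix an optimal L\'evy coupling generator $\mJ_*\in\Ga^\La(\mA,\mB)$. Since $\mJ_*$ is in particular a Feller coupling of $\mA,\mB$, the probability measure $\ga_t:=\de_{(x,y)}e^{t\mJ_*}$ belongs to $\Ga(\de_x e^{t\mA},\de_y e^{t\mB})$. Recalling the paper's convention that $\mcW_2(\mu,\nu)^2=\mcc_2(\mu,\nu)$ (both referring to the cost $c_2=\tfrac12|x-y|^2$, as can be read off from the proof of Theorem~\ref{main3}), I obtain
\[
\mcW_2(\de_x e^{t\mA},\de_y e^{t\mB})^2 \;\le\; \inn{\ga_t,c_2} \;=\; (e^{t\mJ_*}c_2)(x,y).
\]

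Next I would apply identity~\eqref{eq:optgrowth} together with the affine identity for $\ta_2$ from Theorem~\ref{main1}(i) to rewrite the right-hand side as
\[
(e^{t\mJ_*}c_2)(x,y)=\tfrac12|x-y|^2+t\bigl[\ta_2(0,0;\mA,\mB)+(m_\mA-m_\mB)^\top(x-y)\bigr]+\tfrac{t^2}{2}|m_\mA-m_\mB|^2,
\]
which is exactly the first claimed bound. Specializing to $x=y$ removes the cross term and yields the first inequality in the subcase.

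For the final inequality, I would observe that $\ta_2(0,0;\mA,\mB)\ge 0$: indeed $c_2\ge 0=c_2(0,0)$, so the liminf defining $\ta_2$ at the origin (via any coupling generator) is nonnegative. Thus each of $t\,\ta_2(0,0;\mA,\mB)$ and $\tfrac{t^2}{2}|m_\mA-m_\mB|^2$ is bounded above, using $\max\{t,t^2\}\ge t$ and $\max\{t,t^2\}\ge t^2$, by $\max\{t,t^2\}$ times the corresponding summand of $\mcW_\mcg(\mA,\mB)^2=\ta_2(0,0;\mA,\mB)+\tfrac12|m_\mA-m_\mB|^2$; summing the two estimates yields the claim. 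No nontrivial obstacle is anticipated, as the argument is a direct synthesis of Theorem~\ref{main1}(i)--(iii) together with the tautological passage from a coupling generator to a classical coupling of its marginals.
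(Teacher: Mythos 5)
Your proof is correct and follows exactly the route the paper indicates (``Its proof follows directly from \eqref{eq:optgrowth} in Theorem~\ref{main1}''): evaluate the explicit growth formula along a $2$-optimal L\'evy coupling, note that $\de_{(x,y)}e^{t\mJ_*}$ is a classical coupling of the evolved marginals (giving the Wasserstein upper bound), substitute the affine identity from Theorem~\ref{main1}(i), and then use $\ta_2(0,0;\mA,\mB)\ge 0$ together with $t,t^2\le\max\{t,t^2\}$ for the final bound. Your observation that the nonnegativity of $\ta_2(0,0)$ is needed for the last inequality is a detail the paper glosses over but which your argument correctly supplies.
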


\subsection{Lower semicontinuity, weak compactness and completeness}

Since the metric $\mcW_\mcg$ is naturally induced by the Wasserstein-2 metric, it is natural to investigate compactness properties in analogy with those observed in the Wasserstein-2 setting. In particular, we aim to show that every bounded subset $G\subset (\mcg_2^\La(\mathbb{R}^d), \mcW_\mcg)$ is relatively compact with respect to a suitable weaker topology.
By boundedness, we mean there is $\mB\in \mcg_2^\La(\mbr^d)$, or equivalently $\mB=0$, and a constant $R> 0$, such that 
\begin{align*}
	\sup_{\mA\in G}\mcW_\mcg(\mA,\mB)< R. 
\end{align*}
Let us provide the following notion of convergence of probability generators.
\begin{definition}
	For $n\ge 1$, let \( \mathcal{A},\mcA_n \in \mathcal{G}(\Pi) \).  
	We say that \( \mathcal{A}_n \to \mathcal{A} \) \emph{weakly in the sense of transition kernels}, or simply \emph{weakly}, if, for all \( x \in \Pi \) and \( t \geq 0 \),
	\[
	\delta_x e^{t \mathcal{A}_n} \to \delta_x e^{t \mathcal{A}} \quad \text{weakly in } \mathcal{P}(\Pi) \text{}.
	\]
\end{definition}

\begin{remark}\label{rem:weakcon-levy}
	If $\mA_n,\mA$ are L\'evy generators, then $\mA_n \to \mA$ weakly in the sense of transition kernels if and only if 
	\(
	\delta_0 e^{\mA_n} \to \delta_0 e^{\mA},
	\)
	weakly.
	This equivalence follows from the translation invariance and infinite divisibility of L\'evy measures. 
	Indeed, weak convergence at time \(t=1\) already guarantees convergence for all \(t \ge 0\). 
	To see this, recall that the characteristic function of a L\'evy process at time \(t\) is of the form
	\(
	\exp(\,t\psi(\xi)\,),
	\)
	where \(\psi\) is the L\'evy–Khintchine exponent. 
	If \(\delta_0 e^{\mA_n} \to \delta_0 e^{\mA}\) at \(t=1\), then by L\'evy’s continuity theorem we have pointwise convergence of the characteristic functions 
	\(\exp(\psi_n(\xi)) \to \exp(\psi(\xi))\).
	This in turn implies
	\(
	\exp(t\psi_n(\xi)) \to \exp(t\psi(\xi))
	\)
	for every \(t\ge 0\), which yields weak convergence of $\delta_0 e^{t\mA_n}$ to $\delta_0 e^{t\mA}$ for all \(t\).
\end{remark}

Let us now establish the weak compactness of for every bounded sequence in $\mcg_2^\La(\mbr^d)$. 

\begin{proposition}\label{prop:weak-comp}
	Let \( \{ \mathcal{A}_n \}_n \) be a bounded sequence in \( (\mathcal{G}_2^\Lambda(\mathbb{R}^d), \mathcal{W}_\mathcal{G}) \). Then there exists a subsequence \( \{ \mathcal{A}_{n_k} \}_k \) and \( \mathcal{A} \in \mathcal{G}_2^\Lambda(\mathbb{R}^d) \) such that \( \mathcal{A}_{n_k} \to \mathcal{A} \) weakly in the sense of transition kernels.
\end{proposition}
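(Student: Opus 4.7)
The plan is to extract subsequential limits of the L\'evy triplets and then verify weak convergence of the resulting semigroups via the L\'evy continuity theorem.

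First I would exploit the decomposition \eqref{eq:wass-LK} of $\mcW_\mcg$. Writing $\mA_n=\Lag(\ka_n,\al_n,\mu_n)$ in global form and taking the zero generator as reference (which is admissible by the triangle inequality), the closed formulas from Proposition \ref{dd-opt-prop} and Proposition \ref{prop-prelim}(v) give
\begin{equation*}
	\mcW_\mcg(\mA_n,0)^2
	=\tfrac{1}{2}|\ka_n|^2+\mcW_\mcS(\al_n,0)^2+\mcW_\La(\mu_n,0)^2
	=\tfrac{1}{2}|\ka_n|^2+\tfrac{1}{2}\tr(\al_n)+\tfrac{1}{2}\int_{\mbr^d}|x|^2\,d\mu_n(x).
\end{equation*}
Boundedness of $\{\mA_n\}$ therefore yields uniform bounds on $|\ka_n|$, $\tr(\al_n)$, and $\int|x|^2\,d\mu_n$.

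Next I would extract a convergent subsequence. Finite-dimensional Bolzano-Weierstrass gives $\ka_n\to\ka$ in $\mbr^d$ and $\al_n\to\al$ in $\mcS_{\ge 0}(\mbr^d)$ along a subsequence. For the L\'evy measures, the uniform second moment bound implies $\La$-tightness: it controls $\sup_n\int\min\{1,|x|^2\}\,d\mu_n$ and, via Chebyshev, $\sup_n\mu_n(B_r^c)\le C/r^2$, so $\sup_n\int_{B_r^c}\min\{1,|x|^2\}\,d\mu_n\le C/r^2\to 0$. By the Prokhorov-type compactness (Theorem \ref{thm:prokhorov}) a further subsequence satisfies $\mu_n\to\mu$ $\La$-weakly. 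Applying Portmanteau (Lemma \ref{lem:port}) to the truncations $\vphi_R(x):=\min\{|x|^2,R\}$ (which are bounded continuous with $\vphi_R\le R\min\{1,|x|^2\}$) and letting $R\to\infty$ via monotone convergence yields $\int|x|^2\,d\mu\le C$, so $\mu\in\La_2(\mbr^d)$. I then set $\mA:=\Lag(\ka,\al,\mu)\in\mcg_2^\La(\mbr^d)$ as the candidate limit.

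Finally I would verify convergence of transition kernels. By Remark \ref{rem:weakcon-levy} it suffices to show $\de_0 e^{\mA_n}\to\de_0 e^{\mA}$ weakly on $\mbr^d$, and by L\'evy's continuity theorem it is enough to prove pointwise convergence of the characteristic exponents
\begin{equation*}
	\psi_n(\xi)=i\ka_n\cdot\xi-\tfrac{1}{2}\xi^\top\al_n\xi+\int_{\mbr^d}f_\xi(x)\,d\mu_n(x),\qquad f_\xi(x):=e^{i\xi\cdot x}-1-i\xi\cdot x.
\end{equation*}
The drift and diffusion terms converge trivially. The hard part will be the jump term, since $f_\xi$ grows linearly at infinity and thus is not a legitimate test function for $\La$-weak convergence. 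To handle it, I would fix $R>0$ and a cutoff $\chi\in C_c(\mbr^d)$ with $\chi\equiv 1$ on $B_R$ and $0\le\chi\le 1$. The compactly supported piece $\chi f_\xi$ is continuous with $|\chi f_\xi(x)|\le\tfrac{1}{2}|\xi|^2|x|^2\le C_{\xi,R}\min\{1,|x|^2\}$, so $\int\chi f_\xi\,d\mu_n\to\int\chi f_\xi\,d\mu$ by $\La$-weak convergence. The tail is controlled uniformly via $|f_\xi(x)|\le 2+|\xi||x|$ and the uniform second moment bound:
\begin{equation*}
	\Big|\int(1-\chi)f_\xi\,d\mu_n\Big|\le 2\,\mu_n(B_R^c)+|\xi|\int_{|x|>R}|x|\,d\mu_n\le \frac{2C}{R^2}+\frac{|\xi|C}{R},
\end{equation*}
using $|x|\le|x|^2/R$ on $\{|x|>R\}$; the analogous bound holds for $\mu$. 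Letting $R\to\infty$ closes the argument. The main obstacle is precisely this uniform integrability step for the jump integral; everything else is standard once the L\'evy triplet has been shown to converge.
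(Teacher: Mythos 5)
Your plan has a genuine gap at the $\Lambda$-tightness step. You claim that the uniform second moment bound $\sup_n \int |x|^2\, d\mu_n < \infty$ implies $\Lambda$-tightness of $\{\mu_n\}_n$, but your Chebyshev argument only controls the far tail $B_r^c$ for large $r$. The definition of $\Lambda$-tightness requires a compact $K \subset \mathbb{R}^d \setminus \{0\}$ with $\sup_n \int_{K^c} \min\{1,|x|^2\}\, d\mu_n < \varepsilon$, and since $K$ must avoid the origin, $K^c$ always contains a ball $B_\delta(0)$; your argument says nothing about $\sup_n \int_{B_\delta(0)}|x|^2\, d\mu_n$, and the uniform second moment bound does not control it. A concrete counterexample: take $\mu_n = n\,\delta_{e_1/\sqrt{n}}$ on $\mathbb{R}^d$. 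Then $\int|x|^2\, d\mu_n = 1$ for every $n$, so $\mathcal{A}_n := \Lag(0,0,\mu_n)$ is a bounded sequence in $\mathcal{W}_\mathcal{G}$, yet for any fixed $\delta>0$ and $n$ large enough the entire mass sits in $B_\delta(0)$, so $\int_{B_\delta(0)}|x|^2\, d\mu_n = 1$ and $\{\mu_n\}_n$ is not $\Lambda$-tight. Indeed $\{\mu_n\}_n$ has no $\Lambda$-weakly convergent subsequence: any limit $\mu$ would have to vanish on $\mathbb{R}^d\setminus B_\varepsilon(0)$ for every $\varepsilon>0$, hence $\mu=0$, contradicting $\int\min\{1,|x|^2\}\,d\mu_n\equiv 1$.

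This is not a cosmetic issue but a conceptual one: the limit of the generators exists (by direct computation of characteristic exponents, $\delta_0 e^{\mathcal{A}_n}$ converges to a standard Gaussian, so $\mathcal{A}_n \to \Lag(0,I,0)$ weakly in the sense of transition kernels), but the limiting generator has a \emph{diffusion} part that appears from the collapsing jump parts. Extracting subsequential limits of the L\'evy triplet componentwise therefore cannot work, because mass can migrate between components in the limit. The paper's proof avoids this entirely: it passes to the laws $\rho_1^{(n)} = \delta_0 e^{\mathcal{A}_n}$, which are \emph{probability} measures with uniformly bounded second moments and hence tight in the ordinary sense; the closedness of the class of infinitely divisible laws under weak convergence then produces $\mathcal{A}\in\mathcal{G}_2^\Lambda(\mathbb{R}^d)$ without ever asserting convergence of the individual triplet components. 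To salvage a triplet-level argument you would need the classical compactness/convergence criteria for L\'evy triplets that allow the small-jump mass to recombine with the Gaussian covariance (e.g.\ convergence of the modified triplet with the truncated covariance $\alpha_n + \int_{B_1}x x^\top d\mu_n$), which is a substantially more involved route than the one in the paper.
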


\begin{proof}
	Since the sequence is bounded, we may assume without loss of generality that
	\[
	\{ \mathcal{A}_n \}_n \subset B_R(0) 
	:= \Big\{ \mathcal{A} \in \mathcal{G}_2^\Lambda(\mathbb{R}^d) : \mathcal{W}_\mathcal{G}(\mathcal{A},0) < R \Big\},
	\]
	where \( \mathcal{A}=0 \) denotes the trivial generator (with \( e^{t\mathcal{A}} = I \)).  
	
	Let \( \rho_t^{(n)} := \delta_0 e^{t \mathcal{A}_n} \) be the law at time \(t\) of the process generated by \( \mathcal{A}_n \). By Proposition~\ref{prop:Wg-bdd},
	\begin{align}\label{bdd:genbdd}
		\mathcal{W}_2(\rho_t^{(n)}, \delta_0)^2 
		\;\leq\; \max\{t, t^2\} \, \mathcal{W}_\mathcal{G}(\mathcal{A}_n,0)^2 
		\;\leq\; \max\{t, t^2\} R^2.	
	\end{align}
	Hence the second moments \( \int_{\mathbb{R}^d} |x|^2 \, d\rho_t^{(n)}(x) \) are uniformly bounded (see Proposition~\ref{prop-prelim}(v)).  
	Fix \(t=1\). By Prokhorov’s theorem, there exists a subsequence \( \{ \rho_1^{(n_k)} \}_k \) converging weakly to some probability measure \( \rho_1 \in \mathcal{P}_2(\mathbb{R}^d) \).  Note the weak limit $\rho_1$ has finite second moment due to Portmanteau lemma and the uniform bound of second moment of the sequence. 
	
	It is well known that the class of infinitely divisible measures is closed under weak convergence, and every such measure corresponds to the distribution of a L\'evy process. Therefore, there exists \( \mathcal{A} \in \mathcal{G}^\Lambda(\mathbb{R}^d) \) such that  
	\(
	\rho_1 = \delta_0 e^{\mathcal{A}}.
	\)
	Moreover, since \( \rho_1 \in \mathcal{P}_2(\mathbb{R}^d) \), the associated generator \( \mathcal{A} \) necessarily belongs to \( \mathcal{G}_2^\Lambda(\mathbb{R}^d) \). Thus we have $\rho_t^{(n_k)}\to \rho_t$ weakly for $t=1$. By Remark \ref{rem:weakcon-levy}, the weak convergence holds for all $t\ge 0$, that is,
	\[
	\rho_t^{(n_k)}=\delta_0 e^{t \mathcal{A}_{n_k}} \to \delta_0 e^{t \mathcal{A}}=\rho_t
	\quad \text{for all } t \ge 0.
	\]
	Finally, by translation invariance of Lévy semigroups, the same convergence holds starting from any \( x \in \mathbb{R}^d \). Thus \( \mathcal{A}_{n_k} \to \mathcal{A} \) weakly in the sense of transition kernels.
\end{proof}

The coming proposition states the lower semicontinuity of the metric w.r.t. the weak topology on generators introduced earlier.

\begin{proposition}\label{prop:wglsc}
	Let $\mA,\mB\in \mcg_2^\La(\mbr^d)$ and $\{\mA_n,\mB_n\}_n$ be bounded in $\wg$ metric. Suppose
	$\mA_n\to \mA,\mB_n\to\mB$ weakly in the sense of transition kernels. It holds
	\begin{align*}
		\mcW_\mcg(\mA,\mB)&\le \liminf_{n\to\infty}\mcW_\mcg(\mA_n,\mB_n).
	\end{align*}
\end{proposition}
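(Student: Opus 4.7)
The strategy is a coupling-and-Portmanteau argument: extract a weak limit of the optimal L\'evy couplings of $(\mA_n,\mB_n)$, identify the limit as a valid coupling of $(\mA,\mB)$, and then invoke lower semicontinuity of $\int c_2$ against weakly convergent measures.

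By Theorem~\ref{main1}(ii) each pair $(\mA_n,\mB_n)$ admits a $2$-optimal L\'evy coupling $\mJ_n^* \in \Gamma^\La(\mA_n,\mB_n)$, and Theorem~\ref{main1}(iii) at $(t,x,y)=(1,0,0)$ gives
\[
\mcW_\mcg(\mA_n,\mB_n)^2 \;=\; (e^{\mJ_n^*}c_2)(0,0).
\]
Using the block form of the L\'evy triplet of $\mJ_n^*$ (mean $(m_{\mA_n},m_{\mB_n})$, diagonal covariance blocks $Q_{\mA_n},Q_{\mB_n}$), the identity \eqref{eq:wass-LK} applied on $\mbr^{2d}$ gives
\[
\mcW_\mcg(\mJ_n^*,0)^2 \;=\; \mcW_\mcg(\mA_n,0)^2 + \mcW_\mcg(\mB_n,0)^2,
\]
so $\{\mJ_n^*\}_n$ is bounded in $(\mcG_2^\La(\mbr^{2d}),\mcW_\mcg)$. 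Proposition~\ref{prop:weak-comp} on $\mbr^{2d}$ then produces a subsequence $\mJ_{n_k}^* \to \mJ_\infty$ weakly in the sense of transition kernels, with $\mJ_\infty \in \mcG_2^\La(\mbr^{2d})$.

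To check that $\mJ_\infty \in \Gamma^\La(\mA,\mB)$, note that $\delta_{(0,0)}e^{\mJ_{n_k}^*} \to \delta_{(0,0)}e^{\mJ_\infty}$ weakly on $\mbr^{2d}$; pushing forward by the first-coordinate projection and using $\delta_0 e^{\mA_{n_k}} \to \delta_0 e^\mA$ together with uniqueness of weak limits identifies the first marginal of $\delta_{(0,0)}e^{\mJ_\infty}$ as $\delta_0 e^\mA$. By Remark~\ref{rem:weakcon-levy} this fixes the first marginal generator of $\mJ_\infty$ as $\mA$, and the symmetric argument gives $\mB$. Portmanteau's inequality applied to the nonnegative continuous function $c_2$ (standard truncation by $c_2 \wedge M$ and $M\to\infty$) then yields
\[
(e^{\mJ_\infty}c_2)(0,0) \;\le\; \liminf_{k\to\infty}(e^{\mJ_{n_k}^*}c_2)(0,0) \;=\; \liminf_{k\to\infty}\mcW_\mcg(\mA_{n_k},\mB_{n_k})^2.
\]
Since $\mJ_\infty \in \Gamma^\La(\mA,\mB)$, the minimal growth property \eqref{eq:mingrowth} (equivalently Proposition~\ref{prop:WG-equiv}), evaluated at $(t,x,y)=(1,0,0)$, gives $\mcW_\mcg(\mA,\mB)^2 \le (e^{\mJ_\infty}c_2)(0,0)$. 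A subsequence-of-every-subsequence argument, initiated by first extracting a subsequence realizing $\liminf_n \mcW_\mcg(\mA_n,\mB_n)^2$, then upgrades the bound to the full sequence.

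The main obstacle is the identification $\mJ_\infty \in \Gamma^\La(\mA,\mB)$. Passing to the weak limit directly at the level of L\'evy measures would be delicate, since the marginal relation of Definition~\ref{Levy_Coupling} is imposed only outside neighborhoods of the origin and $\La$-weak convergence of the L\'evy measures of $\mJ_n^*$ is not assumed. Working instead at the process level (the one-dimensional distributions at $t=1$) together with Remark~\ref{rem:weakcon-levy} bypasses this subtlety cleanly.
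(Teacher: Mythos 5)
Your proof is correct, but it takes a genuinely different route from the paper's. The paper argues at the level of one-dimensional marginal laws: it invokes Proposition~\ref{prop:Wg-bdd} to bound $\mcc_2(\delta_0 e^{t\mA_n},\delta_0 e^{t\mB_n})$ by $t\ta_2(0,0;\mA_n,\mB_n)+Ct^2$, passes this bound through classical lower semicontinuity of $\mcc_2$ under weak convergence of $\delta_0 e^{t\mA_n}\to\delta_0 e^{t\mA}$, divides by $t$ and sends $t\searrow 0$ to get $\ta_2(0,0;\mA,\mB)\le\liminf_n\ta_2(0,0;\mA_n,\mB_n)$ via strong duality, and then separately shows convergence of the mean vectors. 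You instead pass to the optimal L\'evy coupling generators $\mJ_n^*$ themselves, prove they form a bounded sequence in $(\mcg_2^\La(\mbr^{2d}),\wg)$ (the identity $\wg(\mJ_n^*,0)^2=\wg(\mA_n,0)^2+\wg(\mB_n,0)^2$ is correctly verified via the triplet decomposition and Lemma~\ref{FTOLOT3to1lemma} for the L\'evy measure marginals), extract a weak limit $\mJ_\infty$ via Proposition~\ref{prop:weak-comp}, identify $\mJ_\infty\in\Ga^\La(\mA,\mB)$ by projecting the transition distributions and using the bijection between infinitely divisible laws and L\'evy generators (implicit in Remark~\ref{rem:weakcon-levy}), and then apply Portmanteau to $c_2$ together with the minimal growth property at $(t,x,y)=(1,0,0)$. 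Both routes are valid; yours has the advantage that it produces an explicit limiting coupling (which is in fact $2$-optimal for $(\mA,\mB)$) and sidesteps the $t\searrow 0$ limit entirely. Two small imprecisions to note: the "diagonal covariance blocks" of $\mJ_n^*$ as a L\'evy triplet are the diffusion blocks $\alpha_n,\beta_n$, not $Q_{\mA_n},Q_{\mB_n}$ (the latter are the full covariance matrices of the coupled process), though your computed identity is correct; and the final step is a single further-subsequence extraction from a $\liminf$-realizing subsequence, not a full subsequence-of-every-subsequence argument.
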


\begin{proof}
	Let us denote $\kappa_n = m_{\mathcal{A}_n}$, $\zeta_n = m_{\mathcal{B}_n}$, $\kappa = m_{\mathcal{A}}$, $\zeta = m_{\mathcal{B}}$, and define $\rho_t^{(n)} := \delta_0 e^{t\mathcal{A}_n}$, $\sigma_t^{(n)} := \delta_0 e^{t\mathcal{B}_n}$, $\rho_t := \delta_0 e^{t\mathcal{A}}$, and $\sigma_t := \delta_0 e^{t\mathcal{B}}$.
	
	Since $\{\mathcal{A}_n, \mathcal{B}_n\}_n$ is bounded in $\mcg_2^\La(\mbr^d)$, the same computation from \eqref{bdd:genbdd} gives: for some $R\ge 0$,
	\[
	\frac{1}{2} \int_{\mathbb{R}^d} |x|^2 \, d\rho_t^{(n)}(x) = \mathcal{W}_2(\rho_t^{(n)}, \delta_0)^2 \leq \max\{t, t^2\} R^2.
	\]
	The same bound applies for $\sigma_t^{(n)}$ in place of $\rho_t^{(n)}$. This establishes a pointwise (in time) uniform second moment bound for the families $\{\rho_t^{(n)}\}_n$ and $\{\sigma_t^{(n)}\}_n$. By Jensen's inequality, this also implies that the sequence of first moments $\{\kappa_n, \zeta_n\}_n$ is uniformly bounded in $\mathbb{R}^d$.
	
	Now, again using Proposition~\ref{prop:Wg-bdd}, we have for all $t \geq 0$,
	\[
	\mathcal{C}_2(\rho_t^{(n)}, \sigma_t^{(n)}) \leq t \theta_2(0,0; \mathcal{A}_n, \mathcal{B}_n) + \frac{1}{2} t^2 |\kappa_n - \zeta_n|^2 \leq t \theta_2(0,0; \mathcal{A}_n, \mathcal{B}_n) + C t^2.
	\]
	Since $\mathcal{A}_n \to \mathcal{A}$ and $\mathcal{B}_n \to \mathcal{B}$ weakly in the sense of transition kernels, we have weak convergence $\rho_t^{(n)} \to \rho_t$ and $\sigma_t^{(n)} \to \sigma_t$ weakly as $n\to\infty$. By the lower semicontinuity of the quadratic transport cost $\mathcal{C}_2$, it follows that
	\[
	\mathcal{C}_2(\rho_t, \sigma_t) \leq \liminf_{n \to \infty} \mathcal{C}_2(\rho_t^{(n)}, \sigma_t^{(n)}) \leq t \liminf_{n \to \infty} \theta_2(0,0; \mathcal{A}_n, \mathcal{B}_n) + C t^2.
	\]
	Dividing both sides by $t$ and letting $t \searrow 0$, we obtain
	\[
	\omega_2^+(0,0; \mathcal{A}, \mathcal{B}) = \theta_2(0,0; \mathcal{A}, \mathcal{B}) \leq \liminf_{n \to \infty} \theta_2(0,0; \mathcal{A}_n, \mathcal{B}_n).
	\]
	
	Next, we claim that $\kappa_n \to \kappa$ and $\zeta_n \to \zeta$ in $\mathbb{R}^d$. Indeed, since $\kappa_n = \langle \rho_1^{(n)}, x \rangle$ and $\rho_1^{(n)} \to \rho_1$ weakly with uniform second moment bounds, it follows that
	\[
	\kappa_n = \int_{\mathbb{R}^d} x \, d\rho_1^{(n)}(x) \to \int_{\mathbb{R}^d} x \, d\rho_1(x) = \kappa.
	\]
	An analogous argument shows $\zeta_n \to \zeta$.
	
	Combining the convergence $(\kappa_n, \zeta_n) \to (\kappa, \zeta)$ with the lower semicontinuity result above, we conclude that
	\begin{align*}
		\wg(\mA,\mB)^2&= \f 12 |\ka-\zeta|^2 +\om_2(0,0;\mA,\mB)\\
		&\le \liminf_{n\to\infty}\sqb{\f 12 |\ka_n-\zeta_n|^2 +\ta_2(0,0;\mA_n,\mB_n)}=\liminf_{n\to\infty} \wg(\mA_n,\mB_n)^2. \qedhere
	\end{align*}
\end{proof}

Now the completeness of the Wasserstein generator metric $\wg$ follows easily from the two results above.
\begin{proof}[Proof of Theorem \ref{main3}, completeness]
	Let $\{\mA_n\}_n \subset \mcg_2^\La(\mathbb{R}^d)$ be a Cauchy sequence with respect to the metric $\wg$. Since $\{\mA_n\}_n$ is bounded, by the weak compactness established in Proposition \ref{prop:weak-comp}, along a subsequence we have $\mA_{n_k}\to \mA$ weakly in the sense of transition kernels, where $\mA\in\mcg_2^\La(\mbr^d)$. It remains to show $\mA_n\to\mA$ in the metric $\wg$. 
	
	Let us first prove $\mA_n\to \mA$ weakly in the sense of transition kernel in the full sequence. It suffices to prove: for every subsequence $\{\mA_n\}_n$ there is a further subsequence that converges to $\mA$ weakly. Indeed, for any subsequence of $\{\mA_n\}_n$ by the weak compactness in Proposition \ref{prop:weak-comp} there is a further subsequence, indexed by $\{\mA_{m_k}\}_k$ that converges weakly to some $\mA'\in\mcg_2^\La(\mbr^d)$. Since $\mA_{n_k}\to\mA$ along the subsequence $\{n_k\}$, by the lower semicontinuity of the metric (Proposition \ref{prop:wglsc}) we have
	\begin{align*}
		\wg(\mA,\mA')&\le \liminf_{n\to\infty} \wg(\mA_{m_k},\mA_{n_k})=0. 
	\end{align*}
	The limit is zero because $\{\mA_n\}_n$  is Cauchy in the metric $\wg$. This shows the weak convergence of the full sequence $\{\mA_n\}_n$ to $\mA$. 
	
	Finally we show $\wg(\mA,\mA_n)\to 0$. Since $\mA_m\to\mA$ weakly, the lower semicontinuity again implies
	\begin{align*}
		\wg(\mA,\mA_n)&\le \liminf_{m\to\infty}\wg(\mA_m,\mA_n)\le \sup_{m\ge n}\wg(\mA_m,\mA_n). 
	\end{align*}
	Since $\{\mA_n\}_n$ is Cauchy, the right hand side vanishes as $n\to\infty$. Hence it follows $\wg(\mA,\mA_n)\to 0$. The proof is complete. 
\end{proof}

\subsection{L\'evy-Wasserstein metric and its topological properties}
Let us now restrict our attention to the jump part of generators. In which case the generator metric reduces to a metric on the space $\La_2(\mbr^d)$ of L\'evy measures with finite second moment, which will be called \emph{L\'evy-Wasserstein metric}, denoted as $\mcW_\La$. The metric property of $\mcW_\La$ follows easily from Theorem \ref{main3}.

\begin{proof}[Proof of Corollary \ref{cor:main4}]
	Let $\mu,\nu \in \La_2(\mbr^d)$, and set $\mA = \Lag(0,0,\mu)$ and $\mB = \Lag(0,0,\nu)$. 
	By Proposition \ref{prop:J-opt}, we have
	\[
	\mcW_\La(\mu,\nu)^2 
	= \mcc_2^\La(\mu,\nu) 
	= \ta_2(0,0;\mA,\mB) 
	= \mcW_\mcg(\mA,\mB)^2.
	\]
	Since $\mcW_\mcg$ defines a metric, it follows that $\mcW_\La$ also defines a metric.
\end{proof}

\begin{remark}
	In the classical theory of Wasserstein metrics, the triangle inequality is usually established by constructing a coupling of three probability measures, often referred to as the \emph{gluing lemma}. 
	By contrast, in the present Lévy setting, such a gluing construction is less transparent and technically more involved.
	Instead, our approach exploits the strong duality established earlier, which allows us to derive the triangle inequality through analytic arguments on generators rather than couplings. 
\end{remark}

We next establish the following equivalent characterization of convergence in the metric \( \mcW_\Lambda \), analogous to the standard characterization of convergence in the Wasserstein-2 metric. We recall the reader the notion of $\La$-weak convergence of L\'evy measures from Section \ref{sec-La-conv}, see also Appendix \ref{appen-weaktop} for the related results.  

\begin{theorem}\label{thm:La2-conv}
	For $n\ge1$, let $\mu,\mu_n\in\La_2(\mbr^d)$. The following are equivalent: 
	
	(i) $\mcW_\La(\mu_n,\mu)\to 0$; 
	
	(ii) for all Lipschitz functions $\vphi\in C(\mbr^d)$ satisfying $|\nabla \vphi(x)|\le C|x|$ Lebesgue-a.e. for some $C\ge 0$, it holds
	\begin{align*}
		\lim_{n\to\infty}\int_{\mbr^d} \vphi d\mu_n=\int_{\mbr^d} \vphi d\mu;
	\end{align*}
	
	(iii) $\mu_n\to \mu$ $\La$-weakly, and $$\lim_{n\to\infty}\int_{\mbr^d} |x|^2 d\mu_n= \int_{\mbr^d} |x|^2 d\mu.$$
\end{theorem}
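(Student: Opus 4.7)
The plan is to establish the cycle (i) $\Rightarrow$ (ii) $\Rightarrow$ (iii) $\Rightarrow$ (i); the first two implications are relatively direct, while the main technical work lies in (iii) $\Rightarrow$ (i).

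For (i) $\Rightarrow$ (ii), let $\gamma_n$ be L\'evy $2$-optimal couplings of $\mu_n$ and $\mu$, so that $\inn{\gamma_n,c_2}=\mcW_\La(\mu_n,\mu)^2\to 0$. The hypotheses on $\vphi$ give $|\vphi(x) - \vphi(y)| \le C(|x|+|y|)\,|x-y|$ by integrating $|\nabla\vphi|$ along the segment $[y,x]$, and $\vphi - \vphi(0)$ is quadratically bounded and thus in $L^1(\mu_n) \cap L^1(\mu)$. Applying Lemma~\ref{FTOLOT3to1lemma},
\begin{align*}
\bigl|\inn{\mu_n,\vphi} - \inn{\mu,\vphi}\bigr|
= \Bigl|\int [\vphi(x) - \vphi(y)]\, d\gamma_n\Bigr|
\le 2C\,\Bigl(\int (|x|+|y|)^2\, d\gamma_n\Bigr)^{1/2} \mcW_\La(\mu_n,\mu),
\end{align*}
and the first factor stays bounded since $\tfrac12 \int|x|^2\, d\mu_n = \mcW_\La(\mu_n,0)^2$ is controlled by the triangle inequality.

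For (ii) $\Rightarrow$ (iii), second-moment convergence follows by taking $\vphi(x) = |x|^2$, which is locally Lipschitz with $|\nabla\vphi| = 2|x|$. For $\La$-weak convergence, testing (ii) against smooth cutoffs $f_\ep(x) := |x|^2 \chi_\ep(x)$ --- where $\chi_\ep$ vanishes outside $B_\ep$ and equals $1$ on $B_{\ep/2}$, so that $|\nabla f_\ep(x)| \le C_\ep|x|$ --- yields the uniform quadratic control $\sup_n \int_{B_\ep}|x|^2\, d\mu_n \to 0$ as $\ep \searrow 0$. Combined with a Markov-type tail bound from the uniformly bounded second moments, this gives $\La$-tightness of $\{\mu_n\}$. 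By the L\'evy analogue of Prokhorov's theorem, any subsequence has a further $\La$-weakly convergent sub-subsequence with limit $\mu'$. Testing against $\vphi \in C_c^\infty(\mbr^d\setminus\{0\})$, each of which lies in the admissible class of (ii), gives $\inn{\mu', \vphi} = \inn{\mu, \vphi}$; since such $\vphi$ separate L\'evy measures on $\mbr^d \setminus \{0\}$, $\mu' = \mu$ and the full sequence converges.

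The main obstacle is (iii) $\Rightarrow$ (i), which requires constructing an explicit L\'evy coupling with vanishing cost. Fix $\ep > 0$ with $\mu(\partial B_\ep) = 0$ and write $\mu_n = \mu_n^{\mathrm{in}} + \mu_n^{\mathrm{out}}$ and $\mu = \mu^{\mathrm{in}} + \mu^{\mathrm{out}}$ according to whether $|x| \le \ep$ or $|x| > \ep$. The outer parts are bounded measures with weakly convergent restrictions, convergent masses $m_n \to m$, and convergent second moments. After mass-balancing via Diracs at the origin, the measures $\tilde\mu_n^{\mathrm{out}} := \mu_n^{\mathrm{out}} + (m - m_n)^+ \delta_0$ and $\tilde\mu^{\mathrm{out}} := \mu^{\mathrm{out}} + (m_n - m)^+ \delta_0$ have equal total mass $\max(m_n,m)$, and by the classical characterization of $\mcW_2$-convergence for bounded measures (weak plus second-moment convergence), $\mcW_2(\tilde\mu_n^{\mathrm{out}}, \tilde\mu^{\mathrm{out}}) \to 0$. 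The candidate
\[
\gamma_n := \mu_n^{\mathrm{in}} \otimes \delta_0 \,+\, \delta_0 \otimes \mu^{\mathrm{in}} \,+\, \tilde\gamma_n^{\mathrm{out}},
\]
with $\tilde\gamma_n^{\mathrm{out}}$ a classical $2$-optimal coupling of the balanced outer parts, satisfies the marginal condition of Definition~\ref{Levy_Coupling} since Dirac mass at $0$ does not affect L\'evy marginals on neighborhood-free sets. Its cost is bounded by $\tfrac12\int_{B_\ep}|x|^2 d\mu_n + \tfrac12\int_{B_\ep}|y|^2 d\mu + \mcW_2(\tilde\mu_n^{\mathrm{out}},\tilde\mu^{\mathrm{out}})^2$, and sending $n \to \infty$ and then $\ep \searrow 0$ yields $\limsup_n \mcc_2^\La(\mu_n,\mu) = 0$. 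The crux is exploiting the asymmetry in Definition~\ref{Levy_Coupling}: mass mismatch between $\mu_n^{\mathrm{out}}$ and $\mu^{\mathrm{out}}$ is freely absorbed at the origin without violating the L\'evy marginal condition.
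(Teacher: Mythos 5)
Your proposal is correct, but the implications (ii)$\Rightarrow$(iii) and especially (iii)$\Rightarrow$(i) take genuinely different routes from the paper. For (i)$\Rightarrow$(ii) you follow the same argument: the gradient bound gives $|\vphi(x)-\vphi(y)|\le C|x-y|(|x|+|y|)$, then Cauchy--Schwarz against the optimal L\'evy coupling plus uniform second-moment control does the rest. For (ii)$\Rightarrow$(iii), the paper simply invokes Proposition~\ref{prop:appenC4} for the $\La$-weak convergence, whereas you prove it from scratch via uniform near-origin control, a Markov tail bound, $\La$-tightness, Prokhorov, and separation by $C_c^\infty(\mbr^d\setminus\{0\})$ test functions; both work, yours is self-contained but longer. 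The real divergence is in (iii)$\Rightarrow$(i): the paper proceeds \emph{nonconstructively} --- it takes the sequence of L\'evy $2$-optimal couplings $\ga_n\in\Ga^\La(\mu_n,\mu)$, shows it is $\La$-tight via Lemmas~\ref{prop:la-conv-tight} and~\ref{lem:couplingtight}, extracts a $\La$-weak limit $\ga\in\Ga^\La(\mu,\mu)$, then uses second-moment convergence (Lemma~\ref{lem:C5}) to pass $\int c_2\,d\ga_{n_k}\to\int c_2\,d\ga=0$. You instead build an \emph{explicit, typically suboptimal} coupling by splitting each measure at radius $\ep$ into inner/outer parts, mass-balancing the finite outer parts with Dirac atoms at the origin (which, crucially, do not affect the L\'evy marginal condition), and gluing a classical $\mcW_2$-optimal coupling of the balanced outer parts to the trivial coupling of the inner parts. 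Your construction neatly exploits the slack in Definition~\ref{Levy_Coupling} that the paper leaves implicit in Remark~\ref{rem:ganonempty}. Both methods are sound; the paper's is shorter because it reuses the compactness machinery already built, while yours makes the mechanism by which mass near the origin is ``absorbed for free'' completely transparent. One small point to tighten in your write-up: to conclude $\limsup_n$ of the inner contribution $\tfrac12\int_{B_\ep}|x|^2\,d\mu_n$ vanishes as $\ep\searrow 0$, you should explicitly invoke the uniform-in-$n$ near-origin bound from $\La$-tightness (Proposition~\ref{prop:la-conv-tight}); and the $\mcW_2$-convergence claim for the balanced outer parts requires normalizing by the (convergent but $n$-dependent) common mass $\max(m_n,m)$ before appealing to the classical weak-plus-second-moment characterization, together with the triangle inequality since both balanced families depend on $n$.
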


\begin{proof}
	(i) $\to$ (ii). 
	First a bounded Lipschitz functions $\vphi$ as in (ii). 
	Using the fundamental theorem of calculus for absolutely continuous functions and the bound for gradient, we have
	\begin{align*}
		|\varphi(x)-\varphi(y)|&= \abs{\int_0^1 (x-y)\cdot \nabla \varphi(sx+(1-s)y)ds}\\
		&\le C\int_0^1 |x-y|(s|x|+(1-s)|y|)ds \le C|x-y|(|x|+|y|).
	\end{align*}
	
	For $n\ge 1$, let \( \gamma_n \in \Gamma^\La(\mu_n, \mu) \) be an optimal coupling realizing \( \mcW_\La(\mu_n,\mu)^2 = \int c_2d\ga_n \).
	Using the Cauchy-Schwartz inequality, $(a+b)^2 \le 2(a^2+b^2)$, and the bound above we find
	\begin{align*}
		\abs{\int \varphi d\mu_n - \int \varphi d\mu}&= \abs{\int_{\mbr^{2d}} [\varphi(x)-\varphi(y)]d\ga_n(x,y)} \\
		&\le C\int_{\mbr^{2d}} |x-y|(|x|+|y|)d\ga_n(x,y) \\
		&\le C\sqb{\int_{\mbr^{2d}} |x-y|^2 d\ga_n(x,y)}^{1/2}\cdot\sqrt 2 \sqb{\int_{\mbr^{2d}}(|x|^2+|y|^2)d\ga_n(x,y) }^{1/2}\\
		&= 2 C\mcW_\La(\mu_n,\mu)\sqb{\int_{\mbr^d}|x|^2 d\mu_n(y)+ \int_{\mbr^d} |y|^2 d\mu(y)} ^2.
	\end{align*}
	Since $\{\mu_n\}_n$ is bounded in $\mcW_\La$, their second moment is uniformly bounded. Passing $n\to\infty$, we find $\int\varphi d\mu_n\to\int \varphi d\mu$ for all such $\vphi$.
	
	Next we show the second moment converges:  $\int_{\mbr^d} \f 12|x|^2 d\mu_n\to\int_{\mbr^d} \f12|x|^2 d\mu$. We observe that for any L\'evy measure $\nu\in \La_2(\mbr^d)$, $\int_{\mbr^d}\f 12 |x|^2 d\nu(x)= \mcW_\La(\nu,0)^2. $
	Since $\mu_n\to\mu$ in $\mcl{W}_\La$, and $\nu\mapsto \mcW_\La(\nu,0)$ is continuous w.r.t. the metric, the convergence of the second moment holds from here.
	
	(ii) $\to$ (iii). $\mu_n\xrightarrow{\La}\mu$ follows from Proposition \ref{prop:appenC4}. We note $\vphi(x)=|x|^2$ satisfies $|\nabla \vphi(x)|\le 2|x|$. Hence (ii) implies the convergence of second moment.
	
	(iii) $\to$ (i). For $n\ge 1$, let $\gamma_n$ be a 2-Levy optimal coupling of $\mu_n, \mu$.
	We first verify that the sequence $\{\gamma_n\}_n$ is $\La$-tight. Since $\mu_n \xrightarrow{\La} \mu$, the sequence $\{\mu_n\}_n\subset \La_2(\mbr^d)$ is $\La$-tight (see Lemma~\ref{prop:la-conv-tight}). By Lemma \ref{lem:couplingtight}, $\{\ga_n\}_n\subset \La_2(\mbr^{2d})$ is $\La$-tight. 
	
	By Prokhorov’s theorem (Theorem~\ref{thm:prokhorov}), there exists a subsequence \(\{\gamma_{n_k}\}_k\) and a Lévy measure \(\gamma \in \La(\mathbb{R}^{2d})\) such that \(\gamma_{n_k} \xrightarrow{\La} \gamma\).
	We now show that \(\gamma\) is a coupling of \(\mu\) with itself. Since the marginals of \(\gamma_{n_k}\) are \(\mu_{n_k}, \mu\), and \(\mu_{n_k} \xrightarrow{\La} \mu\), the marginal projections of \(\gamma_{n_k}\) converge in the \(\La\)-sense to \(\mu\). By the continuity of pushforward under weak convergence, the marginals of \(\gamma\) are both equal to \(\mu\). That is, \(\gamma \in \Gamma^\La(\mu,\mu)\).
	
	We now claim that the second moments of \(\gamma_{n_k}\) converge to that of \(\gamma\). Indeed, we have:
	\[
	\int_{\mathbb{R}^{2d}} (|x|^2 + |y|^2) \, d\gamma_n(x,y) = \int_{\mathbb{R}^d} |x|^2 \, d\mu_n(x) + \int_{\mathbb{R}^d} |y|^2 \, d\mu(y),
	\]
	and the right-hand side converges by assumption. 
	By Proposition \ref{lem:C5},  we have
	\[
	\mcW_\La(\mu_{n_k},\mu)=\int_{\mathbb{R}^{2d}} \frac{1}{2} |x - y|^2 \, d\gamma_{n_k}(x,y) \to \int_{\mathbb{R}^{2d}} \frac{1}{2} |x - y|^2 \, d\gamma(x,y)=0.
	\]
	
	Since every subsequence admits a further subsequence along which convergence to zero holds, the full sequence must also converge:
	\(
	\mcW_\La(\mu_n, \mu) \to 0. 
	\)
	\qedhere 
\end{proof}

\subsection{Separability of the Wasserstein generator metric}

We now address the separability of the metric \( \mcW_\mcg \). By the decomposition in \eqref{eq:wass-LK}, this metric is the direct sum of three parts: the Euclidean metric on \( \mathbb{R}^d \), the Bures–Wasserstein metric on \( \mathcal{S}_{\geq 0}(\mathbb{R}^d) \), and the L\'evy–Wasserstein metric on \( \Lambda_2(\mathbb{R}^d) \). Hence, to establish the separability of \( (\mcg_2^\La(\mathbb{R}^d), \mcW_\mcg^\La) \), it suffices to show the separability of each component.

The Euclidean space \( \mathbb{R}^d \) is separable, as is \( \mathcal{S}_{\geq 0}(\mathbb{R}^d) \) under the Bures–Wasserstein metric (as a closed subset of the space of symmetric matrices with respect to the Frobenius norm). The remaining part is handled by the following result:

\begin{proposition}\label{prop:WLa-sep}
	The metric space \( (\Lambda_2(\mathbb{R}^d), \mathcal{W}_\Lambda) \) is separable.
\end{proposition}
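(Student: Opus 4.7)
The plan is to exhibit an explicit countable dense subset of $\La_2(\mbr^d)$, namely
\[
\mathcal{D} := \Big\{ \textstyle\sum_{k=1}^N q_k \delta_{x_k} : N \in \mbn,\ q_k \in \mbq \cap (0,\infty),\ x_k \in \mbq^d \setminus \{0\} \Big\},
\]
which is plainly countable. I will show that for every $\mu \in \La_2(\mbr^d)$ and every $\eta > 0$, there exists $\tilde\mu \in \mathcal{D}$ with $\mcW_\La(\mu, \tilde\mu) < \eta$. The construction proceeds in three successive steps, each controlled by an explicit Lévy coupling that exploits the central flexibility of the Lévy transport framework: mass can be ``sent to'' or ``received from'' the origin at a cost proportional to its squared magnitude. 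Concretely, for any Borel set $A \subset \mbr^d$, the measure $\gamma := (\mathrm{Id},\mathrm{Id})_\# \mu|_{A^c} + (\mathrm{Id},0)_\# \mu|_A$ is a valid Lévy coupling of $\mu$ and $\mu|_{A^c}$—it carries no mass at $(0,0)$ since $\mu(\{0\})=0$, and its marginals agree with $\mu$ and $\mu|_{A^c}$ on sets away from $0$—yielding the basic estimate
\[
\mcW_\La(\mu,\mu|_{A^c})^2 \;\le\; \tfrac12 \int_A |x|^2\, d\mu(x).
\]

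\textbf{Step 1 (truncation).} For $0<\epsilon<R$, I set $\mu^{\epsilon,R} := \mu|_{B_R(0)\setminus B_\epsilon(0)}$. Applying the display above with $A = B_\epsilon(0)\cup B_R(0)^c$ and using the second-moment hypothesis yields $\mcW_\La(\mu,\mu^{\epsilon,R})\to 0$ as $\epsilon\searrow 0$ and $R\to\infty$; moreover $\mu^{\epsilon,R}$ is a finite Borel measure supported on an annulus bounded away from the origin. \textbf{Step 2 (discretization).} I partition $B_R(0)\setminus B_{\epsilon/2}(0)$ into Borel cells $\{Q_i\}_{i=1}^M$ of diameter at most $\delta$ and select $x_i \in Q_i \cap \mbq^d$ in each (possible by density of $\mbq^d$; since $Q_i \cap B_{\epsilon/2}(0) = \emptyset$, we have $x_i \ne 0$), and define $\nu_\delta := \sum_i \mu^{\epsilon,R}(Q_i)\,\delta_{x_i}$. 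The plan $\gamma_\delta := \sum_i (\mathrm{Id},x_i)_\# \mu^{\epsilon,R}|_{Q_i}$ belongs to $\Ga^\La(\mu^{\epsilon,R},\nu_\delta)$ with cost at most $\tfrac{\delta^2}{2}\mu(B_R(0)\setminus B_\epsilon(0))$, which is arbitrarily small for small $\delta$. \textbf{Step 3 (rational weights).} Writing $\nu_\delta = \sum_i a_i \delta_{x_i}$ with real weights $a_i>0$, I choose $q_i \in \mbq\cap(0,\infty)$ with $\sum_i |a_i - q_i|$ as small as desired and set $\tilde\mu := \sum_i q_i \delta_{x_i} \in \mathcal{D}$. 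On each atom, I transport $\min(a_i,q_i)$ mass diagonally between the two atoms at $x_i$ and send the excess $|a_i-q_i|$ to (or from) the origin, producing a Lévy coupling with cost $\tfrac12\sum_i |a_i-q_i|\,|x_i|^2 \le \tfrac{R^2}{2}\sum_i |a_i-q_i|$, which is small.

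Combining the three bounds via the triangle inequality for $\mcW_\La$ (established in Corollary~\ref{cor:main4}) yields $\mcW_\La(\mu,\tilde\mu)<\eta$, proving density of $\mathcal{D}$ and hence separability. The only genuine obstacle here is conceptual rather than computational: one must recognize that, unlike the classical Wasserstein setting where total mass is preserved, the Lévy transport framework permits mass to be freely created or destroyed at the origin at quadratic cost, and this—together with the finite second-moment hypothesis—is exactly what supplies the flexibility needed for each of the three approximation steps. Once this observation is in place, the construction is direct and requires no further technical subtleties.
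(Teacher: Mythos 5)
Your proposal is correct in its essentials, but it takes a genuinely different, more constructive route from the paper. The paper's proof is shorter and leverages classical theory: it fixes a total mass $r$, observes that the slice $\mathcal{M}_r$ of finite Lévy measures with mass exactly $r$ is separable under $\mathcal{W}_2$ (by the classical Wasserstein theory), then transfers this separability to $\mathcal{W}_\Lambda$ via the domination $\mathcal{W}_\Lambda \le \mathcal{W}_2$ from Proposition~\ref{prop:prelim2}, and finally takes the countable union $\bigcup_{q\in\mathbb{Q}^+} \mathcal{M}_q$. Density is then verified with the single truncation/rescaling
\[
d\mu_{\delta,\delta'} = (1-\delta')\chi_{B_\delta(0)^c}(x)\,d\mu(x),
\]
tuning $\delta'$ so the total mass lands in $\mathbb{Q}$. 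In contrast, you build an explicit countable dense set of atomic measures with rational weights at rational points, approximating in three stages; your Step~1 plays the role of the paper's truncation, but you then carry out the discretization and rationalization by hand rather than citing classical Wasserstein separability. Both proofs hinge on the same core observation, which you correctly identify: in the Lévy transport framework, mass may be created at or annihilated from the origin at cost $\tfrac12\int_A |x|^2\,d\mu$, which is precisely what permits the truncation and the mass-mismatch correction in your Step~3 (cf. Proposition~\ref{prop-prelim}(v), used in exactly this way by the paper). Your approach is fully self-contained and makes the dense set concrete; the paper's is shorter but relies on a black-boxed classical fact. One small point you should tighten: in Step~2 you select $x_i \in Q_i \cap \mathbb{Q}^d$, which requires that each cell $Q_i$ actually contain a rational point. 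An arbitrary Borel partition may have cells with empty interior; you should specify the partition (e.g., half-open dyadic cubes intersected with the annulus), which guarantees each nonempty cell has nonempty interior and hence contains a point of $\mathbb{Q}^d$. Also, since the coupling $\gamma = (\mathrm{Id},\mathrm{Id})_\#\mu|_{A^c} + (\mathrm{Id},0)_\#\mu|_A$ has atoms on $\mathbb{R}^d\times\{0\}$, you should note that the marginal condition of Definition~\ref{Levy_Coupling} is only tested against sets bounded away from the origin (equivalently, Proposition~\ref{equivalentLevyCoupling}), so the mass deposited on the axis does not spoil the marginals—this is the same device the paper uses for the trivial coupling in Remark~\ref{rem:ganonempty}, and it works, but it is worth making explicit.
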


\begin{proof}
	Fix \( r > 0 \), and define \( \mcl{M}_r \subset \La_2(\mathbb{R}^d) \) to be the set of Lévy measures with total mass \( \mu(\mathbb{R}^d) = r \). We first claim that \( (\mcl{M}_r, \mcW_\La) \) is separable. 
	
	Indeed, since \( \mcl{M}_r \) consists of finite measures with fixed total mass, it is a subset of the space of probability measures (up to scaling) with finite second moment. The classical 2-Wasserstein space over \( \mathbb{R}^d \) with fixed total mass \( r \) is known to be separable. Moreover, we have $\mcW_\La\le \mcW_2$ from Proposition \ref{prop:prelim2}, that is,
 	the Lévy-Wasserstein metric is dominated by the standard Wasserstein-2 metric. Therefore, the separability of \( (\mcl{M}_r, \mcW_\La) \) follows from that of \( (\mcl{M}_r, \mcW_2) \).

	Now define
	\[
	\mcl{M} := \bigcup_{q \in \mathbb{Q}^+} \mcl{M}_q.
	\]
	This is a countable union of separable subsets, hence separable.

	We now show that \( \mcl{M} \) is dense in \( \La_2(\mathbb{R}^d) \). Let \( \mu \in \La_2(\mathbb{R}^d) \) and fix \( \varepsilon > 0 \). Choose small \( \delta, \delta' > 0 \), and define
	\[
	d\mu_{\delta,\delta'} := (1 - \delta') \chi_{B_\delta(0)^c}(x) \, d\mu(x),
	\]
	which is a finite Lévy measure. Let \( \zeta_{\delta,\delta'} := \mu - \mu_{\delta,\delta'} \), so that \( \mu = \mu_{\delta,\delta'} + \zeta_{\delta,\delta'} \). By Proposition \ref{prop-prelim}, 
	\begin{align*}
		\mcW_\La(\mu,\mu_{\de,\de'})^2&\le \mcW_\La(\mu_{\de,\de'},\mu_{\de,\de'})^2+\mcW_\La(0,\zeta_{\de,\de'})^2 \\
		&=\frac{1}{2} \left( \int_{B_\delta(0)} |x|^2 \, d\mu(x) + \delta' \int_{B_\delta(0)^c} |x|^2 \, d\mu(x) \right).
	\end{align*}
	Because \( \mu \in \La_2(\mathbb{R}^d) \), this expression can be made arbitrarily small by taking \( \delta \), \( \delta' \) sufficiently small. Thus, we may choose \( \delta, \delta' \) such that
	\[
	\mcW_\La(\mu, \mu_{\delta,\delta'}) < \ep.
	\]
	
	Note that \( \mu_{\delta,\delta'} \in \mcl{M}_r \) for \( r = (1 - \delta') \mu(B_\delta(0)^c) \). By the density of the rational numbers, we may also choose \( \delta' > 0 \) small enough so that \( r \in \mathbb{Q} \), namely \( \mu_{\delta,\delta'} \in \mcl{M} \), while ensuring the error estimate above still holds. This completes the proof.
\end{proof}

We may now complete the proof of Theorem \ref{main3} . 
\begin{proof}[Proof of Theorem \ref{main3}, separability]
	The squared generator metric \( \mcW_\mcg^2 \) is the sum of three components: the squared Euclidean distance on \( \mathbb{R}^d \), the squared Bures–Wasserstein distance \( \mcW_\mcS^2 \) on the cone of nonnegative definite matrices, and the squared Lévy–Wasserstein distance \( \mcW_\La^2 \) on \( \La_2(\mathbb{R}^d) \). Each of these metric spaces is known to be separable. Since a finite product of separable metric spaces remains separable, it follows that \( (\mcg_2^\La(\mbr^d), \mcW_\mcg) \) is separable.
\end{proof}

\subsection{Pathwise estimate for coupled Lévy processes}
We shall next establish a maximal-type inequality for the optimal coupling of two L\'evy processes in terms of the metric $\wg$. 

\begin{proof}[Proof of Theorem \ref{main5}]
	Let us first prove the case when the two processes has zero mean. In this case, the process $\{Z_t = (X_t, Y_t)\}_{t \geq 0}$ generated by a 2-optimal L\'evy coupling $\mJ_*$ of $\mA,\mB$ has zero mean. Particularly, it is a vector martingale in $\mathbb{R}^{2d}$.
	
	Define the symmetric matrix
	\[
	U = \begin{bmatrix}
		I & -I \\
		-I & I
	\end{bmatrix}.
	\]
	Since $U \geq 0$, there exists a symmetric positive semidefinite matrix $Q$ such that $Q^2 = Q^\top Q = U$. Then
	\[
	|X_t - Y_t|^2 = Z_t^\top U Z_t = |Q Z_t|^2.
	\]
	Because $\{Z_t\}$ is a vector martingale, so is $\{QZ_t\}$. By Doob’s \(L^2\)-maximal inequality,
	\begin{align*}
		\mathbb{E} \left[ \sup_{t \in [0,T]} |Q Z_t|^2 \right]
		&\leq 4 \, \mathbb{E} |Q Z_T|^2 = 2 \, \mathbb{E} |X_T - Y_T|^2 \\
		&= 4 \left(e^{T \mJ_*} c_2\right)(0,0) = 4T \, \ta(0,0; \mA, \mB) = 4T \, \mcW_\mcg(\mA, \mB)^2,
	\end{align*}
	where the final equality uses Theorem~\ref{main2}(iii). This proves the estimate in the zero-mean case.
	
	Now consider general L\'evy processes $\{X_t\}, \{Y_t\}$ with potentially nonzero means. Write
	\(
	X_t = \bar{X}_t + t m_\mA, Y_t = \bar{Y}_t + t m_\mB,
	\)
	where $\{\bar{X}_t\}, \{\bar{Y}_t\}$ are the centered L\'evy processes with generators $\bar{\mA}, \bar{\mB}$. Note that
	\[
	\mcW^\Lambda_\mcg(\mA, \mB) = \ta(0,0; \bar{\mA}, \bar{\mB}) = \ta(0,0; \mA, \mB).
	\]
	Using the triangle inequality and the elementary bound \(|a + b|^2 \leq 2(|a|^2 + |b|^2)\), we obtain:
	\begin{align*}
		\mathbb{E}^{(0,0)} \left[ \sup_{t \in [0,T]} |X_t - Y_t|^2 \right]
		&\leq 2 \, \mathbb{E}^{(0,0)} \left[ \sup_{t \in [0,T]} |\bar{X}_t - \bar{Y}_t|^2 \right] + 2 |T m_\mA - T m_\mB|^2 \\
		&\leq 8T \, \ta(0,0; \mA, \mB) + 2T^2 |m_\mA - m_\mB|^2 \\
		&\leq 8 \max\{T, T^2\} \, \mcW^\Lambda_\mcg(\mA, \mB)^2. \qedhere
	\end{align*}
\end{proof}



\appendix
\renewcommand{\thelemma}{A.\arabic{lemma}}
\renewcommand{\thetheorem}{A.\arabic{theorem}}
\renewcommand{\theproposition}{A.\arabic{proposition}}
\renewcommand{\theremark}{A.\arabic{remark}}

\section{Approximation of Admissible Pairs}

In this appendix, we construct smooth approximations of admissible test function pairs, as required in the proofs of Lemmas~\ref{lem:denseL1} and~\ref{lem:approx-C3}. Throughout, we fix two L\'evy measures \(\mu, \nu \in \La_2(\mathbb{R}^d)\) with finite second moments.
By an \emph{admissible pair}, we mean any lower semicontinuous functions \((\varphi, \psi) \in L^1(\mu) \oplus L^1(\nu)\) such that
\[
\varphi(x) + \psi(y) \le c_2(x, y) := \frac{1}{2} |x - y|^2, \qquad \text{for all } x, y \in \mathbb{R}^d.
\]
Let \(\mathcal{F}=\mcl{F}(\mu,\nu)\) denote the collection of all admissible pairs. To prove Lemma~\ref{lem:denseL1}, we introduce the following subclasses of \(\mathcal{F}\):
\begin{itemize}
	\item \(\mathcal{F}_0\): pairs \((\varphi, \psi)\) vanishing in a neighborhood of the origin and satisfying the bound 
	\begin{align*}
		|\varphi(x)|,|\psi(x)|\le C\min\cb{1,|x|^2},\qquad \mbox{for some }C\ge 0.
	\end{align*}
	Particularly $\vphi,\psi$ are bounded lower semicontinuous.
	\item \(\mathcal{F}_0^1\): elements of \(\mathcal{F}_0\) with \(\varphi, \psi\) bounded  Lipschitz;
	\item \(\mathcal{F}_0^2\): elements of \(\mathcal{F}_0^1\) with \(\varphi, \psi\) in $C^2_b(\mbr^d)$.
\end{itemize}
These classes satisfy the natural inclusions:
\[
\mathcal{F}_0^2 \subset \mathcal{F}_0^1 \subset \mathcal{F}_0 \subset \mathcal{F}.
\]
Lemma \ref{lem:denseL1} follows as a consequence of the density of $\mcl{F}_0^2$ in $\mcl{F}$. 

\subsection{Density of \texorpdfstring{$\mathcal{F}_0 \subset \mathcal{F}$}{F0 in F}}

Given a pair of lower semicontinuous functions \( f, g \colon \mathbb{R}^d \to \mathbb{R} \cup \{-\infty\} \) with \( f \le g \), and a function \( \varphi \colon \mathbb{R}^d \to \mathbb{R} \), we define the \emph{cutoff} function:
\[
\hat{\varphi} = \chi(\varphi; f, g)(x) := \max\{f(x), \min\{\varphi(x), g(x)\}\} = 
\begin{cases}
	g(x) & \text{if } \varphi(x) \ge g(x), \\
	\varphi(x) & \text{if } f(x) < \varphi(x) < g(x), \\
	f(x) & \text{if } \varphi(x) \le f(x).
\end{cases}
\]
Clearly, the cutoff satisfies \( f \le \hat{\varphi} \le g \).

\begin{lemma}\label{lem:cutoff}
	(a) Let \( f \le g \) with \( (f,g) \in \mathcal{F} \). If \( (\varphi,\psi) \in \mathcal{F} \), then the pair \( (\hat{\varphi},\hat{\psi}) \) also belongs to \( \mathcal{F} \), where  
	\[
	\hat{\varphi} := \chi(\varphi; f,g),\qquad \hat{\psi} := \chi(\psi; f, g) .
	\]
	
	(b) If $(\varphi,\psi)\in\mcl{F}$ and $T:\mbr^d\to\mbr^d$ is a contraction (i.e., 1-Lipchitz). 
	Suppose $\vphi\circ T\in L^1(\mu),\psi\circ T \in L^1(\nu)$.
	Then $(\varphi\circ T,\psi\circ T)\in \mcl{F}$. 
\end{lemma}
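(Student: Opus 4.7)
The plan is to verify three properties for the cutoff pair in (a): lower semicontinuity, integrability against the respective L\'evy measures, and the admissibility inequality. Lower semicontinuity is immediate, since pointwise $\min$ and $\max$ of lsc functions are lsc. Before proceeding further, I would extract two structural consequences from the hypotheses: setting $x = y$ in the admissibility of $(f,g)$ gives $f(x) + g(x) \le c_2(x,x) = 0$, which combined with $f \le g$ yields $f \le 0$ pointwise and $|g| \le |f|$. For integrability, on $\{\psi \ge f\}$ one has $\hat{\psi} = \min(\psi, g)$ so $|\hat{\psi}| \le |\psi| + |g|$, while on $\{\psi < f\}$ one has $\hat{\psi} = f$, and since the chain $\psi < f \le 0$ forces $|f| \le |\psi|$, the same bound $|\hat{\psi}| \le |\psi| + |g|$ persists; thus $\hat{\psi} \in L^1(\nu)$ from $\psi, g \in L^1(\nu)$. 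The argument for $\hat{\varphi} \in L^1(\mu)$ is analogous, now using $\varphi, f \in L^1(\mu)$ together with $|g| \le |f|$.

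The core step is the pointwise admissibility $\hat{\varphi}(x) + \hat{\psi}(y) \le c_2(x,y)$, which I would handle by a nine-case analysis indexed by the branch ($f, \varphi, g$ for $\hat{\varphi}(x)$, and similarly $f, \psi, g$ for $\hat{\psi}(y)$) that is actually attained. The three ``diagonal'' cases $(\varphi,\psi), (f,g), (g,f)$ reduce directly to the admissibility of either $(\varphi,\psi)$ or $(f,g)$, the last via the symmetry $c_2(x,y) = c_2(y,x)$. The case $(f,f)$ reduces to $(f,g)$ using $f \le g$, while $(g,g)$ reduces to $(\varphi,\psi)$ using the defining conditions $\varphi(x) \ge g(x)$ and $\psi(y) \ge g(y)$. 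The four mixed cases are handled by chaining two inequalities; for instance, in branch $(\varphi(x), f(y))$ the condition $\varphi(x) \le g(x)$ combined with $(f,g) \in \mathcal{F}$ gives $\varphi(x) + f(y) \le g(x) + f(y) \le c_2(x,y)$, and in branch $(g(x), \psi(y))$ the condition $\varphi(x) \ge g(x)$ together with admissibility of $(\varphi,\psi)$ yields $g(x) + \psi(y) \le \varphi(x) + \psi(y) \le c_2(x,y)$; the remaining two branches are symmetric.

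For part (b), all three properties follow immediately. Lower semicontinuity of $\varphi \circ T$ and $\psi \circ T$ holds because $T$ is continuous (being 1-Lipschitz), integrability is assumed, and admissibility follows by applying the admissibility of $(\varphi, \psi)$ at $(T(x), T(y))$ together with the contraction property:
\[
\varphi(T(x)) + \psi(T(y)) \le \tfrac{1}{2}|T(x) - T(y)|^2 \le \tfrac{1}{2}|x - y|^2 = c_2(x, y).
\]

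The main obstacle is organizational rather than conceptual: the nine-case admissibility analysis in (a) requires careful bookkeeping of which branch defines each cutoff and which of the two admissibility inequalities applies. No single step is genuinely difficult once the structural facts $f \le 0$, $|g| \le |f|$, and the symmetry of $c_2$ are in hand; the main effort lies in laying out the case table and the chaining arguments cleanly.
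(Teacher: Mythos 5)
Your proof is correct and follows the same three-pronged plan (lower semicontinuity, integrability, admissibility) as the paper, with two noteworthy differences.

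For the admissibility inequality you carry out a nine-case enumeration, whereas the paper collapses it to just two cases: either one of $\hat\varphi(x),\hat\psi(y)$ equals the corresponding $f$-value, in which case $\hat\varphi(x)+\hat\psi(y)\le f(x)+g(y)\le c_2(x,y)$ (or the symmetric bound), or both equal $\min\{\cdot,g\}$, in which case $\hat\varphi(x)+\hat\psi(y)\le \varphi(x)+\psi(y)\le c_2(x,y)$. Your nine cases all hold, but the dichotomy ``hits the $f$-branch / doesn't'' absorbs the bookkeeping you flagged as the main effort.

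For integrability, your argument is actually sharper than the paper's. You observe, by setting $x=y$ in the admissibility of $(f,g)$ together with $f\le g$, that $f\le 0$ and $|g|\le |f|$ pointwise, and then bound $|\hat\varphi|\le |\varphi|+|f|$ and $|\hat\psi|\le |\psi|+|g|$. This uses only $\varphi,f\in L^1(\mu)$ and $\psi,g\in L^1(\nu)$, which is precisely what $(f,g),(\varphi,\psi)\in\mathcal{F}$ provides. The paper instead asserts ``$f,g\in L^1(\mu)$ and $L^1(\nu)$,'' which is more than $(f,g)\in\mathcal{F}$ literally guarantees; that stronger integrability does hold in the subsequent application (where $f,g$ are bounded and vanish near the origin), but your route closes the gap cleanly at the level of the lemma statement. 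Part (b) matches the paper verbatim.
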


\begin{proof}
	(a) Since lower semicontinuity is preserved under pointwise maxima and minima, and since \( f, g, \varphi, \psi \) are all lower semicontinuous, it follows that \( \hat{\varphi} \) and \( \hat{\psi} \) are lower semicontinuous as well.
	Moreover, since \( f, g \in L^1(\mu) \) and \( L^1(\nu) \), and \( \hat{\varphi}, \hat{\psi} \) are bounded by \( f \) and \( g \), we also have \( \hat{\varphi} \in L^1(\mu) \), \( \hat{\psi} \in L^1(\nu) \).
	
	It remains to verify that \( \hat{\varphi}(x) + \hat{\psi}(y) \le c_2(x, y) \) for all \( x, y \in \mathbb{R}^d \), where \( c_2(x, y) := \frac{1}{2}|x - y|^2 \). To this end, we distinguish two cases. If either \( \hat{\varphi}(x) = f(x) \) or \( \hat{\psi}(y) = f(y) \), say the former without loss of generality, then by the bounds \( \hat{\psi}(y) \le g(y) \) and  \( (f, g)\in\mcl{F} \), we obtain
	\[
	\hat{\varphi}(x) + \hat{\psi}(y) \le f(x) + g(y) \le c_2(x, y).
	\]
	In the complementary case where both \( \hat{\varphi}(x) \neq f(x) \) and \( \hat{\psi}(y) \neq f(y) \), it follows from the definition of the cutoff that \( \hat{\varphi}(x) = \min\{\varphi(x), g(x)\} \) and \( \hat{\psi}(y) = \min\{\psi(y), g(y)\} \). In particular, we then have
	\[
	\hat{\varphi}(x) + \hat{\psi}(y) \le \varphi(x) + \psi(y) \le c_2(x, y).
	\]
	In either case, the inequality \( \hat{\varphi}(x) + \hat{\psi}(y) \le c_2(x, y) \) holds, as desired.
	
	(b) Since $T$ is continuous, and $(\varphi,\psi)$ is lower semicontinuous, so is $(\varphi\circ T,\psi\circ T)$. From the assumption, the pair is in $L^1(\mu)\oplus L^1(\nu)$. 
	Moreover it holds for all $x,y\in \mbr^d$:
	\begin{align*}
		\varphi(T(x))+\psi(T(y))&\le \f 12 |T(x)-T(y)|^2 \le \f 12 |x-y|^2.
	\end{align*}
	Hence $(\varphi\circ T,\psi\circ T)\in \mcl{F}$. 
\end{proof}

To complete the proof of the density of $\mcl{F}_0 \subset \mcl{F}$, we invoke Lemma~\ref{lem:cutoff} and construct a sequence $\{(f_n, g_n)\}_{n \in \mathbb{N}}$ of bounded continuous pairs, each vanishing on a neighborhood of the origin, such that  
\(
g_n \nearrow \frac12 |x|^2, f_n \searrow \chi_\infty,
\)
where  
\[
\chi_\infty(x) =
\begin{cases}
	0, & x = 0, \\
	-\infty, & x \neq 0.
\end{cases}
\]

\begin{lemma}\label{lem:comp-seq}
	There exists a sequence $\{(f_n, g_n)\}_{n \in \mathbb{N}}$ of pairs of bounded Lipschitz functions satisfying:
	\begin{itemize}
		\item $f_n \le 0 \le g_n$ and $(f_n,g_n)\in\mcl{F}$ for all $n$;
		\item $f_n$ and $g_n$ vanish on a neighborhood of $0$;
		\item $g_n(x) \nearrow \frac12 |x|^2$ and $f_n(x) \searrow \chi_\infty(x)$ pointwise as $n \to \infty$.
	\end{itemize}
\end{lemma}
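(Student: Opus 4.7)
The plan is to write down explicit formulas for the pair and verify the properties one by one. For each $n\ge 1$, set
\[
g_n(y) := \min\!\Bigl\{\,n,\; \tfrac{1}{2}\bigl((|y|-1/n)_+\bigr)^2\,\Bigr\},\qquad
f_n(x) := \inf_{y\in\mathbb{R}^d}\Bigl[\tfrac{1}{2}|x-y|^2 - g_n(y)\Bigr].
\]
The function $g_n$ is manifestly nonnegative, bounded by $n$, Lipschitz, vanishes on $B_{1/n}(0)$, and satisfies $g_n(y)\le \tfrac{1}{2}|y|^2$. Since both $n$ and the map $n\mapsto (|y|-1/n)_+^2$ are increasing, $g_n \nearrow \tfrac{1}{2}|y|^2$ pointwise. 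The function $f_n$ is the $c_2$-transform of $g_n$; by construction $f_n(x)+g_n(y)\le \tfrac{1}{2}|x-y|^2$, so $(f_n,g_n)\in\mathcal{F}$ up to verifying integrability, which is automatic because $f_n,g_n$ are bounded while $\mu,\nu$ are finite outside every neighborhood of $0$. Taking $y=x$ gives $f_n(x)\le -g_n(x)\le 0$, and truncation by $n$ gives $f_n\ge -n$. Monotonicity $g_n\nearrow g_{n+1}$ and the antitony of the $c_2$-transform immediately yield $f_n\searrow$.

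Three substantive checks remain. \emph{(a) $f_n$ vanishes on $B_{1/n}(0)$:} For $x\in B_{1/n}(0)$, if $|y|\le 1/n$ then $g_n(y)=0$, while if $|y|>1/n$ then $|x-y|\ge |y|-1/n$, so $\tfrac{1}{2}|x-y|^2\ge \tfrac{1}{2}(|y|-1/n)^2 \ge g_n(y)$. In either case $\tfrac{1}{2}|x-y|^2 - g_n(y)\ge 0$, forcing $f_n(x)=0$. \emph{(b) $f_n$ is globally Lipschitz:} Since $g_n\le n$, any near-optimal $y_*$ for $f_n(x_2)$ satisfies $\tfrac{1}{2}|x_2-y_*|^2\le g_n(y_*)\le n$, so $|x_2-y_*|\le\sqrt{2n}$. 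Using $f_n(x_1)\le \tfrac{1}{2}|x_1-y_*|^2 - g_n(y_*)$ and the identity $\tfrac{1}{2}|x_1-y_*|^2 - \tfrac{1}{2}|x_2-y_*|^2 = \tfrac{1}{2}(x_1-x_2)\cdot(x_1+x_2-2y_*)$ gives local Lipschitz constant $\sqrt{2n}+|x_1-x_2|$; combining with the global bound $|f_n|\le n$ for large displacements yields a single Lipschitz constant $L_n$ (depending on $n$). \emph{(c) $f_n(x)\to-\infty$ for $x\ne 0$:} Fix $x\ne 0$ and take $y=\lambda x$ with $\lambda=(1/n+\sqrt{2n})/|x|$, so that $g_n(\lambda x)=n$ and $\tfrac{1}{2}|x-\lambda x|^2=\tfrac{1}{2}(1/n+\sqrt{2n}-|x|)^2$. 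A direct algebraic expansion gives
\[
f_n(x)\le \tfrac{1}{2}\bigl(1/n+\sqrt{2n}-|x|\bigr)^2 - n \;=\; -\sqrt{2n}\,(|x|-1/n)+\tfrac{1}{2}(|x|-1/n)^2 - \tfrac{1}{2n^2} - \sqrt{2/n},
\]
which tends to $-\infty$ as $n\to\infty$. Combined with $f_n(0)=0$, this yields $f_n\searrow \chi_\infty$.

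The main subtlety is the coordinated choice of the vanishing radius $1/n$ for $g_n$ and the truncation level $n$: the radius must be small enough that the Lipschitz $c_2$-transform of $g_n$ still vanishes on $B_{1/n}(0)$ (which requires the kink of $g_n$ to occur exactly at radius $1/n$ so that the distance to the support of $g_n$ matches the square root of its quadratic growth), while the truncation level must grow fast enough to force $f_n(x)\to-\infty$ off the origin. Once these are in balance, all remaining verifications reduce to elementary one-variable estimates of the competition between $\tfrac{1}{2}|x-y|^2$ and $g_n(y)$ along the ray through $x$.
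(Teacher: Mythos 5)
Your construction is correct and genuinely different from the paper's. The paper starts from the conjugate pair of scaled quadratics $f_\delta(x)=\tfrac12(1-\delta^{-1})|x|^2$, $g_\delta(y)=\tfrac12(1-\delta)|y|^2$ and then post-composes both with a radial cutoff map $T_\varepsilon$ (equal to zero near the origin, constant far away) to obtain bounded Lipschitz functions vanishing near $0$; the admissibility $f_\delta\oplus g_\delta\le c_2$ is an algebraic fact, and the cutoff preserves it via the contraction stability of $\mathcal{F}$. You instead build $g_n$ directly as a shifted-and-truncated quadratic, then obtain $f_n$ as the $c_2$-transform of $g_n$, so admissibility is automatic by construction. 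This buys a cleaner verification of $f_n\oplus g_n\le c_2$ at the price of having to show that $f_n$ (an infimal convolution, not a closed-form function) is bounded, Lipschitz, and vanishes near $0$; you handle these points correctly. In particular the observation that near-optimizers lie within distance $\sqrt{2n}$ of $x$, combined with the global bound $-n\le f_n\le 0$, gives the Lipschitz constant; and the choice $y=\lambda x$ along the ray through $x$ with $|y|=1/n+\sqrt{2n}$ cleanly yields $f_n(x)\to-\infty$ off the origin.

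One small algebra slip: the displayed identity
\[
\tfrac12\bigl(1/n+\sqrt{2n}-|x|\bigr)^2-n
\;=\;
-\sqrt{2n}\,(|x|-1/n)+\tfrac12(|x|-1/n)^2-\tfrac{1}{2n^2}-\sqrt{2/n}
\]
has two spurious terms on the right; putting $u=|x|-1/n$ one finds $\tfrac12(\sqrt{2n}-u)^2-n=-\sqrt{2n}\,u+\tfrac12 u^2$, so the correct right-hand side is simply $-\sqrt{2n}(|x|-1/n)+\tfrac12(|x|-1/n)^2$ with no extra constants. This does not affect the conclusion, since those constants vanish as $n\to\infty$ and the leading term $-\sqrt{2n}|x|\to-\infty$ remains the driver, but the equality as written is not an identity.
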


\begin{proof}
	We use the following standard construction.  
	For $\delta \in (0,1)$, set
	\[
	f_\delta(x) = \frac12 \big(1 - \delta^{-1}\big) |x|^2, 
	\qquad
	g_\delta(y) = \frac12 (1 - \delta) |y|^2.
	\]
	Then $(f_\delta, g_\delta)$ is a $c_2$-concave dual pair with $f_\delta \le0\le g_\delta$, in particular satisfying $f_\delta \oplus g_\delta \le c_2$.  
	For $\varepsilon > 0$, define $\rho_\varepsilon : [0, \infty) \to [0, \infty)$ by
	\[
	\rho_\varepsilon(r) =
	\begin{cases}
		0, & r \in [0, \varepsilon],\\
		r - \varepsilon, & r \in (\varepsilon, \tfrac{1}{\varepsilon} - \varepsilon),\\
		\tfrac{1}{\varepsilon}, & r \ge \tfrac{1}{\varepsilon} - \varepsilon.
	\end{cases}
	\]
	Let $T_\varepsilon(x) = \frac{x}{|x|} \rho_\varepsilon(|x|)$.  
	For each $\varepsilon > 0$, the map $T_\varepsilon$ is a contraction and $T_\varepsilon(x) \to x$ locally uniformly as $\varepsilon \searrow 0$.  
	
	Define
	\[
	f_{\delta,\varepsilon}(x) := f_\delta\big(T_\varepsilon(x)\big), 
	\qquad 
	g_{\delta,\varepsilon}(y) := g_\delta\big(T_\varepsilon(y)\big).
	\]
	From the construction we have $f_{\delta,\varepsilon} \le0\le g_{\delta,\varepsilon}$; each pair $(f_{\delta,\varepsilon}, g_{\delta,\varepsilon})$ is bounded, Lipschitz, and vanishes on $B_\varepsilon(0)$.  
	Also, we have $(f_{\de,\ep},g_{\de,\ep})\in \mcl{F}$, by Lemma \ref{lem:cutoff}.
	Moreover, $(\delta, \varepsilon) \mapsto f_{\delta,\varepsilon}$ decreases pointwise to $\chi_\infty$, while $(\delta, \varepsilon) \mapsto g_{\delta,\varepsilon}$ increases pointwise to $\frac12 |x|^2$ as $(\delta, \varepsilon) \searrow (0,0)$.  
	Choosing a sequence $(\delta_n, \varepsilon_n) \searrow (0,0)$ and setting 
	\(
	f_n := f_{\delta_n, \varepsilon_n}, 
	g_n := g_{\delta_n, \varepsilon_n},
	\)
	yields the desired sequence.
\end{proof}

\newcommand{\fblsc}{\mcl{F}_{\mathrm{blsc}}}

\begin{lemma}\label{lem:F1-dense}
	$\mcl{F}_0\subset \mcl{F}$ is dense w.r.t. the topology of $L^1(\mu)\oplus L^1(\nu)$.
\end{lemma}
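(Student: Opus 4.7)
The plan is to approximate via the cutoff construction of Lemma \ref{lem:cutoff}(a), applied with the envelope sequence $(f_n, g_n)$ from Lemma \ref{lem:comp-seq}. For each $(\varphi, \psi) \in \mcl{F}$, I will define
\[
\varphi_n := \chi(\varphi; f_n, g_n), \qquad \psi_n := \chi(\psi; f_n, g_n),
\]
and argue that $(\varphi_n, \psi_n) \in \mcl{F}_0$ with $\varphi_n \to \varphi$ in $L^1(\mu)$ and $\psi_n \to \psi$ in $L^1(\nu)$.

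First I would dispatch the structural properties. Admissibility $(\varphi_n, \psi_n) \in \mcl{F}$ is immediate from Lemma \ref{lem:cutoff}(a) since $(f_n, g_n) \in \mcl{F}$. Membership in $\mcl{F}_0$ follows from the sandwich $f_n \le \varphi_n \le g_n$: the vanishing near the origin is inherited from $f_n \equiv g_n \equiv 0$ there, and the $n$-dependent bound $|f_n|, |g_n| \le C_n \min\{1,|x|^2\}$ furnished by the explicit construction in Lemma \ref{lem:comp-seq} transfers directly to $|\varphi_n|, |\psi_n|$. Lower semicontinuity is preserved under the max/min operations defining $\chi$.

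The main step will be the $L^1$ convergence, which I plan to establish by dominated convergence. At $\mu$-a.e.\ $x \ne 0$ where $\varphi(x)$ is finite, one has $f_n(x) \searrow -\infty$ and $g_n(x) \nearrow \tfrac{1}{2}|x|^2$, so provided $\varphi(x) \le \tfrac{1}{2}|x|^2$ the cutoff eventually becomes inactive and $\varphi_n(x) = \varphi(x)$ for large $n$. For domination, a case analysis over the three regimes $\{\varphi > g_n\}$, $\{f_n \le \varphi \le g_n\}$, $\{\varphi < f_n\}$ yields the pointwise estimate $|\varphi_n - \varphi| \le |\varphi| + \tfrac{1}{2}|x|^2$; the right-hand side lies in $L^1(\mu)$ since $\varphi \in L^1(\mu)$ and $\mu \in \Lambda_2(\mbr^d)$ has finite second moment. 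Dominated convergence then delivers $\varphi_n \to \varphi$ in $L^1(\mu)$, and a symmetric argument handles $\psi_n \to \psi$ in $L^1(\nu)$.

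The hard part will be ensuring the pointwise bound $\varphi(x) \le \tfrac{1}{2}|x|^2$ required above, since admissibility at $y = 0$ only yields $\varphi(x) \le \tfrac{1}{2}|x|^2 - \psi(0)$ and $\psi(0)$ may be negative (or $-\infty$). I plan to circumvent this through a preliminary reduction: slightly enlarge the envelopes to $(f_n - a_n, g_n + a_n) \in \mcl{F}$ with constants $a_n \ge 0$ adapted to the pair so that $g_n + a_n$ eventually dominates $\varphi$ pointwise, and then restore the vanishing at the origin (needed for $\mcl{F}_0$) through a localized secondary modification whose $L^1$ cost vanishes as $a_n \to 0$. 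A symmetric construction, using admissibility at $x = 0$ and possibly additional enlargement of $g_n$, takes care of the matching bound $\psi(y) \le \tfrac{1}{2}|y|^2$ needed for the $\psi_n \to \psi$ convergence.
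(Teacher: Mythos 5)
You correctly identified the genuine gap: the paper's proof invokes the two-sided bound $\chi_\infty(x) \le \varphi(x) \le \tfrac{1}{2}|x|^2$ (and likewise for $\psi$) without justification, and this is not automatic for an arbitrary pair in $\mcl{F}$. Up to and including the dominated-convergence step, your argument is the same as the paper's, so the only question is how to handle this bound.

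However, your proposed workaround (enlarging envelopes to $(f_n - a_n,\, g_n + a_n)$ and then patching the vanishing near the origin, with $a_n \to 0$) cannot succeed, because the lemma in the unrestricted form you are targeting is actually false. Take $\mu = \nu = \delta_{x_0}$ for some $x_0$ with $0<|x_0|<\sqrt{2}$, and take $(\varphi,\psi)=(1,-1)$, which lies in $\mcl{F}(\mu,\nu)$ since both measures are finite. Any $(\varphi_n,\psi_n)\in\mcl{F}_0$ has $\psi_n(0)=0$, which forces $\varphi_n(x_0) \le \tfrac{1}{2}|x_0|^2 < 1$, so $\|\varphi_n - \varphi\|_{L^1(\mu)} = |1-\varphi_n(x_0)| \ge 1 - \tfrac{1}{2}|x_0|^2 > 0$ for all $n$: no approximating sequence in $\mcl{F}_0$ exists. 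Concretely, in your construction one needs $a_n \ge \sup_x\bigl(\varphi(x)-g_n(x)\bigr) \ge \varphi(0)-0$, so whenever $\varphi(0)>0$ (let alone whenever $\varphi$ exceeds $\tfrac12|x|^2$ somewhere) the constants $a_n$ cannot tend to zero, and the ``localized secondary modification'' has an $L^1$ cost that does not vanish.

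The correct resolution is not to chase the general statement but to observe that the lemma implicitly carries the normalization $\varphi(0)=\psi(0)=0$, which is present in every place it is invoked (cf.\ the classes in Theorem~\ref{LK-duality-thm}). Under that normalization, evaluating $\varphi\oplus\psi\le c_2$ at $y=0$ and at $x=0$ gives $\varphi(x)\le\tfrac12|x|^2$ and $\psi(y)\le\tfrac12|y|^2$, while $\varphi(0)=0=\chi_\infty(0)$ supplies $\chi_\infty\le\varphi$; the paper's cutoff and dominated-convergence argument then goes through $\mu$-a.e.\ (resp.\ $\nu$-a.e.) exactly as written. So the hypothesis should be added to the statement, rather than proved away by the reduction you sketch.
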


\begin{proof}
	Choose $(\varphi,\psi)\in \mcl{F}$, and our goal is to construct a sequence $\{(\varphi_n,\psi_n)_n\}\subset \mcl{F}_0$ such that $(\varphi_n,\psi_n)\to(\varphi,\psi)$ in $L^1(\mu)\oplus L^1(\nu)$. Let $\{(f_n,g_n)\}_n$ be the sequence given in Lemma \ref{lem:comp-seq}, and define the cutoff sequences 
	\begin{align*}
		\varphi_n := \chi(\varphi;f_n,g_n),\qquad \psi_n:= \chi(\psi;f_n,g_n).
	\end{align*}
	By Lemma \ref{lem:cutoff}, $(\varphi_n,\psi_n)\in\mcl{F}$. Since $\chi_\infty(x)\le \varphi(x),\psi(x) \le \frac 12 |x|^2$ and $g_n\nearrow \frac 12 |x|^2$, $f_n\searrow \chi_{\infty}(x)$, it follows $|\varphi_n|\nearrow |\varphi|,|\psi_n|\nearrow |\psi|$ pointwise as $n\to\infty$. The dominated convergence theorem implies $\varphi_n\to \varphi$ in $L^1(\mu)$, $\psi_n\to \psi$ in $L^1(\nu)$. 
\end{proof}

\subsection{Density of smooth admissible test functions (proof of Lemma \ref{lem:denseL1})}

We now show that $\mcl{F}_0^1$, the subclass of Lipschitz pairs, is dense in $\mcl{F}$.  
This follows from the fact that any bounded lower semicontinuous function can be approximated by an increasing sequence of Lipschitz functions.  
Specifically, given a lower semicontinuous function $\varphi$ bounded below by $\varphi(x) \ge C$, define
\begin{align}\label{eq:lip-approx}
	\varphi_n(x) := \inf_{y \in \mbr^d} \big[ \varphi(y) + n|x-y| \big],
\end{align}
that is, the infimal convolution of $\varphi$ with $n|x|$ (see \eqref{def:infconvo}).  
Then $\varphi_n$ is $n$-Lipschitz and satisfies $\varphi_n \nearrow \varphi$ pointwise as $n \to \infty$.

Moreover, the following stability property holds: if $f$ is $C$-Lipschitz and $\varphi \geq f$, then $\varphi_n \geq f$ for all $n \geq C$.  In particular, if $\varphi$ is bounded and vanishes in a neighbourhood of the origin, then for all sufficiently large $n$ the same holds for $\varphi_n$.
This property will be useful in establishing a uniform bound in $n$ for the sequence $\{\varphi_n\}_{n}$.

\begin{lemma}\label{lem:F01dense}
	The set $\mcl{F}_0^1 \subset \mcl{F}$ is dense with respect to the topology of $L^1(\mu) \oplus L^1(\nu)$. 
\end{lemma}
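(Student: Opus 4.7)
The plan is to reduce the problem to the subclass $\mcl{F}_0$ via the density result already established in Lemma~\ref{lem:F1-dense}, and then to approximate each $(\varphi,\psi) \in \mcl{F}_0$ by its Moreau-type infimal convolution. Specifically, for $(\varphi,\psi) \in \mcl{F}_0$ I would define
\[
\varphi_n(x) := \inf_{x' \in \mbr^d} \big[\varphi(x') + n|x-x'|\big], \qquad \psi_n(y) := \inf_{y' \in \mbr^d} \big[\psi(y') + n|y-y'|\big],
\]
which are $n$-Lipschitz and bounded, satisfy $\varphi_n \le \varphi$ and $\psi_n \le \psi$, and increase pointwise to $\varphi$ and $\psi$ as $n \to \infty$ by lower semicontinuity and boundedness of the originals. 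Admissibility $\varphi_n \oplus \psi_n \le c_2$ will follow immediately from $\varphi_n(x) + \psi_n(y) \le \varphi(x) + \psi(y) \le c_2(x,y)$, and boundedness of $\varphi_n,\psi_n$ is inherited from that of $\varphi,\psi$.

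The main obstacle is ensuring that $\varphi_n, \psi_n$ continue to vanish on a neighbourhood of the origin for all sufficiently large $n$; a priori, the infimal convolution may shrink the zero set. To handle this, I would invoke the stability property mentioned just before the lemma statement: if $\varphi \ge f$ for some $C$-Lipschitz $f$, then $\varphi_n \ge f$ for all $n \ge C$ (one short calculation verifies this by using the Lipschitz bound $f(y) \ge f(x) - C|x-y|$ inside the infimum). Concretely, since $\varphi$ vanishes on some ball $B_\ep(0)$ and is bounded below by $-M$, I would construct an explicit Lipschitz barrier $f$, of Lipschitz constant at most $2M/\ep$, that vanishes on $B_{\ep/2}(0)$, equals $-M$ outside $B_\ep(0)$, and satisfies $f \le \varphi$ pointwise (the analogous barrier is built for $\psi$). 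Then, for $n \ge 2M/\ep$, the sandwich $f \le \varphi_n \le \varphi$ forces $\varphi_n \equiv 0$ on $B_{\ep/2}(0)$, giving the required vanishing.

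For the $L^1$ convergence, once $\varphi_n$ vanishes on $B_{\ep/2}(0)$ and is uniformly bounded by $M$, the difference $\varphi-\varphi_n$ is dominated by $2M\,\chi_{B_{\ep/2}(0)^c}$, which lies in $L^1(\mu)$ since $\mu$ is a L\'evy measure and hence finite on sets bounded away from the origin. Dominated convergence then yields $\varphi_n \to \varphi$ in $L^1(\mu)$, and analogously $\psi_n \to \psi$ in $L^1(\nu)$, producing the desired approximating sequence in $\mcl{F}_0^1$ and completing the proof.
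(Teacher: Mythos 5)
Your proposal is correct and takes essentially the same route as the paper: reduce to $\mcl{F}_0$ via Lemma~\ref{lem:F1-dense}, approximate by the Moreau infimal convolution \eqref{eq:lip-approx}, invoke the stability of the infimal convolution with respect to a Lipschitz lower barrier to preserve vanishing near the origin for large $n$, and finish by dominated convergence. The only superficial difference is bookkeeping: the paper picks a single nonnegative Lipschitz envelope $f$ with $|\varphi|,|\psi|\le f$ that simultaneously serves as the barrier (via $-f$) and the $L^1$ dominating function, whereas you build a one-sided barrier for the stability step and use $2M\,\chi_{B_{\ep/2}(0)^c}$ as a separate dominating function; both are valid.
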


\begin{proof}
	By Lemma \ref{lem:F1-dense}, it suffices to prove that $\mcl{F}_0^1$ is dense in $\mcl{F}_0$.  
	Fix $(\varphi,\psi) \in \mcl{F}_0$, where $\varphi$ and $\psi$ are bounded, lower semicontinuous, and vanish in a neighbourhood of the origin.  
	
	Choose a nonnegative Lipschitz function $f$ that also vanishes in a neighbourhood of the origin and satisfies
	\[
	|\varphi(x)|,\, |\psi(x)| \le f(x) \quad \text{for all $x$},
	\]
	which is clearly possible.  	
	Let $(\varphi_n,\psi_n)$ be defined as in \eqref{eq:lip-approx}.  
	We have $\varphi_n \nearrow \varphi$ and $\psi_n \nearrow \psi$, hence $\varphi_n \oplus \psi_n \le c_2$.  
	Since $f$ is Lipschitz, the stability property above ensures that $\varphi_n, \psi_n \ge -f$ for all sufficiently large $n$.  
	Thus for large $n$ we have $|\varphi_n|, |\psi_n| \le f$, so $(\varphi_n,\psi_n) \in \mcl{F}_0^1$.  
	
	Finally, since $f$ is integrable with respect to both $\mu$ and $\nu$, the dominated convergence theorem yields
	\(
	(\varphi_n,\psi_n) \to (\varphi,\psi)\) in  $L^1(\mu) \oplus L^1(\nu)$,
	completing the proof.
\end{proof}

Let us now establish the density of $\mcl{F}_0^2$ in $\mcl{F}$.  
The following lemma will be the key ingredient.

\begin{lemma}\label{lem:convo}
	Let \( q : \mathbb{R}^d \to [0, \infty) \) be given, and let \( \varphi, \psi : \mathbb{R}^d \to \mathbb{R} \) be lower semicontinuous functions satisfying
	\[
	\varphi(x) + \psi(y) \leq q(x - y), \qquad \text{for all } x, y \in \mathbb{R}^d.
	\]
	Let \( \eta \in L^1(\mathbb{R}^d) \) be a nonnegative mollifier, i.e.,
	\(
	\eta \geq 0, \int_{\mathbb{R}^d} \eta(x)\,dx = 1.
	\)
	Then the mollified pair \( (\eta * \varphi,\, \eta * \psi) \) also satisfies
	\[
	(\eta * \varphi)(x) + (\eta * \psi)(y) \leq q(x - y), \qquad \text{for all } x, y \in \mathbb{R}^d.
	\]
\end{lemma}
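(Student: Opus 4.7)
\textbf{Proof plan for Lemma \ref{lem:convo}.}
The approach is the most direct one: apply the assumed pointwise inequality at translated arguments $(x-z, y-z)$, weight by the nonnegative mollifier $\eta(z)$, and integrate in $z$. Since the displacement is preserved under common translation, the right-hand side $q(x-y)$ is unchanged, whereas the left-hand side reassembles as $(\eta*\varphi)(x)+(\eta*\psi)(y)$. Concretely, for every $z\in\mathbb{R}^d$ the hypothesis yields
\[
\varphi(x-z)+\psi(y-z)\;\le\;q\bigl((x-z)-(y-z)\bigr)\;=\;q(x-y),
\]
and multiplying by $\eta(z)\ge 0$ and integrating, using $\int\eta=1$, gives the claimed bound. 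The entire argument is one line once the integrals are known to make sense.

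The only real step is therefore the justification that $\eta*\varphi$ and $\eta*\psi$ are well-defined in $[-\infty,\infty)$ so that the two integrals can legitimately be summed. First I would reduce to the case where neither $\varphi$ nor $\psi$ is identically $-\infty$, since otherwise the left side of the conclusion is $-\infty$ and there is nothing to prove. Fixing then points $x_0,y_0$ with $\varphi(x_0),\psi(y_0)\in\mathbb{R}$, the hypothesis supplies the pointwise upper bounds
\[
\varphi(x)\;\le\;q(x-y_0)-\psi(y_0),\qquad \psi(y)\;\le\;q(x_0-y)-\varphi(x_0),
\]
which are real-valued since $q$ takes values in $[0,\infty)$. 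In the setting in which the lemma is invoked (compactly supported mollifiers, and $q=c_2$ or $q=\delta_2^\varepsilon$ of at most quadratic growth against L\'evy or Lebesgue weights), this guarantees that $\eta\,\varphi^+(x-\cdot)$ and $\eta\,\psi^+(y-\cdot)$ are integrable, so that $(\eta*\varphi)(x),(\eta*\psi)(y)\in[-\infty,\infty)$ and the sum is unambiguously defined.

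With well-definedness in hand, I would invoke Tonelli's theorem (applicable since $\eta\ge 0$ and the positive/negative parts can be treated separately thanks to the bounds above) to legitimize the termwise integration. The change of variables $z\mapsto x-z$ and $z\mapsto y-z$ identifies the two integrals with $(\eta*\varphi)(x)$ and $(\eta*\psi)(y)$ respectively, and the right-hand side collapses to $q(x-y)\int\eta=q(x-y)$. There is no genuine obstacle here; the ``hard'' part is merely the bookkeeping around the possibility that the lower semicontinuous functions $\varphi,\psi$ take the value $-\infty$, which is handled by the a priori bounds above.
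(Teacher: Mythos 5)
Your proposal is correct and takes essentially the same one-line approach as the paper: apply the hypothesis at $(x-z,y-z)$, weight by $\eta(z)\ge 0$, and integrate using $\int\eta=1$. The extra care you take regarding possible $-\infty$ values is harmless but unnecessary here, since the statement already assumes $\varphi,\psi:\mathbb{R}^d\to\mathbb{R}$ are real-valued.
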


\begin{proof}
	For any \( x, y \in \mathbb{R}^d \),
	\[
	(\eta * \varphi)(x) + (\eta * \psi)(y)
	= \int_{\mathbb{R}^d} \big[ \varphi(x - z) + \psi(y - z) \big] \eta(z) \, dz
	\leq q(x - y),
	\]
	since the integrand is bounded above by \( q(x - y) \) pointwise.
\end{proof}

We can now deduce the desired density result.

\begin{lemma}\label{lem:F02dense}
	The set $\mcl{F}_0^2 \subset \mcl{F}$ is dense with respect to the topology of $L^1(\mu) \oplus L^1(\nu)$.
\end{lemma}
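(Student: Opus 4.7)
The plan is to invoke Lemma~\ref{lem:F01dense} to reduce the problem: since $\mcl{F}_0^1$ is dense in $\mcl{F}$ in the $L^1(\mu)\oplus L^1(\nu)$ topology, it suffices to approximate an arbitrary pair $(\varphi,\psi) \in \mcl{F}_0^1$ by pairs in $\mcl{F}_0^2$. The natural approximation scheme is smooth mollification. I will fix a standard mollifier $\eta_\delta \in C_c^\infty(\mathbb{R}^d)$ with $\eta_\delta \ge 0$, $\int \eta_\delta \, dx = 1$, and $\supp \eta_\delta \subset B_\delta(0)$, and set
\[
\varphi_\delta := \eta_\delta * \varphi, \qquad \psi_\delta := \eta_\delta * \psi.
\]

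To verify $(\varphi_\delta, \psi_\delta) \in \mcl{F}_0^2$ for small $\delta$, I will check three properties. First, since $\varphi$ and $\psi$ are bounded, $\varphi_\delta$ and $\psi_\delta$ lie in $C_b^\infty(\mathbb{R}^d) \subset C_b^2(\mathbb{R}^d)$, and standard estimates preserve the Lipschitz constants. Second, if $\varphi$ and $\psi$ vanish on $B_r(0)$, then for $\delta < r$ the mollified functions vanish on $B_{r-\delta}(0)$, hence on a neighborhood of the origin. Third, the crucial pointwise inequality $\varphi_\delta \oplus \psi_\delta \le c_2$ follows directly from Lemma~\ref{lem:convo} applied with $q(z) := \tfrac12 |z|^2$, since $c_2(x,y) = q(x-y)$.

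For the $L^1$ convergence, I will exploit the vanishing-near-origin property to cut off the region where the L\'evy measures may blow up. Fixing $\delta < r/2$, both $\varphi_\delta$ and $\varphi$ are supported in $\{|x| \ge r/2\}$ and uniformly bounded by $\|\varphi\|_\infty$, giving $|\varphi_\delta - \varphi| \le 2\|\varphi\|_\infty \, \chi_{\{|x|\ge r/2\}}$. Since $\mu$ is a L\'evy measure, $\mu(\{|x| \ge r/2\}) < \infty$, and by continuity of $\varphi$ we have $\varphi_\delta \to \varphi$ pointwise. Dominated convergence then yields $\int |\varphi_\delta - \varphi|\, d\mu \to 0$, and an identical argument gives $\psi_\delta \to \psi$ in $L^1(\nu)$.

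I do not anticipate any serious obstacle: the reduction is already done by Lemma~\ref{lem:F01dense}, the preservation of the admissibility constraint is exactly the content of Lemma~\ref{lem:convo}, and the $L^1$ convergence is straightforward because the vanishing-near-zero condition sidesteps the non-integrability of L\'evy measures near the origin. The only minor care required is to fix the vanishing radius $r$ before mollifying, so that the support cut-off $\{|x|\ge r/2\}$ works uniformly in $\delta$.
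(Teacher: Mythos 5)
Your proof is correct and follows essentially the same strategy as the paper's: reduce to approximating $\mcl{F}_0^1$ via Lemma~\ref{lem:F01dense}, mollify, and invoke Lemma~\ref{lem:convo} to preserve the constraint $\varphi_\delta\oplus\psi_\delta\le c_2$. Your dominated-convergence step, with the explicit dominating function $2\|\varphi\|_\infty\,\chi_{\{|x|\ge r/2\}}$ and the observation that $\mu(\{|x|\ge r/2\})<\infty$, is a slightly more careful rendering of the paper's appeal to bounded convergence, since it makes explicit that the vanishing-near-origin property is what renders the relevant piece of the L\'evy measure finite.
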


\begin{proof}
	It suffices to prove that $\mcl{F}_0^2$ is dense in $\mcl{F}_0^1$.  
	Fix a $C^\infty$ mollifier $\eta$ supported on $\overline{B_1(0)}$, and define $\eta_\varepsilon(x) = \varepsilon^{-d} \eta(\varepsilon^{-1}x)$.  
	Given $(\varphi,\psi) \in \mcl{F}_0^1$, set
	\[
	\varphi_\varepsilon := \eta_\varepsilon * \varphi, \qquad
	\psi_\varepsilon := \eta_\varepsilon * \psi.
	\]
	Since $(\varphi,\psi)$ vanishes on $B_r(0)$ for some $r > 0$, choosing $\varepsilon \in (0,r)$ ensures that $(\varphi_\varepsilon, \psi_\varepsilon)$ also vanishes on some (possibly smaller) neighborhood of the origin.
	
	By Lemma~\ref{lem:convo}, the smoothed pair satisfies $\varphi_\varepsilon \oplus \psi_\varepsilon \leq c_2$, hence $(\varphi_\varepsilon, \psi_\varepsilon) \in \mcl{F}_0^2$.  
	Moreover, since $(\varphi,\psi)$ is bounded Lipschitz, we have uniform convergence $(\varphi_\varepsilon,\psi_\varepsilon) \to (\varphi,\psi)$.  
	By the bounded convergence theorem, this implies $\varphi_\varepsilon \to \varphi$ in $L^1(\mu)$ and $\psi_\varepsilon \to \psi$ in $L^1(\nu)$.  
	Therefore $\mcl{F}_0^2$ is dense in $\mcl{F}_0^1$.
\end{proof}

\begin{proof}[Proof of Lemma \ref{lem:denseL1}]
	Denote $\mcl{F}_{(iv)}$ the family of all pairs given in Lemma \ref{lem:denseL1}(iv). We note $\mcl{F}_0^2\subset \mcl{F}_{(iv)}\subset \mcl{F}$. Since $\mcl{F}_0^2$ is dense in $\mcl{F}$ by Lemma \ref{lem:F02dense}, so is $\mcl{F}_{(iv)}$. 
\end{proof}

\subsection{Proof of Lemma \ref{lem:approx-C3}}
The proof of Lemma \ref{lem:approx-C3} relies on the approximation via the \emph{infimal convolution}.
Given two functions \( \chi, g : \mathbb{R}^d \to \mathbb{R} \cup \{\infty\} \), their \emph{infimal convolution} is defined by
\begin{align}\label{def:infconvo}
	(\chi \square g)(x) := \inf_{z \in \mathbb{R}^d} \left\{ \chi(z) + g(x - z) \right\} 
	= \inf_{\substack{z+z'=x}} \left\{ \chi(z) + g(z') \right\}.
\end{align}
The operator \( \square \) is \emph{commutative} in general, but not \emph{associative}.

In what follows, given a lower semicontinuous function \( q : \mathbb{R}^d \to [0,\infty] \), we write
\[
c_q(x,y) := q(x-y).
\]
Also, for a function \( f : \mathbb{R}^d \to \mathbb{R} \cup \{\infty\} \), we define the reflection
\[
f^\sim(x) := f(-x).
\]

\begin{lemma}\label{lem:inf-convo}
	Let $\chi, q : \mathbb{R}^d \to [0,\infty]$ and let $\varphi,\psi : \mathbb{R}^d \to \mathbb{R}$ be lower semicontinuous functions satisfying
	\(
	\varphi \oplus \psi \le c_q.
	\)
	Define
	\[
	\hat\varphi := \chi \square \varphi, 
	\quad \hat\psi := \chi \square \psi,
	\quad \hat q := q \square (\chi \square \chi^\sim),
	\quad \hat c_q(x,y) := \hat q(x-y).
	\]
	Then the pair $(\hat\varphi, \hat\psi)$ satisfies
	\(
	\hat\varphi \oplus \hat\psi \le \hat c_q.
	\)
\end{lemma}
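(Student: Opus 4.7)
The plan is a direct computation: unfold each infimal convolution on the left-hand side, apply the pointwise bound $\varphi \oplus \psi \le c_q$, and then reorganize the resulting double infimum so that it matches the definition of $\hat q$ on the right-hand side. There is no genuine obstacle; the only substantive step is choosing the right change of variables.

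First, I would write out the definitions explicitly: for any $x,y \in \mathbb{R}^d$,
\[
\hat\varphi(x) + \hat\psi(y) \;=\; \inf_{z_1,z_2 \in \mathbb{R}^d} \Bigl\{ \chi(z_1) + \chi(z_2) + \varphi(x-z_1) + \psi(y-z_2) \Bigr\}.
\]
Next, I would apply the hypothesis $\varphi(u) + \psi(v) \le q(u-v)$ with $u = x-z_1$ and $v = y-z_2$ to obtain
\[
\hat\varphi(x) + \hat\psi(y) \;\le\; \inf_{z_1,z_2} \Bigl\{ \chi(z_1) + \chi(z_2) + q\bigl((x-y) - (z_1 - z_2)\bigr) \Bigr\}.
\]

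The final step is to recognize the right-hand side as $\hat q(x-y)$. To this end, set $w := z_1 - z_2$ and $u := z_1$, $v := -z_2$, so that $u + v = w$ and $\chi(z_2) = \chi^\sim(v)$. Then
\[
(\chi \square \chi^\sim)(w) \;=\; \inf_{u+v=w} \bigl\{ \chi(u) + \chi^\sim(v) \bigr\} \;=\; \inf_{z_1 - z_2 = w} \bigl\{ \chi(z_1) + \chi(z_2) \bigr\},
\]
and splitting the inner infimum according to the value of $w$ gives
\[
\inf_{z_1,z_2} \Bigl\{ \chi(z_1) + \chi(z_2) + q\bigl((x-y)-(z_1-z_2)\bigr) \Bigr\} \;=\; \inf_w \Bigl\{ q\bigl((x-y)-w\bigr) + (\chi \square \chi^\sim)(w) \Bigr\} \;=\; \hat q(x-y).
\]
Combining the two displays yields $\hat\varphi(x) + \hat\psi(y) \le \hat q(x-y) = \hat c_q(x,y)$, as desired. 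Lower semicontinuity of $\varphi,\psi$ and nonnegativity of $\chi,q$ ensure all infima and sums are well defined in $[-\infty, +\infty]$, so no measurability or integrability issue arises.
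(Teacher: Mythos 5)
Your proof is correct and follows essentially the same line as the paper: unfold the two infimal convolutions, apply $\varphi\oplus\psi\le c_q$ inside the combined infimum, and regroup the variables to recognize $\hat q = q\square(\chi\square\chi^\sim)$. The only differences are cosmetic (variable naming and writing the first step as an equality rather than the paper's $\le$), so there is nothing to add.
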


\begin{proof}
	By the definition of the infimal convolution,
	\begin{align*}
		\hat\varphi(x) + \hat\psi(y) 
		&= \inf_{z} \left\{ \chi(z) + \varphi(x - z) \right\}
		+ \inf_{z'} \left\{ \chi(z') + \psi(y - z') \right\} \\
		&\le \inf_{z, z'} \left\{ \chi(z) + \chi(z') + \varphi(x - z) + \psi(y - z') \right\} \\
		&\le \inf_{z, z'} \left\{ \chi(z) + \chi(z') + q(x - y - (z - z')) \right\}.
	\end{align*}
	Setting \( w := x - y \) and \( u := z - z' \), we rewrite the last expression as
	\begin{align*}
		\inf_{u \in \mathbb{R}^d} \left\{ q(w - u) + \inf_{z - z' = u} \left[ \chi(z) + \chi(z') \right] \right\}&= \inf_{u \in \mathbb{R}^d} \left\{ q(w - u) + (\chi \square \chi^\sim)(u) \right\} \\
		&= q \square (\chi \square \chi^\sim)(w)= \hat q(w).
	\end{align*}
	Thus $\hat\varphi\oplus \hat\psi \le \hat c_q$, which proves the claim.
\end{proof}

In applications, the infimal convolution is often used to produce approximations from below.  
Let \( \{ \chi_\varepsilon \}_{\varepsilon > 0} \) be a family of nonnegative convex functions such that
\(
\chi_\varepsilon(0) = 0, \chi_\varepsilon \nearrow \chi_0 \text{ as } \varepsilon \searrow 0,
\)
where
\[
\chi_0(x) :=
\begin{cases}
	0 & \text{if } x = 0, \\
	\infty & \text{otherwise}.
\end{cases}
\]
Then, for any lower semicontinuous \( f \colon \mathbb{R}^d \to \mathbb{R} \), we have
\[
\chi_\varepsilon \square f \nearrow f \quad \text{pointwise as } \varepsilon \searrow 0.
\]

In the proof of Lemma~\ref{lem:approx-C3}, we take
\[
\chi_\varepsilon(x) :=
\begin{cases}
	0 & \text{if } x \in \overline{B_\varepsilon(0)}, \\
	\infty & \text{otherwise}.
\end{cases}
\]
If we define \( q(x) := \frac12 |x|^2 \), then
\[
\hat q^\varepsilon := q \square \big( \chi_{\varepsilon/2} \square \chi_{\varepsilon/2}^\sim \big) 
= q \square \chi_\varepsilon
\]
can be computed explicitly:
\[
\hat q^\varepsilon(x) = \frac12 \max(0, |x| - 2\varepsilon)^2.
\]

\begin{proof}[Proof of Lemma \ref{lem:approx-C3}]
	By Lemma \ref{lem:F01dense}, $\mathcal{F}_0^1$ is dense in $\mathcal{F}$, so it suffices to prove the result for $(\varphi,\psi) \in \mathcal{F}_0^1$. 
	Fix \( \varepsilon > 0 \), and define
	\[
	\hat\varphi_\varepsilon := \varphi \square \chi_{\varepsilon/2}, 
	\quad \hat\psi_\varepsilon := \psi \square \chi_{\varepsilon/2}.
	\]
	Since \( \varphi \) and \( \psi \) vanish in some $B_r(0)$, for any \( \varepsilon \in (0, r) \) the pair \( (\hat\varphi_\varepsilon, \hat\psi_\varepsilon) \) vanishes on $B_{r/2}(0)$. 
	Moreover, infimal convolution with $\chi_{\varepsilon/2}$ preserves Lipschitz continuity, so $(\hat\varphi_\varepsilon, \hat\psi_\varepsilon) \in \mathcal{F}_0^1$.  
	By Lemma~\ref{lem:inf-convo},
	\begin{align}\label{bdd:tempA3}
		\hat\varphi_\varepsilon \oplus \hat\psi_\varepsilon \le c_2^\varepsilon.	
	\end{align}
	
	Let $\{\eta_{\varepsilon'}\}_{\varepsilon'}$ be the smooth mollifiers from Lemma \ref{lem:F02dense}, with $\mathrm{supp}(\eta_{\varepsilon'}) \subset B_{\varepsilon'}(0)$.  
	Choosing $\varepsilon' = \varepsilon/4$ ensures that
	\(
	(\varphi_\varepsilon, \psi_\varepsilon) := \eta_{\varepsilon'} * (\hat\varphi_\varepsilon, \hat\psi_\varepsilon)
	\)
	still vanishes on $B_{r/4}(0)$. This yields a $C^2$ pair vanishing near the origin, satisfying condition (i) of Lemma \ref{lem:approx-C3}.
	
	Condition (ii) follows from \eqref{bdd:tempA3} and Lemma \ref{lem:convo}.  
	Finally, since $(\hat\varphi_\varepsilon, \hat\psi_\varepsilon) \to (\varphi,\psi)$ uniformly as $\varepsilon \searrow 0$, the mollified sequence $(\varphi_\varepsilon, \psi_\varepsilon)$ also converges uniformly.  
	By the bounded convergence theorem, $(\varphi_\varepsilon,\psi_\varepsilon) \to (\varphi,\psi)$ in $L^1(\mu) \oplus L^1(\nu)$, proving condition (iii).  
	This completes the proof.
\end{proof}

\renewcommand{\thelemma}{B.\arabic{lemma}}
\renewcommand{\thetheorem}{B.\arabic{theorem}}
\renewcommand{\theproposition}{B.\arabic{proposition}}
\renewcommand{\theremark}{B.\arabic{remark}}

\section{Proofs of Propositions \ref{prop:coup-char}, \ref{om-ta-prop} and \ref{prop:c2-reg}}\label{Appen-genresult}

\subsection{Equivalent characterization of Feller couplings}\label{Appen-Coupling}

Let $(\mA,D(\mA))\in\mcg(\Pi)$, and let $\{e^{t\mA}\}_{t\ge 0}$ denote the probability semigroup generated by $\mA$. Since this semigroup admits a transition kernel $\{\ka_t(x)\}_{t\ge 0,\,x\in \Pi}$, it acts as a family of bounded operators $C_0(\Pi)\to C_0(\Pi)$ in the sense that, for all $\vphi\in C_0(\Pi)$,  
\begin{align*}
	(e^{t\mA}\vphi)(x) &= \int_{\Pi} \vphi(y)\,\ka _t(x,dy).
\end{align*}
The operator $e^{t\mA}$, $t\ge 0$, naturally extends to the space of bounded continuous functions $C_b(\Pi)$, or more generally to arbitrary continuous or measurable functions, provided the integral is well-defined. The following properties are immediate:
\begin{itemize}
	\item Order preservation: If $\vphi_1\le\vphi_2$, then $e^{t\mA}\vphi_1\le e^{t\mA}\vphi_2$.
	\item Conservation: $e^{t\mA}1=1$.
	\item Monotone convergence: If $0\le \vphi_n\nearrow \vphi$, then $e^{t\mA}\vphi_n\nearrow e^{t\mA}\vphi$. 
\end{itemize}

For $\la>0$, the \emph{$\la$-resolvent operator} of $\mA$ is the bounded operator $R_\la(\mA):C_0(\Pi)\to C_0(\Pi)$ defined by
\begin{align*}
	R_\la(\mA)\vphi(x) &= \int_0^\infty e^{-\la s}(e^{s\mA}\vphi)(x)\,ds. 
\end{align*}
Equivalently, $R_\la(\mA)$ can be expressed as an integral operator with the \emph{$\la$-potential kernel}
\begin{align*}
	R_\la(\mA)\vphi(x) &= \int_{\Pi} \vphi(y)\,U_\la(x,dy), 
	\qquad U_\la(x,dy)= \int_0^\infty e^{-\la s}\ka_s(x,dy)\,ds. 
\end{align*}
Thus, its definition extends in the same way to general measurable functions, including all $\vphi\in C_b(\Pi)$, whenever the integral is well-defined. As with the semigroup, the resolvent is order preserving and satisfies monotone convergence.  
In particular, the \emph{scaled resolvent operator}
\begin{align*}
	I_\la(\mA) := \la R_\la(\mA)
\end{align*}
is also order preserving and satisfies the monotone convergence and  conservation property $I_\la(\mA)1=1$. Moreover, it is well known that $I_\la(\mA)$ approximates the identity as $\la\to \infty$, in the sense that
\begin{align*}
	I_\la(\mA)\vphi \to \vphi, \qquad \text{for all }\vphi\in C_0(\Pi).
\end{align*}

It is well known that $R_\la(\mA)$ is the inverse of $\la I-\mA$. More precisely, for all $\vphi\in C_0(\Pi)$ and $\psi\in D(\mA)$,  
\begin{align*}
	(\la I-\mA)R_\la(\mA)\vphi &= \vphi, 
	\qquad R_\la(\mA)(\la I-\mA)\psi = \psi. 
\end{align*}
These identities extend further to $\vphi\in C_b(\Pi)$ and $\psi\in \bar D(\mA)$. In particular, the second equality---which will be useful later---follows directly from the fundamental theorem of calculus:
\begin{align*}
	R_\la(\mA)(\la I-\mA)\psi(x)
	&= \int_0^\infty e^{-\la s}\,e^{s\mA} (\la I-\mA)\psi(x)\,ds \\
	&= \int_0^\infty \frac{d}{ds}\Big[e^{-\la s}\,e^{s\mA}\psi(x)\Big]ds 
	= \big(e^{-\la s}e^{s\mA}\psi(x)\big)\Big|_{s=0}^\infty 
	= \psi(x).
\end{align*}
Here, the equality is understood pointwise in $x\in \Pi$. In particular, the fact that the map $s\mapsto e^{-\la s}e^{s\mA}\psi(x)$ is differentiable for each $x\in\Pi$ is crucial. 

In the coming lemma, we present the \emph{Kato--Trotter type} approximation formula for probability semigroups, which expresses $e^{t\mA}$ as a limit involving the scaled resolvent operator $I_\la(\mA)$. The formula in fact holds for general $C_0$-semigroups (see \cite{engel2000one}), and thus for all $\vphi\in C_0(\Pi)$. Since we require an extension to $\vphi\in C_b(\Pi)$, and we are not aware of an explicit reference in the literature, we include a proof here for completeness.

\begin{lemma}\label{lem:tech}
	Let $\mA \in \mcg(\Pi)$, and define $I_\la(\mA) = \la R_\la(\mA)$. 
	Then for every $\vphi \in C_b(\Pi)$ and $t>0$, 
	\begin{align*}
		\lim_{n\to\infty} \big(I_{n/t}(\mA)\big)^n \vphi(x) 
		= \big(e^{t\mA}\vphi\big)(x),
		\qquad \text{for all } x\in\Pi. 
	\end{align*}
\end{lemma}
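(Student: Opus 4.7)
The plan is to exploit the probabilistic interpretation of the scaled resolvent $I_\la(\mA)=\la R_\la(\mA)$. First, I would iterate the identity
\[
I_\la(\mA)\vphi(x)=\int_0^\infty \la e^{-\la s}(e^{s\mA}\vphi)(x)\,ds
\]
and apply Fubini's theorem $n$ times to obtain, for every $\vphi\in C_b(\Pi)$, $x\in\Pi$, $\la>0$, and $n\ge 1$,
\[
[I_\la(\mA)]^n\vphi(x)\;=\;\int_0^\infty \frac{\la^n s^{n-1}e^{-\la s}}{(n-1)!}\,(e^{s\mA}\vphi)(x)\,ds\;=\;\mathbb{E}\!\left[(e^{S_n^\la\mA}\vphi)(x)\right],
\]
where $S_n^\la$ is a Gamma$(n,\la)$ random variable (equivalently, a sum of $n$ i.i.d.\ Exp$(\la)$ variables). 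Specializing to $\la=n/t$ gives $\mathbb{E}[S_n^{n/t}]=t$ and $\mathrm{Var}(S_n^{n/t})=t^2/n$, hence $S_n^{n/t}\to t$ in $L^2$, and a fortiori in probability, by Chebyshev's inequality.

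Next I would establish two properties of the map $s\mapsto (e^{s\mA}\vphi)(x)$ for fixed $x\in\Pi$ and $\vphi\in C_b(\Pi)$. Boundedness by $\|\vphi\|_\infty$ is immediate from the kernel representation $(e^{s\mA}\vphi)(x)=\int_\Pi\vphi(y)\,\ka_s(x,dy)$ together with the conservation $\ka_s(x,\Pi)=1$. The continuity of this map is the main technical point: it is equivalent to weak continuity of $s\mapsto\ka_s(x,\cdot)$ in $\mcP(\Pi)$. I would derive this from the Feller property as follows: strong continuity of $\{e^{t\mA}\}_{t\ge 0}$ on $C_0(\Pi)$ gives $\int\psi\,d\ka_h(x,\cdot)\to\psi(x)$ for all $\psi\in C_0(\Pi)$, i.e.\ vague convergence $\ka_h(x,\cdot)\to\de_x$ as $h\searrow 0$; combined with mass conservation and a standard Portmanteau-type argument on the Polish space $\Pi$, this upgrades to weak convergence. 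The semigroup identity $e^{(s+h)\mA}\vphi=e^{s\mA}(e^{h\mA}\vphi)$ then propagates stochastic continuity at $0$ to every $s>0$: since $e^{h\mA}\vphi\to\vphi$ pointwise boundedly, dominated convergence against the kernel $\ka_s(x,dy)$ yields $(e^{(s+h)\mA}\vphi)(x)\to(e^{s\mA}\vphi)(x)$.

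Having these two ingredients in place, I would conclude by applying the bounded convergence theorem to the distribution of $S_n^{n/t}$: since $S_n^{n/t}\to t$ in probability and $s\mapsto(e^{s\mA}\vphi)(x)$ is bounded continuous,
\[
[I_{n/t}(\mA)]^n\vphi(x)\;=\;\mathbb{E}\!\left[(e^{S_n^{n/t}\mA}\vphi)(x)\right]\;\xrightarrow[n\to\infty]{}\;(e^{t\mA}\vphi)(x),
\]
which is the claim.

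The main obstacle is precisely the continuity of $s\mapsto(e^{s\mA}\vphi)(x)$ for $\vphi\in C_b(\Pi)\setminus C_0(\Pi)$, as this does not follow from the strong continuity of the Feller semigroup on $C_0(\Pi)$ in any direct way. The argument above circumvents this by exploiting the probabilistic (kernel) representation and passing from vague to weak convergence via mass conservation; once this is in place, the rest of the proof is a clean application of the Gamma-distribution calculation and bounded convergence.
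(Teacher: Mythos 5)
Your approach is genuinely different from the paper's. The paper reduces to $0\le\vphi\le1$ by conservation, writes $\vphi=\vphi\chi_m+\vphi\chi_m'$ with $\chi_m\in C_c(\Pi)$ increasing to $1$, applies the known $C_0$-convergence to $\vphi\chi_m$, and controls the tail $\vphi\chi_m'$ by order preservation, letting $m\to\infty$ by monotone convergence. You instead invoke the Yosida--Phillips representation $[I_\la(\mA)]^n\vphi(x)=\mathbb{E}\bigl[(e^{S_n^\la\mA}\vphi)(x)\bigr]$ with $S_n^\la\sim\mathrm{Gamma}(n,\la)$, use the concentration $S_n^{n/t}\to t$, and close with bounded convergence. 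The Gamma representation, Fubini step and concentration estimate are all correct, and this route is attractive because it makes the Euler approximation $I_\la\approx e^{(1/\la)\mA}$ transparent. The paper's argument avoids altogether any question of continuity of $s\mapsto(e^{s\mA}\vphi)(x)$, which is the entire technical content of your approach.

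However, there is a genuine gap: you prove only \emph{right}-continuity of $s\mapsto(e^{s\mA}\vphi)(x)$, while the bounded convergence step requires \emph{two-sided} continuity at $s=t$. The Gamma$(n,n/t)$ variable puts roughly half its mass below $t$, so if $(e^{s\mA}\vphi)(x)$ had a left jump at $s=t$ your conclusion would fail. Your argument for right-continuity—write $(e^{(s+h)\mA}\vphi)(x)=\int(e^{h\mA}\vphi)(y)\,\ka_s(x,dy)$ and apply dominated convergence against the \emph{fixed} measure $\ka_s(x,\cdot)$—does not transfer: for $(e^{(s-h)\mA}\vphi)(x)=\int(e^{h\mA}\vphi)(y)\,\ka_{s-h}(x,dy)$ the reference measure now varies with $h$, and pointwise boundedly convergent integrands against weakly convergent measures need not converge. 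To repair this you need something stronger than pointwise convergence of $e^{h\mA}\vphi\to\vphi$: for instance, show that the convergence is \emph{locally uniform} (take $\psi\in C_c(\Pi)$ agreeing with $\vphi$ on a neighbourhood $U$ of a compact $K$, so on $K$ one has $e^{h\mA}\vphi-\vphi=(e^{h\mA}\psi-\psi)+e^{h\mA}(\vphi-\psi)$; the first term vanishes uniformly by the Feller property, and $|e^{h\mA}(\vphi-\psi)|\le2\|\vphi\|_\infty\,\ka_h(\cdot,U^c)$, which vanishes uniformly on $K$ by comparing with a Urysohn cutoff in $C_c$), and then combine with tightness of the family $\{\ka_{s-h}(x,\cdot)\}_h$. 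Alternatively, one can invoke the standard fact that a Feller process admits a cadlag modification with no fixed-time discontinuities, giving $\vphi(X_{s-h})\to\vphi(X_{s^-})=\vphi(X_s)$ $\mathbb{P}^x$-a.s.\ and hence left-continuity by dominated convergence. Either way this step needs to appear explicitly; as written the proof is incomplete.
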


\begin{proof}
	Let us write $S_n := I_{n/t}(\mA)^n$. 
	By the conservation property of both $I_\la(\mA)$ and $e^{t\mA}$, we have for any $\alpha,c\in\mbr$ and $\vphi\in C_b(\Pi)$,
	\[
	S_n(\alpha\vphi+c) = \alpha S_n(\vphi)+c, 
	\qquad 
	e^{t\mA}(\alpha\vphi+c) = \alpha e^{t\mA}\vphi + c.
	\]
	Hence, by rescaling and shifting $\vphi$, it is sufficient to prove the claim for functions $\vphi\in C_b(\Pi)$ with $0\le \vphi \le 1$. 
	
	Let $\{\chi_m\}_m\subset C_c(\Pi)$ be a sequence of compactly supported functions with $\chi_m\nearrow 1$ pointwise, and set $\chi_m' := 1-\chi_m$. We decompose
	\begin{align*}
		e^{t\mA}\vphi - S_n\vphi
		&= \big(e^{t\mA}[\vphi\chi_m] - S_n[\vphi\chi_m]\big) 
		+ \big(e^{t\mA}[\vphi\chi_m'] - S_n[\vphi\chi_m']\big).
	\end{align*}
	Since $\vphi\chi_m\in C_0(\Pi)$, and $S_n\to e^{t\mA}$ strongly on $C_0(\Pi)$ as mentioned before the proof, we have $S_n[\vphi\chi_m]\to e^{t\mA}[\vphi\chi_m]$ uniformly. 
	Passing to the limit $n\to\infty$ gives, for each $x\in\Pi$,
	\begin{align*}
		\limsup_{n\to\infty}\big|(e^{t\mA}\vphi - S_n\vphi)(x)\big|
		&\le |e^{t\mA}[\vphi\chi_m'](x)| 
		+ \limsup_{n\to\infty}|S_n[\vphi\chi_m'](x)| \\
		&\le e^{t\mA}[\chi_m'](x) 
		+ \limsup_{n\to\infty} S_n[\chi_m'](x).
	\end{align*}
	Here we used that both operators are order preserving and $0\le \vphi\chi_m'\le \chi_m'$. 
	Next, since $\chi_m\in C_0(\Pi)$, the strong convergence $S_n\to e^{t\mA}$ implies
	\begin{align*}
		\lim_{n\to\infty} S_n[\chi_m']
		&= \lim_{n\to\infty}\big(S_n[1-\chi_m]\big) 
		= 1 - \lim_{n\to\infty} S_n\chi_m 
		= 1 - e^{t\mA}\chi_m 
		= e^{t\mA}\chi_m'. 
	\end{align*}
	Hence we obtain
	\begin{align*}
		\limsup_{n\to\infty}\big|(e^{t\mA}\vphi - S_n\vphi)(x)\big|
		\le 2\big(1 - e^{t\mA}\chi_m(x)\big). 
	\end{align*}
	This estimate holds for all $m\ge 1$. Since $\chi_m\nearrow 1$, the monotone convergence property of $e^{t\mA}$ yields $e^{t\mA}\chi_m \nearrow 1$. 
	Letting $m\to\infty$ therefore gives
	\[
	\limsup_{n\to\infty}\big|(e^{t\mA}\vphi - S_n\vphi)(x)\big| = 0,
	\]
	which completes the proof.
\end{proof}

The next proposition, which contains Proposition \ref{prop:coup-char}, collects a range of equivalent characterisations for coupling generators, formulated in terms of semigroup, generator, and resolvent conditions. 
For a probability semigroup $\mA\in \mcg(\Pi)$ and $T>0$, recall the continuity domain $C(\mA)$ of $\mA$, as defined in Definition \ref{def:extdom}.

\begin{proposition}
	Let $\mA,\mB\in\mcg(\Pi)$ and $\mJ\in \mcg(\Pi^2)$. The following are equivalent.
	
	\begin{enumerate}[label=(\roman*)]
		\item For all $\vph,\psi\in C_0(\Pi)$, it holds
		\begin{align}\label{eq:semi-marg}
			e^{t\mJ}[\vph\otimes 1]=(e^{t\mA}\vph) \otimes 1,\qquad e^{t\mJ}[1\otimes\psi]= 1\otimes (e^{t\mB}\psi),\qquad t\ge 0. 
		\end{align}
		\item For all $\vph,\psi\in C_b(\Pi)$, \eqref{eq:semi-marg} holds. 
		\item For all $\vph\in C(\mA),\psi\in C(\mB)$, \eqref{eq:semi-marg} holds.
		\item For all $\vph\in \bar D(\mA),\psi\in \bar D(\mB)$, it holds
		\begin{align}\label{eq:gen-marg}
			\mJ[\vph\otimes 1]=(\mA\vph)\otimes 1,\qquad \mJ[1\otimes \psi]= 1\otimes (\mB\psi). 
		\end{align}
		\item For all $\vph\in D(\mA),\psi\in D(\mB)$, \eqref{eq:gen-marg} holds.
		\item For all $\la>0$, and $\varphi,\psi \in C_0(\Pi)$ that 
		\begin{align*}
			R_\la(\mJ)[\varphi\otimes 1]= (R_\la(\mA)\varphi)\otimes 1, \qquad R_\la(\mJ)[1\otimes \psi]= 1\otimes (R_\la(\mB)\psi).
		\end{align*}
	\end{enumerate}
\end{proposition}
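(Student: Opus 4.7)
My plan is to close the cycle of implications $(\mathrm{i}) \Rightarrow (\mathrm{ii}) \Rightarrow (\mathrm{iii}) \Rightarrow (\mathrm{iv}) \Rightarrow (\mathrm{v}) \Rightarrow (\mathrm{vi}) \Rightarrow (\mathrm{i})$. The substantive work will lie in transferring the marginal identity between the successively larger or smaller test-function classes ($C_0(\Pi)$, $C_b(\Pi)$, $C(\mA)$, $\bar D(\mA)$, $D(\mA)$); the remaining steps are standard semigroup--generator--resolvent manipulations, together with the Kato--Trotter approximation from Lemma~\ref{lem:tech}. By symmetry, I only write the $\vph$-marginal in each step; the $\psi$-marginal is identical.

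For $(\mathrm{i}) \Rightarrow (\mathrm{ii})$, I will pick an increasing sequence $\chi_m \in C_c(\Pi)$ with $\chi_m \nearrow 1$, decompose any $\vph \in C_b(\Pi)$ as $\vph = \vph^+ - \vph^-$, apply (i) to the compactly supported approximants $\vph^\pm \chi_m \in C_0(\Pi)$, and pass to the limit using that $e^{t\mJ}$ and $e^{t\mA}$ admit transition kernels and are order preserving; monotone convergence of kernels then yields~\eqref{eq:semi-marg} on $C_b(\Pi)$. For $(\mathrm{ii}) \Rightarrow (\mathrm{iii})$, observe that (ii) in kernel form asserts that the first marginal of the joint transition kernel of $\mJ$ at $(x,y)$ coincides with the transition kernel of $\mA$ at $x$. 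Hence, for any $\vph \in C(\mA)$, truncating $\vph_n := (-n) \vee \vph \wedge n \in C_b(\Pi)$ and applying (ii) to $\vph_n^{\pm}$ followed by monotone convergence yields $e^{t\mJ}[\vph \otimes 1](x,y) = e^{t\mA}\vph(x)$, with integrability transferred directly from $\vph \in C(\mA)$; the continuity and local-uniform convergence required for membership in $C(\mJ)$ then follow from the corresponding properties of $e^{t\mA}\vph$ on $\Pi$.

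For $(\mathrm{iii}) \Rightarrow (\mathrm{iv})$, if $\vph \in \bar D(\mA)$ then the difference quotient $h^{-1}(e^{h\mJ}[\vph \otimes 1] - \vph \otimes 1)$ equals $h^{-1}(e^{h\mA}\vph - \vph) \otimes 1$ and converges locally uniformly on $\Pi^2$ to $(\mA\vph) \otimes 1$; thus $\vph \otimes 1 \in \bar D(\mJ)$ and~\eqref{eq:gen-marg} holds. The implication $(\mathrm{iv}) \Rightarrow (\mathrm{v})$ is immediate from $D(\mA) \subset \bar D(\mA)$. For $(\mathrm{v}) \Rightarrow (\mathrm{vi})$, given $\vph \in C_0(\Pi)$ I set $\psi := R_\la(\mA)\vph \in D(\mA)$ so that $(\la I - \mA)\psi = \vph$; then by (v) applied to $\psi$ and linearity of $\mJ$ on $\psi \otimes 1 \in \bar D(\mJ)$,
\[
(\la I - \mJ)(\psi \otimes 1) = \bigl((\la I - \mA)\psi\bigr) \otimes 1 = \vph \otimes 1,
\]
so applying $R_\la(\mJ)$ to both sides gives $\psi \otimes 1 = R_\la(\mJ)[\vph \otimes 1]$, which is exactly the claimed resolvent identity.

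Finally, $(\mathrm{vi}) \Rightarrow (\mathrm{i})$ will invoke the Kato--Trotter formula. Since $R_\la(\mA)$ preserves $C_0(\Pi)$, iteration of (vi) yields
\[
I_{n/t}(\mJ)^n[\vph \otimes 1] = \bigl(I_{n/t}(\mA)^n \vph\bigr) \otimes 1
\]
for every $n \ge 1$ and $\vph \in C_0(\Pi)$; letting $n \to \infty$ and applying Lemma~\ref{lem:tech} to both sides recovers~\eqref{eq:semi-marg}. I expect the main delicacy to lie in $(\mathrm{ii}) \Rightarrow (\mathrm{iii})$, since $C(\mA)$ may contain unbounded functions (e.g.\ the quadratic cost $c_2$), and one must carefully verify that the integrability of $\vph$ against the $\mA$-kernel transfers to the coupled kernel via the marginal identity established in (ii); the monotone-truncation argument sketched above handles this cleanly, but requires separating positive and negative parts to avoid indeterminate forms.
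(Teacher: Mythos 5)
Your proposal is correct and follows essentially the same cycle of implications $(\mathrm{i}) \Rightarrow (\mathrm{ii}) \Rightarrow (\mathrm{iii}) \Rightarrow (\mathrm{iv}) \Rightarrow (\mathrm{v}) \Rightarrow (\mathrm{vi}) \Rightarrow (\mathrm{i})$ as the paper, with the same mechanisms at each step: monotone kernel approximation for $(\mathrm{i}){\Rightarrow}(\mathrm{ii}){\Rightarrow}(\mathrm{iii})$, difference quotients for $(\mathrm{iii}){\Rightarrow}(\mathrm{iv})$, resolvent algebra for $(\mathrm{v}){\Rightarrow}(\mathrm{vi})$, and the Kato--Trotter Lemma~\ref{lem:tech} for $(\mathrm{vi}){\Rightarrow}(\mathrm{i})$. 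The only cosmetic difference is your explicit use of the $\vph=\vph^+-\vph^-$ splitting together with a fixed cutoff sequence $\chi_m$, where the paper simply reduces to nonnegative $\vph$ and takes an arbitrary increasing approximation in $C_0(\Pi)$; both are the same monotone-convergence argument.
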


\begin{proof}
	By symmetry, it suffices to prove the marginal condition involving $\mA$ for each implication.
	
	(i) $\to$ (ii).  
	It is enough to treat the case $\vphi\ge 0$ with $\vphi\in C_b(\Pi)$. Let $\{\vphi_n\}_n\subset C_0(\Pi)$ be an increasing sequence with $\vphi_n\nearrow \vphi$. Then $\vphi_n\otimes 1 \nearrow \vphi\otimes 1$. Using (i) and the monotone convergence property of $e^{t\mJ}$ and $e^{t\mA}$,
	\[
	e^{t\mJ}[\vphi\otimes 1]
	= \lim_{n\to\infty} e^{t\mJ}[\vphi_n\otimes 1]
	= \lim_{n\to\infty}(e^{t\mA}\vphi_n)\otimes 1
	= (e^{t\mA}\vphi)\otimes 1.
	\]
	
	(ii) $\to$ (iii).  
	Let $\vphi\in C(\mA)$ be nonnegative. Approximate $\vphi$ monotonically from below by a sequence in $C_b(\Pi)$, and apply (ii) together with the convergence theorem. This yields \eqref{eq:semi-marg} for $\vphi\in C(\mA)$. 
	
	(iii) $\to$ (iv).  
	Suppose $\vphi\in \bar D(\mA)$, so that $\f 1t(e^{t\mA}-1)\vphi \to\mA\vphi$ locally uniformly as $t\searrow 0$. 
	By (iii),
	\[
	\frac{e^{t\mJ}-I}{t}(\vphi\otimes 1) 
	= \Big(\frac{e^{t\mA}-I}{t}\vphi\Big)\otimes 1.
	\]
	Passing to the limit $t\searrow 0$, the right-hand side converges locally uniformly to $(\mA\vphi)\otimes 1$, and hence so does the left-hand side. This gives \eqref{eq:gen-marg}.
	
	(iv) $\to$ (v).  
	Immediate, since $D(\mA)\subset \bar D(\mA)$. 
	
	(v) $\to$ (vi).  
	Take $\vphi\in C_0(\Pi)$ and set $\vphi' := R_\la(\mA)\vphi\in D(\mA)$, so that $(\la I-\mA)\vphi'=\vphi$. Then
	\begin{align*}
		(\la I-\mJ)[\vphi'\otimes 1]
		&= \la\vphi'\otimes 1 - \mJ[\vphi'\otimes 1] \\
		&= \la\vphi'\otimes 1 - (\mA\vphi')\otimes 1 
		= (\la I-\mA)\vphi'\otimes 1 = \vphi\otimes 1.
	\end{align*}
	Applying $R_\la(\mJ)$ gives
	\[
	R_\la(\mJ)[\vphi\otimes 1] = \vphi'\otimes 1 = (R_\la(\mA)\vphi)\otimes 1.
	\]
	
	(vi) $\to$ (i).  
	For $\vphi\in C_0(\Pi)$ we have $\vphi\otimes 1\in C_b(\Pi)$. By (vi),
	\[
	I_\la(\mJ)[\vphi\otimes 1] = (I_\la(\mA)\vphi)\otimes 1.
	\]
	Iterating, the same holds for all powers $I_\la^n$. Applying Lemma~\ref{lem:tech},
	\begin{align*}
		e^{t\mJ}[\vphi\otimes 1]
		= \lim_{n\to\infty} I_{n/t}(\mJ)^n[\vphi\otimes 1] 
		= \lim_{n\to\infty} \big(I_{n/t}(\mA)^n\vphi\big)\otimes 1
		= (e^{t\mA}\vphi)\otimes 1.
	\end{align*}
	This proves (i).
\end{proof}

\subsection{Proof of Proposition \ref{om-ta-prop}}
We are to prove the inequality:
\begin{align*}
	\om_c^*(x,y;\mA,\mB)\le \om_c^-(x,y;\mA,\mB)\le \om^+(x,y;\mA,\mB)\le \ta_c(x,y;\mA,\mB). 
\end{align*}
As pointed out in Remark \ref{rem:om-bdd}, the second inequality is straightforward from the definition of $\om^\pm_c$.

Let us proceed with the last inequality. Choose a Feller coupling $\mJ\in \Ga(\mA,\mB)$. For any $(x,y)\in \Pi^2$, $\ga_t = \de_{(x,y)}e^{t\mJ}\in\mcp(\Pi^2)$ is a coupling of the probability measures $\de_{x}e^{t\mA},\de_ye^{t\mB}$. This follows
\begin{align*}
	\mcc_c(\de_x e^{t\mA},\de_y e^{t\mB})\le \int_{\Pi^2} c(x,y)d\ga_t(x,y)= e^{t\mJ}c(x,y). 
\end{align*}
Subtracting both sides with $c(x,y)$, dividing $t$, and passing $t\searrow0$ we find
\begin{align*}
	\om_c^+(x,y;\mA,\mB)&= \limsup_{t\to 0} \frac{\mcc_c(\de_x e^{t\mA},\de_y e^{t\mB})-c(x,y)}{t}\le  \liminf_{t\searrow 0} \frac{e^{t\mJ}c(x,y)-c(x,y)}{t}=\mJ^- c(x,y). 
\end{align*}
This holds for any $\mJ\in \Ga(\mA,\mB)$. Taking the infimum over $\mcl{J}\in\Ga(\mA,\mB)$ yields the second inequality. 

Finally consider the first inequality. Fix $(x_0,y_0)\in \Pi^2$ and any pair $\varphi\in \bar D(\mA),\psi\in \bar D(\mB)$ so that $\varphi\oplus\psi\le c$, and $\varphi(x_0)+\psi(y_0)=c(x_0,y_0)$. 
Fix $t\ge 0$ and denote $\mu_t = \de_{x_0}e^{t\mA}, \nu_t = \de_{y_0} e^{t\mB}$. Let $\ga_t$ be a $c$-optimal coupling of the measures $\mu_t,\nu_t$. This follows
\begin{align*}
	(e^{t\mA}\varphi)(x_0)+(e^{t\mB}\psi)(y_0)&=\int_\Pi \varphi(x) d\mu_t  + \int_\Pi \psi(x) d\nu_t\\
	&= \int_{\Pi^2}[\varphi(x)+\psi(y)]d\gamma_t(x,y) \le \int_{\Pi^2} c(x,y)d\gamma_t(x,y) \\
	&= \mcc_c(\de_{x_0} e^{t\mA},\de_{y_0} e^{t\mB}). 
\end{align*}
Subtracting both side with $\varphi(x_0)+\psi(y_0)=c_2(x_0,y_0)$ yields::
\begin{align*}
	(e^{t\mA}\varphi)(x_0)-\varphi(x_0)+(e^{t\mB}\psi)(y_0)-\psi(y_0)\le \mcc_c(\de_{x_0} e^{t\mA},\de_{y_0} e^{t\mB})-c(x_0,y_0).
\end{align*}
Dividing $t$ and taking limit inferior as $t\searrow 0$, it leads us to 
\begin{align*}
	(\mA\varphi)(x_0)+(\mB\psi)(y_0)\le \om_c^-(x_0,y_0;\mA,\mB). 
\end{align*}
We note the inequality above holds for any $\varphi\in \bar D(\mA),\psi\in \bar D(\mB)$ satisfying $\varphi\oplus \psi\le c$, $\varphi(x_0)+\psi(y_0)=c(x_0,y_0)$. Taking the supremum over all such pairs $(\varphi,\psi)$, this leads to the first inequality. 

\subsection{Proof of Proposition \ref{prop:c2-reg}}

(i) Fix $(x_0,y_0)\in\mbr^{2d}$.  
We construct two barrier functions $a \le c_2 \le b$ that touch $c_2$ at $(x_0,y_0)$.  
The lower barrier $a$ is the supporting plane of $c_2$ at $(x_0,y_0)$, while $b$ is a quadratic function touching $c_2$ from above:
\begin{align*}
	a(x,y)&= \tfrac{1}{2}(x_0-y_0)^\top(x-y)
	= \tfrac{1}{2}(x_0-y_0)^\top x - \tfrac{1}{2}(x_0-y_0)^\top y
	=: \vphi(x)+\psi(y),\\
	b(x,y)&= |x-x_0|^2+|y-y_0|^2+c_2(x_0,y_0)
	+(x-x_0)^\top(x_0-y_0)-(y-y_0)^\top(x_0-y_0) \\
	&=: \Phi(x)+\Psi(y)+c_2(x_0,y_0).
\end{align*}
Clearly $a\le c_2 \le b$, with equality at $(x_0,y_0)$.

Let $\{T_t\}_{t\ge0}$ be a Markovian coupling semigroup of $\mA,\mB$.  
By order preservation,
\begin{align}\label{eq:barrier-ineq}
	T_h a \;\le\; T_h c_2 \;\le\; T_h b, \qquad h\ge0. 
\end{align}
Since both $a$ and $b$ are of direct sum form, the marginality property yields
\begin{align*}
	(T_h a)(x_0,y_0)
	&= (e^{h\mA}\vphi)(x_0)+(e^{h\mB}\psi)(y_0) \\
	&= c_2(x_0,y_0)+\tfrac{h}{2}(m_\mA-m_\mB)^\top(x_0-y_0), \\
	(T_h b)(x_0,y_0)
	&= (e^{h\mA}\Phi)(x_0)+(e^{h\mB}\Psi)(y_0)+c_2(x_0,y_0) \\
	&= c_2(x_0,y_0)+ h^2\big(|m_\mA|^2+|m_\mB|^2\big) 
	+ h\Big[\tr(Q_\mA+Q_\mB)+(m_\mA-m_\mB)^\top(x_0-y_0)\Big].
\end{align*}
Here we used that for quadratic–linear test functions, the semigroup action can be computed explicitly from the mean vector and covariance matrix of the underlying Lévy generator:
\begin{align*}
	(e^{t\mA}\vphi)(x_0)&= \tfrac{1}{2}(x_0+tm_\mA)^\top(x_0-y_0),\\
	(e^{t\mB}\psi)(y_0)&= -\tfrac{1}{2}(y_0+tm_\mB)^\top(x_0-y_0),\\
	(e^{t\mA}\Phi)(x_0)&= t^2|m_\mA|^2 + t\tr(Q_\mA) + t\,m_\mA^\top(x_0-y_0),\\
	(e^{t\mB}\Psi)(y_0)&= t^2|m_\mB|^2 + t\tr(Q_\mB) - t\,m_\mB^\top(x_0-y_0).
\end{align*}

Recalling 
\(
U(t,x,y) = (x-y+t(m_\mA-m_\mB))^\top(m_\mA-m_\mB),
\)
the identities above for $T_ha,T_hb$, combined with \eqref{eq:barrier-ineq}, give
\[
\frac{h}{2}\,U(0,x_0,y_0)
\;\le\; (T_h c_2-c_2)(x_0,y_0)
\;\le\; h^2\big(|m_\mA|^2+|m_\mB|^2\big)
+ h\big[\tr(Q_\mA+Q_\mB)+U(0,x_0,y_0)\big].
\]
Finally, by order preservation again, applying $T_t$ to this inequality yields for all $t,h\ge0$:
\[
\frac{h}{2}\,U(t,x_0,y_0)
\;\le\; (T_{t+h}c_2-T_tc_2)(x_0,y_0)
\;\le\; h^2\big(|m_\mA|^2+|m_\mB|^2\big)
+ h\big[\tr(Q_\mA+Q_\mB)+U(t,x_0,y_0)\big],
\]
which is the desired bound. We note here the identity $T_tU(0,\cdot,\cdot)=U(t,\cdot ,\cdot)$ is used.

(ii) Let $\mJ\in\Ga(\mA,\mB)$. Since $\{e^{t\mJ}\}_{t\ge 0}$ is itself a Markovian coupling semigroup, the estimate from (a) applies to $e^{t\mJ}c_2$, which shows that $t\mapsto e^{t\mJ}c_2$ is locally uniformly continuous.  
To conclude $c_2\in C(\mJ)$, it remains to check that for each $t\ge 0$, one has $e^{t\mJ}c_2\in C(\mbr^d)$.  
Let $\{\kappa_t(x,y)\}$ denote the transition kernel of $\mJ$. By the Feller property, the map $(x,y)\mapsto \kappa_t(x,y)$ is weakly continuous. Consider $Q(x,y)=q(x)+q(y)$ with $q(z)=|z|^2$. By the marginal property of the coupling semigroup,
\[
\inn{\kappa_t(x,y),Q}= e^{t\mJ}Q(x,y)=(e^{t\mA}q)(x)+(e^{t\mB}q)(y).
\]
Since $q\in \bar D(\mA)\cap\bar D(\mB)$ (as $\mA,\mB\in \mcg_2^\La(\mbr^d)$), both terms on the right-hand side are continuous, and hence $(x,y)\mapsto \kappa_t(x,y)$ is continuous in the 2-Wasserstein topology. Consequently, for any continuous function with quadratic growth, such as $c_2$, we obtain
\[
(e^{t\mJ}c_2)(x,y)=\inn{\kappa_t(x,y),c_2},
\]
which is continuous in $(x,y)$. Thus $e^{t\mJ}c_2\in C(\mbr^d)$, and so $c_2\in C(\mJ)$.  

(iii) If $\mA,\mB\in\mcg_2^\La(\mbr^d)$, and $\mJ\in\Ga(\mA,\mB)$ is itself a L\'evy generator, then clearly  $\mJ\in\mcg_2^\La(\mbr^{2d})$, i.e., it admits finite second moments. 
This is because $e^{t\mJ}[|x|^2+|y|^2]=e^{t\mA}[|x|^2]+e^{t\mB}[|y|^2]<\infty$. 
In this case $\bar D(\mJ)$ contains all functions in $C_2^2(\mbr^{2d})$, which includes $c_2$. Hence $c_2\in\bar D(\mJ)$. 

%

\renewcommand{\thelemma}{C.\arabic{lemma}}
\renewcommand{\thetheorem}{C.\arabic{theorem}}
\renewcommand{\theproposition}{C.\arabic{proposition}}
\renewcommand{\theremark}{C.\arabic{remark}}

\section{$\La$-weak topology in the space $\La(\mbr^d)$ of L\'evy measures}\label{appen-weaktop}

This appendix reviews basic results on the weak topology for Lévy measures. While related notions appear in standard references such as Sato’s \cite{Sato1999} or Kallenberg's \cite{kallenberg1997fmp} monograph, they are not typically stated in a form tailored to our convergence analysis. The results presented here parallel the classical weak theory for probability measures—such as the Portmanteau theorem and Prokhorov’s compactness criterion—but adapted to the Lévy class $\La(\mathbb{R}^d)$, where singularity at the origin.

Let us briefly recall the notions of \emph{weak convergence}, \emph{vague convergence} and \emph{tightness} for bounded measures in the classical setting. 
Throughout the discussion, l	et $\Pi$ be a locally compact, Hausdorff, second countable space, and denote by $\mathcal{M}(\Pi)$ the space of bounded Radon (Borel) measures on $\Pi$. A sequence $\{\mu_n\}_n \subset \mathcal{M}(\Pi)$ is said to \emph{converge weakly} to $\mu \in \mathcal{M}(\Pi)$ if
\begin{align*}
	\lim_{n\to\infty}\int_{\Pi} \varphi(x)\, d\mu_n(x) = \int_{\Pi} \varphi(x)\, d\mu(x)
\end{align*}
for all bounded continuous functions $\varphi \in C_b(\Pi)$. 
The sequence \emph{converges vaguely} to $\mu$ if the same holds for all compactly supported test functions $\varphi \in C_c(\Pi)$. 
A family $\mathcal{N} \subset \mathcal{M}(\Pi)$ is said to be \emph{tight} if:
\begin{enumerate}
	\item $\sup_{\mu \in \mathcal{N}} \mu(\Pi) < \infty$;
	\item for every $\varepsilon > 0$, there exists a compact set $K \subset \Pi$ such that $\sup_{\mu \in \mathcal{N}} \mu(K^c) < \varepsilon$.
\end{enumerate}
It is well known that:
\begin{itemize}
	\item If $\mu_n \to \mu$ weakly, then $\mu_n \to \mu$ vaguely. 
	\item Conversely, if $\mu_n \to \mu$ vaguely, the family $\{\mu_n\}$ is tight, then $\mu_n \to \mu$ weakly. 
	\item In particular, for probability measures (where $\mu_n(\Pi)=1$), vague convergence together with tightness is equivalent to weak convergence. 
\end{itemize}

In our work, we utilize the following notions of convergence and tightnesss of L\'evy measures in the sense of L\'evy. 

\begin{definition}[$\La$-weak convergence and $\La$-tightness]
	(i) For $n\ge 1$, let $\mu,\mu_n\in \La(\mbr^d)$. We say that $\mu_n$ \emph{converges L\'evy weakly, or $\La$-weakly, to $\mu$}, written $\mu_n \xrightarrow{\La} \mu$, if
	\begin{align}\label{def:weak-conv}
		\lim_{n \to \infty} \int_{\mathbb{R}^d} \varphi(x) \, d\mu_n(x) = \int_{\mathbb{R}^d} \varphi(x) \, d\mu(x)	
	\end{align}
	for all bounded continuous test functions \( \varphi \in C_b(\mathbb{R}^d) \) satisfying \( |\varphi(x)| \le C\min\{1,|x|^2\} \) for some constant \( C \ge 0 \).
	
	(ii) A family \( \mathcal{M} \subset \La(\mathbb{R}^d) \) is said to be \emph{$\La$-tight} if the following conditions holds: (1) it holds
	\begin{align*}
		\sup_{\mu\in \mcl{M}} \int_{\mbr^d} \min\cb{1,|x|^2}d\mu(x)<\infty;
	\end{align*}
	(2) for every \( \varepsilon > 0 \), there exist a compact set $K\subset \mbr^d\setminus \{0\}$ such that 
	\[
	\sup_{\mu \in \mathcal{M}} \int_{K^c} \min\{1, |x|^2\} \, d\mu(x) < \varepsilon.
	\]
\end{definition}

We now relate these classical notions to their counterparts in the context of Lévy measures. 
Given a Lévy measure $\mu \in \Lambda(\mathbb{R}^d)$, define its associated re-scaled (bounded) measure 
$\tilde{\mu} \in \mathcal{M}(\mathbb{R}^d \setminus \{0\})$ by
\begin{align*}
	d\tilde{\mu}(x) := \min\{1, |x|^2\} \, d\mu(x),
\end{align*}
where $\mathbb{R}^d \setminus \{0\}$ is a Polish space.  
Likewisse, given a measurable function $\tilde \varphi:\mathbb{R}^d \setminus \{0\}\to \mathbb{R}$, define its re-scaled version by
\begin{align}\label{def:rescaled}
	\varphi(x) := \min\{1,|x|^2\}\,\tilde{\varphi}(x), 
	\qquad \varphi(0):=0.
\end{align}
Then the identity holds:
\begin{equation}\label{eq:C1}
	\int_{\mathbb{R}^d} \varphi \, d\mu 
	= \int_{\mathbb{R}^d\setminus\{0\}} \tilde{\varphi} \, d\tilde{\mu}
\end{equation}
Moreover, $\tilde\varphi$ is continuous (resp.\ lower semicontinuous, upper semicontinuous) on $\mathbb{R}^d\setminus\{0\}$ 
iff $\varphi$ is continuous (resp.\ lower semicontinuous, upper semicontinuous) on $\mathbb{R}^d$. 
Similarly, $\tilde \varphi$ is bounded above (resp.\ below) iff $\varphi$ satisfies 
$\varphi(x)\le C\min\{1,|x|^2\}$ (resp.\ $\varphi(x)\ge -C\min\{1,|x|^2\}$) for some $C\ge 0$.  

This rescaling relation allows us to transfer classical weak convergence results for $\tilde\mu$ 
to the setting of Lévy measures $\mu$, as will be seen in the following characterization.

\begin{proposition}\label{lem:char}
	\leavevmode
	\begin{enumerate}[label=(\roman*)]
		\item Let $\mu,\mu_n \in \Lambda(\mathbb{R}^d)$ for $n \geq 1$. The following are equivalent:
		\begin{itemize}
			\item $\mu_n \xrightarrow{\Lambda} \mu$ ($\Lambda$-weak convergence);
			\item $\tilde{\mu}_n \to \tilde{\mu}$ weakly on $\mathbb{R}^d \setminus \{0\}$.
		\end{itemize}
		
		\item Let $\mathcal{N} \subset \Lambda(\mathbb{R}^d)$ be a family of Lévy measures. The following are equivalent:
		\begin{itemize}
			\item $\mathcal{N}$ is $\Lambda$-tight;
			\item $\tilde{\mathcal{N}} := \{\tilde{\mu} : \mu \in \mathcal{N}\}$ is tight on $\mathbb{R}^d \setminus \{0\}$.
		\end{itemize}
	\end{enumerate}
\end{proposition}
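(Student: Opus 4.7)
The plan is to reduce both claims to the standard theory of weak convergence and tightness for bounded Radon measures on the Polish space $\mathbb{R}^d \setminus \{0\}$, using the rescaling $d\tilde{\mu}(x) = \min\{1,|x|^2\}\,d\mu(x)$ together with the pairing identity \eqref{eq:C1}. The essential observation is that this rescaling induces a bijective correspondence between the class of admissible test functions for $\Lambda$-weak convergence, namely $\varphi \in C_b(\mathbb{R}^d)$ satisfying $|\varphi(x)| \le C\min\{1,|x|^2\}$ for some $C\ge 0$, and the class of all bounded continuous functions $\tilde\varphi \in C_b(\mathbb{R}^d\setminus\{0\})$, via the formula $\varphi(x) = \min\{1,|x|^2\}\,\tilde\varphi(x)$ with $\varphi(0):=0$. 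Once this bijection is in place, both (i) and (ii) follow almost by direct translation.

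For part (i), I would first verify the bijection carefully. If $\tilde\varphi \in C_b(\mathbb{R}^d\setminus\{0\})$, then the function $\varphi(x) := \min\{1,|x|^2\}\,\tilde\varphi(x)$, extended by $\varphi(0)=0$, lies in $C_b(\mathbb{R}^d)$ and satisfies the bound $|\varphi(x)| \le \|\tilde\varphi\|_\infty\,\min\{1,|x|^2\}$; continuity at $0$ follows from $\min\{1,|x|^2\}\to 0$ combined with the boundedness of $\tilde\varphi$. Conversely, if $\varphi$ is admissible in the $\Lambda$-sense, then $\tilde\varphi := \varphi/\min\{1,|x|^2\}$ is bounded by $C$ on $\mathbb{R}^d\setminus\{0\}$ and continuous there because $\min\{1,|x|^2\}>0$ away from the origin. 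Invoking the identity \eqref{eq:C1}, the statement $\int\varphi\,d\mu_n \to \int\varphi\,d\mu$ for every admissible $\varphi$ is literally the same as $\int\tilde\varphi\,d\tilde\mu_n \to \int\tilde\varphi\,d\tilde\mu$ for every $\tilde\varphi \in C_b(\mathbb{R}^d\setminus\{0\})$, which is precisely the definition of classical weak convergence $\tilde\mu_n \to \tilde\mu$.

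For part (ii), the argument reduces to bookkeeping once we observe that $\int_{\mathbb{R}^d} \min\{1,|x|^2\}\,d\mu = \tilde\mu(\mathbb{R}^d\setminus\{0\})$ and $\int_{K^c}\min\{1,|x|^2\}\,d\mu = \tilde\mu(K^c)$ for any compact $K\subset\mathbb{R}^d\setminus\{0\}$; here I would note that since every Lévy measure satisfies $\mu(\{0\})=0$, the complement $K^c$ may be taken with respect to either $\mathbb{R}^d$ or $\mathbb{R}^d\setminus\{0\}$ without affecting the integrals. The first condition in the definition of $\Lambda$-tightness then reads $\sup_{\mu\in\mathcal{N}}\tilde\mu(\mathbb{R}^d\setminus\{0\})<\infty$, and the second reads as the classical escape-mass condition for $\tilde{\mathcal{N}}$ on the Polish space $\mathbb{R}^d\setminus\{0\}$, so the two notions of tightness coincide verbatim.

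The only subtlety worth flagging, and the one step I would write out most carefully, is the continuity at the origin and the $\min\{1,|x|^2\}$-bound when passing from $\tilde\varphi$ to $\varphi$; beyond that, the proof is essentially a change of variables. No Prokhorov- or Portmanteau-type argument is required at this stage, since those are invoked only when one wishes to promote tightness into subsequential convergence, which is handled separately elsewhere in the appendix.
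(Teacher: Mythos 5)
Your proof is correct and takes essentially the same approach as the paper's own proof, which is simply a terse statement that the rescaling \eqref{def:rescaled} together with the pairing identity \eqref{eq:C1} makes the two notions of convergence (and, respectively, tightness) coincide verbatim; you are merely spelling out the bijection between the two test-function classes and the mass bookkeeping that the paper leaves implicit.
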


\begin{proof}
	(i) By \eqref{def:rescaled} and \eqref{eq:C1}, testing against $\varphi$ in the definition of $\Lambda$-weak convergence is equivalent to testing against $\tilde\varphi \in C_b(\mathbb{R}^d\setminus\{0\})$. Hence $\mu_n \xrightarrow{\Lambda} \mu$ iff $\tilde\mu_n \to \tilde\mu$ weakly.  
	
	(ii) Likewise, by \eqref{eq:C1}, $\Lambda$-tightness of $\{\mu_n\}$ is equivalent to tightness of $\{\tilde\mu_n\}$ on $\mathbb{R}^d\setminus\{0\}$.  
\end{proof}

\subsection{Portmanteau Lemma for $\La$-weak convergence}
Our first result is a Portmanteau-type theorem, giving an equivalent characterization of $\La$-weak convergence in terms of lower and upper semicontinuous test functions.

\begin{lemma}[Portmanteau-type lemma for $\La$-weak convergence]\label{lem:port}
	Let \( \{\mu_n\}_n \subset \La(\mathbb{R}^d) \) be a sequence of Lévy measures, and let \( \mu \in \La(\mathbb{R}^d) \). The following are equivalent:
	
	\begin{itemize}
		\item[(i)] \( \mu_n \xrightarrow{\La} \mu \).
		
		\item[(ii)] For every lower semicontinuous function \( \varphi: \mathbb{R}^d \to \mathbb{R} \) satisfying \( \varphi(x) \ge -C \min\{1, |x|^2\} \) for some \( C \ge 0 \), it holds that
		\[
		\int_{\mathbb{R}^d} \varphi(x)\, d\mu(x) \le \liminf_{n \to \infty} \int_{\mathbb{R}^d} \varphi(x)\, d\mu_n(x).
		\]
		
		\item[(iii)] For every upper semicontinuous function \( \varphi: \mathbb{R}^d \to \mathbb{R} \) satisfying \( \varphi(x) \le C \min\{1, |x|^2\} \) for some \( C \ge 0 \), it holds that
		\[
		\int_{\mathbb{R}^d} \varphi(x)\, d\mu(x) \ge \limsup_{n \to \infty} \int_{\mathbb{R}^d} \varphi(x)\, d\mu_n(x).
		\]
	\end{itemize}
\end{lemma}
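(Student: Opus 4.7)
\textbf{Proof proposal for Lemma~\ref{lem:port}.} The strategy is to transfer the problem to the classical Portmanteau theorem for bounded Radon measures on the Polish space $\mathbb{R}^d \setminus \{0\}$ via the rescaling $\mu \leftrightarrow \tilde\mu$ introduced prior to Proposition~\ref{lem:char}. By part (i) of that proposition, condition (i) of the present lemma is equivalent to $\tilde\mu_n \to \tilde\mu$ weakly on $\mathbb{R}^d \setminus \{0\}$. The classical Portmanteau theorem then characterizes weak convergence of bounded measures through liminf/limsup inequalities against semicontinuous test functions bounded below/above. My task is therefore to match up the regularity and growth conditions appearing in (ii)--(iii) with the hypotheses of the classical result.

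To this end, I would set up the following dictionary. Given $\varphi:\mathbb{R}^d \to \mathbb{R}$ satisfying $\varphi(x)\ge -C\min\{1,|x|^2\}$, define on $\mathbb{R}^d \setminus \{0\}$
\[
\tilde\varphi(x) := \frac{\varphi(x)}{\min\{1,|x|^2\}}.
\]
Since $\min\{1,|x|^2\}$ is continuous and strictly positive away from the origin, $\tilde\varphi$ is lower semicontinuous on $\mathbb{R}^d \setminus \{0\}$ whenever $\varphi$ is lower semicontinuous there, and the growth condition on $\varphi$ is equivalent to $\tilde\varphi \ge -C$ on $\mathbb{R}^d \setminus \{0\}$. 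Moreover, since $\mu(\{0\}) = 0$ for any Lévy measure, the identity \eqref{eq:C1} gives
\[
\int_{\mathbb{R}^d} \varphi\, d\mu \;=\; \int_{\mathbb{R}^d \setminus \{0\}} \tilde\varphi\, d\tilde\mu,
\]
independently of the value $\varphi(0)$, and the same for each $\mu_n$. The corresponding statement with upper semicontinuity and $\tilde\varphi \le C$ is verified in the same way.

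With this correspondence in hand, the implication (i)$\Rightarrow$(ii) follows by applying the classical Portmanteau lemma on $\mathbb{R}^d \setminus \{0\}$: the weak convergence $\tilde\mu_n \to \tilde\mu$ yields $\int \tilde\varphi\, d\tilde\mu \le \liminf \int \tilde\varphi\, d\tilde\mu_n$ for every lower semicontinuous $\tilde\varphi$ bounded below, which translates back to (ii). The implication (i)$\Rightarrow$(iii) is obtained symmetrically by replacing $\varphi$ with $-\varphi$ (or applying the upper semicontinuous branch of classical Portmanteau directly). For the converses, I would observe that (ii) and (iii) are equivalent via the substitution $\varphi \mapsto -\varphi$, and that any continuous $\varphi \in C_b(\mathbb{R}^d)$ with $|\varphi(x)| \le C\min\{1,|x|^2\}$ satisfies \emph{both} growth bounds in (ii) and (iii). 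Applying (ii) to $\varphi$ and to $-\varphi$ simultaneously forces
\[
\int \varphi\, d\mu \;\le\; \liminf_{n\to\infty} \int \varphi\, d\mu_n \;\le\; \limsup_{n\to\infty}\int \varphi\, d\mu_n \;\le\; \int \varphi\, d\mu,
\]
which is precisely the $\Lambda$-weak convergence \eqref{def:weak-conv}, giving (i).

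The only mild technical subtlety---and the step deserving the most care---is the handling of the origin: although $\varphi$ in (ii)--(iii) is defined on all of $\mathbb{R}^d$ with no prescribed value (or regularity) at $0$, the rescaled $\tilde\varphi$ lives on $\mathbb{R}^d \setminus \{0\}$, and I must justify that the value of $\varphi$ at the origin plays no role in any of the integrals or in the semicontinuity assumptions being invoked. This is immediate from $\mu(\{0\}) = \mu_n(\{0\}) = 0$ and the fact that the classical Portmanteau theorem is applied on the Polish space $\mathbb{R}^d \setminus \{0\}$, where the origin has been removed entirely. Everything else is a direct translation of standard results.
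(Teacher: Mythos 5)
Your proof is correct and takes essentially the same route as the paper: reduce to classical Portmanteau on $\mathbb{R}^d\setminus\{0\}$ via the rescaling $\mu\leftrightarrow\tilde\mu$, $\varphi\leftrightarrow\tilde\varphi$, then translate the three equivalent conditions back. The paper's version is terser (it simply invokes Lemma~\ref{lem:char} and classical Portmanteau), while you supply the dictionary and the converse directions explicitly, including the correct observation that (ii) applied to $\pm\varphi$ for continuous $\varphi$ recovers (i). One small point worth flagging: the claim that $\tilde\varphi=\varphi/\min\{1,|x|^2\}$ is lower semicontinuous on $\mathbb{R}^d\setminus\{0\}$ whenever $\varphi$ is, is not automatic for products of LSC functions with positive continuous ones; it uses the fact that $\varphi$ is bounded below by $-C$ (which your growth hypothesis guarantees), so one writes $\varphi g=(\varphi+C)g-Cg$ and notes that a nonnegative LSC function times a positive continuous function is LSC. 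This is exactly what makes the dictionary work, and it is worth stating rather than attributing the LSC transfer to positivity of the denominator alone.
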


\begin{proof}
	By Lemma~\ref{lem:char}, $\mu_n \xrightarrow{\La} \mu$ iff the rescaled measures 
	$\tilde \mu_n \to \tilde \mu$ weakly on $\mathbb{R}^d \setminus \{0\}$, with the correspondence \eqref{eq:C1},
	where $\tilde \varphi$ is the rescaling of $\varphi$ defined in~\eqref{def:rescaled}.
	The conclusion then follows directly from the classical Portmanteau lemma applied to $\tilde \mu_n \to \tilde \mu$.
\end{proof}

\begin{proposition}\label{prop:appenC4}
	We have $\mu_n \xrightarrow{\La} \mu$ if and only if \eqref{def:weak-conv} holds for all bounded Lipschitz test functions $\vphi\in C_b(\mbr^d)$ satisfying 
	\[
	|\vphi(x)| \le C \min\{1,|x|^2\}, 
	\qquad 
	|\nabla \vphi(x)| \le C|x| \quad \text{(Lebesgue a.e.)}
	\]
	for some constant $C \ge 0$. 
\end{proposition}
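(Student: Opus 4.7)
The forward implication is immediate, since the restricted class is contained in the admissible test class from the definition of $\La$-weak convergence. For the converse, assume \eqref{def:weak-conv} holds for every bounded Lipschitz $\varphi$ with $|\varphi(x)|\le C\min\{1,|x|^2\}$ and $|\nabla\varphi(x)|\le C|x|$ a.e. The plan is to combine a tightness argument with a Prokhorov-plus-identification scheme.

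First I would verify $\La$-tightness of $\{\mu_n\}$ using two explicit families of probes drawn from the restricted class. Fix a smooth cutoff $\rho:[0,\infty)\to[0,1]$ with $\rho\equiv 1$ on $[0,1]$ and $\rho\equiv 0$ on $[2,\infty)$. For small $r\in(0,1/2)$ set $\varphi_r(x):=|x|^2\rho(|x|/r)$, and for $R\ge 1$ set $\psi_R(x):=1-\rho(|x|/R)$. A direct computation shows $|\varphi_r|\le\min\{1,|x|^2\}$ and $|\nabla\varphi_r|\le(2+2\|\rho'\|_\infty)|x|$, while $|\psi_R|\le\min\{1,|x|^2\}$ and $|\nabla\psi_R|\le(\|\rho'\|_\infty/R^2)|x|$, so both lie in the restricted class. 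Since $\varphi_r\ge|x|^2\chi_{B_r(0)}$ and $\psi_R\ge\chi_{B_{2R}(0)^c}$, the hypothesis combined with dominated convergence (for $\int\varphi_r\,d\mu\to 0$ as $r\searrow 0$) and the decay $\mu(B_R(0)^c)\to 0$ as $R\to\infty$ (which holds since $\mu\in\La(\mbr^d)$) yields, after handling the finitely many small-$n$ terms separately,
\[
\sup_n\int_{B_r(0)}|x|^2\,d\mu_n<\varepsilon,\qquad \sup_n\mu_n(B_{2R}(0)^c)<\varepsilon
\]
for $r$ small and $R$ large. This establishes $\La$-tightness.

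Next I would invoke the Prokhorov-type compactness (Theorem \ref{thm:prokhorov}) to conclude that every subsequence $\{\mu_{n_k}\}_k$ admits a further $\La$-weakly convergent subsubsequence $\mu_{n_{k_j}}\xrightarrow{\La}\nu$ for some $\nu\in\La(\mbr^d)$. Along this limit, the hypothesis yields
\[
\int_{\mbr^d}\varphi\,d\nu=\int_{\mbr^d}\varphi\,d\mu
\]
for every $\varphi$ in the restricted class. To identify $\nu=\mu$, the key observation is that any bounded Lipschitz $f$ vanishing on $B_\varepsilon(0)$ lies in the restricted class: on its support $|x|\ge\varepsilon$, so $|f(x)|\le\|f\|_\infty\le(\|f\|_\infty/\varepsilon^2)\min\{1,|x|^2\}$ and $|\nabla f(x)|\le\Lip(f)\le(\Lip(f)/\varepsilon)|x|$ a.e. Functions of this form separate Radon measures on $\mbr^d\setminus B_\varepsilon(0)$, hence $\nu=\mu$ on $\mbr^d\setminus B_\varepsilon(0)$ for every $\varepsilon>0$; since both assign no mass to $\{0\}$, $\nu=\mu$. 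Because every subsequence of $\{\mu_n\}$ contains a subsubsequence converging to the same limit $\mu$, the full sequence satisfies $\mu_n\xrightarrow{\La}\mu$.

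The main obstacle is the joint constraint that test functions be Lipschitz \emph{and} satisfy both $|\varphi|\le C\min\{1,|x|^2\}$ and $|\nabla\varphi|\le C|x|$ a.e.: the gradient bound forces a flatness at the origin that is not preserved under naive approximation (for instance, mollification of a candidate in $C_b(\mbr^d)$ need not recover the a.e. gradient bound). The cutoff constructions $\varphi_r,\psi_R$ used in the tightness step, and the ``vanishing-on-$B_\varepsilon(0)$'' Lipschitz functions used in the identification step, are designed to meet both constraints simultaneously without any such approximation.
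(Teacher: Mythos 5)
Your proof is correct, but it takes a genuinely different route from the paper's. The paper avoids Prokhorov compactness entirely: it first notes that the hypothesis directly covers Lipschitz $\varphi$ vanishing near the origin, uses tightness of the truncated measures to extend this to all $C_b$ functions vanishing near the origin, and then removes the vanishing condition by a cutoff $\varphi_\varepsilon = \varphi\chi_\varepsilon$, estimating the remainder $|\varphi - \varphi_\varepsilon|$ against the test function $\psi(x) = C\min\{4\varepsilon^2,|x|^2\}$, which lies in the restricted class because $|\nabla\psi| \le C|x|$ a.e.\ (this is precisely the kind of function the restricted class was designed to contain). You instead establish $\La$-tightness with explicit probes $\varphi_r,\psi_R$, invoke the Prokhorov-type Theorem~\ref{thm:prokhorov}, and identify the subsubsequential limit as $\mu$ by noting that bounded Lipschitz functions vanishing near the origin lie in the restricted class. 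Both arguments are sound; the paper's is more self-contained and elementary (no compactness theorem needed), while yours makes the structure of the argument (tightness $\to$ compactness $\to$ uniqueness of limit) more transparent and shows explicitly how each constraint of the restricted class is compatible with the needed probes. One small point you should make explicit: $\La$-tightness also requires the uniform bound $\sup_n \int \min\{1,|x|^2\}\,d\mu_n < \infty$; this follows since $\min\{1,|x|^2\}$ itself is bounded, Lipschitz, satisfies $|\nabla\min\{1,|x|^2\}| \le 2|x|$ a.e., and hence lies in the restricted class, so its integrals converge and are therefore bounded. Without noting this, the appeal to Theorem~\ref{thm:prokhorov} is not fully justified.
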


\begin{proof}
	The implication ``$\to$'' is immediate from the definition of $\La$-weak convergence.  
	Let us prove the converse.  
	
	By assumption, \eqref{def:weak-conv} holds for every bounded Lipschitz function $\vphi$ satisfying the above growth bounds. In particular, it applies to functions $\vphi$ that vanish inside some ball $B_r(0)$, since in that case we can find constants $C,C'\ge 0$ with 
	\[
	|\nabla\vphi(x)| \le C'\chi_{B_r(0)^c}(x) \le C|x|.
	\]
	For each $r>0$, the family $\{\chi_{B_r(0)^c}\mu_n\}_n$ is tight (as the sequence of truncated measures converges weakly). By density of Lipschitz functions in $C_b(\mbr^d)$, it follows that \eqref{def:weak-conv} holds for every $\vphi \in C_b(\mbr^d)$ that vanishes on some $B_r(0)$.  
	
	Next, let us extend \eqref{def:weak-conv} to all $\vphi \in C_b(\mbr^d)$ such that $\vphi(x)\le C\min\{1,|x|^2\}$ for some $C \ge 0$.  
	Fix a cutoff $\chi \in C_b(\mbr^d)$ that is radially nondecreasing, with $\chi(x)=0$ for $|x|\le 1$ and $\chi(x)=1$ for $|x|\ge 2$.  
	For $\ep>0$, set
	\(
	\chi_\ep(x) := \chi(\tfrac{x}{\ep}), 
	\vphi_\ep := \vphi \chi_\ep.
	\)
	Then
	\[
	|\vphi_\ep(x)| \le C\min\{1,|x|^2\}, 
	\qquad 
	|\vphi(x)-\vphi_\ep(x)| 
	\le C\chi_{B_{2\ep}(0)}(x)\min\{1,|x|^2\} 
	\le C\min\{4\ep^2,|x|^2\}.
	\]
	Hence we have 
	\[
	\Bigl|\int_{\mbr^d} \vphi\, d(\mu_n-\mu)\Bigr|
	\;\le\;
	\Bigl|\int_{\mbr^d} \vphi_\ep \,d(\mu_n-\mu)\Bigr|
	+ \int_{\mbr^d} |\vphi-\vphi_\ep|\, d(\mu_n+\mu).
	\]
	
	As $n\to\infty$, the first term vanishes by the previous paragraph (since $\vphi_\ep$ vanishes on $B_\ep(0)$). Therefore
	\begin{align*}
		\limsup_{n\to\infty}\Bigl|\int_{\mbr^d} \vphi\, d(\mu_n-\mu)\Bigr|
		&\le \limsup_{n\to\infty} \int_{\mbr^d} |\vphi-\vphi_\ep|\, d(\mu_n+\mu) \\
		&\le \limsup_{n\to\infty} C \int_{\mbr^d} \min\{|x|^2, 4\ep^2\}\, d(\mu_n+\mu) \\
		&= 2C \int_{\mbr^d} \min\{|x|^2, 4\ep^2\}\, d\mu.
	\end{align*}
	Here we used that $\psi(x):= C\min\{4\ep^2,|x|^2\}$ satisfies $|\nabla \psi(x)| \le C|x|$, so \eqref{def:weak-conv} also applies to $\psi$.  
	Finally, since $\ep>0$ is arbitrary, sending $\ep \searrow 0$ yields that \eqref{def:weak-conv} holds for all admissible $\vphi$.  
	Thus $\mu_n \xrightarrow{\La} \mu$.
\end{proof}

\begin{proposition}\label{prop:la-conv-tight}
	If \( \mu_n \xrightarrow{\La} \mu \) for some \( \mu \in \La(\mathbb{R}^d) \), then the sequence \( \{ \mu_n \}_n \) is \( \La \)-tight.
\end{proposition}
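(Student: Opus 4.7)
My plan is to reduce the statement to the classical fact that every weakly convergent sequence of bounded Radon measures on a Polish space is tight, applied to the rescaled measures $\tilde\mu_n$ on $\mbr^d\setminus\{0\}$ introduced in the discussion before Proposition~\ref{lem:char}. By Proposition~\ref{lem:char}, $\Lambda$-weak convergence $\mu_n\xrightarrow{\Lambda}\mu$ is equivalent to the weak convergence $\tilde\mu_n\to\tilde\mu$ of bounded measures on $\mbr^d\setminus\{0\}$, and $\Lambda$-tightness of $\{\mu_n\}_n$ is equivalent to classical tightness of $\{\tilde\mu_n\}_n$ on the same space. Since $\mbr^d\setminus\{0\}$ is a Polish space, all the usual tools (inner regularity of finite Radon measures, Urysohn-type cutoffs with compact support) apply.

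First, I would verify the uniform mass bound required in the first condition of $\Lambda$-tightness. Testing against $\vphi(x)=\min\{1,|x|^2\}$, which is admissible in the definition of $\Lambda$-weak convergence, we get
\[
\int_{\mbr^d}\min\{1,|x|^2\}\,d\mu_n\;\longrightarrow\;\int_{\mbr^d}\min\{1,|x|^2\}\,d\mu,
\]
so the sequence of integrals is bounded; this is exactly a uniform bound on $\tilde\mu_n(\mbr^d\setminus\{0\})$.

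Next I would establish the tightness-away-from-$0$ condition. Fix $\ep>0$. Since $\tilde\mu$ is a finite Radon measure on the Polish space $\mbr^d\setminus\{0\}$, by inner regularity there exists a compact $K\subset \mbr^d\setminus\{0\}$ with $\tilde\mu(K^c)<\ep/2$. By Urysohn's lemma choose a continuous $\chi:\mbr^d\setminus\{0\}\to[0,1]$ with compact support $K'\supset K$ and $\chi=1$ on $K$. Then for each $n$,
\[
\tilde\mu_n((K')^c)\;\le\;\tilde\mu_n(\mbr^d\setminus\{0\})-\int \chi\,d\tilde\mu_n.
\]
Applying the weak convergence $\tilde\mu_n\to\tilde\mu$ (equivalently, testing $\mu_n\xrightarrow{\Lambda}\mu$ against the rescaled version of $\chi$, which is continuous, bounded, and dominated by $\min\{1,|x|^2\}$), the right-hand side tends to $\tilde\mu(\mbr^d\setminus\{0\})-\int\chi\,d\tilde\mu\le\tilde\mu(K^c)<\ep/2$. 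Hence $\tilde\mu_n((K')^c)<\ep$ for all sufficiently large $n$.

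The last step is to absorb the finitely many initial terms: since each individual $\tilde\mu_n$ is a finite Radon measure on a Polish space and hence tight, one can enlarge $K'$ to a compact set $K''\subset\mbr^d\setminus\{0\}$ so that the inequality $\tilde\mu_n((K'')^c)<\ep$ holds simultaneously for \emph{all} $n\ge 1$. Translating this back via \eqref{eq:C1} yields the second $\Lambda$-tightness condition. I expect no serious obstacle here: the main point is the reduction via Proposition~\ref{lem:char}, after which the argument is the standard Prokhorov-type proof that weakly convergent sequences of bounded measures on a Polish space are tight; the only slightly delicate bookkeeping is ensuring that the compact set $K'$ stays inside $\mbr^d\setminus\{0\}$, which is automatic since the inner regularity and the Urysohn cutoff are performed on $\mbr^d\setminus\{0\}$ itself.
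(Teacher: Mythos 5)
Your proof is correct and follows essentially the same route as the paper: reduce via Proposition~\ref{lem:char} to the rescaled measures $\tilde\mu_n$ on $\mbr^d\setminus\{0\}$, then invoke the classical fact that a weakly convergent sequence of bounded measures on a Polish space is tight. The paper simply cites this classical result without elaboration, whereas you unpack it via the standard Urysohn-cutoff argument; both are sound.
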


\begin{proof}
	By the characterization given by Lemma \ref{lem:char}, $\mu_n\xrightarrow[]{\La}\mu$ if and only if $\tilde \mu_n\to \tilde \mu$ weakly on $\mbr^d\setminus\{0\}$. From the classical result, $\{\tilde \mu_n\}$ is tight on $\mbr^d\setminus\{0\}$ in the usual sense. Applying Lemma \ref{lem:char} again $\{\mu_n\}_n$ is $\La$-tight.
%
%
\end{proof}

Next, we characterize $\Lambda$-convergence for Lévy measures with bounded second moments. This parallels the case of probability measures with finite second moments (cf.~\cite[Definition 6.8]{MR2459454}).  

\begin{lemma}[Characterization of convergence in $\Lambda_2(\mathbb{R}^d$)] 
	\label{lem:C5}
	Let $\mu_n,\mu \in \Lambda_2(\mathbb{R}^d)$. Suppose \( \mu_n \xrightarrow{\Lambda} \mu \) and
	\begin{align}\label{conv:sec-moment}
		\lim_{n\to\infty}\int_{\mathbb{R}^d} |x|^2\, d\mu_n(x) 
		= \int_{\mathbb{R}^d} |x|^2\, d\mu(x).
	\end{align}
	Then for every \( \varepsilon > 0 \), there exists \( R \ge 1 \) such that
	\[
	\sup_n \int_{|x| \ge R} |x|^2\, d\mu_n(x) \le \varepsilon.
	\]
	Moreover, for any continuous $\varphi$ with quadratic growth, $|\varphi(x)| \le C|x|^2$ for some $C \ge 0$, one has
	\begin{align*}
		\lim_{n\to\infty} \int_{\mathbb{R}^d} \varphi(x)\, d\mu_n(x) 
		= \int_{\mathbb{R}^d} \varphi(x)\, d\mu(x).
	\end{align*}
\end{lemma}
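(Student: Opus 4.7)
The plan is to mimic the classical argument for the analogous characterization of convergence in the Wasserstein-$2$ space (cf.\ \cite[Theorem 6.9]{MR2459454}), with the caveat that in the Lévy setting we must verify that our cutoff test functions lie in the class to which $\Lambda$-weak convergence applies, namely continuous functions controlled by $C\min\{1,|x|^2\}$. Throughout, fix a continuous radial cutoff $\psi_R \in C_b(\mbr^d)$ with $0\le \psi_R\le 1$, $\psi_R\equiv 1$ on $B_R(0)$ and $\psi_R\equiv 0$ on $B_{2R}(0)^c$.

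\textbf{Step 1: Uniform tail integrability of $|x|^2$.}  First I would observe that $|x|^2\psi_R(x)$ is bounded, continuous, and satisfies $|x|^2\psi_R(x)\le (2R)^2\min\{1,|x|^2\}$, so it is admissible as a test function for $\Lambda$-weak convergence. Thus $\int |x|^2\psi_R\,d\mu_n\to \int |x|^2\psi_R\,d\mu$. Combined with the assumed convergence of the full second moments \eqref{conv:sec-moment}, this yields
\[
\lim_{n\to\infty}\int |x|^2(1-\psi_R)\,d\mu_n \;=\; \int |x|^2(1-\psi_R)\,d\mu.
\]
Since $(1-\psi_R)\equiv 0$ on $B_R(0)$ and $\equiv 1$ on $B_{2R}(0)^c$, sandwiching gives the chain
\[
\int_{|x|\ge 2R}|x|^2\,d\mu_n \;\le\; \int |x|^2(1-\psi_R)\,d\mu_n \;\le\; \int_{|x|\ge R}|x|^2\,d\mu_n.
\]
Given $\varepsilon>0$, I would first choose $R$ so large that $\int_{|x|\ge R}|x|^2\,d\mu\le \varepsilon/2$, which is possible because $\mu\in\Lambda_2(\mbr^d)$; passing to the limit in the displayed identity produces $N=N(\varepsilon)$ with $\int_{|x|\ge 2R}|x|^2\,d\mu_n\le \varepsilon$ for $n\ge N$. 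Finally, each of the finitely many remaining $\mu_n$ individually has a finite second moment, so enlarging $R$ if necessary handles them as well, yielding the desired uniform tail bound.

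\textbf{Step 2: Convergence of $\int\varphi\,d\mu_n$ for $\varphi$ with quadratic growth.}  For a continuous $\varphi$ with $|\varphi(x)|\le C|x|^2$, split $\varphi=\varphi\psi_R+\varphi(1-\psi_R)$. The truncated piece $\varphi\psi_R$ is bounded continuous and satisfies $|\varphi\psi_R|\le C(2R)^2\min\{1,|x|^2\}$, so by $\Lambda$-weak convergence,
\[
\lim_{n\to\infty}\int \varphi\psi_R\,d\mu_n \;=\; \int \varphi\psi_R\,d\mu.
\]
For the tail, the bound $|\varphi(1-\psi_R)(x)|\le C|x|^2\chi_{\{|x|\ge R\}}(x)$ together with Step~1 shows
\[
\sup_n \Bigl|\int \varphi(1-\psi_R)\,d\mu_n\Bigr| \;\le\; C\sup_n\int_{|x|\ge R}|x|^2\,d\mu_n \;\xrightarrow[R\to\infty]{} 0,
\]
and the same holds for $\mu$. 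Combining these two estimates, letting $n\to\infty$ first and then $R\to\infty$, yields $\int\varphi\,d\mu_n\to\int\varphi\,d\mu$.

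\textbf{Anticipated obstacle.}  There is no deep difficulty here: the only subtle point is ensuring that every cutoff test function used is admissible for the hypothesis $\mu_n\xrightarrow{\Lambda}\mu$, i.e.\ controlled by $C\min\{1,|x|^2\}$. The factor $\min\{1,|x|^2\}$ in the definition of $\Lambda$-weak convergence and the vanishing of $|x|^2$ near the origin conspire so that the classical cutoff argument transfers essentially verbatim; the only bookkeeping required is the constant $(2R)^2$ arising when bounding $\psi_R|x|^2$ in terms of $\min\{1,|x|^2\}$, which is harmless for fixed $R$.
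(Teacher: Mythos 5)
Your proof is correct and follows the same overall decomposition (truncate, then control the tails uniformly in $n$ via the second-moment hypothesis). The only real difference is in Step~1: the paper takes the sharp cutoff $|x|^2\chi_{\{|x|<R\}}$, which is not an admissible test function for $\Lambda$-weak convergence, and therefore has to invoke the Portmanteau-type Lemma~\ref{lem:port} to obtain the required $\liminf$ inequality; you instead use a continuous cutoff $\psi_R$ so that $|x|^2\psi_R$ is directly admissible (bounded by $(2R)^2\min\{1,|x|^2\}$), which lets you compare the exact limits and sandwich the tail. This is a slightly more self-contained route that avoids one auxiliary lemma, at the modest cost of working with the annulus $R\le|x|\le 2R$ rather than a single radius. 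Your Step~2 also truncates in the spatial domain via $\psi_R$ rather than in the range of $\varphi$ (the paper uses $\varphi_R=\max\{-R,\min\{\varphi,R\}\}$), but again the two variants are interchangeable and lead to the same tail estimate via Step~1.
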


%

\begin{proof}	
	Fix \( \varepsilon > 0 \). Since \( |x|^2 \in L^1(\mu) \), there exists \( R > 0 \) such that
	\[
	\int_{|x| \ge R} |x|^2\, d\mu(x) < \frac{\varepsilon}{2}.
	\]
	Consider the truncated function \( |x|^2 \chi_{\{|x| < R\}} \), which is lower semicontinuous and bounded below. By the Portmanteau theorem (Lemma~\ref{lem:port}),
	\[
	\liminf_{n\to\infty} \int_{|x|<R} |x|^2\, d\mu_n(x) \;\ge\; \int_{|x|<R} |x|^2\, d\mu(x).
	\]
	Using the convergence of the second moment \eqref{conv:sec-moment}, we obtain
	\[
	\limsup_{n\to\infty} \int_{|x|\ge R} |x|^2\, d\mu_n(x) 
	\;\le\; \int_{|x|\ge R} |x|^2\, d\mu(x) < \frac{\varepsilon}{2}.
	\]
	Hence, there exists \( N \) such that for all \( n \ge N \),
	\[
	\int_{|x|\ge R} |x|^2\, d\mu_n(x) < \varepsilon.
	\]
	For the finitely many indices \( n < N \), finiteness of the second moment allows enlarging \( R \) so that the same bound holds. Thus the first claim follows.
	
	For the second claim, let \( \varphi \in C(\mathbb{R}^d) \) with \( |\varphi(x)| \le C |x|^2 \). For \( R > 0 \), define the cutoff 
	\[
	\varphi_R(x) = \max\{-R, \min\{\varphi(x), R\}\} , \qquad \delta_R = \varphi - \varphi_R.
\]
Then \( \varphi_R \in C_b(\mathbb{R}^d) \) and \( |\delta_R(x)| \le C |x|^2 \chi_{\{|x|\ge R\}} \). Hence,
\[
\Big| \int \varphi \, d\mu_n - \int \varphi \, d\mu \Big|
\;\le\; \Big| \int \varphi_R \, d\mu_n - \int \varphi_R \, d\mu \Big| 
+ \int |\delta_R(x)| \, d(\mu_n+\mu).
\]
As \( n \to \infty \), the first term vanishes since \( \mu_n \xrightarrow{\La} \mu \). For the second term,
\[
\limsup_{n\to\infty} \Big| \int \varphi \, d\mu_n - \int \varphi \, d\mu \Big|
\;\le\; 2C \sup_n \int_{|x|\ge R} |x|^2\, d\mu_n(x).
\]
By the first part, the right-hand side can be made arbitrarily small by choosing \( R \) large. This proves the result.
\end{proof}

\subsection{Prokhorov-type Compactness Theorem}
We conclude with a compactness criterion for the space $\La(\mbr^d)$ of L\'evy measures, in analogy with Prokhorov's theorem.

\begin{theorem}[Prokhorov-type compactness]\label{thm:prokhorov}
	Let \( \{\mu_n\}_n \subset \La(\mathbb{R}^d) \) be a \( \La \)-tight sequence of Lévy measures. Then there exist a subsequence (still denoted \( \mu_n \)) and a Lévy measure \( \mu \in \La(\mathbb{R}^d) \) such that $\mu_n \xrightarrow{\La} \mu.$
\end{theorem}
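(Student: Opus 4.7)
The plan is to reduce the statement to the classical Prokhorov theorem via the rescaling bijection between Lévy measures on $\mathbb{R}^d$ and bounded Radon measures on the Polish space $\mathbb{R}^d\setminus\{0\}$ already developed in Lemma~\ref{lem:char}. Concretely, to each $\mu_n\in\Lambda(\mathbb{R}^d)$ I would associate its rescaling $d\tilde\mu_n(x):=\min\{1,|x|^2\}\,d\mu_n(x)\in\mathcal{M}(\mathbb{R}^d\setminus\{0\})$.

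First, by Lemma~\ref{lem:char}(ii) the $\Lambda$-tightness hypothesis on $\{\mu_n\}_n$ translates exactly into tightness of $\{\tilde\mu_n\}_n$ as a family of bounded Radon measures on $\mathbb{R}^d\setminus\{0\}$, together with the uniform mass bound $\sup_n \tilde\mu_n(\mathbb{R}^d\setminus\{0\})<\infty$. Since $\mathbb{R}^d\setminus\{0\}$ is a locally compact, second countable, Hausdorff Polish space, the classical Prokhorov theorem applies: passing to a subsequence (still denoted $\tilde\mu_n$), there exists a bounded Radon measure $\tilde\mu$ on $\mathbb{R}^d\setminus\{0\}$ with $\tilde\mu_n\to\tilde\mu$ weakly.

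Next, I would define a candidate limit $\mu$ on $\mathbb{R}^d$ by setting $\mu(\{0\}):=0$ and
\[
d\mu(x):=\frac{1}{\min\{1,|x|^2\}}\,d\tilde\mu(x)\quad\text{on }\mathbb{R}^d\setminus\{0\}.
\]
To verify $\mu\in\Lambda(\mathbb{R}^d)$, it suffices to check the integrability condition \eqref{Levy-m-def}, namely $\int_{\mathbb{R}^d}\min\{1,|x|^2\}\,d\mu(x)=\tilde\mu(\mathbb{R}^d\setminus\{0\})<\infty$, which holds by construction. Finally, applying Lemma~\ref{lem:char}(i) in the reverse direction, the weak convergence $\tilde\mu_n\to\tilde\mu$ on $\mathbb{R}^d\setminus\{0\}$ is equivalent to $\mu_n\xrightarrow{\Lambda}\mu$, completing the proof.

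The only delicate point is verifying that the rescaling correspondence of Lemma~\ref{lem:char} is genuinely a bijection between $\Lambda(\mathbb{R}^d)$ and the set of bounded Radon measures on $\mathbb{R}^d\setminus\{0\}$ (so that the candidate $\mu$ is well-defined and a Lévy measure); this is the crux, but it is a direct measure-theoretic check using the strictly positive weight $\min\{1,|x|^2\}$ away from the origin. Beyond this, the argument is essentially a transfer of classical Prokhorov through a continuous weighting, with no genuine analytic obstruction.
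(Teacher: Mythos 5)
Your proposal is correct and follows essentially the same route as the paper's own proof: rescale via $d\tilde\mu_n = \min\{1,|x|^2\}\,d\mu_n$, use Lemma~\ref{lem:char}(ii) to transfer $\Lambda$-tightness to classical tightness of $\{\tilde\mu_n\}$ on $\mathbb{R}^d\setminus\{0\}$, extract a weak limit by classical Prokhorov, undo the rescaling to define $\mu$, and conclude via Lemma~\ref{lem:char}(i). The paper's proof is a condensed version of exactly this argument.
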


\begin{proof}
	Let $\{\tilde \mu_n\}_n \subset \mcl{M}(\mbr^d \setminus \{0\})$ be the rescaled measures as in \eqref{def:rescaled}. 
	By Proposition~\ref{lem:char}, the $\La$-tightness of $\{\mu_n\}_n$ implies that $\{\tilde \mu_n\}_n$ is tight in $\mcl{M}(\mbr^d \setminus \{0\})$. 
	By Prokhorov’s theorem, there exists a subsequence (still denoted $\tilde \mu_n$) converging weakly to some $\tilde \mu \in \mcl{M}(\mbr^d \setminus \{0\})$.	
	Define $\mu \in \La(\mbr^d)$ by 
	\(
	d\mu(x) = (\min\{1,|x|^2\})^{-1}\, d\tilde \mu(x).
	\)
	Then Proposition~\ref{lem:char} yields $\mu_n \xrightarrow{\La} \mu$ along the subsequence. 
\end{proof}

\bibliography{refs}
\bibliographystyle{abbrv}

\end{document}